\newcommand{\axref}[1]{(\hyperref[ax:#1]{#1})}
\newcommand{\newref}[4][]{
\ifthenelse{\equal{#1}{}}{\newtheorem{h#2}[hthm]{#4}}{\newtheorem{h#2}{#4}[#1]}
\expandafter\newcommand\csname r#2\endcsname[1]{#3~\ref{#2:##1}}
\expandafter\newcommand\csname R#2\endcsname[1]{#4~\ref{#2:##1}}
\expandafter\newcommand\csname n#2\endcsname[1]{\ref{#2:##1}}
\newenvironmentx{#2}[2][1=,2=]{
\ifthenelse{\equal{##2}{}}{\begin{h#2}}{\begin{h#2}[##2]}
\ifthenelse{\equal{##1}{}}{}{\label{#2:##1}}
}{\end{h#2}}
}
\theoremstyle{definition}
\theoremstyle{remark}
\newcommand{\type}{}
\newcommand{\ob}{}
\newcommand{\term}{1}
\newcommand{\unit}{()}
\newcommand{\fs}[1]{\mathrm{#1}}
\newcommand{\Hom}{\fs{Hom}}
\newcommand{\Id}{\fs{Id}}
\newcommand{\refl}{\fs{refl}}
\newcommand{\sym}[1]{#1^{-1}}
\newcommand{\id}{\fs{id}}
\newcommand{\pmap}{\fs{ap}}
\newcommand{\Fib}{\fs{Fib}}
\newcommand{\fib}{\ \fs{fib}}
\newcommand{\El}{\fs{El}}
\numberwithin{figure}{section}
\newcommand{\ct}{%
  \mathchoice{\mathbin{\raisebox{0.25ex}{$\displaystyle\centerdot$}}}%
             {\mathbin{\raisebox{0.25ex}{$\centerdot$}}}%
             {\mathbin{\raisebox{0.25ex}{$\scriptstyle\,\centerdot\,$}}}%
             {\mathbin{\raisebox{0.25ex}{$\scriptscriptstyle\,\centerdot\,$}}}
}
\newcommand{\pb}[1][dr]{\save*!/#1-1.2pc/#1:(-1,1)@^{|-}\restore}
\newcommand{\po}[1][dr]{\save*!/#1+1.2pc/#1:(1,-1)@^{|-}\restore}
\begin{document}

\title{Indexed type theories}

\author{Valery Isaev}

\begin{abstract}
In this paper, we define indexed type theories which are related to indexed ($\infty$-)categories in the same way as (homotopy) type theories are related to ($\infty$-)categories.
We define several standard constructions for such theories including finite (co)limits, arbitrary (co)products, exponents, object classifiers, and orthogonal factorization systems.
We also prove that these constructions are equivalent to their type theoretic counterparts such as $\Sigma$-types, unit types, identity types, finite higher inductive types, $\Pi$-types, univalent universes, and higher modalities.
\end{abstract}

\maketitle

\section{Introduction}

Indexed categories were defined in \cite{indexed-cats} (see also \cite[B1]{elephant}).
We define an analogue of this notion using the language of type theory.
Ordinary homotopy type theory is an internal language of $\infty$-categories with some additional structure depending on constructions that we assume in the theory.
Often we need to assume that the $\infty$-category is at least locally Cartesian closed.
Indexed type theories allows us to discuss properties of arbitrary (indexed) $\infty$-categories.

Indexed type theories can be useful even when applied to $\infty$-categories which have all the required structure such as $\infty$-toposes.
One problem of ordinary homotopy type theory is that every construction must be stable under pullbacks.
For example, we will define orthogonal factorization systems in section~\ref{sec:refl-fib}.
In ordinary homotopy type theory only the stable factorization systems can be defined (which was done in \cite{modality-hott}).

Similar problem occurs when we try to describe certain universes.
For example, we could try to add a universe $\mathcal{U}_\mathrm{cov}$ of discrete Segal types and covariant maps between them to the theory described in \cite{riehl-dhott}.
A naive definition of such a universe postulates that we have an equivalence between the type of functions $X \to \mathcal{U}_\mathrm{cov}$ and the type of covariant fibrations over $X$.
This is not a correct definition since this condition is too strong.
The correct definition requires only an equivalence between the space of maps from $X$ to $\mathcal{U}_\mathrm{cov}$ and the space of covariant fibrations over $X$.
It is impossible to formulate this condition in ordinary homotopy type theory, but it is easy to do in an indexed type theory as we will see in section~\ref{sec:class}.

We will define two kinds of indexed type theories: unary and dependent.
Indexed unary type theories allow us to discuss properties of arbitrary categories.
Such a theory has two levels: the base theory and the indexed theory.
The base theory has the usual type-theoretic syntax and the language of the indexed theory can be described informally as the language of type-enriched categories.
Indexed dependent type theories also have two levels.
The first one is the same as for unary theories, but the second has type-theoretic syntax, so it is more convenient to use such theories, but it applies only to finitely complete theories.

We will define several categorical constructions in both styles and prove that they are equivalent:
\begin{enumerate}
\item Finite limits in unary theories and $\Sigma$-types, unit types, and identity types in dependent theories.
\item Finite colimits in unary theories and finite higher inductive types in dependent theories.
\item Exponents in unary theories and $\Pi$-types in dependent theories.
\item Object classifiers in unary theories and univalent universes in dependent theories.
\item Orthogonal factorization systems in unary theories and higher modalities in dependent theories.
\end{enumerate}

The paper is organized as follows.
In section~\ref{sec:unary}, we define indexed unary type theories.
In section~\ref{sec:equivalence}, we define the notion of an equivalence in unary theories.
In section~\ref{sec:colimits}, we define limits and colimits in unary theories.
In section~\ref{sec:dependent}, we define indexed dependent type theories.
In section~\ref{sec:lccc}, we define exponent and $\Pi$-types in indexed theories.
In section~\ref{sec:colimits-dep}, we define limits and colimits in dependent theories.
In section~\ref{sec:initial}, we prove the initial type theorem, which is the first step in the proof of the general adjoint functor theorem.
In section~\ref{sec:class}, we defined object classifiers.
In section~\ref{sec:refl-fib}, we define orthogonal factorization systems.

\section{Indexed unary type theories}
\label{sec:unary}

We can think about an indexed type theory as a syntactic representation of indexed $\infty$-categories, that is a functor $F$ from an $\infty$-category $\mathcal{B}$ to the large $\infty$-category of $\infty$-categories.
An indexed type theory consists of two levels.
The first level is just an ordinary type theory and it represents $\mathcal{B}$
Since we are mostly interested in the case when $\mathcal{B}$ is the $\infty$-category of spaces,
we can assume that the first level has all usual constructions such as identity types, $\Sigma$-types, $\Pi$-types, (univalent) universes, and (higher) inductive types.
Nevertheless, in general, we will assume that the base theory has only identity types and $\Sigma$-types; all additional assumptions will be explicitly specified.
We will often talk about functions, but this is only for notational convenience and does not assume that function types exist.
Terms of type $A \to B$ correspond to terms of type $B$ in context $x : A$, so we can talk about functions $A \to B$ as long as this type does not appear inside other types.

The second level of the theory represents $\infty$-categories $F(\Gamma)$ for various objects $\Gamma$ of $\mathcal{B}$.
In this section, we will discuss \emph{indexed unary type theories}, that is indexed type theories in which the second level consists of unary type theories.
A unary type theory is a non-dependent type theory in which contexts consist of exactly one type.
Such theories represent arbitrary 1-categories.
We do not know whether indexed unary type theories represent all indexed $\infty$-categories over a given base, but it seems that this should be true at least for locally small indexed $\infty$-categories.

Indexed unary type theories have four kinds of judgments:
\[ \Gamma \vdash A \type \qquad \Gamma \vdash a : A \qquad \Gamma \mid \cdot \vdash B \ob \qquad \Gamma \mid x : A \vdash b : B \]

In each of these judgments, $\Gamma$ is a context, that is a sequence of the form $x_1 : A_1, \ldots x_n : A_n$, where $A_1$, \ldots $A_n$ are types and $x_1$, \ldots $x_n$ are pairwise distinct variables.
Judgments $\Gamma \vdash A \type$ and $\Gamma \vdash a : A$ represent types and terms of the first level of the theory.
We will call such types and terms \emph{base types} and \emph{base terms}, respectively.
The collection of rules that involve only judgments for base types and base terms will be called the base (sub)theory.
When we say that the base theory has some construction such as $\Pi$-types or universes, this means that there are usual rules for these constructions formulated in terms of these judgments.

Judgments $\Gamma \mid \cdot \vdash A \ob$ represent types of the second level of the theory.
We will call these types \emph{indexed types} to distinguish them from base types.
In a judgment $\Gamma \mid x : A \vdash b : B$, $x$ is a variable which is distinct from the variables in $\Gamma$, $A$ and $B$ are indexed types, and $b$ is a term of the second level of the theory.
We will call such terms \emph{indexed terms}.
Indexed types represent objects indexed by $\Gamma$ and indexed an indexed term $\Gamma \mid x : A \vdash b : B$ represents a morphism between $A$ and $B$.

We have the usual rules for variables and substitutions for the base theory:
\begin{center}
\AxiomC{}
\UnaryInfC{$x_1 : A_1, \ldots x_n : A_n \vdash x_i : A_i$}
\DisplayProof
\end{center}

\begin{center}
\def\extraVskip{1pt}
\Axiom$\fCenter \Gamma \vdash b_1 : B_1$
\noLine
\UnaryInf$\fCenter \ldots$
\noLine
\UnaryInf$\fCenter \Gamma \vdash b_k : B_k[b_1/y_1, \ldots b_{k-1}/y_{k-1}]$
\Axiom$\fCenter \Gamma, y_1 : B_1, \ldots y_k : B_k \vdash C \type$
\def\extraVskip{2pt}
\BinaryInfC{$\Gamma \vdash C[b_1/y_1, \ldots b_k/y_k] \type$}
\DisplayProof
\end{center}

\begin{center}
\def\extraVskip{1pt}
\Axiom$\fCenter \Gamma \vdash b_1 : B_1$
\noLine
\UnaryInf$\fCenter \ldots$
\noLine
\UnaryInf$\fCenter \Gamma \vdash b_k : B_k[b_1/y_1, \ldots b_{k-1}/y_{k-1}]$
\Axiom$\fCenter \Gamma, y_1 : B_1, \ldots y_k : B_k \vdash c : C$
\def\extraVskip{2pt}
\BinaryInfC{$\Gamma \vdash c[b_1/y_1, \ldots b_k/y_k] : C[b_1/y_1, \ldots b_k/y_k]$}
\DisplayProof
\end{center}

We also have the usual equations for substitution:
\begin{align*}
y_i[b_1/y_1, \ldots b_k/y_k] & = b_i \\
c[y_1/y_1, \ldots y_k/y_k] & = c \\
d[c_1/z_1, \ldots c_n/z_n][b_1/y_1, \ldots b_k/y_k] & = d[c_1'/z_1, \ldots c_n'/z_n],
\end{align*}
where $c_i' = c_i[b_1/y_1, \ldots b_k/y_k]$.

For every construction $\sigma(\overline{z_1}.\,c_1, \ldots \overline{z_n}.\,c_n)$ in the base theory, we have the following equation whenever variables $\overline{z_1}$, \ldots $\overline{z_n}$ are not free in $b_1$, \ldots $b_k$:
\[ \sigma(\ldots, \overline{z_i}.\,c_i, \ldots)[b_1/y_1, \ldots b_k/y_k] = \sigma(\ldots, \overline{z_i}.\,c_i[b_1/y_1, \ldots b_k/y_k], \ldots) \]
We also have the weakening operation which is left implicit as usual.
This concludes the description of basic rules of the base theory.
They are the usual rules of a dependent type theory which we include here so that they can be compared to the rules of the indexed theory.

Variables of the indexed theory represent identity morphisms and substitution represents composition:
\begin{center}
\AxiomC{}
\UnaryInfC{$\Gamma \mid x : A \vdash x : A$}
\DisplayProof
\qquad
\AxiomC{$\Gamma \mid \Delta \vdash b : B$}
\AxiomC{$\Gamma \mid y : B \vdash c : C$}
\BinaryInfC{$\Gamma \mid \Delta \vdash c[b/y] : C$}
\DisplayProof
\end{center}

These operations satisfy the obvious equations:
\begin{align*}
y[b/y] & = b \\
b[x/x] & = b \\
d[c/z][b/y] & = d[c[b/y]/z]
\end{align*}

We can also substitute base terms into indexed types and terms:
\begin{center}
\def\extraVskip{1pt}
\Axiom$\fCenter \Gamma \vdash b_1 : B_1$
\noLine
\UnaryInf$\fCenter \ldots$
\noLine
\UnaryInf$\fCenter \Gamma \vdash b_k : B_k[b_1/y_1, \ldots b_{k-1}/y_{k-1}]$
\Axiom$\fCenter \Gamma, y_1 : B_1, \ldots y_k : B_k \mid \cdot \vdash C \ob$
\def\extraVskip{2pt}
\BinaryInfC{$\Gamma \mid \cdot \vdash C[b_1/y_1, \ldots b_k/y_k] \ob$}
\DisplayProof
\end{center}

\begin{center}
\def\extraVskip{1pt}
\Axiom$\fCenter \Gamma \vdash b_1 : B_1$
\noLine
\UnaryInf$\fCenter \ldots$
\noLine
\UnaryInf$\fCenter \Gamma \vdash b_k : B_k[b_1/y_1, \ldots b_{k-1}/y_{k-1}]$
\Axiom$\fCenter \Gamma, y_1 : B_1, \ldots y_k : B_k \mid z : C \vdash d : D$
\def\extraVskip{2pt}
\BinaryInfC{$\Gamma \mid z : C[b_1/y_1, \ldots b_k/y_k] \vdash d[b_1/y_1, \ldots b_k/y_k] : D[b_1/y_1, \ldots b_k/y_k]$}
\DisplayProof
\end{center}

These operations represent reindexing along a morphism in the base category.
They satisfy the following equations:
\begin{align*}
x[b_1/y_1, \ldots b_k/y_k] & = x \\
d[c/z][b_1/y_1, \ldots b_k/y_k] & = d[b_1/y_1, \ldots b_k/y_k][c[b_1/y_1, \ldots b_k/y_k]/z] \\
c[y_1/y_1, \ldots y_k/y_k] & = c \\
d[c_1/z_1, \ldots c_n/z_n][b_1/y_1, \ldots b_k/y_k] & = d[c_1'/z_1, \ldots c_n'/z_n],
\end{align*}
where $c_i' = c_i[b_1/y_1, \ldots b_k/y_k]$.
The first two equations correspond to the fact that reindexing preserves identity morphisms and composition in the indexed theory.
The last two equations correspond to the fact that reindexing is functorial, that is it preserves identity morphisms and composition in the base theory.

For every construction $\sigma(\overline{z_1}.\,c_1, \ldots \overline{z_n}.\,c_k)$ in the indexed theory, we have the following equation whenever variables $\overline{z_1}$, \ldots $\overline{z_n}$ are not free in $b_1$, \ldots $b_k$:
\[ \sigma(\ldots, \overline{z_i}.\,c_i, \ldots)[b_1/y_1, \ldots b_k/y_k] = \sigma(\ldots, \overline{z_i}.\,c_i[b_1/y_1, \ldots b_k/y_k], \ldots) \]
We also have the weakening operation which is left implicit as usual.
This equation corresponds to the fact that all constructions in the indexed category must be stable under reindexing.

As we noted before, we assume that the base theory has identity types:
\begin{center}
\AxiomC{$\Gamma \vdash a : A$}
\AxiomC{$\Gamma \vdash a' : A$}
\BinaryInfC{$\Gamma \vdash \Id_A(a,a') \type$}
\DisplayProof
\qquad
\AxiomC{$\Gamma \vdash a : A$}
\UnaryInfC{$\Gamma \vdash \refl(a) : \Id_A(a,a)$}
\DisplayProof
\end{center}
\medskip

\begin{center}
\def\extraVskip{1pt}
\Axiom$\fCenter \Gamma \vdash a : A$
\noLine
\UnaryInf$\fCenter \Gamma \vdash a' : A$
\noLine
\UnaryInf$\fCenter \Gamma \vdash t : \Id_A(a,a')$
\Axiom$\fCenter \Gamma, x : A, p : \Id_A(a,x), \Delta \vdash D \type$
\noLine
\UnaryInf$\fCenter \Gamma, \Delta[a/x,\refl(a)/p] \vdash d : D[a/x,\refl(a)/p]$
\def\extraVskip{2pt}
\BinaryInfC{$\Gamma, \Delta[a'/x,t/p] \vdash J(a, x p \Delta.\,D, \Delta.\,d, a', t) : D[a'/x,t/p]$}
\DisplayProof
\end{center}

\[ J(a, x p \Delta.\,D, \Delta.\,d, a, \refl(a)) = d \]

We will sometimes omit the type in the notation $\Id_A(a,a')$.
The fact that the type $\Id(a,a')$ is inhabited will be denoted by $a \sim a'$.
If $\Gamma \vdash p : \Id_A(a,a')$, $\Gamma, x : A \vdash B \type$, and $\Gamma \vdash b : B[a/x]$, then we will write $\Gamma \vdash p_*(b) : B[a'/x]$ for the usual transport operation defined in terms of $J$.
Operation $\pmap$ is defined in terms of $J$ and has the following type:
if $\Gamma \vdash B \type$, $\Gamma, x : A \vdash b : B$, and $\Gamma \vdash p : \Id_A(a,a')$, then $\Gamma \vdash \pmap(x.b, p) : \Id_B(b[a/x],b[a'/x])$.

An indexed type theory is \emph{locally small} if there is a type of its morphisms.
That is, it must contain the following rules and equations:
\begin{center}
\AxiomC{$\Gamma \mid \cdot \vdash A \ob$}
\AxiomC{$\Gamma \mid \cdot \vdash B \ob$}
\BinaryInfC{$\Gamma \vdash \Hom(A,B) \type$}
\DisplayProof
\qquad
\AxiomC{$\Gamma \mid x : A \vdash b : B$}
\UnaryInfC{$\Gamma \vdash \lambda x.\,b : \Hom(A,B)$}
\DisplayProof
\end{center}
\medskip

\begin{center}
\AxiomC{$\Gamma \vdash f : \Hom(A,B)$}
\AxiomC{$\Gamma \mid \Delta \vdash a : A$}
\BinaryInfC{$\Gamma \mid \Delta \vdash f\,a : B$}
\DisplayProof
\end{center}

\begin{align*}
(\lambda x.\,b)\,a & = b[a/x] \\
\lambda x.\,f\,x & = f
\end{align*}

We might also use notation $A \to B$ for $\Hom(A,B)$, but we prefer the latter notation since the former may be confusing.
Indeed, $A \to B$ might also denote the indexed type of functions if the indexed theory is Cartesian closed or the base type of function if $A$ and $B$ are base types.

If the indexed theory is locally small, then indexed types must carry the structure of an $\infty$-category.
We cannot construct this structure internally due to coherence issues, but we can at least construct lower levels of this structure.
Morphisms between indexed types $A$ and $B$ are terms of type $\Hom(A,B)$.
The identity morphism $\id_A$ on an indexed type $A$ is $\lambda x.\,x : \Hom(A,A)$.
Composition of morphisms $f : \Hom(A,B)$ and $g : \Hom(B,C)$ is defined as $\lambda x.\,g\,(f\,x) : \Hom(A,C)$ and denoted by $g \circ f$.
Composition is strictly associative and identity morphisms are strictly unital.

If $f,g : \Hom(A,B)$ are morphisms, then a 2-morphism between them is a term $p : \Id_{\Hom(A,B)}(f,g)$.
Vertical composition $p \ct q$ of 2-morphisms $p$ and $q$ is defined as the usual operation of path concatenation.
The identity 2-morphism on $f : \Hom(A,B)$ is $\refl(f)$.
Vertical composition is associative, identity 2-morphisms are unital, and every 2-morphism is invertible.
These facts are true in a weak sense, that is up to a 3-morphism.
Let $f,g : \Hom(A,B)$ and $h,i : \Hom(B,C)$ be morphisms and let $p : \Id_{\Hom(A,B)}(f,g)$ and $q : \Id_{\Hom(B,C)}(h,i)$ be 2-morphisms.
The horizontal composition of $p$ and $q$ is a term $p * q$ of type $\Id_{\Hom(A,C)}(\lambda x.\,h\,(f\,x), \lambda x.\,i\,(g\,x))$.
To define $p * q$, we just need to eliminate $p$ and $q$ and then define $\refl(f) * \refl(h)$ as $\refl(\lambda x.\,h\,(f\,x))$.
It is easy to prove that usual properties of this operation hold.
Expressions $\refl(f) * q$, $p * \refl(g)$, and $\refl(f) * \refl(g)$ will be denoted by $f * q$, $p * g$, and $f * g$, respectively.

\begin{defn}
An equivalence between indexed types $A$ and $B$ is a morphism $f : \Hom(A,B)$ such that there is a morphism $g : \Hom(B,A)$ such that $g \circ f \sim \id_A$ and $f \circ g \sim \id_B$.
\end{defn}

If the indexed theory is locally small, then not only base morphisms act on indexed types and terms, but also homotopies between them.
Let $\Gamma \vdash a : A$ and $\Gamma \vdash a' : A$ be two base terms.
If $\Gamma, x : A \mid \cdot \vdash B \ob$ be an indexed type, then we have indexed types $\Gamma \mid \cdot \vdash B[a/x] \ob$ and $\Gamma \mid \cdot \vdash B[a'/x] \ob$.
Let $\Gamma \vdash h : \Id_A(a,a')$ be homotopy between $a$ and $a'$.
Then we can construct an equivalence between $B[a/x]$ and $B[a'/x]$.
A map $f : \Hom(B[a/x],B[a'/x])$ is defined as $J(a, x p.\,\Hom(B[a/x],B), \id_{B[a/x]}, a', h)$.
A map $g : \Hom(B[a'/x],B[a/x])$ is constructed similarly: $\lambda y.\,J(a, x p.\,\Hom(B,B[a/x]), \id_{B[a/x]}, a', h)$.
To prove that $g \circ f$ and $f \circ g$ are homotopic to identity morphisms, it is enough to eliminate $h$ using $J$ and then both $g \circ f$ and $f \circ g$ become identity morphisms.

\section{Equivalences}
\label{sec:equivalence}

In this section, we define types that express the property of a map $f : \Hom(A,B)$ of being an equivalence and prove that they are equivalent.
We also prove a few simple properties of equivalences.
These questions were studied in \cite[Section~4]{hottbook} for ordinary homotopy type theory.
Most of the theorems in this section also hold in the framework of indexed unary type theories, but the proofs must be modified.

\subsection{Bi-invertible maps}

Let $f : \Hom(A,B)$ be a morphism.
We will say that $f$ is \emph{bi-invertible} if the following type is inhabited:
\[ \fs{biinv}(f) = \fs{linv}(f) \times \fs{rinv}(f), \]
where $\fs{linv}(f)$ and $\fs{rinv}(f)$ are types of left and right inverses of $f$, respectively:
\begin{align*}
\fs{linv}(f) & = \sum_{g : \Hom(B,A)} \Id(g \circ f, \id_A) \\
\fs{rinv}(f) & = \sum_{g : \Hom(B,A)} \Id(f \circ g, \id_B)
\end{align*}

\begin{prop}[biinv-equiv]
A map is bi-invertible if and only if it is an equivalence.
\end{prop}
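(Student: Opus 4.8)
The plan is to prove both implications separately, reducing everything to the elimination rule $J$ for identity types exactly as in the classical argument, but taking care that all the homotopies we manipulate are now homotopies between terms of base type $\Hom(A,B)$, so we can only use $J$ and the derived operations $\pmap$, transport, and path concatenation. For the easy direction, suppose $f : \Hom(A,B)$ is an equivalence, so we have $g : \Hom(B,A)$ together with witnesses $p : g \circ f \sim \id_A$ and $q : f \circ g \sim \id_B$; i.e. the types $\Id(g \circ f, \id_A)$ and $\Id(f \circ g, \id_B)$ are inhabited. Then $(g, p)$ is a point of $\fs{linv}(f)$ and $(g, q)$ is a point of $\fs{rinv}(f)$, so $\fs{biinv}(f) = \fs{linv}(f) \times \fs{rinv}(f)$ is inhabited. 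This uses nothing but the definitions and the fact that the base theory has $\Sigma$-types and products.

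For the converse, suppose $f$ is bi-invertible, witnessed by a left inverse $(g, p)$ with $p : \Id(g \circ f, \id_A)$ and a right inverse $(h, q)$ with $q : \Id(f \circ h, \id_B)$. The standard move is to show that $g \sim h$, from which it follows that $g$ (equivalently $h$) is a two-sided inverse, exhibiting $f$ as an equivalence. To build the homotopy $g \sim h$ I would form the chain
\[ g \;\sim\; g \circ \id_B \;\sim\; g \circ (f \circ h) \;\sim\; (g \circ f) \circ h \;\sim\; \id_A \circ h \;\sim\; h, \]
where the first and last steps are the strict unit laws for composition (so actually equalities, by the discussion preceding the definition of equivalence), the middle step is strict associativity (again an equality), and the two remaining steps are obtained by applying $\pmap$: from $q : \Id(f \circ h, \id_B)$ we get $\pmap(u.\, g \circ u, q) : \Id(g \circ (f \circ h), g \circ \id_B)$, and from $p : \Id(g \circ f, \id_A)$ we get $\pmap(u.\, u \circ h, p) : \Id((g \circ f) \circ h, \id_A \circ h)$. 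Concatenating these paths (using the $\ct$ operation on 2-morphisms together with the strict equalities) yields $r : \Id(g, h)$. Then from $q$ and $r$ we obtain, again via $\pmap$ along precomposition with $f$, a path $f \circ g \sim f \circ h \sim \id_B$; together with $p : g \circ f \sim \id_A$ this shows $g$ is a two-sided inverse of $f$, so $f$ is an equivalence.

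I expect the only real subtlety to be bookkeeping: making sure that the "strict" equalities (unitality, associativity) are genuinely definitional so that the path $r : \Id(g,h)$ can be assembled purely by $\pmap$ and concatenation without needing any coherence 2-cells, and checking that the $\pmap$ applications have the stated types — this is where one uses that $\Hom(-,-)$ is a base type so that $\pmap$ is available. None of the steps require eliminating the paths $p$ or $q$ via $J$; a single use of $\pmap$ along the maps $u \mapsto g \circ u$ and $u \mapsto u \circ h$ suffices, which is cleaner than the book's proof and is exactly what the locally small structure makes possible. The argument does not even need function extensionality, since we are comparing terms of the base type $\Hom(A,B)$ directly.
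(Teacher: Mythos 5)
Your proposal is correct and follows essentially the same route as the paper: the forward direction is immediate from the definitions, and for the converse you derive $\Id(g,h)$ by whiskering $p$ and $q$ on either side (your $\pmap(u.\,g\circ u,-)$ and $\pmap(u.\,u\circ h,-)$ are exactly the paper's horizontal compositions $-*g$ and $h*-$), using the strict unit and associativity laws to splice the chain together, and then conclude that $g$ is a two-sided inverse. The only quibble is terminological — the final step uses $\pmap$ along \emph{post}composition with $f$, and $\pmap$ is itself defined by $J$, so this is not really avoiding path elimination — but neither point affects correctness.
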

\begin{proof}
Obviously, if a map is an equivalence, then it is bi-invertible.
Let us prove the converse.
Let $g : \Hom(B,A)$, $p : \Id(g \circ f, \id_A)$ be a left inverse of $f$ and let $g' : \Hom(B,A)$, $p' : \Id(f \circ g', \id_B)$ be a right inverse of $f$.
Then $g' * p : \Id(g \circ f \circ g', g')$.
Since $f \circ g' \sim \id_B$, there is a term of type $\Id(g,g')$.
It follows that $g$ is an inverse of $f$.
\end{proof}

\begin{lem}[lrinv-contr]
If $f$ is an equivalence, then types $\fs{linv}(f)$ and $\fs{rinv}(f)$ are contractible.
\end{lem}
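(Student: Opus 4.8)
The plan is to realize $\fs{linv}(f)$ and $\fs{rinv}(f)$ as fibers, over identity morphisms, of ``functions'' between $\Hom$-types that admit two-sided inverses, and then to invoke the base-theory version of the statement that a map with a quasi-inverse has contractible fibers.

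Fix an inverse $g_0 : \Hom(B,A)$ of $f$ together with witnesses $\alpha : \Id(g_0 \circ f, \id_A)$ and $\beta : \Id(f \circ g_0, \id_B)$, which exist since $f$ is an equivalence. For $\fs{rinv}(f)$, consider the function $w$ sending $g : \Hom(B,A)$ to $f \circ g : \Hom(B,B)$; this is legitimate even without function types in the base theory, since the type $\Hom(B,A) \to \Hom(B,B)$ never occurs inside another type. By definition $\fs{rinv}(f)$ is $\sum_{g : \Hom(B,A)} \Id(f \circ g, \id_B)$, that is, the fiber of $w$ over $\id_B$. The function $h \mapsto g_0 \circ h$, for $h : \Hom(B,B)$, is a two-sided inverse of $w$: using strict associativity and unitality of composition, the two required homotopies are obtained from $\pmap(k.\,k \circ g,\,\alpha)$ and $\pmap(k.\,k \circ h,\,\beta)$. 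Dually, $\fs{linv}(f) = \sum_{g : \Hom(B,A)} \Id(g \circ f, \id_A)$ is the fiber over $\id_A$ of the function $u$ sending $g$ to $g \circ f : \Hom(A,A)$, and $h \mapsto h \circ g_0$ is a two-sided inverse of $u$, with homotopies coming from $\pmap(k.\,h \circ k,\,\alpha)$ and $\pmap(k.\,g \circ k,\,\beta)$.

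It then remains to know that a map between base types with a two-sided inverse has contractible fibers, where ``contractible'' is read as a judgment (a center, together with a path to that center in the context of a generic point), so that no $\Pi$-type is needed. This is the content of \cite[Theorems 4.2.3 and 4.2.6]{hottbook}: one first upgrades the quasi-inverse data to half-adjoint data by correcting one of the homotopies using naturality of the other, and then verifies that the fiber over $y$ is contractible with center $(g_0\,y, \dots)$. Both steps are pure path algebra using only $J$ and $\Sigma$-types, so they transfer to the present base theory unchanged. Note that, unlike the argument in \cite[Section~4.3]{hottbook}, we do not pass through ``postcomposition with an equivalence is an equivalence'', as that would use function extensionality; instead the quasi-inverses above are produced explicitly. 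Applying the reduction lemma to $w$ and to $u$ yields the claim.

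The main obstacle is this last step. One must check that the homotopies occurring in the half-adjoint argument are genuine identities in $\Hom$-types — which they are, since $\Hom$-types are bona fide base types carrying identity types — rather than merely pointwise equalities that would need function extensionality to assemble into paths, and one must keep ``contractible'' at the level of judgments throughout rather than as an inhabitant of a $\Pi$-type. Granting this, everything else is bookkeeping with strict associativity and unitality of composition and the defining equation of $\pmap$.
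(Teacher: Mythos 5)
Your proof is correct and follows essentially the same route as the paper: identify $\fs{linv}(f)$ and $\fs{rinv}(f)$ as fibers of pre- and postcomposition with $f$ over the identity morphisms, observe that these maps are equivalences (you make the quasi-inverses and the $\pmap$-homotopies explicit, which the paper leaves implicit), and invoke \cite[Theorems~4.2.3 and~4.2.6]{hottbook}, noting that their proofs go through without $\Pi$-types. The only cosmetic difference is your remark about avoiding function extensionality; the paper's step ``postcomposition with $f$ is an equivalence'' is exactly the explicit quasi-inverse construction you give, made possible by the strictness of composition on $\Hom$-types.
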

\begin{proof}
If $f$ is an equivalence, then precomposition with $f$ is an equivalence between types $\Hom(B,C)$ and $\Hom(A,C)$.
Similarly, postcomposition with $f$ is an equivalence between types $\Hom(C,A)$ and $\Hom(C,B)$.
Since $\fs{linv}(f)$ and $\fs{rinv}(f)$ are fibres of these maps over the identity morphisms, \cite[Theorem~4.2.3]{hottbook} and \cite[Theorem~4.2.6]{hottbook} imply that these types are contractible.
Note that the proofs of these theorems work even if we do not have $\Pi$-types.
\end{proof}

\begin{prop}
The type $\fs{biinv}(f)$ is a proposition.
\end{prop}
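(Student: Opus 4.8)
The plan is to show that $\fs{biinv}(f)$ is a proposition by the standard trick: a type $P$ is a proposition if and only if $P \to \fs{isContr}(P)$, since a type that is contractible whenever it is inhabited is automatically a proposition (this is proved exactly as in \cite[Lemma~3.11.10]{hottbook}, and the proof there does not use $\Pi$-types, only $\Sigma$-types and identity types, so it is available in the indexed unary setting). So I would assume we are given a term of $\fs{biinv}(f)$ and prove that $\fs{biinv}(f)$ is contractible under that assumption.

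First I would observe that from an inhabitant of $\fs{biinv}(f) = \fs{linv}(f) \times \fs{rinv}(f)$ we get in particular a left inverse and a right inverse of $f$, and hence by \rprop{biinv-equiv} the map $f$ is an equivalence. Now that $f$ is known to be an equivalence, \rlem{lrinv-contr} tells us that both $\fs{linv}(f)$ and $\fs{rinv}(f)$ are contractible. A product of two contractible types is contractible (again a routine fact provable with $\Sigma$-types and identity types alone, as in \cite[Section~3.11]{hottbook}), so $\fs{biinv}(f)$ is contractible. This establishes $\fs{biinv}(f) \to \fs{isContr}(\fs{biinv}(f))$, and therefore $\fs{biinv}(f)$ is a proposition.

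The only point that needs a little care is that all the auxiliary lemmas invoked --- ``inhabited-implies-contractible yields propositionality'', ``product of contractibles is contractible'', and the characterization of fibres used inside \rlem{lrinv-contr} --- must be checked to go through without $\Pi$-types, since the base theory is only assumed to have identity types and $\Sigma$-types. This is the same caveat already flagged in the proof of \rlem{lrinv-contr}, and each of these statements only manipulates paths, transport, and $\Sigma$-types, so there is no genuine obstacle; it is just a matter of noting that the ordinary homotopy-type-theoretic arguments are $\Pi$-free. Thus the proof is essentially a two-line combination of \rprop{biinv-equiv} and \rlem{lrinv-contr}, wrapped in the ``inhabited $\Rightarrow$ contractible'' criterion for being a proposition.
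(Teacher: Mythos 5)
Your proof is correct and follows exactly the route the paper intends: the paper's one-line proof ``follows from \rprop{biinv-equiv} and \rlem{lrinv-contr}'' is precisely your argument of assuming an inhabitant, deducing that $f$ is an equivalence, and concluding contractibility of the product $\fs{linv}(f) \times \fs{rinv}(f)$. Your added care about the auxiliary facts being $\Pi$-free is a reasonable elaboration but does not change the substance.
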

\begin{proof}
This follows from \rprop{biinv-equiv} and \rlem{lrinv-contr}.
\end{proof}

\subsection{Half adjoint equivalences}

Let $f : \Hom(A,B)$ be a morphism.
We will say that $f$ is a \emph{half adjoint equivalence} if the following type is inhabited:
\[ \fs{ishae}(f) = \sum_{g : \Hom(B,A)} \sum_{\eta : \Id(g \circ f, \id_A)} \sum_{\epsilon : \Id(f \circ g, \id_B)} \Id(\eta * f, f * \epsilon). \]

\begin{prop}
A map is a half adjoint equivalence if and only if it is an equivalence.
\end{prop}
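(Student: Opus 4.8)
The plan is to prove both implications separately. The easy direction is that a half adjoint equivalence is an equivalence: given a term of $\fs{ishae}(f)$, we can discard the coherence datum $\Id(\eta * f, f * \epsilon)$ and keep $g$ together with $\eta : \Id(g \circ f, \id_A)$ and $\epsilon : \Id(f \circ g, \id_B)$, which exhibits $f$ as an equivalence directly from the definition. (Alternatively, one could route through \rprop{biinv-equiv}, since such an $f$ is visibly bi-invertible.)

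For the converse, suppose $f$ is an equivalence. By \rprop{biinv-equiv} it is bi-invertible, so by \rlem{lrinv-contr} both $\fs{linv}(f)$ and $\fs{rinv}(f)$ are contractible; in particular $f$ has some inverse $g$ together with homotopies $\eta_0 : \Id(g \circ f, \id_A)$ and $\epsilon_0 : \Id(f \circ g, \id_B)$. The issue is that $\eta_0$ and $\epsilon_0$ need not satisfy the triangle identity $\Id(\eta_0 * f, f * \epsilon_0)$, so I would adjust one of them. Following the argument of \cite[Theorem~4.2.3]{hottbook}, I keep $g$ and $\eta_0$ and replace $\epsilon_0$ by $\epsilon_1$ defined so that the required coherence holds: concretely, using horizontal composition, $\epsilon_1$ should be the 2-morphism obtained by pasting $\sym{(f * (g * \epsilon_0))}$, $(f * (\eta_0 * g))$... — more precisely, one sets $\epsilon_1 := \sym{(\epsilon_0 * (f \circ g))} \ct ((f * \eta_0) * g) \ct \epsilon_0$ (reading composites in $\Hom$ and using that whiskering on the left and right distributes over $\ct$), and then one checks $\Id(\eta_0 * f, f * \epsilon_1)$.

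The main obstacle is this last verification: one must show that with the above choice of $\epsilon_1$ the triangle identity $\Id(\eta_0 * f, f * \epsilon_1)$ holds. In \cite[Section~4.2]{hottbook} this is reduced, via function extensionality and the naturality of homotopies, to the elementary lemma that for a path $p : \Id(x,y)$ and a function, $\pmap$ applied appropriately gives a commuting square; here, since we are in the unary indexed setting and do not necessarily have $\Pi$-types, I cannot literally invoke function extensionality. Instead I would work with the 2-morphism calculus set up in section~\ref{sec:unary}: eliminate the 2-morphisms $\eta_0$ (and whatever paths arise) using $J$, reducing to the case where they are $\refl$, at which point both sides of the desired identity compute to reflexivity. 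The key technical input is the interchange law and naturality for horizontal composition $*$ of 2-morphisms, which were asserted to hold "in a weak sense" in section~\ref{sec:unary}; assembling these coherences correctly, rather than any single hard step, is where the real work lies. I would also record, as in \cite[Theorem~4.2.13]{hottbook}, that $\fs{ishae}(f)$ is a proposition — this follows from \rprop{biinv-equiv}, the present proposition, and the earlier proposition that $\fs{biinv}(f)$ is a proposition — though this is not strictly needed for the statement at hand.
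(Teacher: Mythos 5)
Your overall decomposition is the paper's: the forward direction is immediate, and for the converse you keep $g$ and $\eta_0$ and replace $\epsilon_0$ by the standard adjointified counit. Your $\epsilon_1$ is, up to a propositional equality and some scrambled whiskering notation (under the paper's conventions $(f*\eta_0)*g$ does not typecheck; the intended term is $g*\eta_0*f$, and the paper's first factor is $g*f*\sym{\epsilon_0}$ rather than $\sym{(\epsilon_0*(f\circ g))}$, though these two agree propositionally), exactly the paper's $\epsilon'$. So the key definition is right.

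The gap is in the verification of $\Id(\eta_0 * f, f * \epsilon_1)$, which is the only nontrivial content of the proposition. Your stated strategy --- eliminate $\eta_0$ with $J$, reduce to the case where it is $\refl$, and observe that both sides compute to reflexivity --- fails: $J$ applies only to a path whose endpoint can be generalized to a fresh variable, and the endpoints of $\eta_0 : \Id(g \circ f, \id_A)$ and $\epsilon_0 : \Id(f \circ g, \id_B)$ are the fixed composites $g \circ f$, $\id_A$, $f \circ g$, $\id_B$. These occur in the goal, and inside $\epsilon_1$ itself, in an entangled way; replacing $\id_A$ by a variable $h$ already breaks the well-typedness of $(g * \eta_0 * f) \ct \epsilon_0$. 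If bare path induction sufficed here, the coherence datum in $\fs{ishae}$ would never be needed. The missing ingredient is the naturality lemma $\eta * f * g \sim f * g * \eta$, which the paper proves by a whisker-and-cancel trick: $(\eta * f * g) \ct \eta \sim \eta * \eta \sim (f * g * \eta) \ct \eta$, and then cancel $\eta$ (legitimate since $- \ct \eta$ is an equivalence of identity types). With that lemma, a short chain of interchange-law rewritings --- each individual step of which \emph{is} provable by path induction, because there the endpoints are free --- turns $f * \epsilon_1$ into $\eta_0 * f$. You name ``naturality for horizontal composition'' as an input but never isolate this lemma or indicate how to obtain it without function extensionality, and that is precisely where the work is. (As an aside, your remark that propositionality of $\fs{ishae}(f)$ would follow from its logical equivalence with $\fs{biinv}(f)$ is also incorrect --- logical equivalence does not transfer propositionality --- but that lies outside the present statement.)
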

\begin{proof}
Obviously, if a map is a half adjoint equivalence, then it is an equivalence.
Let us prove the converse.
Let $g : \Hom(B,A)$, $\eta : \Id(g \circ f, \id_A)$, $\epsilon : \Id(f \circ g, \id_B)$ be an inverse of $f$.
Then we define $\epsilon' : \Id(f \circ g, \id_B)$ as concatenation of paths
$g * f * \sym{\epsilon} : \Id(f \circ g, f \circ g \circ f \circ g)$, $g * \eta * f : \Id(f \circ g \circ f \circ g, f \circ g)$, and $\epsilon : \Id(f \circ g, \id_B)$.
We need to prove that $f * \epsilon' \sim \eta * f$.

First, note that $\eta * f * g \sim f * g * \eta$.
Indeed, $(\eta * f * g) \ct \eta \sim \eta * \eta \sim (f * g * \eta) \ct \eta$.
Thus, if we cancel $\eta$, this gives us a homotopy between the original paths.
Now, we can finish the proof:
\begin{align*}
f * \epsilon' & \sim \\
(f * g * f * \sym{\epsilon}) \ct (f * g * \eta * f) \ct (f * \epsilon) & \sim \\
(f * g * f * \sym{\epsilon}) \ct (\eta * f * g * f) \ct (f * \epsilon) & \sim \\
(\eta * f * \sym{\epsilon}) \ct (f * \epsilon) & \sim \\
(\eta * f) \ct (f * \sym{\epsilon}) \ct (f * \epsilon) & \sim \\
\eta * f & .
\end{align*}
\end{proof}

\begin{prop}
The type $\fs{ishae}(f)$ is a proposition.
\end{prop}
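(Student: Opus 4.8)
The plan is to follow the strategy of \cite[Section~4.2]{hottbook}, which shows that $\fs{ishae}(f)$ is a proposition by exhibiting it as a retract of, or an equivalence with, a type that is manifestly a proposition. Concretely, I would first prove that $\fs{ishae}(f)$ is equivalent (or at least logically equivalent in a way that respects the contractibility argument) to a type of the form $\sum_{u : \fs{rinv}(f)} P(u)$ where $P(u)$ is a family of contractible types, or dually over $\fs{linv}(f)$. By \rlem{lrinv-contr}, if $f$ is an equivalence then $\fs{linv}(f)$ and $\fs{rinv}(f)$ are contractible; and since the previous proposition shows $f$ being a half adjoint equivalence implies $f$ is an equivalence, we may assume this throughout (a type that is empty-or-contractible conditional on its own inhabitedness is a proposition). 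So the real task is to reorganize the four-fold $\Sigma$ defining $\fs{ishae}(f)$ so that the first factor is one of these contractible types and the remaining data forms a contractible fibre.

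The key steps, in order: (1) Observe that it suffices to show $\fs{ishae}(f)$ is a proposition under the assumption that it is inhabited, hence under the assumption that $f$ is an equivalence (using the preceding proposition). (2) Regroup $\fs{ishae}(f) = \sum_{g,\eta,\epsilon} \Id(\eta * f, f * \epsilon)$ by pulling the pair $(g,\epsilon)$ together: it is equivalent to $\sum_{(g,\epsilon) : \fs{rinv}(f)} \sum_{\eta : \Id(g \circ f, \id_A)} \Id(\eta * f, f * \epsilon)$. Since $\fs{rinv}(f)$ is contractible by \rlem{lrinv-contr}, this type is equivalent to the fibre over the center $(g_0, \epsilon_0)$, namely $\sum_{\eta : \Id(g_0 \circ f, \id_A)} \Id(\eta * f, f * \epsilon_0)$. (3) Now show this last type is contractible. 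Here the point is that $\eta$ ranges over a path type and the second component constrains $\eta * f$ against a fixed path; one uses that postcomposition (horizontal composition with $f$ on the appropriate side) behaves like an equivalence on the relevant path types — more precisely, that the map $\eta \mapsto \eta * f$ from $\Id(g_0 \circ f, \id_A)$ into the appropriate path type is an embedding, so that $\sum_{\eta} \Id(\eta * f, f * \epsilon_0)$ is a fibre of an embedding and hence a proposition, and it is inhabited because $f$ is an equivalence, hence contractible. Then $\fs{ishae}(f)$, being equivalent to a contractible type (under the inhabitedness hypothesis), is a proposition.

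The main obstacle I expect is step (3): in ordinary HoTT one freely uses $\Pi$-types and function extensionality to manipulate these path algebra identities and to identify the relevant fibres, but the excerpt is careful to avoid assuming $\Pi$-types in the indexed theory. So I would need to check that the horizontal-composition operation $* $ and the transport lemmas used are all available at the level of 2-morphisms between morphisms of indexed types (which the excerpt has set up explicitly: $p * q$, and the weak groupoid laws up to a 3-morphism), and that the fibrewise argument can be carried out using only $\Sigma$-types and $\Id$-types on the \emph{base} theory, where $g \circ f$, $\eta$, $\epsilon$ live as base terms of type $\Hom(-,-)$. Since $\Hom(A,B)$ is a base type, $\Id_{\Hom(A,B)}$ and its iterated path types are ordinary base identity types, so the standard arguments of \cite[Section~4.2]{hottbook} apply verbatim once everything is phrased in terms of these base types; I would remark explicitly, as the authors did in the proof of \rlem{lrinv-contr}, that the cited proofs do not require $\Pi$-types. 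The only genuinely indexed ingredient is the naturality-type identity $\Id(\eta * f, f * \epsilon)$, but $\eta * f$ and $f * \epsilon$ are again paths in the base type $\Hom(A,B)$, so this too reduces to base-level path algebra.
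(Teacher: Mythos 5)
Your proposal is correct and follows essentially the same route as the paper: assume $f$ is an equivalence, collapse one $\Sigma$-factor using \rlem{lrinv-contr}, and show the remaining fibre is contractible because whiskering with $f$ is ($\pmap$ of) an equivalence of $\Hom$-types. The only difference is a mirror image — you group $(g,\epsilon)$ as $\fs{rinv}(f)$ and invert $\eta \mapsto \eta * f$, whereas the paper groups $(g,\eta)$ as $\fs{linv}(f)$ and inverts $\epsilon \mapsto f * \epsilon$, reducing to a based path space — which is an inessential variation.
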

\begin{proof}
We can assume that $f$ is an equivalence and prove that $\fs{ishae}(f)$ is contractible.
By \rlem{lrinv-contr}, the type $\Sigma_{g : \Hom(B,A)} \Id(g \circ f, \id_A)$ is contractible.
Thus, we just need to prove that, for every $g : \Hom(B,A)$ and $\eta : \Id(g \circ f, \id_A)$, the type $\Sigma_{\epsilon : \Id(f \circ g, \id_B)} \Id(\eta * f, f * \epsilon)$ is also contractible.

Since $f$ is an equivalence, the function $f * -$ is also an equivalence.
It follows that the type $\Id(\eta * f, f * \epsilon)$ is equivalent to the type $\Id(h(\eta * f), \epsilon)$, where $h$ is the inverse of $f * -$.
Thus, the type $\Sigma_{\epsilon : \Id(f \circ g, \id_B)} \Id(\eta * f, f * \epsilon)$ is equivalent to the type $\Sigma_{\epsilon : \Id(f \circ g, \id_B)} \Id(h(\eta * f), \epsilon)$, which is contractible by \cite[Lemma~3.11.8]{hottbook}.
\end{proof}

\subsection{Properties of equivalences}

\begin{prop}
Equivalences satisfy the 2-out-of-6 property.
That is, if $f : \Hom(A,B)$, $g : \Hom(B,C)$, and $h : \Hom(C,D)$ are maps such that $g \circ f$ and $h \circ g$ are equivalences, then so are the maps $f$, $g$, $h$, and $h \circ g \circ f$.
\end{prop}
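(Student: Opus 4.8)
The plan is to reduce the whole statement to the single claim that $g$ is an equivalence; once that is established, the maps $f$, $h$, and $h \circ g \circ f$ come out formally.

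\emph{Step 1: $g$ is bi-invertible, hence an equivalence.} Write $u = g \circ f$ and $v = h \circ g$ for the two maps assumed to be equivalences, and fix two-sided inverses $\sym{u}$ and $\sym{v}$. Using that composition is strictly associative, $(\sym{v} \circ h) \circ g = \sym{v} \circ (h \circ g) = \sym{v} \circ v \sim \id_B$, so $\sym{v} \circ h$ is a left inverse of $g$. Dually, $g \circ (f \circ \sym{u}) = (g \circ f) \circ \sym{u} = u \circ \sym{u} \sim \id_C$, so $f \circ \sym{u}$ is a right inverse of $g$. Thus $\fs{biinv}(g)$ is inhabited, and by \rprop{biinv-equiv} the map $g$ is an equivalence; fix a two-sided inverse $\sym{g}$, so that $\sym{g} \circ g \sim \id_B$ and $g \circ \sym{g} \sim \id_C$.

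\emph{Step 2: two auxiliary facts.} First, a composition of equivalences is an equivalence: if $a$ and $b$ are composable equivalences with two-sided inverses $\sym{a}$ and $\sym{b}$, then $\sym{a} \circ \sym{b}$ is a two-sided inverse of $b \circ a$, the required homotopies being assembled from the given ones by whiskering, concatenation, strict associativity, and strict unitality, exactly as one verifies the analogous statement for plain composition. Second, being an equivalence is invariant under homotopy: if $k \sim k'$ and $e$ is a two-sided inverse of $k'$, then whiskering a path $k \sim k'$ on each side shows $e \circ k \sim e \circ k' \sim \id$ and $k \circ e \sim k' \circ e \sim \id$, so $e$ is also a two-sided inverse of $k$. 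Neither fact uses $\Pi$-types.

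\emph{Step 3: the remaining maps.} Whiskering $\sym{g} \circ g \sim \id_B$ on the right by $f$ and using strict associativity and unitality gives $\sym{g} \circ u = (\sym{g} \circ g) \circ f \sim f$; since $\sym{g} \circ u$ is a composition of equivalences it is an equivalence by Step 2, hence so is $f$. Symmetrically, whiskering $g \circ \sym{g} \sim \id_C$ on the left by $h$ gives $v \circ \sym{g} = h \circ (g \circ \sym{g}) \sim h$, so $h$ is an equivalence. Finally $h \circ g \circ f$ is a composition of the equivalences $f$, $g$, $h$, so it is an equivalence by Step 2 again.

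\emph{Main obstacle.} The only genuine idea is Step 1: noticing that the two hypotheses furnish $g$ with a left inverse and a right inverse simultaneously, so that \rprop{biinv-equiv} applies. After that everything is routine $2$-categorical bookkeeping with the strictly associative and unital composition of the indexed theory; the only point requiring a little care is checking that the two facts of Step 2 survive the absence of function types, which they do since they only manipulate $\Hom$-types and paths.
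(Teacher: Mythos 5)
Your proposal is correct and follows essentially the same route as the paper: the key step in both is to observe that the two hypotheses give $g$ a left inverse and a right inverse simultaneously, so that \rprop{biinv-equiv} makes $g$ an equivalence, after which the remaining maps follow formally. The only (immaterial) difference is in the bookkeeping for $f$ and $h$: the paper exhibits explicit two-sided inverses ($i \circ g$ for $f$ and $g \circ k$ for $h$) and verifies them by cancelling $g$, whereas you write $f$ and $h$ as homotopic to composites of equivalences and invoke closure under composition and homotopy invariance.
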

\begin{proof}
Let $i : \Hom(C,A)$ be an inverse of $g \circ f$ and let $k : \Hom(D,B)$ be an inverse of $h \circ g$.
Since $g \circ f \circ i \sim \id_C$ and $h \circ g \circ k \sim \id_D$, \rprop{biinv-equiv} implies that $g$ is an equivalence.
The map $i \circ g$ is an inverse of $f$.
Indeed, $i \circ g \circ f \sim \id_A$ since $i$ is an inverse of $g \circ f$.
Since $g \circ f \circ i \sim \id_C$, it follows that $g \circ f \circ i \circ g \sim g$.
Since $g$ is an equivalence, this implies that $f \circ i \circ g \sim \id_B$.
Similarly, $g \circ k$ is an inverse of $h$.
The map $h \circ g \circ f$ is an equivalence since equivalences are closed under composition.
\end{proof}

A map $f : \Hom(A,B)$ is a \emph{quasi-retract} of a map $g : \Hom(C,D)$ if there is a commutative diagram of the form
\[ \xymatrix{ A \ar[r]^i \ar[d]_f & C \ar[r]^j \ar[d]_g & A \ar[d]_f \\
              B \ar[r]_k          & D \ar[r]_m          & B
            } \]
such that $j \circ i \sim \id_A$ and $m \circ k \sim \id_B$.

\begin{prop}
Equivalences are closed under quasi-retracts.
\end{prop}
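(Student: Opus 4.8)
The plan is to build an explicit homotopy inverse of $f$ out of the diagram together with an inverse of $g$. Since $g$ is an equivalence, fix $g' : \Hom(D,C)$ with $g' \circ g \sim \id_C$ and $g \circ g' \sim \id_D$ (e.g. by \rprop{biinv-equiv}, or directly from the definition of equivalence). Set $f' = j \circ g' \circ k : \Hom(B,A)$. I claim that $f'$ is a two-sided homotopy inverse of $f$, which by definition makes $f$ an equivalence.

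For the first composite, the left-hand square of the diagram gives $k \circ f \sim g \circ i$. Whiskering this homotopy on the left by $j \circ g'$, then using $g' \circ g \sim \id_C$ whiskered by $j$ on the left and $i$ on the right, and finally the hypothesis $j \circ i \sim \id_A$, we obtain
\[ f' \circ f = j \circ g' \circ k \circ f \sim j \circ g' \circ g \circ i \sim j \circ i \sim \id_A. \]
Symmetrically, the right-hand square gives $f \circ j \sim m \circ g$, and whiskering by $m$ and $g' \circ k$, then using $g \circ g' \sim \id_D$ and the hypothesis $m \circ k \sim \id_B$, we obtain
\[ f \circ f' = f \circ j \circ g' \circ k \sim m \circ g \circ g' \circ k \sim m \circ k \sim \id_B. \]
Concatenating the homotopies in each chain (path concatenation $\ct$) finishes the argument.

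The only point that needs care is that $\Hom$-composition respects the relation $\sim$: a homotopy $p : \Id(h,h')$ can be whiskered by a fixed morphism $\ell$ on either side to produce terms of $\Id(\ell \circ h, \ell \circ h')$ and $\Id(h \circ \ell, h' \circ \ell)$. This is exactly the horizontal composition $\ell * p$ and $p * \ell$ discussed above (equivalently, an instance of $\pmap$ applied to the term $\lambda h.\,\ell \circ h$ or $\lambda h.\,h \circ \ell$). Because $\Hom$-composition is strictly associative and strictly unital, no reassociation or coherence bookkeeping is required beyond concatenating the resulting paths, so there is no real obstacle; the argument is a routine diagram chase internal to the indexed theory.
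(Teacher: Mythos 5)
Your proof is correct and is essentially identical to the paper's: both define the candidate inverse as $j$ composed with an inverse of $g$ composed with $k$, and verify the two composites using the commutativity of the squares and the retraction homotopies $j \circ i \sim \id_A$, $m \circ k \sim \id_B$. (You even correct a small typo in the paper, which writes $\id_B$ where $\id_A$ is meant at the end of the first chain.)
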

\begin{proof}
Let $f : \Hom(A,B)$ be a retract of $g : \Hom(C,D)$ and let $i,j,k,m$ be maps as in the diagram above.
Let $h : \Hom(D,C)$ be an inverse of $g$.
Then $j \circ h \circ k$ is an inverse of $f$.
Indeed, $j \circ h \circ k \circ f \sim j \circ h \circ g \circ i \sim j \circ \id_C \circ i = j \circ i \sim \id_B$ and
$f \circ j \circ h \circ k \sim m \circ g \circ h \circ k \sim m \circ \id_D \circ k = m \circ k \sim \id_B$.
\end{proof}

\section{Limits and colimits}
\label{sec:colimits}

In this section, we will work in a locally small indexed unary type theory.
We will define specific finite (co)limits and arbitrary (co)products.

\subsection{Finite (co)limits}

An indexed type $T$ is \emph{terminal} if, for every indexed type $X$, the type $\Hom(X,T)$ is contractible.
Dually, an indexed type $T$ is \emph{initial} if, for every indexed type $X$, the type $\Hom(T,X)$ is contractible.
Terminal and initial types are unique up to unique equivalence, that is the type of equivalences between a pair of terminal or initial types is contractible.
We will say that an indexed unary type theory \emph{has terminal (resp., initial) types} if, for every context $\Gamma$, there is a terminal (resp., initial) type $\Gamma \mid \cdot \vdash T \ob$.

A \emph{binary product} of indexed types $A$ and $B$ is an indexed type $A \times B$ together with a pair of maps $\pi_1 : \Hom(A \times B, A)$ and $\pi_2 : \Hom(A \times B, B)$
such that the following function is an equivalence for every indexed type $C$:
\[ \lambda h.\,(\pi_1 \circ h, \pi_2 \circ h) : \Hom(C, A \times B) \to \Hom(C,A) \times \Hom(C,B). \]
The inverse of this function will be denoted by $\langle -, - \rangle$.
An indexed unary type theory \emph{has binary products} if a binary product exists for every pair of types in every context.
\emph{Binary coproducts} $A \amalg B$ are defined dually.

An \emph{equalizer} of a pair of maps $f,g : \Hom(A,B)$ is a map $e : \Hom(E,A)$ together with a homotopy $p : \Id(f \circ e, g \circ e)$
such that the following function is an equivalence for every indexed type $E'$:
\[ \lambda h.\,(e \circ h, h * p) : \Hom(E', E) \to \sum_{e' : \Hom(E',A)} \Id(f \circ e', g \circ e'). \]
An indexed unary type theory \emph{has equalizers} if an equalizer exists for every parallel pair of maps in every context.
\emph{Coequalizers} are defined dually.

A \emph{pullback} of a pair of maps $f : \Hom(A,C)$ and $g : \Hom(B,C)$ is a triple $\pi_1 : \Hom(A \times_C B, A)$, $\pi_2 : \Hom(A \times_C B, B)$, $\pi_3 : \Id(f \circ \pi_1, g \circ \pi_2)$
such that the following function is an equivalence for every indexed type $P'$:
\[ \lambda h.\,(\pi_1 \circ h, \pi_2 \circ h, h * \pi_3) : \Hom(P, A \times_C B) \to \Hom(P,A) \times_{\Hom(P,C)} \Hom(P,B), \]
where the pullback of types $\Hom(P,A) \times_{\Hom(P,C)} \Hom(P,B)$ is defined as usual:
\[ \sum_{\pi_1' : \Hom(P,A)} \sum_{\pi_2' : \Hom(P,B)} \Id(f \circ \pi_1', g \circ \pi_2'). \]
An indexed unary type theory \emph{has pullbacks} if a pullback exists for every pair of maps with a common codomain in every context.
\emph{Pushouts} $A \amalg_C B$ are defined dually.

\begin{remark}
Binary (co)products, (co)equalizers, pullbacks, and pushouts are unique up to unique equivalence.
\end{remark}

\begin{remark}
The function $\Hom(C,-)$ preserves binary products, equalizers, and pullbacks.
\end{remark}

We have the following standard proposition:

\begin{prop}[fin-lim]
An indexed unary type theory with terminal types has pullbacks if and only if it has equalizers and binary products.
\end{prop}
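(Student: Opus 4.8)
The plan. All three universal properties here are stated as the assertion that a specific map of base types, induced by a cone, is an equivalence; since the base theory has $\Sigma$-types and identity types, all the relevant target base types exist. So in each direction it suffices to exhibit an explicit candidate limit together with its canonical cone, and then identify the map it induces on hom-types with a composite of equivalences already available. In particular, no genuinely new homotopical input is needed beyond the closure properties of equivalences from Section~\ref{sec:equivalence} and the fact that $\Sigma$-types and identity types preserve equivalences.

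Pullbacks imply binary products and equalizers. Given $A$ and $B$, let $T$ be a terminal type and set $A \times B$ to be the pullback of the unique maps $A \to T$ and $B \to T$, with projections $\pi_1, \pi_2$. For any $C$ the type $\Hom(C,T)$ is contractible, hence a proposition, so the base-type pullback $\Hom(C,A) \times_{\Hom(C,T)} \Hom(C,B)$ is equivalent to $\Hom(C,A) \times \Hom(C,B)$, and under this equivalence the pullback map $\lambda h.\,(\pi_1 \circ h, \pi_2 \circ h, h * \pi_3)$ becomes $\lambda h.\,(\pi_1 \circ h, \pi_2 \circ h)$; thus the latter is an equivalence and $A \times B$ is a binary product. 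With binary products available, define the equalizer of $f,g : \Hom(A,B)$ to be the pullback $E$ of $\langle \id_A, f \rangle : A \to A \times B$ and $\langle \id_A, g \rangle : A \to A \times B$, put $e := \pi_1$, and extract $p : \Id(f \circ e, g \circ e)$ from $\pi_3$: decomposing $\pi_3$ through the product equivalence yields $\alpha : \Id(\pi_1,\pi_2)$ and $\beta : \Id(f \circ \pi_1, g \circ \pi_2)$, and we set $p := \beta \ct \sym{(g * \alpha)}$. Contracting the redundant second leg of the pullback cone (the factor $\sum_{b}\Id(\pi_1,b)$ is contractible) then shows that $\lambda h.\,(e \circ h, h * p)$ is an equivalence onto $\sum_{e' : \Hom(E',A)} \Id(f \circ e', g \circ e')$, as required.

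Binary products and equalizers imply pullbacks. Given $f : \Hom(A,C)$ and $g : \Hom(B,C)$, form $A \times B$ with projections $p_1, p_2$ and take the equalizer $e : \Hom(E, A \times B)$, $q : \Id(f \circ p_1 \circ e, g \circ p_2 \circ e)$ of $f \circ p_1$ and $g \circ p_2$; set $\pi_1 := p_1 \circ e$, $\pi_2 := p_2 \circ e$, $\pi_3 := q$. For every $P'$ the equalizer property gives an equivalence of $\Hom(P',E)$ with $\sum_{h : \Hom(P', A \times B)} \Id(f \circ p_1 \circ h, g \circ p_2 \circ h)$, and reparametrizing $h$ by $(p_1 \circ h, p_2 \circ h)$ via the product property turns this into $\sum_{a : \Hom(P',A)} \sum_{b : \Hom(P',B)} \Id(f \circ a, g \circ b)$, which is the pullback type; one checks that the composite of these equivalences is exactly $\lambda h.\,(\pi_1 \circ h, \pi_2 \circ h, h * \pi_3)$, so $E$ is a pullback.

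Main obstacle. The only real work is the bookkeeping: verifying that the composites of equivalences above agree with the maps actually induced by the chosen cones rather than merely being some equivalences of the same types, and, in the pullback-to-equalizer step, the path algebra needed to reconcile $p$ with $\pi_3$ (distributing horizontal composition with $h$ over $\ct$ and $\sym{(-)}$) and to carry out the contraction of the redundant leg. I would handle all of this by first $J$-eliminating the homotopies $\pi_3$ (resp. $q$), so that the cones become strict and the bookkeeping reduces to the corresponding statement for strictly commuting squares of types, for which the required equivalences are immediate from the $\Sigma$- and identity-type calculus of Section~\ref{sec:equivalence}.
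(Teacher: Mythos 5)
Your proof is correct and follows the same overall strategy as the paper: the same three implications, the identical construction of binary products as $A \times_1 B$, and the identical construction of pullbacks as the equalizer of $f \circ \pi_1$ and $g \circ \pi_2$. The one place you diverge is the equalizer-from-pullbacks step: the paper realizes the equalizer of $f,g : \Hom(A,B)$ as the pullback of the diagonal $\langle \id_B, \id_B \rangle : \Hom(B, B \times B)$ along $\langle f, g \rangle : \Hom(A, B \times B)$, and then contracts the factor $\sum_{b}\Id(f \circ a, b)$, whereas you pull back the two graph maps $\langle \id_A, f \rangle, \langle \id_A, g \rangle : \Hom(A, A \times B)$ against each other and contract the singleton $\sum_{b : \Hom(E',A)}\Id(a,b)$. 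Both constructions are standard, both presuppose binary products (which you, like the paper, build first from pullbacks and the terminal type), and both reduce the remaining verification to the same kind of path algebra — distributing $h * {-}$ over $\ct$ and $\sym{(-)}$ — which is indeed dischargeable by path induction on suitably generalized endpoints, as you indicate. So the difference is a matter of which pullback square you choose, not of method.
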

\begin{proof}
First, suppose that the theory has a terminal type $1$ and pullbacks.
Then we can define a product of types $A$ and $B$ as the pullback of unique maps $!_A : \Hom(A,1)$ and $!_B : \Hom(B,1)$.
Since $\Hom(P,1)$ is contractible, the obvious projection $\Hom(P,A) \times_{\Hom(P,1)} \Hom(P,B) \to \Hom(P,A) \times \Hom(P,B)$ is an equivalence.
This implies that $A \times_1 B$ is a product of $A$ and $B$.

An equalizer of maps $f,g : \Hom(A,B)$ can be defined as the pullback of $\langle \id_B, \id_B \rangle : \Hom(B, B \times B)$ along $\langle f, g \rangle : \Hom(A, B \times B)$:
\[ \xymatrix{ E \ar[r]^s \ar[d]_e & B \ar[d]^{\langle \id_B, \id_B \rangle} \\
              A \ar[r]_-{\langle f, g \rangle} & B \times B
            } \]
By the definition of products, the type of homotopies $\Id_{\Hom(P, B \times B)}(r,r')$ is equivalent to the type $\Id_{\Hom(P,B)}(\pi_1 \circ r, \pi_1 \circ r') \times \Id_{\Hom(P,B)}(\pi_2 \circ r, \pi_2 \circ r')$.
Thus, by the definition of pushouts, we have the following equivalence:
\begin{align*}
& \Hom(P,E) \to \sum_{a : \Hom(P,A)} \sum_{b : \Hom(P,B)} \Id(f \circ a, b) \times \Id(g \circ a, b) \\
& \lambda q.\,(e \circ q, s \circ q, q * h_1, q * h_2),
\end{align*}
where $h_1 : \Id(f \circ e, s)$ and $h_2 : \Id(g \circ e, s)$ are certain homotopies.
The codomain of this function is equivalent to $\Sigma_{a : \Hom(P,A)} \Id(f \circ a, g \circ a)$.
This implies that we have the following equivalence:
\[ \lambda q.\,(e \circ q, q * (h_1 \ct \sym{h_2})) : \Hom(P,E) \to \sum_{a : \Hom(P,A)} \Id(f \circ a, g \circ a). \]
Thus, we can define an equalizer of maps $f$ and $g$ as the triple $E$, $e$, $h_1 \ct \sym{h_2}$.

Now, suppose that the theory has binary products and equalizers.
Let $f : \Hom(A,C)$ and $g : \Hom(B,C)$ be a pair of maps.
Let $e : P \to A \times B$, $h : \Id(f \circ \pi_1 \circ e, g \circ \pi_2 \circ e)$ be the equalizer of the maps $f \circ \pi_1, g \circ \pi_2 : A \times B \to C$.
Then we can define a pullback of $f$ and $g$ as the triple $\pi_1 \circ e$, $\pi_2 \circ e$, $h$.
The universal property of equalizers implies the universal property of pullbacks.
\end{proof}

\begin{defn}[fin-lim]
An indexed unary type theory \emph{has finite limits} if equivalent conditions of \rprop{fin-lim} hold.
\end{defn}

\begin{example}
If an indexed unary type theory has a terminal type $\term$ with a point $\unit : \term$, then the \emph{loop space type} of a pointed type $Y$, $y_0 : \Hom(\term,Y)$ is the pullback of $y_0$ and $y_0$.
Equivalently, the loop space type is the equalizer of $y_0$ and $y_0$.
Thus, the loop space type is a type $\Omega(Y,y_0)$ together with a homotopy $\Id_{\Hom(\Omega(Y,y_0), Y)}(\lambda s.\,y_0\,\unit, \lambda s.\,y_0\,\unit)$ satisfying the universal property.
Since $\Hom(X,-)$ preserves terminal types and pullbacks, we have the following equivalence:
\[ \Hom(X, \Omega(Y,y_0)) \simeq \Omega(\Hom(X,Y), \lambda x.\,y_0\,\unit), \]
where the second $\Omega$ is the usual loop space base type: $\Omega(S,s_0) = \Id_S(s_0,s_0)$.
\end{example}

\begin{example}
The \emph{suspension} $\Sigma X$ of a type $X$ is the pushout of the maps $\lambda x.\,\unit, \lambda x.\,\unit : \Hom(X,\term)$.
The \emph{$0$-sphere} $S^0$ is the coproduct $\term \amalg \term$.
The \emph{$(n+1)$-sphere} $S^{n+1}$ is the suspension $\Sigma S^n$.
\end{example}

\subsection{(Co)products}
\label{sec:products}

A \emph{product} of an indexed type $\Gamma, i : I \mid \cdot \vdash B \ob$ is an indexed type $\Gamma \mid \cdot \vdash P \ob$ together with a term $\Gamma, i : I \vdash \pi : \Hom(P,B)$
such that the function $\pi \circ -$ has an inverse in the sense that there is a rule of the form
\begin{center}
\AxiomC{$\Gamma \mid \cdot \vdash P' \ob$}
\AxiomC{$\Gamma, i : I \vdash f : \Hom(P',B)$}
\BinaryInfC{$\Gamma \vdash \langle f \rangle_{i : I} : \Hom(P',P)$}
\DisplayProof
\end{center}
and the following types are inhabited:
\begin{align*}
& \Id(\pi \circ \langle f \rangle_{i : I}, f) \\
& \Id(\langle \pi \circ f \rangle_{i : I}, f).
\end{align*}

The theory of coproducts is defined dually.
A \emph{coproduct} of an indexed type $\Gamma, i : I \mid \cdot \vdash B \ob$ is an indexed type $\Gamma \mid \cdot \vdash C \ob$ together with a term $\Gamma, i : I \vdash \fs{in} : \Hom(B,C)$
such that the function $- \circ \fs{in}$ has an inverse in the sense that there is a rule of the form
\begin{center}
\AxiomC{$\Gamma \mid \cdot \vdash C' \ob$}
\AxiomC{$\Gamma, i : I \vdash f : \Hom(B,C')$}
\BinaryInfC{$\Gamma \vdash [ f ]_{i : I} : \Hom(C,C')$}
\DisplayProof
\end{center}
and the following types are inhabited:
\begin{align*}
& \Id([ f ]_{i : I} \circ \fs{in}, f) \\
& \Id([ f \circ \fs{in} ]_{i : I}, f).
\end{align*}

If the $\Pi$-type $\Pi_{i : I} \Hom(P',B)$ exists for all indexed types $\Gamma \mid \cdot \vdash P' \ob$, then a pair $P$, $\pi$ is a product of a family $B$ if and only if the following function is an equivalence for every indexed type $P'$:
\[ \lambda h.\,\lambda i.\,\pi \circ h : \Hom(P',P) \to \prod_{i : I} \Hom(P',B). \]
Dually, if the $\Pi$-type $\Pi_{i : I} \Hom(B,C')$ exists for all indexed types $\Gamma \mid \cdot \vdash C' \ob$, then a pair $C$, $\fs{in}$ is a product of a family $B$ if and only if the following function is an equivalence for every indexed type $C'$:
\[ \lambda h.\,\lambda i.\,h \circ \fs{in}(i) : \Hom(C,C') \to \prod_{i : I} \Hom(B,C'). \]

Products and coproducts are unique up to unique equivalence.
We will denote the product and the coproduct of a family $\Gamma, i : I \mid \cdot \vdash B \type$ by $\prod_{i : I} B$ and $\coprod_{i : I} B$, respectively.
We will say that the product $\prod_{i : I} B$ is \emph{extensional} if the type $\Hom(P, \prod_{i : I} B)$ satisfies functional extensionality as a weak $\Pi$-type $\Pi_{i : I} \Hom(P,B)$ for all indexed types $P$.
Similarly, we will say that the coproduct $\coprod_{i : I} B$ is \emph{extensional} if the type $\Hom(\coprod_{i : I} B, C)$ satisfies functional extensionality as a weak $\Pi$-type $\Pi_{i : I} \Hom(B,C)$ for all indexed types $C$.

\begin{example}
If $\Gamma \vdash I \type$ is a base type and $\Gamma \mid \cdot \vdash X \type$ is an indexed type, then the \emph{power} (or \emph{cotensor}) of $X$ by $I$ is the product $\prod_{i : I} X$.
The \emph{copower} (or \emph{tensor}) of $X$ by $I$ is the coproduct $\coprod_{i : I} X$.
The power will be denoted by $X^I$ and the tensor by $I \cdot X$.
\end{example}

We can also define the theory of \emph{strict products}:
\begin{center}
\AxiomC{$\Gamma, i : I \mid \cdot \vdash B \ob$}
\UnaryInfC{$\Gamma \mid \cdot \vdash \prod_{i : I} B \ob$}
\DisplayProof
\qquad
\AxiomC{$\Gamma, i : I \mid \Delta \vdash b : B$}
\RightLabel{, $i \notin \mathrm{FV}(\Delta)$}
\UnaryInfC{$\Gamma \mid \Delta \vdash \lambda i.\,b : \prod_{i : I} B$}
\DisplayProof
\end{center}
\medskip

\begin{center}
\AxiomC{$\Gamma \mid \Delta \vdash f : \prod_{i : I} B$}
\AxiomC{$\Gamma \vdash j : I$}
\BinaryInfC{$\Gamma \mid \Delta \vdash f\,j : B[j/i]$}
\DisplayProof
\end{center}

\begin{align*}
(\lambda i.\,b)\,j & = b[j/i] \\
\lambda i.\,f\,i & = f
\end{align*}

The difference between weak and strict products is that the former requires types $\Hom(P',\prod_{i : I} B)$ and $\Pi_{i : I} \Hom(P',B)$ to be equivalent while the latter requires them to be \emph{isomorphic}.
The theory of \emph{weak products} is define in the same way as the theory of strict products with the difference that the last two equations hold only propositionally.

\begin{prop}
A type $\Gamma \mid \cdot \vdash \prod_{i : I} B$ is a product of a family $\Gamma, i : I \mid \cdot \vdash B \ob$ if and only if it is a weak product of this family.
\end{prop}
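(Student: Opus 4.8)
The plan is to show that a strict/weak-product structure is exactly what is needed to verify the equivalence condition defining a product, and conversely that a product (together with the $\Pi$-types appearing only as $\Hom$-types, which always exist since $\Hom(P',B)$ is a base type) yields a weak-product structure. First I would unfold both sides. A weak product consists of the formation rule $\prod_{i:I} B$, the term former $\lambda i.\,b$, the application $f\,j$, and the two equations $(\lambda i.\,b)\,j = b[j/i]$ and $\lambda i.\,f\,i = f$ holding propositionally. Setting $\pi = x\,i$ for the generic variable $x : \prod_{i:I} B$ in context $\Gamma, i : I \mid x : \prod_{i:I} B$, i.e. $\pi := \lambda x.\,x\,i : \Hom(\prod_{i:I} B, B)$, and defining $\langle f \rangle_{i:I} := \lambda y.\,\lambda i.\,f\,y$ for $f : \Hom(P',B)$ in context $\Gamma, i:I$, I would check that the two required inhabitation conditions $\Id(\pi \circ \langle f \rangle_{i:I}, f)$ and $\Id(\langle \pi \circ f\rangle_{i:I}, f)$ follow from the propositional $\beta$- and $\eta$-laws of the weak product. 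This direction is essentially a direct translation.

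For the converse, assume $P$, $\pi$ is a product of the family $B$. I would define the formation rule to be $P$ itself. For the term former, given $\Gamma, i:I \mid \Delta \vdash b : B$ with $i \notin \mathrm{FV}(\Delta)$, I would use $\langle - \rangle_{i:I}$ applied to an appropriate morphism into $B$ to produce a term of $\Hom(-,P)$ and hence, by composing with a suitable point of $\Delta$, a term $\lambda i.\,b : P$ in context $\Delta$; more precisely one curries $b$ as a morphism, applies the inverse $\langle - \rangle_{i:I}$, and uncurries. For application, given $f : P$ in context $\Delta$ and $j : I$ in context $\Gamma$, set $f\,j := (\pi \circ \lambda\_.\,f)[j/i]$, i.e. reindex $\pi$ along $j$ and evaluate at $f$. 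Then the $\beta$-law $(\lambda i.\,b)\,j \sim b[j/i]$ comes from the first inhabited type $\Id(\pi \circ \langle f\rangle_{i:I}, f)$ reindexed along $j$, and the $\eta$-law $\lambda i.\,f\,i \sim f$ comes from the second inhabited type $\Id(\langle \pi \circ f\rangle_{i:I}, f)$. The one subtlety is that these propositional equalities are stated for morphisms ($\Hom$-types), so I must track carefully how a term-in-context corresponds to a morphism and transport the homotopies accordingly, using that $\Hom(C,-)$ and reindexing behave well.

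The main obstacle I anticipate is bookkeeping the two different "shapes" of data: the product is phrased in terms of morphisms $\Hom(P',B)$ and the operator $\langle - \rangle_{i:I}$, whereas the weak product is phrased in terms of terms $\Gamma \mid \Delta \vdash b : B$ with an arbitrary context $\Delta$ rather than a single indexed type. Reconciling these requires observing that a term $\Gamma \mid \Delta \vdash b : B$ with $\Delta = (z : C)$ is literally a morphism $\Hom(C,B)$, and for the general case one uses that the theory is unary so $\Delta$ is a single type; one also needs the naturality/stability-under-reindexing equations (the $\sigma$-substitution equation for indexed constructions) to know that $\lambda i.\,b$ and $f\,j$ interact correctly with substitution. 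Once the dictionary between the morphism formulation and the term formulation is set up, both implications reduce to rewriting with the stated propositional $\beta$/$\eta$ equations, so I do not expect any genuinely hard homotopical step — the difficulty is entirely in making the translation precise.
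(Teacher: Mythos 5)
Your proposal is correct and follows essentially the same route as the paper: in one direction you set $\pi = \lambda x.\,x\,i$ and $\langle f\rangle_{i:I} = \lambda y.\,\lambda i.\,f\,y$ and read off the two inhabitation conditions from the propositional $\beta$/$\eta$ laws, and in the other you define $\lambda i.\,b$ by currying, applying $\langle-\rangle_{i:I}$, and uncurrying, and $f\,j$ as the reindexed projection $\pi[j/i]$ applied to $f$ --- exactly the paper's definitions. The bookkeeping issue you flag (terms-in-context versus morphisms) is resolved just as you suggest, via local smallness and the fact that a unary context is a single type.
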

\begin{proof}
First, suppose that $\prod_{i : I} B$ is a product.
If $\Gamma, i : I \mid x : A \vdash b : B$, then we define $\lambda i.\,b$ as $\langle \lambda x.\,b \rangle_{i : I}\,x$.
If $\Gamma \mid x : A \vdash f : \prod_{i : I} B$ and $\Gamma \vdash j : I$, then we define $f\,j$ as $\pi[j/i]\,f$.
The $\beta$ and $\eta$ equations hold for these definitions:
\begin{align*}
& \lambda x.\,(\lambda i.b)\,j = \lambda x.\,\pi[j/i]\,(\langle \lambda x.b \rangle_{i : I}\,x) = (\pi \circ \langle \lambda x.b \rangle_{i : I})[j/i] \sim (\lambda x.b)[j/i] = \lambda x.\,b[j/i] \\
& \lambda x.\,\lambda i.\,f\,i = \lambda x.\,\langle \lambda x.\,\pi\,f \rangle_{i : I}\,x = \langle \lambda x.\,\pi\,f \rangle_{i : I} = \langle \pi \circ (\lambda x.f) \rangle_{i : I} \sim \lambda x.f
\end{align*}

Now, suppose that $\prod_{i : I} B$ is a weak product.
We define
\[ \Gamma, i : I \vdash \pi : \Hom(\prod_{i : I} B, B) \]
as $\lambda f.\,f\,i$.
If $\Gamma, i : I \vdash g : \Hom(P,B)$, then we define $\Gamma \vdash \langle g \rangle_{i : I} : \Hom(P, \prod_{i : I} B)$ as $\lambda x.\,\lambda i.\,g\,x$.
The required homotopies can be constructed as follows:
\begin{align*}
& \pi \circ \langle g \rangle_{i : I} = \lambda x.\,(\lambda i.\,g\,x)\,i \sim \lambda x.\,g\,x = g \\
& \langle \pi \circ g \rangle_{i : I} = \lambda x.\,\lambda i.\,g\,x\,i \sim \lambda x.\,g\,x = g
\end{align*}
\end{proof}

Let us prove a few properties of products and coproducts.
To simplify the notation, we will assume that the base theory of an indexed theory with a product $\prod_{i : I} B$ has $\Pi$-types $\Pi_{i : I} \Hom(P,B)$ for all $P$.
The following proposition shows that the (co)product of a contractible family of types is any type of this family:

\begin{prop}
Let $I$ be a contractible type and let $i_0$ be a point of $I$.
Then types $\prod_{i : I} B$, $\coprod_{i : I} B$, and $B[i_0/i]$ are equivalent.
\end{prop}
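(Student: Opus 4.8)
The plan is to produce an explicit equivalence between $\prod_{i:I} B$ and $B[i_0/i]$ and, dually, between $\coprod_{i:I} B$ and $B[i_0/i]$, using only the universal properties together with contractibility of $I$. The key idea is that contractibility of $I$ turns the weak $\Pi$-type $\Pi_{i:I}\Hom(P,B)$ into $\Hom(P, B[i_0/i])$ via \cite[Lemma~3.11.8]{hottbook} (or the equivalent statement that evaluation at $i_0$ is an equivalence for any type family over a contractible base), and then Yoneda-style reasoning identifies the representing objects.

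First I would treat the product. Since $I$ is contractible with center $i_0$, for every indexed type $P'$ the evaluation map $\bigl(\prod_{i:I}\Hom(P',B)\bigr) \to \Hom(P', B[i_0/i])$ sending $F \mapsto F(i_0)$ is an equivalence; this is the standard fact that a type family over a contractible type is equivalent to its fiber over the center, which does not require $\Pi$-types in the indexed theory since $\Hom$ is a base type and the base theory has identity types. Composing with the equivalence $\Hom(P',\prod_{i:I}B) \to \prod_{i:I}\Hom(P',B)$ of the universal property of the product, we obtain for each $P'$ an equivalence
\[ \Hom\Bigl(P',\ \textstyle\prod_{i:I}B\Bigr)\ \simeq\ \Hom(P', B[i_0/i]), \]
and by inspecting the construction this equivalence is natural in $P'$; concretely it is postcomposition with $\pi[i_0/i] : \Hom(\prod_{i:I}B, B[i_0/i])$. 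Naturality plus the fact that these maps are equivalences for all $P'$ lets us apply the standard Yoneda argument: taking $P' = \prod_{i:I}B$ and $P' = B[i_0/i]$ and chasing identities produces mutually inverse maps $\pi[i_0/i]$ and $\langle \delta \rangle_{i:I}$ (where $\delta : \Hom(B[i_0/i], B)$ is transport along the contraction $I$) exhibiting the equivalence $\prod_{i:I}B \simeq B[i_0/i]$. The copower case is formally dual, using the universal property of $\coprod_{i:I}B$, the equivalence $\prod_{i:I}\Hom(B,C') \simeq \Hom(B[i_0/i], C')$, and naturality in $C'$, to show $\fs{in}[i_0/i] : \Hom(B[i_0/i], \coprod_{i:I}B)$ is an equivalence; combined with the product case this also gives $\prod_{i:I}B \simeq \coprod_{i:I}B$.

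The main obstacle I expect is the "evaluation at the center is an equivalence" step when $\Pi$-types in the indexed theory are \emph{not} assumed. The paper's convention (stated just before this proposition) is to assume $\Pi_{i:I}\Hom(P,B)$ exists in the base theory, so one can simply cite \cite[Lemma~3.11.8]{hottbook} for the base-type statement that $\Sigma_{i:I}\text{(stuff)}$ or $\Pi_{i:I}\text{(stuff)}$ over contractible $I$ reduces to the fiber over $i_0$; the slight care needed is to check that the contraction of $I$ transports $B$ coherently, i.e. that the map $\delta$ above is well-defined and that its round-trips are homotopic to identities — this follows by eliminating the path witnessing $I$'s contractibility, exactly as in the last paragraph of Section~\ref{sec:unary} where homotopies of base terms act on indexed types. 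A secondary bookkeeping point is verifying naturality of the Yoneda equivalence in $P'$ (resp.\ $C'$), but this is routine since every map in sight is a composition or postcomposition/precomposition, all of which are natural by the strict associativity and unitality of composition of morphisms noted in Section~\ref{sec:unary}.
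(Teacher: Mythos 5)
Your argument is correct, and it rests on the same essential construction as the paper's: transport along the contraction of $I$ supplies the comparison maps between $B[i_0/i]$ and the family $B$. The packaging differs, though. The paper shows \emph{directly} that $B[i_0/i]$, equipped with the projections $\pi_i = \lambda x.\,p(i)_*(x)$ and the pairing $\langle f \rangle_{i:I} = \lambda x.\,\sym{p(i)}_*(f\,x)$, satisfies the universal property of a product of $B$, and then concludes by the uniqueness of products up to equivalence (dually for coproducts); no explicit appeal to the lemma that a $\Pi$-type over a contractible base is evaluation at the center, and no explicit Yoneda step, since the inverse to $\pi \circ -$ is written down by hand. You instead run the representability argument through the $\Hom$-functors: the composite $\Hom(P',\prod_{i:I}B) \simeq \Pi_{i:I}\Hom(P',B) \simeq \Hom(P',B[i_0/i])$ is postcomposition with $\pi[i_0/i]$, and a Yoneda argument extracts the equivalence (dually, precomposition with $\fs{in}[i_0/i]$ for the coproduct). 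Both routes are sound; yours leans on the standing convention, stated just before the proposition, that the base $\Pi$-types $\Pi_{i:I}\Hom(P',B)$ exist, whereas the paper's direct verification would survive even without that convenience, and your version makes explicit the naturality that the paper hides inside ``products are unique up to unique equivalence.'' Your flagged obstacle (coherence of the transports) is real but is discharged exactly as you say, by path induction on the contraction, which is also what the paper's ``clearly'' is silently doing.
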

\begin{proof}
Let $p(i)$ be a path between $i_0$ and $i : I$.
Then the pair $B[i_0/i], \pi_i = \lambda x.\,p(i)_*(x)$ is a product of $B$.
We can define $\langle f \rangle_{i : I}$ as $\lambda x.\,\sym{p(i)}_*(f\,x)$.
Clearly, this is an inverse to $\pi \circ -$.
Since $B[i_0/i],\pi$ is a product and products are unique up to equivalence, it follows that $B[i_0/i]$ is equivalent to $\prod_{i : I} B$.
Similar argument shows that it is also equivalent to $\coprod_{i : I} B$.
\end{proof}

The following proposition shows how to compute products and coproducts indexed by $\Sigma$-types:

\begin{prop}
Let $\Gamma \vdash I \type$ and $\Gamma, i : I \vdash J \type$ be base types and let $\Gamma, i : I, j : J \mid \cdot \vdash B \ob$ be an indexed type.
Then types $\prod_{(p : \Sigma_{i : I} J)} B[\pi_1(p)/i, \pi_2(p)/j]$ and $\prod_{i : I} \prod_{j : J} B$ are equivalent.
Dually, types $\coprod_{(p : \Sigma_{i : I} J)} B[\pi_1(p)/i, \pi_2(p)/j]$ and $\coprod_{i : I} \coprod_{j : J} B$ are equivalent.
\end{prop}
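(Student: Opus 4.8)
The plan is to exploit the universal property of products directly, together with the corresponding ``currying'' isomorphism for $\Pi$-types in the base theory. Recall that, having assumed the base $\Pi$-types exist, a pair $(P,\pi)$ is a product of a family iff the map $\lambda h.\,\lambda i.\,\pi\circ h$ from $\Hom(P',P)$ to $\prod_{i:I}\Hom(P',B)$ is an equivalence for all $P'$. So it suffices to produce, for every indexed type $P'$, an equivalence
\[ \prod_{(p:\Sigma_{i:I}J)}\Hom\bigl(P',\,B[\pi_1(p)/i,\pi_2(p)/j]\bigr)\ \simeq\ \prod_{i:I}\prod_{j:J}\Hom(P',B) \]
that is natural in $P'$, and then conclude that $\prod_{(p:\Sigma_{i:I}J)}B[\pi_1(p)/i,\pi_2(p)/j]$ is also a product of the family $\Gamma,i:I,j:J\mid\cdot\vdash B\ob$, whence it is equivalent to $\prod_{i:I}\prod_{j:J}B$ by uniqueness of products up to equivalence.

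First I would establish the type-level equivalence. The map $\prod_{(p:\Sigma_{i:I}J)}C[\pi_1(p)/i,\pi_2(p)/j]\simeq\prod_{i:I}\prod_{j:J}C$ is the standard ``curry/uncurry'' equivalence for $\Pi$-types over a $\Sigma$-type in the base theory; it holds because this is an ordinary statement of dependent type theory (it only uses $\Sigma$-types and $\Pi$-types and functional extensionality, all in the base theory, and the base theory is a genuine dependent type theory by assumption). Applying this with $C := \Hom(P',B)$ (viewed as a base type in context $\Gamma,i:I,j:J$) gives the displayed equivalence. Naturality in $P'$ is immediate from the fact that the curry/uncurry maps are defined by pre- and post-composition with the projections $\pi_1,\pi_2$ of the $\Sigma$-type, and composition in $\Hom$ is strictly functorial.

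Next I would package the two nested products on the right. Writing $Q$ for the product $\prod_{i:I}\prod_{j:J}B$ — which we may form by first taking $\prod_{j:J}B$ in context $\Gamma,i:I$ and then $\prod_{i:I}(-)$ in context $\Gamma$ — the universal properties of these two products compose: $\Hom(P',\prod_{i:I}\prod_{j:J}B)\simeq\prod_{i:I}\Hom(P',\prod_{j:J}B)\simeq\prod_{i:I}\prod_{j:J}\Hom(P',B)$, all naturally in $P'$. Chaining this with the equivalence from the previous paragraph yields a natural equivalence $\Hom(P',\prod_{i:I}\prod_{j:J}B)\simeq\prod_{(p:\Sigma_{i:I}J)}\Hom(P',B[\pi_1(p)/i,\pi_2(p)/j])$. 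Transporting the projection $\pi$ of $Q$ along this equivalence exhibits $\prod_{i:I}\prod_{j:J}B$, equipped with a suitable $\Gamma,p:\Sigma_{i:I}J\vdash\pi':\Hom(\prod_{i:I}\prod_{j:J}B,\,B[\pi_1(p)/i,\pi_2(p)/j])$, as a product of the reindexed family, and then uniqueness of products gives the claimed equivalence. The coproduct case is entirely dual: replace $\Hom(P',-)$ by $\Hom(-,C')$, use $- \circ \fs{in}$ in place of $\pi\circ -$, and use the dual curry/uncurry equivalence $\prod_{(p:\Sigma_{i:I}J)}\Hom(B[\pi_1(p)/i,\pi_2(p)/j],C')\simeq\prod_{i:I}\prod_{j:J}\Hom(B,C')$ together with the composite universal property of $\coprod_{i:I}\coprod_{j:J}B$.

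The main obstacle is bookkeeping of substitutions and variable contexts rather than any real mathematical difficulty: one must be careful that $J$ depends on $i$, so the ``inner'' product $\prod_{j:J}B$ lives in context $\Gamma,i:I$ and its formation and universal property are relative to that context, and that forming $\prod_{i:I}$ of it requires $j\notin\mathrm{FV}(\Delta)$-style side conditions to line up correctly; similarly one must check that the reindexing equations (stability of all constructions under substitution of base terms) make the curry/uncurry equivalence interact correctly with the projections $\pi_1(p),\pi_2(p)$. Once the contexts are tracked honestly, each individual equivalence is either the universal property of a product/coproduct or a routine fact of ordinary dependent type theory, and naturality in $P'$ (resp.\ $C'$) is formal.
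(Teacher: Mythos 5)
Your proposal is correct and follows essentially the same route as the paper: exhibit $\prod_{i:I}\prod_{j:J}B$ as a product of the reindexed family over $\Sigma_{i:I}J$ by factoring the canonical comparison map through $\prod_{i:I}\prod_{j:J}\Hom(P',B)$, where the first factor is an equivalence by the composed universal properties of the two nested products and the second is the standard curry/uncurry equivalence for base $\Pi$-types over a $\Sigma$-type, then conclude by uniqueness of products. The paper's proof is just a terser version of this, defining the projection $\lambda f.\,f\,(\pi_1(p))\,(\pi_2(p))$ up front and dismissing the curry/uncurry step as an easy exercise in ordinary type theory.
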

\begin{proof}
We will prove this statement for products; the case of coproducts is dual.
To do this, it is enough to show that $\prod_{i : I} \prod_{j : J} B$ is a product of $\Gamma , p : \Sigma_{i : I} J \mid \cdot \vdash B[\pi_1(p)/i, \pi_2(p)/j] \ob$.
We define projections as follows:
\[ \lambda f.\,f\,(\pi_1(p))\,(\pi_2(p)) : \Hom(\prod_{i : I} \prod_{j : J} B, B[\pi_1(p)/i,\pi_2(p)/j]). \]
We need to show that the following map is an equivalence:
\begin{align*}
& \Hom(X, \prod_{i : I} \prod_{j : J} B) \to \prod_{p : \sum_{i : I} J} \Hom(X, B[\pi_1(p)/i,\pi_2(p)/j]) \\
& \lambda g.\,\lambda p.\,\lambda x.\,g\,x\,(\pi_1(p))\,(\pi_2(p)).
\end{align*}
Note that this map factors through the following maps:
\begin{align*}
\lambda g.\,\lambda i j.\,\lambda x.\,g\,x\,i\,j & : \Hom(X, \prod_{i : I} \prod_{j : J} B) \to \prod_{i : I} \prod_{j : J} \Hom(X,B) \\
\lambda h p.\,h\,(\pi_1(p))\,(\pi_2(p)) & : (\prod_{i : I} \prod_{j : J} \Hom(X,B)) \to \prod_{p : \sum_{i : I} J} \Hom(X, B[\pi_1(p)/i,\pi_2(p)/j]).
\end{align*}
The first map is an equivalence since $\prod_{i : I} \prod_{j : J} B$ is a product and the fact that the second map is an equivalence is an easy exercise in the ordinary type theory.
\end{proof}

The following proposition shows that the product of an empty family of types is the terminal object and the coproduct of such a family is initial:

\begin{prop}
Suppose that the base theory has the empty type $\bot$.
Let $\Gamma, i : \bot \mid \cdot \vdash B \ob$ be an indexed type.
Then $\prod_{i : \bot} B$ is terminal and $\coprod_{i : \bot} B$ is initial.
\end{prop}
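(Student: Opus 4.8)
The plan is to reduce both halves of the statement to a single standard fact: a base $\Pi$-type over the empty type $\bot$ is contractible.

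For the product, fix an arbitrary indexed type $P'$. Under the running assumption of this subsection the base $\Pi$-type $\prod_{i : \bot} \Hom(P',B)$ exists, and the fact that $\prod_{i : \bot} B$, $\pi$ is a product is equivalent to saying that
\[ \lambda h.\,\lambda i.\,\pi \circ h : \Hom(P', \prod_{i : \bot} B) \to \prod_{i : \bot} \Hom(P',B) \]
is an equivalence. Its codomain is a function type out of $\bot$ in the base theory: it has a point, obtained by applying the eliminator of $\bot$ to the family $\Hom(P',B)$ and abstracting over $i$, and any two of its points are equal, since by function extensionality this reduces to a pointwise equality, which is again a function out of $\bot$ and hence inhabited. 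So the codomain is contractible, and being equivalent to it, $\Hom(P', \prod_{i : \bot} B)$ is contractible as well. As $P'$ was arbitrary, this is exactly the statement that $\prod_{i : \bot} B$ is terminal.

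The coproduct case is dual. Fixing an indexed type $X$, the coproduct property of $\coprod_{i : \bot} B$, $\fs{in}$ says that $\lambda h.\,\lambda i.\,h \circ \fs{in}(i) : \Hom(\coprod_{i : \bot} B, X) \to \prod_{i : \bot} \Hom(B,X)$ is an equivalence, and its codomain is contractible by the very same argument, so $\Hom(\coprod_{i : \bot} B, X)$ is contractible for every $X$, i.e. $\coprod_{i : \bot} B$ is initial. The only nontrivial input in the whole argument is the contractibility of $\prod_{i : \bot} Y$ for a base family $Y$, which relies on function extensionality in the base theory; this is the one place where a little care is needed, the rest being the transport of contractibility along the equivalences already at hand.
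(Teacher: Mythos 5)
Your proof is correct and follows essentially the same route as the paper: both reduce the claim to the equivalences $\Hom(P',\prod_{i:\bot}B)\simeq\Pi_{i:\bot}\Hom(P',B)$ and $\Hom(\coprod_{i:\bot}B,X)\simeq\Pi_{i:\bot}\Hom(B,X)$ together with the contractibility of $\Pi_{i:\bot}Y$ for base types $Y$. Your only addition is to spell out why that $\Pi$-type is contractible (via the eliminator and function extensionality), a step the paper simply cites as a known fact.
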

\begin{proof}
Since $\Hom(P, \prod_{i : \bot} B)$ is equivalent to $\Pi_{i : \bot} \Hom(P,B)$ and $\Hom(\coprod_{i : \bot} B, P)$ is equivalent $\Pi_{i : \bot} \Hom(B,P)$,
the statement follows from the fact that $\Pi_{i : \bot} X$ is contractible for every base type $X$.
\end{proof}

The following proposition shows how to compute products and coproducts indexed by pushouts:

\begin{prop}
Suppose that the base theory has the following pushout:
\[ \xymatrix{ K \ar[r]^-g \ar[d]_-f & J \ar[d]^-{f'} \\
              I \ar[r]_-{g'}        & \po I \amalg_K J.
            } \]
Let $\Gamma, s : I \amalg_K J \mid \cdot \vdash B \ob$ be an indexed type.
Then we have the following canonical equivalences:
\begin{align*}
\prod_{s : I \amalg_K J} B & \simeq (\prod_{i : I} B[g' i/s]) \times_{(\prod_{k : K} B[g' (f k) / s])} (\prod_{j : J} B[f' j/s]) \\
\coprod_{s : I \amalg_K J} B & \simeq (\coprod_{i : I} B[g' i/s]) \amalg_{(\coprod_{k : K} B[f' (g k) / s])} (\coprod_{j : J} B[f' j/s]).
\end{align*}
\end{prop}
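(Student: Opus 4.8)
The plan is to mimic the proof of the preceding proposition: reduce the claim to a computation with base types by testing against an arbitrary indexed type $X$, and then invoke uniqueness of (co)products up to equivalence. I will describe the argument for products; the statement for coproducts is obtained dually, testing against an arbitrary indexed type $Y$ and using that pushouts of indexed types are defined so that $\Hom(-,Y)$ sends $A \amalg_C B$ to $\Hom(A,Y) \times_{\Hom(C,Y)} \Hom(B,Y)$, together with the coproduct universal property $\Hom(\coprod_{i:I} B[f'(gk)/s]\text{-type},Y) \simeq \prod \Hom(-,Y)$ in the evident form over $I$, $J$, $K$.

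So let $Q = (\prod_{i:I} B[g'i/s]) \times_{(\prod_{k:K} B[g'(fk)/s])} (\prod_{j:J} B[f'j/s])$, where the left leg of the cospan is $\langle \pi_{fk} \rangle_{k:K}$ — the family of product projections reindexed along $f : K \to I$ — and the right leg is the family of projections at $gk$, reindexed along $g : K \to J$, post-composed with the canonical equivalences $B[f'(gk)/s] \to B[g'(fk)/s]$ induced by $\fs{glue}(k) : g'(fk) = f'(gk)$ (the action of a base path on an indexed type, as at the end of Section~\ref{sec:unary}). Since products are unique up to equivalence, it is enough to exhibit a projection $\Gamma, s : I \amalg_K J \vdash \pi : \Hom(Q, B)$ making $Q$ a product of the family $\Gamma, s : I \amalg_K J \mid \cdot \vdash B \ob$. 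I would define $\pi$ by induction on $s$: on $s = g'i$ take the composite of the first pullback projection $Q \to \prod_{i:I} B[g'i/s]$ with the $i$-th product projection, on $s = f'j$ take the analogous composite through the second pullback projection, and let the coherence over $\fs{glue}(k)$ be supplied by the commutativity of the square defining $Q$, transported through the $\fs{glue}$-equivalences.

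To check the universal property I would, for an arbitrary indexed type $X$, exhibit the map $\Hom(X,Q) \to \prod_{s : I \amalg_K J} \Hom(X,B)$ induced by $\pi$ as a composite of equivalences. First, by the definition of the pullback of indexed types, $\Hom(X,Q) \simeq \Hom(X, \prod_{i:I} B[g'i/s]) \times_{\Hom(X, \prod_{k:K} B[g'(fk)/s])} \Hom(X, \prod_{j:J} B[f'j/s])$. Next, applying the product universal property at each of the three vertices — available because the base theory is assumed to have the relevant $\Pi$-types — and checking that these equivalences intertwine the two cospan legs with, respectively, restriction of a dependent function along $f$ and restriction along $g$ followed by the transport in the base family $s \mapsto \Hom(X,B)$ induced by $\fs{glue}$, I get $\Hom(X,Q) \simeq (\prod_{i:I} \Hom(X, B[g'i/s])) \times_{(\prod_{k:K} \Hom(X, B[g'(fk)/s]))} (\prod_{j:J} \Hom(X, B[f'j/s]))$. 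Finally, this last pullback is exactly the data classified by the universal property of the pushout $I \amalg_K J$ applied to the base family $s \mapsto \Hom(X,B)$, hence equivalent to $\prod_{s : I \amalg_K J} \Hom(X,B)$; tracing through shows the composite is post-composition with $\pi$, so $\pi$ makes $Q$ a product and the proposition follows.

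I expect two points to need the most care. One is the compatibility bookkeeping in the middle step: verifying that, under the product universal property, the cospan leg built from the indexed transport equivalences induced by $\fs{glue}(k)$ corresponds to the ordinary transport in the base-type family $s \mapsto \Hom(X,B)$ — this is where $\pmap$ and functoriality of transport enter. The other is the appeal to the universal property of the base pushout for a \emph{type-valued} (dependent) family: converting the pointwise glue-coherence into a genuine pullback of function types uses function extensionality in the base theory (equivalently, one should read ``the base theory has this pushout'' as ``$I \amalg_K J$ satisfies the dependent universal property''), which is the only assumption on the base theory beyond identity types, $\Sigma$-types, the stated $\Pi$-types, and the pushout itself. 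The construction of $\pi$ with its glue-coherence — and dually of the coproduct inclusions — is conceptually routine but constitutes the bulk of the written-out argument.
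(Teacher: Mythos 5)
Your proposal is correct and follows essentially the same route as the paper: define the cospan legs as restriction along $f$ and restriction along $g$ composed with transport along $\fs{glue}$, identify $\Hom(X,Q)$ with the pullback of $\Hom$-types, apply the product universal property at each vertex, and finish with the dependent universal property of the base pushout. Your explicit construction of the projection $\pi$ by pushout induction is a small elaboration of what the paper leaves implicit, but it is not a different argument.
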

\begin{proof}
We will construct the first equivalence; the second is its dual.
First, let us define maps that appears in the pullback in the statement of this proposition.
The map $\Hom(\prod_{i : I} B[g' i/s], \prod_{k : K} B[g' (f k) / s])$ is defined as $\lambda p.\,\lambda k.\,p\,(f\,k)$.
One of the constructors of the pushout $I \amalg_K J$ gives us a map $h : \Pi_{k : K} \Id(f'\,(g\,k),g'\,(f\,k))$.
The map $\Hom(\prod_{j : J} B[f' j/s], \prod_{k : K} B[g' (f k) / s])$ is defined as $\lambda p.\,\lambda k.\,(h\,k)_*(p\,(g\,k))$.

Now, we need to prove that, for every type $P$, the type $\Hom(P, \prod_{s : I \amalg_K J} B)$ is the weak $\Pi$-type $\Pi_{s : I \amalg_K J} \Hom(P,B)$.
By the universal property of pullbacks, we have an equivalence between $\Hom(P, \prod_{s : I \amalg_K J} B)$ and the following type:
\[ \Hom(P, \prod_{i : I} B[g' i/s]) \times_{\Hom(P, \prod_{k : K} B[g' (f k) / s])} \Hom(P, \prod_{j : J} B[f' j/s]). \]
By the universal property of products, this type is equivalent to the following one:
\[ (\prod_{i : I} \Hom(P, B[g' i/s])) \times_{(\prod_{k : K} \Hom(P, B[g' (f k) / s]))} (\prod_{j : J} \Hom(P, B[f' j/s])). \]
The standard argument about pushouts shows that this type is equivalent to the weak $\Pi$-type $\Pi_{s : I \amalg_K J} \Hom(P,B)$.
\end{proof}

The following corollary shows how to compute tensoring by different type-theoretic constructions:

\begin{cor}
We have the following canonical equivalences:
\begin{align*}
\bot \cdot X & \simeq 0 \\
(I \amalg_K J) \cdot X & \simeq I \cdot X \amalg_{K \cdot X} J \cdot X \\
\top \cdot X & \simeq X \\
(\sum_{i : I} J) \cdot X & \simeq \coprod_{i : I} J \cdot X \\
(\Sigma I) \cdot 1 & \simeq \Sigma (I \cdot 1) \\
S^n \cdot 1 & \simeq S^n
\end{align*}
\end{cor}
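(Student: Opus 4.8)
The plan is to derive each of the six equivalences as an instance of one of the previously established propositions about tensors (copowers) and the general computation rules for coproducts, specializing the results proved just above for $\coprod_{i : I} B$ in the case where the family $B$ is constant, equal to $X$ (or to $\term$).

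First I would recall that by definition $I \cdot X = \coprod_{i : I} X$ is the coproduct of the constant family. The first line, $\bot \cdot X \simeq 0$, is then immediate from the proposition asserting that $\coprod_{i : \bot} B$ is initial; here I would just note that "$0$" denotes an initial type. The second line, $(I \amalg_K J) \cdot X \simeq I \cdot X \amalg_{K \cdot X} J \cdot X$, is the constant-family instance of the proposition computing $\coprod_{s : I \amalg_K J} B$ as a pushout: when $B = X$, the reindexed families $B[g' i/s]$, $B[f'(g k)/s]$, $B[f' j/s]$ are all again the constant family $X$, so the right-hand side becomes exactly $(\coprod_{i:I} X) \amalg_{(\coprod_{k:K} X)} (\coprod_{j:J} X) = I\cdot X \amalg_{K \cdot X} J \cdot X$. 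The third line, $\top \cdot X \simeq X$, follows from the proposition about contractible index types, since $\top$ is contractible; the coproduct of the constant family over a contractible type is any member of the family, namely $X$. The fourth line, $(\sum_{i:I} J) \cdot X \simeq \coprod_{i:I} J \cdot X$, is the constant-family instance of the proposition computing $\coprod_{(p : \Sigma_{i:I} J)} B$ as $\coprod_{i:I}\coprod_{j:J} B$: taking $B = X$ constant, the inner $\coprod_{j : J} X$ is by definition $J \cdot X$, so the right-hand side is $\coprod_{i:I} (J \cdot X)$.

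The last two lines concern $1 = \term$ as the fixed indexed type. For $(\Sigma I) \cdot 1 \simeq \Sigma(I \cdot 1)$, I would use the definition of the suspension $\Sigma X$ as the pushout of $\lambda x.\unit,\lambda x.\unit : \Hom(X,\term)$, combined with the second and third equivalences: on the base side $\Sigma I = I \amalg_I^{(\mathrm{id},\mathrm{id})?}$... more precisely $\Sigma I$ is the pushout $\term \amalg_I \term$ (of the constant maps $I \to \term$), so $(\Sigma I)\cdot 1$ is, by the pushout formula for tensors (line two applied with $K = I$, and both $I,J$ replaced by $\top$ together with line three), the pushout $(\top\cdot 1)\amalg_{(I\cdot 1)}(\top\cdot 1) \simeq 1 \amalg_{I\cdot 1} 1$, which is exactly $\Sigma(I\cdot 1)$ by the definition of suspension of the indexed type $I \cdot 1$. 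Finally, $S^n \cdot 1 \simeq S^n$ follows by induction on $n$: for the base case, $S^0 = \term \amalg \term$ on the base side, so $S^0 \cdot 1 \simeq (\top \cdot 1) \amalg (\top \cdot 1) \simeq 1 \amalg 1 = S^0$ using line two (with $K = \bot$, so the pushout is a coproduct, together with line one) and line three; for the step, $S^{n+1} = \Sigma S^n$, hence $S^{n+1}\cdot 1 = (\Sigma S^n)\cdot 1 \simeq \Sigma(S^n \cdot 1) \simeq \Sigma S^n = S^{n+1}$ using the fifth equivalence and the inductive hypothesis (noting that suspension of indexed types respects equivalence, which is clear from its definition as a pushout).

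The main obstacle is bookkeeping rather than mathematics: one has to be careful that the reindexing substitutions $B[g' i/s]$ etc.\ applied to a constant family genuinely collapse to the constant family, and that the "canonical" maps named in the earlier propositions match the canonical maps implicit in the statements here (for instance, that the pushout/coproduct comparison maps for tensors are the ones induced by functoriality of $-\cdot X$ in the base type). I would also want to check that all the ambient hypotheses of the cited propositions are available — in particular that the relevant base-theory pushouts and $\Sigma$-types exist, which is exactly what makes the constructions $\Sigma I$, $S^n$, $\Sigma I \amalg_K J$ meaningful in the first place — and that the base theory has the empty type $\bot$ so that line one and the base case of line six go through.
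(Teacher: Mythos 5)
Your proposal is correct and takes essentially the same route as the paper: the paper's proof is a three-sentence sketch deriving the first four equivalences from the preceding propositions on (co)products indexed by $\bot$, pushouts, contractible types, and $\Sigma$-types, and the last two from the definitions of suspension and spheres in terms of pushouts, coproducts, and terminal types. You simply supply the bookkeeping (the constant-family specializations, the identification $\Sigma I \simeq \term \amalg_I \term$, and the external induction on $n$) that the paper leaves implicit.
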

\begin{proof}
The first four equivalences follow from previous propositions.
The equivalence for suspension follows from previous equations since suspension is defined in terms of pushouts and terminal types.
The last equivalence follows from previous since spheres are defined in terms of suspensions, coproducts, and terminal types.
\end{proof}

\begin{example}
Let $S^1$ be the pushout of $1 \amalg 1$ in the base theory, that is a higher inductive type with two point constructors $N,S : S^1$ and two path constructors $L,R : \Id_{S^1}(N,S)$.
Then the product of a family $\Gamma, x : S^1 \mid \cdot \vdash B$ is the equalizer of the maps $L_*(-),R_*(-) : B[N/x] \to B[S/x]$.
\end{example}

\section{Indexed dependent type theories}
\label{sec:dependent}

In this section, we define the dependent version of indexed type theories.

\subsection{Basic rules}

\emph{Indexed dependent type theories} have four kinds of judgments:
\[ \Gamma \vdash A \type \qquad \Gamma \vdash a : A \qquad \Gamma \mid \Delta \vdash B \ob \qquad \Gamma \mid \Delta \vdash b : B \]

In each of these judgments, $\Delta$ is an indexed context, that is a sequence of the form $y_1 : B_1, \ldots y_k : B_k$, where $B_1$, \ldots $B_k$ are indexed types and $y_1$, \ldots $y_k$ are pairwise distinct variables.
The base theory has the same rules as the base theory in indexed unary type theories.
The indexed theory has the following rules:
\begin{center}
\AxiomC{}
\UnaryInfC{$\Gamma \mid x_1 : A_1, \ldots x_n : A_n \vdash x_i : A_i$}
\DisplayProof
\end{center}

\begin{center}
\def\extraVskip{1pt}
\Axiom$\fCenter \Gamma \mid \Delta \vdash b_1 : B_1$
\noLine
\UnaryInf$\fCenter \ldots$
\noLine
\UnaryInf$\fCenter \Gamma \mid \Delta \vdash b_k : B_k[b_1/y_1, \ldots b_{k-1}/y_{k-1}]$
\Axiom$\fCenter \Gamma \mid \Delta, y_1 : B_1, \ldots y_k : B_k \vdash C \type$
\def\extraVskip{2pt}
\BinaryInfC{$\Gamma \mid \Delta \vdash C[b_1/y_1, \ldots b_k/y_k] \type$}
\DisplayProof
\end{center}

\begin{center}
\def\extraVskip{1pt}
\Axiom$\fCenter \Gamma \mid \Delta \vdash b_1 : B_1$
\noLine
\UnaryInf$\fCenter \ldots$
\noLine
\UnaryInf$\fCenter \Gamma \mid \Delta \vdash b_k : B_k[b_1/y_1, \ldots b_{k-1}/y_{k-1}]$
\Axiom$\fCenter \Gamma \mid \Delta, y_1 : B_1, \ldots y_k : B_k \vdash c : C$
\def\extraVskip{2pt}
\BinaryInfC{$\Gamma \mid \Delta \vdash c[b_1/y_1, \ldots b_k/y_k] : C[b_1/y_1, \ldots b_k/y_k]$}
\DisplayProof
\end{center}

We also have the usual equations for substitution:
\begin{align*}
y_i[b_1/y_1, \ldots b_k/y_k] & = b_i \\
c[y_1/y_1, \ldots y_k/y_k] & = c \\
d[c_1/z_1, \ldots c_n/z_n][b_1/y_1, \ldots b_k/y_k] & = d[c_1'/z_1, \ldots c_n'/z_n],
\end{align*}
where $c_i' = c_i[b_1/y_1, \ldots b_k/y_k]$.

We often assume that, for every construction $\sigma(\overline{z_1}.\,c_1, \ldots \overline{z_n}.\,c_n)$ in the indexed theory, the following equation holds whenever variables $\overline{z_1}$, \ldots $\overline{z_n}$ are not free in $b_1$, \ldots $b_k$:
\[ \sigma(\ldots, \overline{z_i}.\,c_i, \ldots)[b_1/y_1, \ldots b_k/y_k] = \sigma(\ldots, \overline{z_i}.\,c_i[b_1/y_1, \ldots b_k/y_k], \ldots) \]
We also have the weakening operation which is left implicit as usual.
We will call a theory or a construction \emph{unstable} if this equation does not hold.

We can also substitute base terms into indexed types and terms:
\begin{center}
\def\extraVskip{1pt}
\Axiom$\fCenter \Gamma \vdash b_1 : B_1$
\noLine
\UnaryInf$\fCenter \ldots$
\noLine
\UnaryInf$\fCenter \Gamma \vdash b_k : B_k[b_1/y_1, \ldots b_{k-1}/y_{k-1}]$
\Axiom$\fCenter \Gamma, y_1 : B_1, \ldots y_k : B_k \mid \Delta \vdash C \ob$
\def\extraVskip{2pt}
\BinaryInfC{$\Gamma \mid \Delta[b_1/y_1, \ldots b_k/y_k] \vdash C[b_1/y_1, \ldots b_k/y_k] \ob$}
\DisplayProof
\end{center}

\begin{center}
\def\extraVskip{1pt}
\Axiom$\fCenter \Gamma \vdash b_1 : B_1$
\noLine
\UnaryInf$\fCenter \ldots$
\noLine
\UnaryInf$\fCenter \Gamma \vdash b_k : B_k[b_1/y_1, \ldots b_{k-1}/y_{k-1}]$
\Axiom$\fCenter \Gamma, y_1 : B_1, \ldots y_k : B_k \mid \Delta \vdash c : C$
\def\extraVskip{2pt}
\BinaryInfC{$\Gamma \mid \Delta[b_1/y_1, \ldots b_k/y_k] \vdash c[b_1/y_1, \ldots b_k/y_k] : C[b_1/y_1, \ldots b_k/y_k]$}
\DisplayProof
\end{center}

These operations satisfy the following equations for all base terms $b_1$, \ldots $b_k$ and indexed terms $c_1$, \ldots $c_n$:
\begin{align*}
x[b_1/y_1, \ldots b_k/y_k] & = x \\
d[c_1/z_1, \ldots c_n/z_n][b_1/y_1, \ldots b_k/y_k] & = d[c_1'/z_1, \ldots c_n'/z_n],
\end{align*}
where $c_i' = c_i[b_1/y_1, \ldots b_k/y_k]$.

These operations satisfy the following equations for all base terms $b_1$, \ldots $b_k$, $c_1$, \ldots $c_n$:
\begin{align*}
c[y_1/y_1, \ldots y_k/y_k] & = c \\
d[c_1/z_1, \ldots c_n/z_n][b_1/y_1, \ldots b_k/y_k] & = d[c_1'/z_1, \ldots c_n'/z_n],
\end{align*}
where $c_i' = c_i[b_1/y_1, \ldots b_k/y_k]$.

Since the second level of an indexed dependent type theory is also a dependent type theory,
we can add standard type-theoretic construction to it.
When we add such a construction, we always assume that it is defined in every base context.

Indexed dependent type theories have the same rules as indexed unary type theories.
This means that indexed unary type theories can be interpreted in indexed dependent type theories.
This implies that every model of an indexed dependent type theory is a model of corresponding unary theory
(that is, there is a forgetful functor from the category of models of an indexed dependent theory to the category of models of an indexed unary theory).
Every model of an ordinary dependent type theory (as defined in \cite{alg-tt}) is a model of an indexed dependent type theory.
This follows from the fact that indexed type theories can be interpreted in ordinary dependent type theories.

Judgments $\Gamma \mid \Delta \vdash A \ob$ and $\Gamma \mid \Delta \vdash a : A$ are interpreted as $\Gamma, \Delta \vdash A \type$ and $\Gamma, \Delta \vdash a : A$, respectively.
All the rules of indexed dependent type theories correspond to some rules of ordinary dependent type theories.
This interpretation will be called \emph{the canonical indexing of a dependent type theory over itself}.
It is analogous to the canonical indexing of a Cartesian category over itself.

\subsection{Dependent $\Hom$-types}

Since every indexed dependent type theory is an indexed unary type theory, the extensions that we discussed in the context of unary theories also applies to dependent versions.
Note that these constructions apply only to closed indexed types.
Sometimes we can extend the notion, so that it applies to indexed types in a non-empty context.

For example, there is a notion of locally small indexed dependent type theory.
We can also define the following dependent version of $\Hom$-types:
\begin{center}
\AxiomC{$\Gamma \mid \Delta \vdash B \ob$}
\UnaryInfC{$\Gamma \vdash \Hom(\Delta.B) \type$}
\DisplayProof
\qquad
\AxiomC{$\Gamma \mid \Delta \vdash b : B$}
\UnaryInfC{$\Gamma \vdash \lambda \Delta.\,b : \Hom(\Delta.B)$}
\DisplayProof
\end{center}
\medskip

\begin{center}
\AxiomC{$\Gamma \vdash f : \Hom(\Delta.B)$}
\AxiomC{$\Gamma \mid E \vdash a_1 : A_1\ \ldots\ \Gamma \mid E \vdash a_k : A_k[a_1/x_1, \ldots a_{k-1}/x_{k-1}]$}
\BinaryInfC{$\Gamma \mid E \vdash f\,a_1\,\ldots\,a_k : B[a_1/x_1, \ldots a_k/x_k]$}
\DisplayProof
\end{center}
where $\Delta = x_1 : A_1, \ldots x_k : A_k$.

\begin{align*}
(\lambda x_1 \ldots x_k.\,b)\,a_1\,\ldots\,a_k & = b[a_1/x_1, \ldots a_k/x_k] \\
\lambda x_1 \ldots x_k.\,f\,x_1\,\ldots\,x_k & = f
\end{align*}
If $\Delta$ is empty, then we will write $\Hom(\Delta.B)$ as $\Hom(B)$, abstraction as $\lambda(b)$, and the application operation as $f\,()$.

If we have such dependent $\Hom$-types, then we can define the following operation:
\begin{center}
\def\extraVskip{1pt}
\Axiom$\fCenter \Gamma \vdash a : A$
\noLine
\UnaryInf$\fCenter \Gamma \vdash a' : A$
\noLine
\UnaryInf$\fCenter \Gamma \vdash t : \Id_A(a,a')$
\Axiom$\fCenter \Gamma, x : A, p : \Id_A(a,x), \Delta \mid E \vdash D \ob$
\noLine
\UnaryInf$\fCenter \Gamma, \Delta[a/x,\refl(a)/p] \mid E[a/x,\refl(a)/p] \vdash d : D[a/x,\refl(a)/p]$
\def\extraVskip{2pt}
\BinaryInfC{$\Gamma, \Delta[a'/x,t/p] \mid E[a'/x,t/p] \vdash J(a, x p \Delta E.\,D, \Delta E.\,d, a', t) : D[a'/x,t/p]$}
\DisplayProof
\end{center}

\[ J(a, x p \Delta E.\,D, \Delta E.\,d, a, \refl(a)) = d \]

Indeed, we define $J(a, x p \Delta E.\,D, \Delta E.\,d, a', t)$ as follows:
\[ J(a, x p \Delta.\,\Hom(E.D), \Delta.\,\lambda E.\,d, a', t)\,y_1\,\ldots\,y_k, \]
where $E = y_1 : B_1, \ldots y_k : B_k$.
If we do not assume the existence dependent $\Hom$-types, then we need to add this operation explicitly.
If the indexed theory has identity types, then we can define the following operation:
\begin{center}
\AxiomC{$\Gamma \vdash p : \Id_{\Hom(A,B)}(f,g)$}
\AxiomC{$\Gamma \mid \Delta \vdash a : A$}
\BinaryInfC{$\Gamma \mid \Delta \vdash \fs{hap}(p,a) : \Id_B(f\,a,g\,a)$}
\DisplayProof
\end{center}
It is defined as follows:
\[ \fs{hap}(p,a) = \pmap(f.\,f\,a,p). \]

We will say that identity types are \emph{extensional} if $\fs{hap}$ is an equivalence.
More precisely, the theory of extensional identity types has the following constructions:
\begin{center}
\AxiomC{$\Gamma \mid x : A \vdash p : \Id_{B}(f\,x,g\,x)$}
\UnaryInfC{$\Gamma \vdash \fs{Idext}(x.\,p) : \Id_{\Hom(A,B)}(f,g)$}
\DisplayProof
\end{center}
\medskip

\begin{center}
\AxiomC{$\Gamma \mid x : A \vdash p : \Id_{B}(f\,x,g\,x)$}
\AxiomC{$\Gamma \mid \Delta \vdash a : A$}
\BinaryInfC{$\Gamma \mid \Delta \vdash \fs{hap_h}(x.\,p,a) : \Id_B(\fs{hap}(\fs{Idext}(x.\,p),a),p[a/x])$}
\DisplayProof
\end{center}
\medskip
The standard argument implies that we also have the following homotopy:
\begin{center}
\AxiomC{$\Gamma \vdash p : \Id_{\Hom(A,B)}(f,g)$}
\UnaryInfC{$\Gamma \mid \Delta \vdash \fs{hap_h'}(p) : \Id(\fs{Idext}(x.\,\fs{hap}(p,x)),p)$}
\DisplayProof
\end{center}
If we have dependent $\Hom$-types, then identity types are extensional if and only if the following function is an equivalence:
\[ \lambda p.\,\lambda x.\,\fs{hap}(p,x) : \Id_{\Hom(A,B)}(f,g) \to \Hom(A, x.\,\Id_B(f\,x,g\,x)). \]

\begin{example}
The canonical indexing of a dependent type theory over itself is locally small if and only if it has non-dependent function types.
It has dependent $\Hom$-types if and only if it has $\Pi$-types.
It has extensional identity types if and only if it has identity types and the functional extensionality.
\end{example}

\subsection{Weak dependent $\Hom$-types}

If the indexed theory has extensional identity types, $\Sigma$-types, and the unit type, then we can define a weak version of dependent $\Hom$-types:
\[ \Hom(\Delta.B) = \sum_{f : \Hom(\Sigma(\Delta),\Sigma_{p : \Sigma(\Delta)} B[\pi_1(p)/x_1, \ldots \pi_k(p)/x_k])} \Id(\pi_1 \circ f, \id_{\Sigma(\Delta)}), \]
where $\Delta = x_1 : A_1, \ldots x_k : A_k$ and $\Sigma(\Delta)$ is defined inductively:
\begin{align*}
\Sigma(\cdot) & = \top \\
\Sigma(x : A, \Delta) & = \sum_{x : A} \Sigma(\Delta).
\end{align*}
The abstraction is defined as follows:
\[ \lambda x_1 \ldots x_k.\,b = (\lambda p.\,(p, b[\pi_1(p)/x_1, \ldots \pi_k(p)/x_k]), \refl(\id_{\Sigma(\Delta)})). \]
The application is defined as follows:
\[ f\,a_1\,\ldots\,a_k = \fs{hap}(\pi_2(f),(a_1, \ldots a_k))_*(\pi_2(\pi_1(f)\,(a_1, \ldots a_k))). \]
The beta rule holds judgmentally, but the eta rule holds only propositionally.
Indeed, $\lambda x_1 \ldots x_k.\,f\,x_1\,\ldots\,x_k$ equals to
\[ (\lambda p.\,(p,\fs{hap}(\pi_2(f),p')_*(\pi_2(\pi_1(f)\,p'))), \refl(\id_{\Sigma(\Delta)})), \]
where $p' = (\pi_1(p), \ldots \pi_k(p))$.
To prove that it is homotopic to $f$, we need to construct a homotopy of the following type:
\[ h : \Id(\pi_1(f), \lambda p.\,(p,\fs{hap}(\pi_2(f),p')_*(\pi_2(\pi_1(f)\,p')))) \]
such that $h * \pi_1$ is homotopic to $\pi_2(f)$.
To construct such a homotopy, we can use $\fs{Idext}$.
Then we need to define two homotopies for every $p : \Sigma(\Delta)$:
\begin{align*}
h_1 & : \Id(\pi_1(\pi_1(f)\,p),p) \\
h_2 & : \Id((h_1)_*(\pi_2(\pi_1(f)\,p)),\fs{hap}(\pi_2(f),p')_*(\pi_2(\pi_1(f)\,p')))
\end{align*}
The condition that $h * \pi_1$ is homotopic to $\pi_2(f)$ is satisfied if we put $h_1 = \fs{hap}(\pi_2(f),p)$.
Finally, to construct $h_2$, it is enough to note that $p'$ is homotopic to $p$.

Thus, the theory of dependent $\Hom$-types is a slightly stricter version of the theory of $\Hom$-types.
This is similar to the theory of $\Pi$-types being a strict version of the theory of non-dependent function types.

We can define a dependent version of $\fs{hap}$:
\begin{center}
\AxiomC{$\Gamma \vdash p : \Id_{\Hom(\Delta,B)}(f,g)$}
\def\extraVskip{1pt}
\Axiom$\fCenter \Gamma \mid E \vdash a_1 : A_1$
\noLine
\UnaryInf$\fCenter \ldots$
\noLine
\UnaryInf$\fCenter \Gamma \mid E \vdash a_k : A_k[a_1/x_1, \ldots a_{k-1}/x_{k-1}]$
\def\extraVskip{2pt}
\BinaryInfC{$\Gamma \mid E \vdash \fs{hap}(p, a_1, \ldots a_k) : \Id_{B[a_1/x_1, \ldots a_k/x_k]}(f\,a_1\,\ldots\,a_k,g\,a_1\,\ldots\,a_k)$}
\DisplayProof
\end{center}
It is defined as follows:
\[ \fs{hap}(p, a_1, \ldots a_k) = J(f, h q.\,\Id(f\,a_1\,\ldots\,a_k,h\,a_1\,\ldots\,a_k), \refl(f\,a_1\,\ldots\,a_k), g, p). \]
It is straightforward to check that if identity types are extensional, then the following function is an equivalence:
\begin{align*}
& \Id_{\Hom(x_1 \ldots x_k.B)}(f,g) \to \Hom(x_1 \ldots x_k.\,\Id_B(f\,x_1\,\ldots\,x_k,g\,x_1\,\ldots\,x_k)) \\
& \lambda p.\,\lambda x_1 \ldots x_k.\,\fs{hap}(p, x_1, \ldots x_k).
\end{align*}

\section{Locally Cartesian closed indexed theories}
\label{sec:lccc}

In this section, we will define locally Cartesian closed unary type theories and discuss the relationship between them and indexed dependent type theories with $\Pi$-types.

\subsection{Types over contexts}

Let $p_A : \Hom(A,\Delta)$ and $p_B : \Hom(B,\Delta)$ be a pair of maps with the same codomain.
We will write $\Hom_\Delta(A,B)$ for the type $\Sigma_{f : \Hom(A,B)} \Id(p_B \circ f, p_A)$.
If we think of maps $\Hom(X,\Delta)$ as types over $\Delta$, then $\Hom_\Delta(A,B)$ is the type of morphisms between such types.
We will identify elements of $\Hom_\Delta(A,B)$ with underlying morphisms $\Hom(A,B)$ and we will often omit the homotopy in $\Hom_\Delta(A,B)$ when constructing an element of this type.

If the theory has $\Sigma$-types and unit types, then, for every dependent types $\Delta \vdash B \ob$, we can define the following map:
\[ \pi_1 : \Hom(\sum_{p : \Sigma(\Delta)} B[\pi_1(p)/x_1, \ldots \pi_k(p)/x_k]), \Sigma(\Delta)), \]
where $\Delta = x_1 : A_1, \ldots x_k : A_k$.
Conversely, if the theory also has identity types, then, for every map $f : \Hom(A,B)$, we can define the following dependent type:
\[ y : B \vdash \sum_{x : A} \Id_B(f\,x,y). \]
These constructions are mutually inverse.
This implies that every dependent type in an arbitrary context is equivalent to a type in a context of size 1.

If $A$ and $B$ are dependent types in a context $\Delta$ in an indexed dependent type theory, then we will write $\Hom_\Delta(A,B)$ for $\Hom((\Delta, x : A). B)$.
This type corresponds to the previously defined type of maps over $\Delta$ through the equivalence between dependent types and types over $\Sigma(\Delta)$.

\subsection{Exponentiable maps}

Let $p_A : \Hom(A,D)$ be a morphism in an indexed unary type theory such that its pullbacks along any map exist.
The \emph{exponent over $D$} of $p_A$ and a map $p_B : \Hom(B,D)$ is a map $p : \Hom(B^A,D)$ together with a map $\fs{ev} : \Hom_D(B^A \times_D A, B)$ such that the following function is an equivalence for every indexed type $X$:
\[ \lambda f.\, \fs{ev} \circ (f \times_D A) : \Hom_D(X, B^A) \to \Hom_D(X \times_D A, B). \]
We will say that $p_A$ is \emph{exponentiable} if the exponent $B^A$ exists for all maps $p_B$.
We will say that the theory is \emph{locally Cartesian closed} if all maps are exponentiable.

Non-dependent function types in an indexed dependent type theory are defined as usual.
If $\Gamma \mid \Delta \vdash A \ob$ and $\Gamma \mid \Delta \vdash B \ob$ is a pair of indexed types, then the \emph{function type} $\Gamma \mid \Delta \vdash A \to B \ob$ is defined as follows:
\begin{center}
\AxiomC{$\Gamma \mid \Delta, x : A \vdash b : B$}
\UnaryInfC{$\Gamma \mid \Delta \vdash \lambda x.\,b : A \to B$}
\DisplayProof
\qquad
\AxiomC{$\Gamma \mid \Delta \vdash f : A \to B$}
\AxiomC{$\Gamma \mid \Delta \vdash a : A$}
\BinaryInfC{$\Gamma \mid \Delta \vdash f\,a : B$}
\DisplayProof
\end{center}

\begin{align*}
(\lambda x.\,b)\,a & = b[a/x] \\
\lambda x.\,f\,x & = f
\end{align*}
The \emph{$\Pi$-type} $\Pi_{x : A} B$ of a family $\Gamma \vdash \Delta, x : A \vdash B \ob$ is defined similarly:
\begin{center}
\AxiomC{$\Gamma \mid \Delta, x : A \vdash b : B$}
\UnaryInfC{$\Gamma \mid \Delta \vdash \lambda x.\,b : \Pi_{x : A} B$}
\DisplayProof
\qquad
\AxiomC{$\Gamma \mid \Delta \vdash f : \Pi_{x : A} B$}
\AxiomC{$\Gamma \mid \Delta \vdash a : A$}
\BinaryInfC{$\Gamma \mid \Delta \vdash f\,a : B[a/x]$}
\DisplayProof
\end{center}

\begin{align*}
(\lambda x.\,b)\,a & = b[a/x] \\
\lambda x.\,f\,x & = f
\end{align*}

The last two equations are called $\beta$ and $\eta$ rules.
\emph{Weak function types} and \emph{weak $\Pi$-types} are defined in the same way except for the $\beta$ and $\eta$ rules which hold only propositionally.
We will denote them by $\beta(x.b,a)$ and $\eta(f)$:
\begin{align*}
\beta(x.b, a) & : \Id((\lambda x.\,b)\,a, b[a/x]) \\
\eta(f) & : \Id(\lambda x.\,f\,x, f)
\end{align*}

\emph{Functional extensionality} for (weak) $\Pi$-types is defined as usual:
\begin{center}
\AxiomC{$\Gamma \mid \Delta, x : A \vdash p : \Id_B(f\,x,g\,x)$}
\UnaryInfC{$\Gamma \mid \Delta \vdash \fs{funext}(x.p) : \Id_{\Pi_{x : A} B}(f,g)$}
\DisplayProof
\end{center}
\medskip

\begin{center}
\AxiomC{$\Gamma \mid \Delta \vdash p : \Id_{\Pi_{x : A} B}(f,g)$}
\UnaryInfC{$\Gamma \mid \Delta \vdash \fs{funext_h}(p) : \Id(\fs{funext}(x.\pmap(f.\,f\,x,p)), p)$}
\DisplayProof
\end{center}
\medskip

\begin{center}
\AxiomC{$\Gamma \mid \Delta, x : A \vdash p : \Id_B(f\,x,g\,x)$}
\AxiomC{$\Gamma \mid \Delta \vdash a : A$}
\BinaryInfC{$\Gamma \mid \Delta \vdash \fs{funext_h'}(x.p,a) : \Id(\pmap(f.\,f\,a,\fs{funext}(x.p)), p[a/x])$}
\DisplayProof
\end{center}

\begin{lem}[dep-exp]
Let $\Delta = y_1 : A_1, \ldots y_k : A_k$ be a context and let $\Gamma \mid \Delta \vdash \Pi_{x : A} B \ob$ be a $\Pi$-type in this context.
Then the following function is an equivalence:
\[ \lambda f.\,\lambda \overline{y} x.\,f\,\overline{y}\,x : \Hom(\Delta. \Pi_{x : A} B) \to \Hom((\Delta, x : A). B). \]
\end{lem}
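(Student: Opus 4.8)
The plan is to exhibit the evident inverse map and to verify that both round trips are the identity, using only the $\beta$- and $\eta$-rules for $\Pi$-types and for dependent $\Hom$-types. Write $\Delta = y_1 : A_1, \ldots, y_k : A_k$, abbreviate the list $y_1, \ldots, y_k$ by $\overline{y}$, and let $F(f) = \lambda \overline{y} x.\,f\,\overline{y}\,x$ denote the function in the statement. I would define the candidate inverse $G : \Hom((\Delta, x : A).B) \to \Hom(\Delta.\Pi_{x : A} B)$ by $G(g) = \lambda \overline{y}.\,\lambda x.\,g\,\overline{y}\,x$, where the inner $\lambda x.\,(-)$ is the $\Pi$-abstraction in context $\Gamma \mid \Delta$ and the outer $\lambda \overline{y}.\,(-)$ is the dependent $\Hom$-abstraction over $\Delta$; this is well formed, since $g\,\overline{y}\,x$ is the application of the dependent $\Hom$-term $g$ to the $k+1$ variables $\overline{y}, x$, which yields a term of $B$ in context $\Gamma \mid \Delta, x : A$, so that $\lambda x.\,g\,\overline{y}\,x : \Pi_{x : A} B$ over $\Gamma \mid \Delta$ and $G(g) : \Hom(\Delta.\Pi_{x : A} B)$.

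Next I would compute $G(F(f))$: the $\beta$-rule for dependent $\Hom$-types gives $(F(f))\,\overline{y}\,x = f\,\overline{y}\,x$, so $G(F(f)) = \lambda \overline{y}.\,\lambda x.\,f\,\overline{y}\,x$; the $\eta$-rule for $\Pi$-types gives $\lambda x.\,(f\,\overline{y})\,x = f\,\overline{y}$, so this becomes $\lambda \overline{y}.\,f\,\overline{y}$; and the $\eta$-rule for dependent $\Hom$-types gives $\lambda \overline{y}.\,f\,\overline{y} = f$. Symmetrically, I would compute $F(G(g))$: the $\beta$-rule for dependent $\Hom$-types gives $(G(g))\,\overline{y} = \lambda x.\,g\,\overline{y}\,x$, then the $\beta$-rule for $\Pi$-types gives $(\lambda x.\,g\,\overline{y}\,x)\,x = g\,\overline{y}\,x$, so $F(G(g)) = \lambda \overline{y} x.\,g\,\overline{y}\,x$, which equals $g$ by the $\eta$-rule for dependent $\Hom$-types. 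Thus $F$ and $G$ are mutually inverse as functions of base types, so $F$ is in particular an equivalence, with $\refl$ witnessing both homotopies.

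The argument is essentially bookkeeping, so I do not expect a substantial obstacle; the only delicate point is keeping $\Pi$-application and dependent $\Hom$-application distinct and feeding the two $\eta$-rules in the correct order. One remark explains why ``equivalence'' is the natural phrasing: if one works with \emph{weak} $\Pi$-types and weak dependent $\Hom$-types, the equalities above degrade to identities that depend on the variables $\overline{y}$, and transporting the pointwise $\eta$-homotopy $\lambda x.\,f\,\overline{y}\,x \sim f\,\overline{y}$ underneath the outer abstraction $\lambda \overline{y}.\,(-)$ requires functional extensionality for dependent $\Hom$-types (equivalently, extensional identity types, cf.\ Section~\ref{sec:dependent}); under that hypothesis the very same computation yields $G(F(f)) \sim f$ and $F(G(g)) \sim g$, so that $F$ is again an equivalence.
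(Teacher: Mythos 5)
Your proof is correct and follows essentially the same route as the paper: exhibit the inverse $\lambda g.\,\lambda \overline{y}.\lambda x.\,g\,\overline{y}\,x$ and check both round trips by the $\beta$- and $\eta$-rules for dependent $\Hom$-types and for $\Pi$-types, in the same order. Your closing remark about the weak case matches the paper's own care in marking the $\Pi$-type $\beta$/$\eta$ steps as propositional rather than judgmental.
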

\begin{proof}
The inverse of this function is given by $\lambda g.\,\lambda \overline{y}.\lambda x.\,g\,\overline{y}\,x$:
\begin{align*}
(\lambda f.\,\lambda \overline{y} x.\,f\,\overline{y}\,x) \circ (\lambda g.\,\lambda \overline{y}.\lambda x.\,g\,\overline{y}\,x) & = \\
\lambda g.\,\lambda \overline{y} x.\,(\lambda \overline{y}.\lambda x.\,g\,\overline{y}\,x)\,\overline{y}\,x & = \\
\lambda g.\,\lambda \overline{y} x.\,(\lambda x.\,g\,\overline{y}\,x)\,x & \sim \\
\lambda g.\,\lambda \overline{y} x.\,g\,\overline{y}\,x & \sim \\
\lambda g.g &
\end{align*}
\begin{align*}
(\lambda g.\,\lambda \overline{y}.\lambda x.\,g\,\overline{y}\,x) \circ (\lambda f.\,\lambda \overline{y} x.\,f\,\overline{y}\,x) & = \\
\lambda f.\,\lambda \overline{y}.\lambda x.\,(\lambda \overline{y} x.\,f\,\overline{y}\,x)\,\overline{y}\,x & = \\
\lambda f.\,\lambda \overline{y}.\lambda x.\,f\,\overline{y}\,x & \sim \\
\lambda f.\,\lambda \overline{y}.\,f\,\overline{y} & \sim \\
\lambda f.f &
\end{align*}
\end{proof}

\begin{prop}
Functional extensionality holds for all $\Pi$-types that exist in an indexed dependent type theory with extensional identity types.
\end{prop}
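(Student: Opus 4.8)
The plan is to deduce functional extensionality for $\Pi$-types from \rlem{dep-exp} together with the extensionality of identity types applied to dependent $\Hom$-types. Fix a $\Pi$-type $\Gamma \mid \Delta, x : A \vdash B \ob$, write $\Delta' = (\Delta, x : A)$, and let $\Gamma \mid \Delta \vdash f : \Pi_{x : A} B$ and $\Gamma \mid \Delta \vdash g : \Pi_{x : A} B$. By \rlem{dep-exp} the map $\Phi = \lambda h.\,\lambda \overline{y} x.\,h\,\overline{y}\,x : \Hom(\Delta.\,\Pi_{x : A} B) \to \Hom(\Delta'.\,B)$ is an equivalence, and by its definition together with the $\beta$ rule for $\Hom$-types one has $\Phi(\lambda \Delta.\,f) = \lambda \overline{y} x.\,f\,x$ and $\Phi(\lambda \Delta.\,g) = \lambda \overline{y} x.\,g\,x$. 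Since $\Phi$ is an equivalence, applying $\pmap$ to it gives an equivalence on identity types (by \cite[Theorem~2.11.1]{hottbook}, whose proof, as noted for \rlem{lrinv-contr}, does not use $\Pi$-types), so
\[ \Id_{\Hom(\Delta.\,\Pi_{x : A} B)}(\lambda \Delta.\,f, \lambda \Delta.\,g) \simeq \Id_{\Hom(\Delta'.\,B)}(\lambda \overline{y} x.\,f\,x, \lambda \overline{y} x.\,g\,x). \]

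Next I would use the hypothesis, in the form that the map $\lambda p.\,\lambda \overline{y}.\,\fs{hap}(p,\overline{y})$ is an equivalence between identity types of dependent $\Hom$-types and pointwise identity types. Applied to the context $\Delta$ and the type $\Pi_{x : A} B$ it gives $\Id_{\Hom(\Delta.\,\Pi_{x : A} B)}(\lambda \Delta.\,f, \lambda \Delta.\,g) \simeq \Hom(\Delta.\,\Id_{\Pi_{x : A} B}(f, g))$; applied to the context $\Delta'$ and the type $B$ it gives $\Id_{\Hom(\Delta'.\,B)}(\lambda \overline{y} x.\,f\,x, \lambda \overline{y} x.\,g\,x) \simeq \Hom(\Delta'.\,\Id_B(f\,x, g\,x))$. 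Composing these with the previous equivalence produces a canonical equivalence
\[ E : \Hom(\Delta.\,\Id_{\Pi_{x : A} B}(f, g)) \simeq \Hom(\Delta'.\,\Id_B(f\,x, g\,x)). \]
Reading off its data gives the operations of functional extensionality: $\fs{funext}(x.\,p)$ is obtained by applying $E^{-1}$ to $\lambda \overline{y} x.\,p$ and then to the variables of $\Delta$, while the two round-trip homotopies of $E$ supply $\fs{funext_h}$ and $\fs{funext_h'}$.

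To obtain $\fs{funext_h}$ and $\fs{funext_h'}$ exactly in the form of their rules, one must check that the forward map $E$ is homotopic to $r \mapsto \lambda \overline{y} x.\,\pmap(f.\,f\,x, r\,\overline{y})$, i.e. that $\fs{hap}$ for $\Hom$-types is carried by $\Phi$ to the operation $\pmap(f.\,f\,x, -)$. Unfolding the definition of $E$, this reduces to the identity $\fs{hap}(\pmap(h.\,\Phi\,h, P), \overline{y}, x) \sim \pmap(f.\,f\,x, \fs{hap}(P, \overline{y}))$ for $P : \Id_{\Hom(\Delta.\,\Pi_{x : A} B)}(\lambda \Delta.\,f, \lambda \Delta.\,g)$, which follows from a single application of $J$ to $P$: at $P = \refl(\lambda \Delta.\,f)$ both sides compute to $\refl(f\,x)$ via the computation rules for $J$, $\pmap$, and $\Hom$-types. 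Feeding this identification into the round-trips of $E$, together with the $\beta$ and $\eta$ rules for $\Hom$-types and the fact that $\fs{Idext}(\overline{y}.\,\refl) \sim \refl$ (immediate from $\fs{hap_h'}$), yields the two required equations. The main obstacle is precisely this bookkeeping — threading the explicit definitions of $\Phi$, of the dependent $\fs{hap}$, and of $\fs{Idext}$, $\fs{hap_h}$, $\fs{hap_h'}$ through the composite $E$ so that the extracted operations satisfy the rules on the nose. Conceptually there is nothing subtle, but one must track which equalities hold judgmentally and which only propositionally, the latter becoming more prominent if only weak dependent $\Hom$-types are available.
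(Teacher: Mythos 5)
Your proposal is correct and follows essentially the same route as the paper: the composite equivalence $E$ you build out of $\pmap$ applied to the map from Lemma~\ref{lem:dep-exp} and the two dependent $\fs{hap}$ equivalences is exactly the paper's commutative square traversed along three sides, and your path-induction check that $E$ agrees with $r \mapsto \lambda \overline{y} x.\,\pmap(f.\,f\,x, r\,\overline{y})$ is precisely the paper's observation that the square commutes because all four maps preserve $\refl$. Reading $\fs{funext}$, $\fs{funext_h}$, and $\fs{funext_h'}$ off the inverse of that map is also what the paper does.
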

\begin{proof}
Let $\Delta = y_1 : A_1, \ldots y_k : A_k$ be a context and let $\Gamma \mid \Delta \vdash f' : \Pi_{x : A} B$ and $\Gamma \mid \Delta \vdash g' : \Pi_{x : A} B$ be a pair of functions in this context.
Consider the following square:
\[ \xymatrix{ \Id_{\Hom(\Delta. \Pi_{x : A} B)}(f',g') \ar[r] \ar[d] & \Id_{\Hom((\Delta, x : A). B)}(\lambda \overline{y} x.\,f'\,\overline{y}\,x, \lambda \overline{y} x.\,g'\,\overline{y}\,x) \ar[d] \\
              \Hom(\Delta.\,\Id_{\Pi_{x : A} B}(f'\,\overline{y}, g'\,\overline{y})) \ar[r] & \Hom((\Delta, x : A).\,\Id_B(f'\,\overline{y}\,x, g'\,\overline{y}\,x))
            } \]
The bottom map is defined as $\lambda s.\,\lambda \overline{y} x.\,\pmap(f.\,f\,x,s\,\overline{y})$.
The left and right maps are defined as $\fs{hap}$.
These functions are equivalences since identity maps are extensional.
The top map is defined as $\lambda p.\,\pmap(f.\,\lambda \overline{y} x.\,f\,\overline{y}\,x,p)$.
\Rlem{dep-exp} implies that this function is also an equivalence.
Since the functions appearing in this square preserve $\refl$, path induction implies that it commutes.
Thus, the bottom map is an equivalence.
Let $r(f',g')$ be its inverse.

If $\Gamma \mid \Delta, x : A \vdash p : \Id(f\,x,g\,x)$, then we define $\fs{funext}(x.p)$ as follows:
\[ \Gamma \mid \Delta \vdash r(\lambda \overline{y}.f,\lambda \overline{y}.g)\,(\lambda \overline{y} x.p)\,\overline{y} : \Id_{\Pi_{x : A} B}(f,g). \]
It is easy to define $\fs{funext_h}$ and $\fs{funext_h'}$ using the fact that $r$ is an inverse of $\lambda s.\,\lambda \overline{y} x.\,\pmap(f.\,f\,x,s\,\overline{y})$.
\end{proof}

\begin{prop}[lccc]
Let $\Gamma \mid \Delta \vdash A \ob$ be a type in an indexed dependent type theory with $\Sigma$-types, unit types, and extensional identity types.
Then the following conditions are equivalent:
\begin{enumerate}
\item \label{it:exp-dep} Weak unstable $\Pi$-types $\Pi_{x : A} B$ exist for all dependent types $B$.
\item \label{it:exp-non-dep} Weak unstable function types $A \to B$ exist for all types $B$.
\item \label{it:exp-exp} $A$ is exponentiable.
\end{enumerate}
\end{prop}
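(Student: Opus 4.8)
The plan is to prove the cyclic chain of implications \eqref{it:exp-dep} $\Rightarrow$ \eqref{it:exp-non-dep} $\Rightarrow$ \eqref{it:exp-exp} $\Rightarrow$ \eqref{it:exp-dep}. The implication \eqref{it:exp-dep} $\Rightarrow$ \eqref{it:exp-non-dep} is the easy direction: a weak function type $A \to B$ is just the special case of a weak $\Pi$-type over a constant family, so it is immediate. For \eqref{it:exp-non-dep} $\Rightarrow$ \eqref{it:exp-exp}, I would pass through the canonical indexing picture. Recall from the subsection on types over contexts that a map $p_C : \Hom(C,D)$ in the indexed theory corresponds to a dependent type over $D$ (using $\Sigma$-types, unit types, and identity types, which are all available since identity types are extensional), and that $\Hom_D(X, B^A)$ unfolds to a type of the shape $\sum_{f : \Hom(X,B^A)} \Id(\dots)$. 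The strategy is to build $B^A$ over $D$ fibrewise: working in the base context extended by a variable $d : D$, the fibres $A_d$ and $B_d$ are indexed types, and the function type $A_d \to B_d$ is supplied by hypothesis \eqref{it:exp-non-dep}; reassembling these fibres via a $\Sigma$ over $D$ (i.e.\ via the $\Hom(\Delta.-)$ construction applied to the context containing $d : D$) yields a map $B^A \to D$, and the evaluation map comes from the application operation of the function type. The universal property $\Hom_D(X, B^A) \simeq \Hom_D(X \times_D A, B)$ then follows from the $\beta$ and $\eta$ homotopies of the weak function type, the universal property of the pullback $X \times_D A$ (which computes the fibres correctly), and the equivalence between morphisms over $D$ and fibrewise families of morphisms.

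For \eqref{it:exp-exp} $\Rightarrow$ \eqref{it:exp-dep}, I would run the previous correspondence in reverse. Given a dependent type $\Gamma \mid \Delta, x : A \vdash B \ob$, present it as a map $p_B : \Hom(\Sigma_{\Delta,x:A} B, \Sigma(\Delta,x:A)) $ over $\Sigma(\Delta)$ — more precisely, as a map to $\Sigma_{x : A}(\dots)$ which itself sits over $\Sigma(\Delta)$ via the projection off $A$. Since $A$ is exponentiable, the exponent of $p_A : \Sigma_{\Delta,x:A}\!A \to \Sigma(\Delta)$ (the projection corresponding to the family $A$) applied to $p_B$ exists; call its total space $P$. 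I claim $P$, sliced appropriately over $\Sigma(\Delta)$, represents the weak $\Pi$-type $\Pi_{x:A} B$. The defining equivalence $\Hom_D(X, P) \simeq \Hom_D(X \times_D A, B)$ specializes, by taking $X$ to range over types over $\Sigma(\Delta)$ and $X \times_D A$ to compute the corresponding dependent context extension, to the statement that sections of $P$ over a type correspond to dependent sections of $B$ — which is exactly the universal property of the $\Pi$-type once it is translated back through the dictionary between types over $\Sigma(\Delta)$ and dependent types in context $\Delta$. The abstraction and application operations, and their $\beta$/$\eta$ homotopies, are then read off from $\fs{ev}$ and the inverse of the exponent equivalence; the fact that they hold only up to homotopy (and that the resulting $\Pi$-type may be unstable) is precisely because the exponent is characterized by an equivalence rather than an isomorphism and is not assumed stable under reindexing.

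The main obstacle I anticipate is bookkeeping the translation between the two languages: the exponent $B^A$ is formulated for a \emph{single} exponentiable map over a base $D$, whereas the $\Pi$-type is formulated for a dependent type in an arbitrary indexed context $\Delta$ (which may involve a nonempty indexed context, not just a base context). Matching these requires repeatedly invoking the equivalence "every dependent type in an arbitrary context is equivalent to a type in a context of size $1$" and the identification $\Hom_\Delta(A,B) = \Hom((\Delta, x:A).B)$ from the subsection on types over contexts, and then checking that the pullback $X \times_D A$ appearing in the exponent's universal property corresponds, under this dictionary, to the context extension $\Delta, x : A$. Verifying that the $\beta$ and $\eta$ rules transported across these equivalences are the expected ones — rather than conjugated by stray transports — is the genuinely delicate part; everything else is the two easy implications plus the symmetric unwinding of definitions. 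I would lean on \rlem{dep-exp} and the functional extensionality proposition just above to absorb the coherence of iterated abstractions.
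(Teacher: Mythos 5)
Your overall architecture (the cycle \eqref{it:exp-dep} $\Rightarrow$ \eqref{it:exp-non-dep} $\Rightarrow$ \eqref{it:exp-exp} $\Rightarrow$ \eqref{it:exp-dep}) differs from the paper, which proves \eqref{it:exp-non-dep} $\Rightarrow$ \eqref{it:exp-dep} and \eqref{it:exp-non-dep} $\Leftrightarrow$ \eqref{it:exp-exp} separately; your first two implications match the paper's. But your step \eqref{it:exp-exp} $\Rightarrow$ \eqref{it:exp-dep} has a genuine gap. The exponent is defined only for a map $p_B$ over the \emph{same} base $D$ as $p_A$; it knows nothing about dependency of $B$ on $x : A$. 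Taking $p_B$ to be the composite $\Sigma(\Delta, x{:}A, B) \to \Sigma(\Delta)$, the exponent you form is $(\Sigma_{x : A} B)^A$, whose universal property classifies arbitrary maps $X \times_D A \to \Sigma_{x:A} B$ over $D = \Sigma(\Delta)$ --- that is, the non-dependent function type $A \to \Sigma_{x:A} B$. No amount of ``slicing appropriately over $\Sigma(\Delta)$'' turns its total space $P$ into $\Pi_{x:A}B$: you must additionally carve out the fibrewise \emph{sections}, i.e.\ pass to the subtype $\sum_{f : A \to \Sigma_{x:A} B} \Id_{A \to A}(\lambda x.\,\pi_1(f\,x), \lambda x.\,x)$, which uses the extensional identity types in an essential way. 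Your claim that ``sections of $P$ over a type correspond to dependent sections of $B$'' is exactly the statement that needs this cut-down plus a proof.

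Moreover, once the candidate $\Pi$-type is defined as this type of sections, a weak $\Pi$-type is not characterized by a representability statement but by having abstraction and application operations with propositional $\beta$ and $\eta$; extracting these from the section description and verifying the laws is where essentially all of the work in the paper's proof lives (roughly a page of explicit path algebra with $\fs{funext}$, $\fs{funext_h}$, $\fs{funext_h'}$, $\Sigma$-extensionality and transports along composite homotopies $h$, $h'$). You flag ``verifying that the $\beta$ and $\eta$ rules transported across these equivalences are the expected ones'' as the delicate part, but you locate the difficulty in the dictionary between contexts of size one and general $\Delta$ (which is indeed routine, absorbed by \rlem{dep-exp}), rather than in the section construction itself, which your outline omits entirely. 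The remaining pieces --- \eqref{it:exp-dep} $\Rightarrow$ \eqref{it:exp-non-dep} as a special case, and the passage between function types and exponents via $\fs{ev}$, the terminal type over $D$, and \rlem{dep-exp} --- are correct and agree with the paper.
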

\begin{proof}
\eqref{it:exp-dep} $\Rightarrow$ \eqref{it:exp-non-dep} Obvious.

\eqref{it:exp-non-dep} $\Rightarrow$ \eqref{it:exp-dep} 
We define $\Pi_{x : A} B$ as follows:
\[ \sum_{f : A \to \Sigma_{x : A} B} \Id_{A \to A}(\lambda x.\,\pi_1\,(f\,x), \lambda x.\,x). \]
If $\Gamma \mid \Delta, x : A \vdash b : B$, then we define $\lambda x.b$ as follows:
\[ (\lambda x.(x,b), \fs{funext}(x. h(b,x))), \]
where $h(b,x) = \beta(x.\,\pi_1\,((\lambda x.(x,b))\,x),x) \ct \pmap(p.\,\pi_1(p), \beta(x.(x,b),x)) \ct \sym{\beta(x.x,x)}$.
We will denote this pair by $\lambda^d x.b$ to distinguish it from the non-dependent $\lambda$.
If $\Gamma \mid \Delta \vdash p : \Pi_{x : A} B$ and $\Gamma \mid \Delta \vdash a : A$, then we define $p\,a$ as follows:
\[ h'(p,a)_*(\pi_2(\pi_1(p)\,a)), \]
where $h'(p,a) = \sym{\beta(x.\,\pi_1\,(\pi_1(p)\,x),a)} \ct \pmap(f.\,f\,a,\pi_2(p)) \ct \beta(x.x,a)$.

Let us prove that $(\lambda^d x.b)\,a = h'(\lambda^d x.b,a)_*((\lambda x.(x,b))\,a)$ is homotopic to $b[a/x]$.
First, note that we have the following homotopy:
\[ \beta(x.(x,b),a) : \Id_{\Sigma_{x : A} B}((\lambda x.(x,b))\,a,(a,b[a/x])). \]
A standard argument implies that $\pmap(p.\,\pi_1(p),\beta(x.(x,b),a))_*((\lambda x.(x,b))\,a)$ is homotopic to $b[a/x]$.
Thus, we just need to prove that $\pmap(p.\,\pi_1(p),\beta(x.(x,b),a))$ is homotopic to $h'(\lambda^d x.b,a)$:
\begin{align*}
h'(\lambda^d x.b,a) & = \\
\sym{\beta(x.\,\pi_1\,((\lambda x.(x,b))\,x),a)} \ct \pmap(f.\,f\,a,\fs{funext}(x. h(b,x))) \ct \beta(x.x,a) & \sim \\
\sym{\beta(x.\,\pi_1\,((\lambda x.(x,b))\,x),a)} \ct h(b,a) \ct \beta(x.x,a) & \sim \\
\pmap(p.\,\pi_1(p), \beta(x.(x,b),a)) & .
\end{align*}
The first homotopy follows from $\fs{funext_h'}$ and the second one from the definition of $h(b,a)$.

Now, let $(f,q)$ be a term of type $\Pi_{x : A} B$.
Let us prove that $\lambda^d x.\,(f,q)\,x$ is homotopic to $(f,q)$.
Let $b = h'((f,q),x)_*(\pi_2(f\,x))$.
Then
\[ \lambda^d x.\,(f,q)\,x = (\lambda x.(x,b),\fs{funext}(x.h(b,x))). \]
By $\Sigma$-extensionality, it is enough to construct a homotopy $t$ between $f$ and $\lambda x.(x,b)$ such that $t_*(q)$ is homotopic to $\fs{funext}(x.h(b,x))$.
We can define $t$ as follows:
\[ \fs{funext}(x. \Sigma \fs{ext}(h'((f,q),x),\refl) \ct \sym{\beta(x.(x,b),x)}). \]
Note that $t_*(q)$ is homotopic $\sym{\fs{funext}(x.s)} \ct q$, where
\[ s = \beta(x.\,\pi_1(f\,x),x) \ct \pmap(p.\,\pi_1(p\,x),t) \ct \sym{\beta(x.\,\pi_1((\lambda x.(x,b))\,x),x)}. \]
This is easy to prove by the path induction on $t$.
Thus, we just need to prove that $q$ is homotopic to $\fs{funext}(x.\,s \ct h(b,x))$.
By $\fs{funext_h}$, it is enough to prove that $\pmap(f.\,f\,x,q)$ is homotopic to $s \ct h(b,x)$.

We have the following sequence of homotopies:
\begin{align*}
\pmap(p.\,\pi_1(p\,x),t) & \sim \\
\pmap(p.\,\pi_1(p),\pmap(f.\,f\,x,t)) & = \\
\pmap(p.\,\pi_1(p),\pmap(f.\,f\,x, \fs{funext}(x. \Sigma \fs{ext}(h'((f,q),x),\refl) \ct \sym{\beta(x.(x,b),x)}))) & \sim \\
\pmap(p.\,\pi_1(p), \Sigma \fs{ext}(h'((f,q),x),\refl) \ct \sym{\beta(x.(x,b),x)}) & \sim \\
h'((f,q),x) \ct \sym{\pmap(p.\,\pi_1(p), \beta(x.(x,b),x))} & .
\end{align*}
This implies the existence of the required homotopy:
\begin{align*}
s \ct h(b,x) & \sim \\
\beta(x.\,\pi_1(f\,x),x) \ct \pmap(p.\,\pi_1(p\,x),t) \ct \pmap(p.\,\pi_1(p), \beta(x.(x,b),x)) \ct \sym{\beta(x.x,x)} & \sim \\
\beta(x.\,\pi_1(f\,x),x) \ct h'((f,q),x) \ct \sym{\beta(x.x,x)} & \sim \\
\pmap(f.\,f\,x,q) & .
\end{align*}
The first homotopy easily follows from the definitions of $s$ and $h(b,x)$.
The second homotopy follows from the sequence above.
The last homotopy follows from the definition of $h'((f,q),x)$.

\eqref{it:exp-non-dep} $\Leftrightarrow$ \eqref{it:exp-exp}.
A map $\fs{ev} : \Hom_D(B^A \times_D A, B)$ corresponds to a map $\fs{ev} : \Hom((d : D, f : B^A, a : A). B)$.
Then $\lambda f.\,\fs{ev} \circ (f \times_D A)$ is an equivalence if and only if the following function is an equivalence:
\[ \lambda f. \lambda d x a.\,\fs{ev}\,d\,(f\,d\,x)\,a : \Hom((d : D, x : X).\,B^A) \to \Hom((d : D, x : X, a : A). B). \]
If the function type $\Gamma \mid d : D \vdash A \to B \ob$ exists, then we define $B^A$ as $A \to B$ and $\fs{ev}$ as $\lambda d f a.\,f\,a$.
The function $\lambda f. \lambda d x a.\,\fs{ev}\,d\,(f\,d\,x)\,a = \lambda f.\,\lambda d x a.\,f\,d\,x\,a$ is an equivalence by \rlem{dep-exp}.

Conversely, suppose that the exponent $B^A$ over $D$ exists for all dependent types $\Gamma \mid d : D \vdash A \ob$ and $\Gamma \mid d : D \vdash B \ob$.
Let $\Gamma \mid \Delta \vdash A \ob$ and $\Gamma \mid \Delta \vdash B \ob$ be a pair of dependent types.
Since the theory has $\Sigma$-types and unit types, we may assume that $\Delta$ consists of a single type, that is $\Delta = (d : D)$.
Then we can define $A \to B$ as $B^A$.
If $\Gamma \mid d : D \vdash f : A \to B$ and $\Gamma \mid d : D \vdash a : A$, then we define $\Gamma \mid d : D \vdash f\,a : B$ as $\fs{ev}\,d\,f\,a$.
Let $r$ be the inverse of $\lambda f. \lambda d x a.\,\fs{ev}\,d\,(f\,d\,x)\,a$ for $X = \top$.
If $\Gamma \mid d : D, x : A \vdash b : B$, then we define $\Gamma \mid d : D \vdash \lambda x.b : A \to B$ as $r\,(\lambda d x a.\,b[a/x])\,d\,\fs{tt}$, where $\fs{tt}$ is the unique element of $\top$.
Then $\beta$ and $\eta$ equivalences follow from the fact that $r$ is the inverse of $\lambda f. \lambda d x a.\,\fs{ev}\,d\,(f\,d\,x)\,a$.
\end{proof}

\section{Limits and colimits in dependent theories}
\label{sec:colimits-dep}

In this section, we discuss the concepts of limits and colimits in indexed dependent type theories.

\subsection{Finite limits}

Clearly, an indexed dependent type theory has a terminal type if and only if it has a closed contractible type.
For example, this is true when it has the unit type.
If an indexed dependent type theory has extensional identity types and $\Sigma$-types, then it has pullbacks.
Indeed, we can define a pullback of maps $f : \Hom(A,C)$ and $g : \Hom(B,C)$ as $A \times_C B = \Sigma_{x : A} \Sigma_{y : B} \Id_C(f\,x,g\,y)$
with the obvious projections $\pi_1 : \Hom(A \times_C B, A)$, $\pi_2 : \Hom(A \times_C B, B)$ and the obvious homotopy between $f \circ \pi_1$ and $g \circ \pi_2$: namely, $\fs{Idext}(p.\,\pi_3(p))$.
We need to show that the following map is an equivalence:
\begin{align*}
& \Hom(P, A \times_C B) \to \sum_{F : \Hom(P,A)} \sum_{G : \Hom(P,B)} \Id(f \circ F, g \circ G) \\
& \lambda s.\,(\pi_1 \circ s, \pi_2 \circ s, s * \fs{Idext}(p.\,\pi_3(p))).
\end{align*}
The inverse of this map is defined as follows:
\[ \lambda t.\,\lambda p.\,(\pi_1(t)\,p, \pi_2(t)\,p, \fs{hap}(\pi_3(t),p)). \]
It is easy to see that these functions are inverse of each other using the fact that $s * \fs{Idext}(p.\,\pi_3(p)) = \fs{Idext}(p.\,\pi_3(s\,p))$.

\begin{remark}
The existence of $\Sigma$-types also implies the existence of products.
Thus, equalizers also can be constructed from $\Sigma$-types.
\end{remark}

The following proposition shows that $\Hom(x_1 \ldots x_n.\,-)$ commutes with $\Sigma$-types and identity types:
\begin{prop}
If an indexed dependent type theory has extensional identity types and $\Sigma$-types, then it has the following canonical equivalences:
\begin{align*}
\Hom(x_1 \ldots x_n.\,\Id_A(a,a')) & \simeq \Id_{\Hom(x_1 \ldots x_n. A)}(\lambda x_1 \ldots x_n.\,a, \lambda x_1 \ldots x_n.\,a') \\
\Hom(x_1 \ldots x_n.\,\sum_{y : A} B) & \simeq \sum_{f : \Hom(x_1 \ldots x_n. A)} \Hom(x_1 \ldots x_n.\,B[f\,x_1\,\ldots\,x_n / y]).
\end{align*}
\end{prop}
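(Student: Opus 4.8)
The plan is to establish the two canonical equivalences separately, reducing each to results already proven; throughout I abbreviate $x_1 \ldots x_n$ by $\overline{x}$. For the first equivalence I would instantiate the equivalence
\[ \lambda p.\,\lambda \overline{x}.\,\fs{hap}(p, x_1, \ldots x_n) : \Id_{\Hom(\overline{x}.B)}(f,g) \to \Hom(\overline{x}.\,\Id_B(f\,x_1\,\ldots\,x_n, g\,x_1\,\ldots\,x_n)), \]
valid for dependent $\Hom$-types whenever identity types are extensional, at $B = A$, $f = \lambda \overline{x}.\,a$ and $g = \lambda \overline{x}.\,a'$. The $\beta$ rule supplies homotopies $(\lambda \overline{x}.\,a)\,x_1\,\ldots\,x_n \sim a$ and $(\lambda \overline{x}.\,a')\,x_1\,\ldots\,x_n \sim a'$, and since $\Hom(\overline{x}.\,-)$ carries a pointwise equivalence to an equivalence, transporting the body of the codomain along these homotopies turns it into $\Hom(\overline{x}.\,\Id_A(a,a'))$; composing gives the claimed canonical equivalence. (With only weak dependent $\Hom$-types the $\beta$ rule is propositional, which changes nothing.)

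For the second equivalence I would write the two maps down by hand. One way, send $h : \Hom(\overline{x}.\,\sum_{y : A} B)$ to the pair whose first component is $f = \lambda \overline{x}.\,\pi_1(h\,x_1\,\ldots\,x_n)$ and whose second component is $\lambda \overline{x}.\,\pi_2(h\,x_1\,\ldots\,x_n)$, placed in $\Hom(\overline{x}.\,B[f\,x_1\,\ldots\,x_n/y])$ by transporting along the $\beta$ homotopy $f\,x_1\,\ldots\,x_n \sim \pi_1(h\,x_1\,\ldots\,x_n)$. The other way, send $(f,g)$ to $\lambda \overline{x}.\,(f\,x_1\,\ldots\,x_n, g\,x_1\,\ldots\,x_n)$. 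The round trip starting from $h$ collapses to the identity by the $\beta$ and $\eta$ rules for $\Hom$-types and the $\eta$ rule for $\Sigma$-types. For the round trip starting from $(f,g)$ I would use $\Sigma$-extensionality: the pointwise $\beta$ homotopy, promoted through extensionality of identity types for $\Hom$-types (the equivalence displayed above), yields a path $f \sim \lambda \overline{x}.\,\pi_1(h\,x_1\,\ldots\,x_n)$, and it then remains to identify the induced transport of $g$ with $\lambda \overline{x}.\,\pi_2(h\,x_1\,\ldots\,x_n)$.

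This last coherence is the main obstacle: one must push a transport along a path in $\Hom(\overline{x}.\,A)$ through the dependent second $\Sigma$-component. I expect to handle it exactly as in the proof of \rprop{lccc} --- reduce through $\fs{funext_h}$, $\fs{funext_h'}$ and the dependent version of $\fs{hap}$ to a pointwise identity, then eliminate the remaining path by induction --- the pointwise input being simply the compatibility of the $\beta$ rules for $\Hom$-types and for $\Sigma$-types with $\pi_2$. Everything else reduces to bookkeeping with the $\beta$ and $\eta$ rules.
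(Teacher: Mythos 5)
Your proposal is correct and follows essentially the same route as the paper: the first equivalence is obtained by instantiating the extensionality of identity types for dependent $\Hom$-types at $f = \lambda \overline{x}.\,a$, $g = \lambda \overline{x}.\,a'$, and the second is given by exactly the two mutually inverse maps you write down. The only difference is that you work with propositional $\beta$/$\eta$, which is what creates the transport coherence you flag as the main obstacle; with the judgmental $\beta$ and $\eta$ rules the paper assumes for dependent $\Hom$-types, both round trips reduce to definitional equalities and that obstacle disappears.
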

\begin{proof}
The first equivalence is simply the extensionality for identity types.
The second equivalence is defined as follows:
\begin{align*}
& \lambda g.\,(\lambda x_1 \ldots x_n.\,\pi_1(g\,x_1\,\ldots\,x_n), \lambda x_1 \ldots x_n.\,\pi_2(g\,x_1\,\ldots\,x_n)) \\
& \lambda p.\,\lambda x_1 \ldots x_n.\,(\pi_1(p)\,x_1\,\ldots\,x_n, \pi_2(p)\,x_1\,\ldots\,x_n)
\end{align*}
It is easy to see that these functions are mutually inverse.
\end{proof}

\subsection{Initial types}

Finite colimits can be defined in an indexed dependent type theory as higher inductive types.
We need to be careful since the usual definition gives us finite colimits which are stable under pullbacks.
If we want a definition of general finite colimits, then we need to modify the rules for higher inductive types slightly.

\begin{prop}
Let $0$ be a type in an indexed dependent type theory has extensional identity types and $\Sigma$-types.
Then $0$ is initial if and only if the theory has the following rule:
\begin{center}
\AxiomC{$\Gamma \mid x : 0 \vdash D \type$}
\AxiomC{$\Gamma \mid \Delta \vdash a : 0$}
\BinaryInfC{$\Gamma \mid \Delta \vdash 0\text{-}\fs{elim}(x.D,a) : D[a/x]$}
\DisplayProof
\end{center}
\end{prop}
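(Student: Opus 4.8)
The plan is to prove the two implications separately, using a $\Sigma$-type to reduce the dependent elimination rule to an ordinary map out of $0$, and extensional identity types to recover the uniqueness half of initiality.

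For the forward direction, assume $0$ is initial. Given $\Gamma \mid x : 0 \vdash D \type$, form the closed indexed type $E = \sum_{x : 0} D$ over $\Gamma$, with its first projection $\pi_1 : \Hom(E,0)$. Since $\Hom(0,E)$ is contractible we may choose $s : \Hom(0,E)$, and since $\Hom(0,0)$ is contractible we get $h : \Id(\pi_1 \circ s, \id_0)$. Working in context $\Gamma \mid x : 0$ we then have $\pi_2(s\,x) : D[\pi_1(s\,x)/x]$ together with $\fs{hap}(h,x) : \Id_0(\pi_1(s\,x),x)$, so transporting along this path produces a term of $D$; call it $0\text{-}\fs{elim}(x.D,x)$. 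For a general $\Gamma \mid \Delta \vdash a : 0$ set $0\text{-}\fs{elim}(x.D,a) := 0\text{-}\fs{elim}(x.D,x)[a/x]$ using the substitution rule for indexed terms. Since the rule imposes no computation or $\eta$ equation, this is all that is needed.

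For the converse, assume the rule and let $\Gamma \mid \cdot \vdash X \type$ be a closed indexed type; we must show $\Hom(0,X)$ is contractible. Weakening $X$ into context $\Gamma \mid x : 0$ and applying the rule with $D = X$, $\Delta = (x : 0)$, $a = x$ gives $\Gamma \mid x : 0 \vdash 0\text{-}\fs{elim}(x.X,x) : X$, hence a map $e_X := \lambda x.\,0\text{-}\fs{elim}(x.X,x) : \Hom(0,X)$, so $\Hom(0,X)$ is inhabited. To see $e_X$ is a center of contraction, take any $f : \Hom(0,X)$ and apply the rule to the dependent family $\Gamma \mid x : 0 \vdash \Id_X(f\,x, e_X\,x) \ob$ (again with $\Delta = (x : 0)$, $a = x$) to obtain a term of $\Id_X(f\,x, e_X\,x)$ in context $\Gamma \mid x : 0$; then $\fs{Idext}$ applied to this term yields a homotopy $f \sim e_X$ in $\Hom(0,X)$. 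Thus $\Hom(0,X)$ is inhabited with every element homotopic to $e_X$, i.e. contractible.

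The step I expect to require the most care is the converse: since the rule for $0\text{-}\fs{elim}$ carries no $\beta$- or $\eta$-equation, initiality cannot be read off definitionally and must be assembled homotopically, and the crucial observation is precisely that extensional identity types convert the pointwise output of the rule — an element of $\Id_X(f\,x, e_X\,x)$ over $x : 0$ — into an honest identification of morphisms in $\Hom(0,X)$ via $\fs{Idext}$. The remaining points are routine given the reindexing and locally small rules already in place: one checks that the applications $s\,x$, $f\,x$, $e_X\,x$ and the transports are formed in the intended indexed contexts, and that the term $0\text{-}\fs{elim}$ built above can be chosen compatibly with reindexing.
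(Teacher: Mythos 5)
Your proof is correct and follows essentially the same route as the paper: for the forward direction you build $0\text{-}\fs{elim}$ from a section $s : \Hom(0,\Sigma_{x:0}D)$ and the homotopy $\pi_1 \circ s \sim \id_0$ supplied by initiality, transporting $\pi_2(s\,x)$ along $\fs{hap}(h,x)$; for the converse you apply the eliminator to the identity family and use $\fs{Idext}$, exactly as the paper does (the paper phrases contractibility as ``inhabited and any two maps homotopic'' rather than via a designated center, which is an immaterial difference).
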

\begin{proof}
If $0$ is initial, then $\Gamma \mid x : 0 \vdash \fs{hap}(h,x)_*(\pi_2(f\,x)) : D$, where $f$ is the unique map $\Hom(0,\Sigma_{x : 0} D)$ and $h$ is the unique homotopy between $\pi_1 \circ f$ and $\id_0$.
Now, we can define $0\text{-}\fs{elim}(x.D,a)$ as $\fs{hap}(h,x)_*(\pi_2(f\,x))[a/x]$.

Conversely, suppose that the theory has the the rule $0\text{-}\fs{elim}$.
Then, for every type $D$, we can define a map $\lambda x.\,0\text{-}\fs{elim}(y.D,x) : \Hom(0,D)$.
For all maps $f,g : \Hom(0,D)$, we can define a homotopy between them as follows:
\[ \fs{Idext}(x.\,0\text{-}\fs{elim}(y.\,\Id_D(f\,y,g\,y),x)) : \Id(f,g). \]
\end{proof}

We will say that the initial type $0$ in an indexed unary type theory is \emph{stable under pullbacks} if, for every type $B$ such that the product $B \times 0$ exists, this product is initial.

\begin{prop}
Let $0$ be a type in an indexed dependent type theory with extensional identity types and $\Sigma$-types.
Then the following conditions are equivalent:
\begin{enumerate}
\item \label{it:init-first} The theory has the following unstable rule:
\begin{center}
\AxiomC{$\Gamma \mid \Delta, x : 0, E \vdash D \type$}
\AxiomC{$\Gamma \mid \Delta \vdash a : 0$}
\BinaryInfC{$\Gamma \mid \Delta, E[a/x] \vdash 0\text{-}\fs{elim''}(x E.D,a) : D[a/x]$}
\DisplayProof
\end{center}
\item \label{it:init-second} The theory has the following unstable rule:
\begin{center}
\AxiomC{$\Gamma \mid \Delta \vdash D \type$}
\AxiomC{$\Gamma \mid \Delta \vdash a : 0$}
\BinaryInfC{$\Gamma \mid \Delta \vdash 0\text{-}\fs{elim'}(D,a) : D$}
\DisplayProof
\end{center}
\item \label{it:init-third} For every map $f : \Hom(B,0)$, the type $B$ is initial.
\item \label{it:init-fourth} The type $0$ is a stable under pullbacks initial type.
\end{enumerate}
\end{prop}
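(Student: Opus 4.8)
The plan is to prove the cyclic chain $\eqref{it:init-first} \Rightarrow \eqref{it:init-second} \Rightarrow \eqref{it:init-third} \Rightarrow \eqref{it:init-fourth} \Rightarrow \eqref{it:init-first}$. The equivalence $\eqref{it:init-first} \Leftrightarrow \eqref{it:init-second}$ is essentially formal, so the substance lies in $\eqref{it:init-second} \Rightarrow \eqref{it:init-third} \Rightarrow \eqref{it:init-fourth} \Rightarrow \eqref{it:init-second}$. The only nontrivial ingredients are the previous proposition (for an arbitrary type, initiality is equivalent to the existence of the weak dependent eliminator) and the remark that $\Sigma$-types provide binary products.

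For $\eqref{it:init-first} \Rightarrow \eqref{it:init-second}$, take $E$ empty and weaken $D$ so that it does not mention $x$; then $D[a/x] = D$ and the rule of $\eqref{it:init-first}$ specializes to that of $\eqref{it:init-second}$. For the converse, given the premises of $\eqref{it:init-first}$, substituting $a$ for $x$ yields $\Gamma \mid \Delta, E[a/x] \vdash D[a/x] \type$, and $a : 0$ is still available in this context by weakening; so $\eqref{it:init-second}$ applied in context $\Delta, E[a/x]$ to the type $D[a/x]$ produces the required term, which we may take as $0\text{-}\fs{elim''}(x E.D,a)$. Henceforth I argue with $\eqref{it:init-second}$.

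For $\eqref{it:init-second} \Rightarrow \eqref{it:init-third}$, present a morphism $f : B \to 0$ as a term $\Gamma \mid z : B \vdash f : 0$. By the previous proposition applied to $B$ it suffices, for every $\Gamma \mid y : B \vdash D \type$ and every $\Gamma \mid \Delta \vdash b : B$, to exhibit a term of $D[b/y]$; substituting $b$ for $z$ in $f$ gives $\Gamma \mid \Delta \vdash f[b/z] : 0$, and then $0\text{-}\fs{elim'}(D[b/y], f[b/z]) : D[b/y]$ does it. For $\eqref{it:init-third} \Rightarrow \eqref{it:init-fourth}$, applying $\eqref{it:init-third}$ to $\id_0$ shows that $0$ is initial, and for any $B$ for which the product $B \times 0$ exists the projection to $0$ is a morphism $B \times 0 \to 0$, so $B \times 0$ is initial by $\eqref{it:init-third}$; hence $0$ is a stable-under-pullbacks initial type.

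For $\eqref{it:init-fourth} \Rightarrow \eqref{it:init-first}$ it is enough, by the first step, to derive $\eqref{it:init-second}$. Given $\Gamma \mid \Delta \vdash a : 0$ and $\Gamma \mid \Delta \vdash D \type$: if $\Delta$ is empty this follows from $\eqref{it:init-fourth}$ and the previous proposition; otherwise set $P = \Sigma(\Delta)$, so that $P \times 0$ exists (by $\Sigma$-types) and is initial (by stability of $0$). Writing $p$ for the tuple of the variables of $\Delta$ viewed as a term of $P$ and $a'$ for the result of substituting the projections of $p$ into $a$, we have $\Gamma \mid p : P \vdash (p, a') : P \times 0$. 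Feeding into the eliminator of the initial type $P \times 0$ the type $\Gamma \mid q : P \times 0 \vdash D' \type$ obtained by substituting the components of $\pi_1 q$ for the variables of $\Delta$ in $D$, then substituting $(p, a')$ for $q$ and using $\pi_1(p, a') = p$, and finally substituting the tuple $p$ back for the variables of $\Delta$, yields a term of $D$ in context $\Gamma \mid \Delta$, which is $\eqref{it:init-second}$. I expect this last implication to be the main obstacle: the delicate part is the bookkeeping of the substitutions of $a$ and $D$ across the correspondence between the indexed context $\Delta$ and the single indexed type $\Sigma(\Delta)$, and checking that the $\beta$ rule for $\Sigma$-types makes the relevant round trip strict enough for the definitions to typecheck. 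A secondary point is that all four eliminators are genuinely \emph{unstable}, so when manipulating the constructed terms one must not invoke commutation with reindexing along base terms.
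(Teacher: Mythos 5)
Your proof is correct, and it reorganizes the argument in a way that genuinely differs from the paper at two points. First, you establish (2) $\Rightarrow$ (1) directly: substituting $a$ for $x$ in $D$ and weakening $a$ into the context $\Delta, E[a/x]$ exhibits the conclusion of rule (1) as an instance of rule (2) applied in the extended context to the type $D[a/x]$. This is a purely syntactic observation that needs neither $\Sigma$-types nor identity types; the paper instead closes the cycle through (3) $\Rightarrow$ (1), which builds the initial type $\Sigma(\Delta, x : 0, E)$ and extracts the eliminator by transporting along the section homotopy. Second, where the paper proves (2) $\Rightarrow$ (3) by writing down the map $\lambda x.\,0\text{-}\fs{elim'}(C, f\,x)$ together with the uniqueness homotopy via $\fs{Idext}$, and proves (4) $\Rightarrow$ (3) by showing $B \simeq B \times 0$, you instead invoke the preceding proposition (a type is initial if and only if it admits the dependent eliminator) applied to types other than $0$ --- to $B$ in one direction and to $\Sigma(\Delta) \times 0$ in the other. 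That proposition is stated for a type named $0$, but its proof is uniform in the type, so this is legitimate and buys some brevity at the cost of making the reader notice that fact. Two small points you rightly flag are not actual gaps: for nonempty $\Delta$ one should read $\Sigma(\Delta)$ without a trailing unit type (the theory is not assumed to have unit types, and the paper's own proof of (3) $\Rightarrow$ (1) faces the same convention), and the round trips through pairing and projection only need the $\beta$-rule for $\Sigma$-types up to propositional equality, since transporting along it still produces a term of the required type.
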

\begin{proof}
\eqref{it:init-first} $\Rightarrow$ \eqref{it:init-second}
This is obvious since $0\text{-}\fs{elim'}$ is a special case of $0\text{-}\fs{elim''}$.

\eqref{it:init-second} $\Rightarrow$ \eqref{it:init-third}
For every type $C$, we can define a map $\lambda x.\,0\text{-}\fs{elim'}(C,f\,x) : \Hom(B,C)$.
Let $g_1,g_2 : \Hom(B,C)$ be a pair of maps.
Then we can define a homotopy between them as follows:
\[ \fs{Idext}(x.\,0\text{-}\fs{elim'}(\Id_C(g_1\,x,g_2\,x),f\,x)) : \Id(g_1,g_2). \]

\eqref{it:init-third} $\Rightarrow$ \eqref{it:init-fourth}
Since we have the map $\id_0 : \Hom(0,0)$, the type $0$ is initial.
Moreover, since we have the projection $\pi_2 : \Hom(B \times 0, 0)$, every product $B \times 0$ is also initial.

\eqref{it:init-fourth} $\Rightarrow$ \eqref{it:init-third}
It is enough to prove that $B$ is equivalent to $B \times 0$ since the latter type is initial.
We have maps $\langle \id_B, f \rangle : \Hom(B, B \times 0)$ and $\pi_1 : B \times 0 \to B$.
It is clear that $\pi_1 \circ \langle id_B, f \rangle$ is homotopic to $\id_B$.
The map $\langle \id_B, f \rangle \circ \pi_1$ is homotopic to $\id_{B \times 0}$ since $B \times 0$ is initial.

\eqref{it:init-third} $\Rightarrow$ \eqref{it:init-first}
Let $\Delta = x_1 : A_1, \ldots x_m : A_m$ and $E = y_1 : C_1, \ldots y_n : C_n$.
Let $B = \Sigma(\Delta, x : 0, E)$.
Since we have the projection $\pi_{m+1} : \Hom(B,0)$, the type $B$ is initial.
Thus, we have a map $f : \Hom(B, \Sigma_{b : B} D')$, where
\[ D' = D[\pi_1(b)/x_1, \ldots \pi_m(b)/x_m, \pi_{m+1}(b)/x, \pi_{m+2}(b)/y_1, \ldots \pi_{m+n+1}(b)/y_n]. \]
We also have a homotopy $h : \Id(\pi_1 \circ f, \id_B)$.
Now, we can define $0\text{-}\fs{elim''}(x E.D, a)$ as $\fs{hap}(h,p)_*(\pi_2(f\,p))$, where $p = (x_1, \ldots x_m, a, y_1, \ldots y_n)$.
\end{proof}

\subsection{Dependent products}

We defined strict products in subsection~\ref{sec:products}.
We can define even stricter version of products which we call \emph{strict dependent products} in indexed dependent type theories:
\begin{center}
\AxiomC{$\Gamma, i : I \mid \Delta \vdash B \ob$}
\RightLabel{, $i \notin \mathrm{FV}(\Delta)$}
\UnaryInfC{$\Gamma \mid \Delta \vdash \prod_{i : I} B \ob$}
\DisplayProof
\qquad
\AxiomC{$\Gamma, i : I \mid \Delta \vdash b : B$}
\RightLabel{, $i \notin \mathrm{FV}(\Delta)$}
\UnaryInfC{$\Gamma \mid \Delta \vdash \lambda i.\,b : \prod_{i : I} B$}
\DisplayProof
\end{center}
\medskip

\begin{center}
\AxiomC{$\Gamma \mid \Delta \vdash f : \prod_{i : I} B$}
\AxiomC{$\Gamma \vdash j : I$}
\BinaryInfC{$\Gamma \mid \Delta \vdash f\,j : B[j/i]$}
\DisplayProof
\end{center}

\begin{align*}
(\lambda i.\,b)\,j & = b[j/i] \\
\lambda i.\,f\,i & = f
\end{align*}

\begin{example}
The canonical indexing of a dependent type theory over itself has strict dependent products if and only if it has $\Pi$-types.
\end{example}

\emph{Weak dependent products} are defined in the same way except for the last two equations which hold only propositionally.
Obviously, if a theory has strict (resp., weak) dependent products, then it also has strict (resp., weak) products.
We can think of dependent products as wide pullbacks.
Thus, to construct them from ordinary products, we also need to assume the existence of finite limits:

\begin{prop}
If a theory has $\Sigma$-types, unit types, extensional identity types, and weak extensional products, then it also has weak unstable dependent products.
\end{prop}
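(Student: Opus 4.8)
The plan is to realise the weak dependent product as a \emph{wide pullback}, assembled from the non-dependent products that are assumed to exist and the pullbacks that exist because $\Sigma$-types together with extensional identity types give pullbacks. Fix a family $\Gamma, i : I \mid \Delta \vdash B \ob$ with $i \notin \mathrm{FV}(\Delta)$ and set $D = \Sigma(\Delta)$; this is a closed indexed type in context $\Gamma$ which does not depend on $i$. Through the equivalence between dependent types over $\Delta$ and types over $\Sigma(\Delta)$, the family $B$ corresponds to a map $p_i = \pi_1 : \Hom(E_i, D)$ in context $\Gamma, i : I$, where $E_i = \sum_{p : D} B[\pi_1(p)/x_1, \ldots, \pi_k(p)/x_k]$. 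Since each $E_i$ is a closed family over $\Gamma, i : I$ and $D$ is constant in $i$, the products $\prod_{i : I} E_i$ (with projections $\pi^i : \Hom(\prod_{i : I} E_i, E_i)$) and $D^I = \prod_{i : I} D$ exist by hypothesis. I then form the map $e = \langle p_i \circ \pi^i \rangle_{i : I} : \Hom(\prod_{i : I} E_i, D^I)$ and the diagonal $\delta = \langle \id_D \rangle_{i : I} : \Hom(D, D^I)$, and I define $\prod_{i : I} B$ to be the pullback of $e$ and $\delta$, viewed as an indexed type in context $\Delta$ by way of its projection $q_2 : \Hom(\prod_{i : I} B, D)$; the other projection is $q_1 : \Hom(\prod_{i : I} B, \prod_{i : I} E_i)$.

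For abstraction, a term $\Gamma, i : I \mid \Delta \vdash b : B$ corresponds to a section $s_i : \Hom(D, E_i)$ of $p_i$ in context $\Gamma, i : I$; the maps $\langle s_i \rangle_{i : I}$ and $\id_D$ agree over $D^I$, since $e \circ \langle s_i \rangle_{i : I} \sim \langle p_i \circ s_i \rangle_{i : I} \sim \langle \id_D \rangle_{i : I} = \delta$, so the universal property of the pullback produces a section of $q_2$, which is the required term $\lambda i.\,b$. For application, a term $\Gamma \mid \Delta \vdash f : \prod_{i : I} B$ is a section $f^\dagger$ of $q_2$, and for $\Gamma \vdash j : I$ the composite $\pi^j \circ q_1 \circ f^\dagger$ is a section of $p_j$, because postcomposing it with $p_j$ yields the $j$-th component of $e \circ q_1 \circ f^\dagger \sim \delta \circ q_2 \circ f^\dagger \sim \delta$, which is $\id_D$; this section is $f\,j : B[j/i]$. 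The $\beta$-rule $(\lambda i.\,b)\,j \sim b[j/i]$ then follows by unwinding the two universal properties: $q_1 \circ (\lambda i.\,b)^\dagger \sim \langle s_i \rangle_{i : I}$ by the computation rule of the pullback, and $\pi^j \circ \langle s_i \rangle_{i : I} \sim s_j$ by the $\beta$-rule for weak products, while $s_j$ corresponds to $b[j/i]$ under the section/term dictionary.

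The $\eta$-rule $\lambda i.\,(f\,i) \sim f$ is where the real bookkeeping lies. Because the pullback is a genuine homotopy pullback, it suffices to produce homotopies after $q_1$ and after $q_2$ that are compatible over $D^I$: after $q_2$ both sides are sections of $\id_D$, and after $q_1$ one needs $\langle \pi^i \circ q_1 \circ f^\dagger \rangle_{i : I} \sim q_1 \circ f^\dagger$, which is the $\eta$-rule for weak products. The compatibility of these two homotopies, the homotopies $e \circ \langle s_i \rangle \sim \delta$ used above, and in general the (weak) naturality of the pairing operation $\langle - \rangle_{i : I}$ under precomposition, all rely on extensionality of the products: extensionality is exactly what lets us promote the fibrewise homotopies supplied by the $\beta$- and $\eta$-rules applied at each $i$ to homotopies between maps into $\prod_{i : I} E_i$ and into $D^I$, and then to check that the resulting cells cohere. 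I expect this coherence bookkeeping — carrying homotopies that hold only weakly through both the product and the pullback universal properties — to be the main obstacle; everything else is routine calculation. Note that no stability under base-term substitution is claimed, which is precisely why the dependent product produced here is \emph{unstable} and the substitution-commutation equation need not be verified.
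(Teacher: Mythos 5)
Your construction is essentially the paper's: unwinding your pullback of $e = \langle p_i \circ \pi^i \rangle_{i : I}$ against the diagonal $\delta$ via the $\Sigma$-plus-identity-type presentation of pullbacks, and passing to the fibre over a point of $\Sigma(\Delta)$, gives exactly the type $\sum_{f : \prod_{i : I} \Sigma_{x : A} B_i} \Id_{A^I}(\lambda i.\,\pi_1(f\,i), \lambda i.\,x)$ that the paper writes down after collapsing $\Delta$ to a single type $A$. The only difference is presentational: the paper discharges the $\beta$ and $\eta$ bookkeeping by citing the identical term-level computation already carried out when constructing $\Pi$-types from non-dependent function types, while you sketch the same verification through the universal properties of the pullback and the extensional product.
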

\begin{proof}
Let $\Gamma, i : I \mid \Delta \vdash B_i \ob$ be an indexed type.
Since we have $\Sigma$-types and unit types, we may assume that $\Delta$ consists of exactly one type, that is $\Delta = (x : A)$.
We define $\Gamma \mid x : A \vdash \prod_{i : I} B_i$ as follows:
\[ \Gamma \mid x : A \vdash \sum_{f : \prod_{i : I} \Sigma_{x : A} B_i} \Id_{A^I}(\lambda i.\,\pi_1(f\,i), \lambda i.\,x). \]
The construction of the abstraction and the application and proofs of $\beta$ and $\eta$ equivalences are the same as in \rprop{lccc}.
\end{proof}

\subsection{Dependent coproducts}

Coproducts can be defined in a more type-theoretic way.
The theory of \emph{(unstable) dependent coproducts} consists of the following unstable rules:
\begin{center}
\AxiomC{$\Gamma, i : I \mid \Delta \vdash B_i \ob$}
\RightLabel{, $i \notin \mathrm{FV}(\Delta)$}
\UnaryInfC{$\Gamma \mid \Delta \vdash \coprod_{i : I} B_i \ob$}
\DisplayProof
\qquad
\AxiomC{$\Gamma \vdash j : I$}
\UnaryInfC{$\Gamma \mid \Delta, x : B_j \vdash \fs{in}_j(x) : \coprod_{i : I} B_i$}
\DisplayProof
\end{center}
\medskip

\begin{center}
\AxiomC{$\Gamma \mid \Delta, z : \coprod_{i : I} B \vdash D \ob$}
\AxiomC{$\Gamma, i : I \mid \Delta, x : B \vdash d : D[\fs{in}_i(x)/z]$}
\BinaryInfC{$\Gamma \mid \Delta, z : \coprod_{i : I} B \vdash \coprod\text{-}\fs{elim}(z.D, i x.d) : D$}
\DisplayProof
\end{center}
\medskip

\[ \coprod\text{-}\fs{elim}(z.D, i x.d)[\fs{in}_j(x)[b/x]/z] = d[j/i,b/x] \]

The theory of \emph{weak (unstable) dependent coproducts} has the same rules except for the last equality which holds only propositionally.

\begin{example}
The canonical indexing of a dependent type theory over itself always has dependent coproducts since we always assume that the base theory has $\Sigma$-types.
\end{example}

\begin{prop}[coprod-sigma]
Dependent coproducts commute with $\Sigma$-types.
More precisely, the following map is an equivalence for every dependent type $\Gamma, i : I \mid \Delta, d : D \vdash B \ob$:
\[ \lambda z. \coprod\text{-}\fs{elim}(z. \sum_{d : D} \coprod_{i : I} B, i p. (\pi_1(p),\fs{in}_i(x)[\pi_2(p)/x])) : \Hom_\Delta(\coprod_{i : I} \sum_{d : D} B, \sum_{d : D} \coprod_{i : I} B). \]
\end{prop}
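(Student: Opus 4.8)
The plan is to exhibit an explicit inverse of the map in the statement and then check the two composites are homotopic to identities by induction on the dependent coproducts, i.e.\ by applying the $\coprod\text{-}\fs{elim}$ rule. Denote by $\phi : \Hom_\Delta(\coprod_{i : I} \sum_{d : D} B, \sum_{d : D} \coprod_{i : I} B)$ the map in the statement. First I would introduce the candidate inverse, a term in context $\Gamma \mid \Delta, w : \sum_{d : D} \coprod_{i : I} B$:
\[ \psi = \lambda w.\,\coprod\text{-}\fs{elim}(z.\,\coprod_{i : I} \sum_{d : D} B,\ i\,x.\,\fs{in}_i((\pi_1(w), x)))[\pi_2(w)/z]. \]
This is well formed since $\Sigma$-types are available, $\pi_1(w) : D$, and $\pi_2(w)$ lies in $(\coprod_{i : I} B)[\pi_1(w)/d] = \coprod_{i : I} B[\pi_1(w)/d]$ by stability, so that for $x : B_i[\pi_1(w)/d]$ the pair $(\pi_1(w), x)$ has type $\sum_{d : D} B$ at index $i$ and hence $\fs{in}_i((\pi_1(w), x))$ is a legitimate element of the motive $\coprod_{i : I} \sum_{d : D} B$.

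Next I would verify $\psi \circ \phi \sim \id$. Using the extensionality of identity types (the $\fs{hap}$/$\fs{Idext}$ equivalence for $\Hom$-types), this reduces to constructing, in context $\Gamma \mid \Delta, z : \coprod_{i : I} \sum_{d : D} B$, a term of $\Id(\psi(\phi(z)), z)$. Applying $\coprod\text{-}\fs{elim}$ with this identity type as the motive, it suffices to treat $z = \fs{in}_i(p)$ for $p : \sum_{d : D} B$. Here the computation rule for $\phi$ gives $\phi(\fs{in}_i(p)) = (\pi_1(p), \fs{in}_i(x)[\pi_2(p)/x])$, and then the $\Sigma$-projection equations together with the computation rule for $\coprod\text{-}\fs{elim}$ give $\psi(\phi(\fs{in}_i(p))) = \fs{in}_i((\pi_1(p), \pi_2(p)))$, which is homotopic to $\fs{in}_i(p)$ by $\Sigma$-eta. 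Symmetrically, for $\phi \circ \psi \sim \id$ I would first replace a general $w : \sum_{d : D} \coprod_{i : I} B$ by $(d, v)$ using $\Sigma$-eta, then run $\coprod\text{-}\fs{elim}$ on $v$; in the remaining case $v = \fs{in}_i(x)$ one computes $\psi((d, \fs{in}_i(x))) = \fs{in}_i((d, x))$ and then $\phi(\fs{in}_i((d, x))) = (\pi_1((d, x)), \fs{in}_i(y)[\pi_2((d, x))/y]) = (d, \fs{in}_i(x))$, recovering $w$.

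The mathematical content is routine once the inverse is written down. The main obstacle is bookkeeping in the dependent setting: keeping track of the substitutions $[\pi_1(w)/d]$ and $[\pi_2(w)/z]$, the reindexing implicit in $\Hom_\Delta$, and — if the dependent coproducts and/or identity types are only weak — threading the propositional $\beta$-homotopies for $\coprod\text{-}\fs{elim}$ and the $\Sigma$-computation homotopies coherently through both round-trip arguments. No idea beyond the $\coprod\text{-}\fs{elim}$ rule, extensionality of identity types, and $\Sigma$-eta is required.
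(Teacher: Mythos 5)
Your proposal is correct and matches the paper's proof: the paper exhibits exactly the same inverse (written as $\lambda p.\,\coprod\text{-}\fs{elim}(z.\,\coprod_{i : I} \sum_{d : D} B,\ i\,b.\,\fs{in}_i(x)[(\pi_1(p),b)/x])[\pi_2(p)/z]$) and then simply asserts that the two composites are checked to be mutually inverse using the coproduct eliminator, which is the round-trip computation you carry out.
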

\begin{proof}
The inverse of this map is defined as follows:
\[ \lambda p. \coprod\text{-}\fs{elim}(z. \coprod_{i : I} \sum_{d : D} B, i b.\,\fs{in}_i(x)[(\pi_1(p),b)/x])[\pi_2(p)/z]. \]
It is easy to show that these maps are mutually inverse using the eliminator for coproducts.
\end{proof}

\begin{lem}[dep-coprod-coprod]
Suppose that an indexed dependent type theory has $\Sigma$-types, extensional identity types, and dependent coproducts.
Then the map $\Gamma, i : I \vdash \fs{in}_i' : \Hom(\Sigma(\Delta,B_i), \Sigma(\Delta, \coprod_{i : I} B_i))$ induced by $\fs{in}_i$ makes $\Sigma(\Delta, \coprod_{i : I} B_i)$ into a coproduct of the family $\Gamma, i : I \mid \cdot \vdash \Sigma(\Delta,B_i)$.
\end{lem}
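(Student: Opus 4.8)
The plan is to verify directly that the pair $\bigl(\Sigma(\Delta, \coprod_{i:I} B_i),\ \fs{in}_i'\bigr)$ satisfies the universal property of a coproduct of the closed family $\Gamma, i : I \mid \cdot \vdash \Sigma(\Delta,B_i) \ob$ as defined in Section~\ref{sec:products}: for every closed indexed type $\Gamma \mid \cdot \vdash C' \ob$ we must produce a rule sending $\Gamma, i : I \vdash f : \Hom(\Sigma(\Delta,B_i),C')$ to a map $\Gamma \vdash [f]_{i:I} : \Hom(\Sigma(\Delta,\coprod_{i:I} B_i),C')$, together with inhabitants of $\Id([f]_{i:I} \circ \fs{in}_i', f)$ and of $\Id([g \circ \fs{in}']_{i:I}, g)$ for every $\Gamma \vdash g : \Hom(\Sigma(\Delta,\coprod_{i:I} B_i),C')$. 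Write $\Delta = x_1 : A_1, \ldots, x_k : A_k$; the side condition $i \notin \mathrm{FV}(\Delta)$ from the rules for dependent coproducts guarantees that $\coprod_{i:I} B_i$ is a legitimate indexed type over $\Delta$. For a variable $x$ of $B_i$ let $\iota_i$ be the canonical element $(x_1, \ldots, x_k, x)$ of $\Sigma(\Delta,B_i)$, assembled in the context $\Gamma, i : I \mid \Delta, x : B_i$, and write a generic element of $\Sigma(\Delta,\coprod_{i:I} B_i)$, after splitting, as a tuple $(x_1, \ldots, x_k, z)$ with $z : \coprod_{i:I} B_i$.

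To define $[f]_{i:I}$, note first that, via the $\Sigma$-eliminator, it suffices to give a term in the context $\Gamma \mid \Delta, z : \coprod_{i:I} B_i$ in order to define a map out of $\Sigma(\Delta,\coprod_{i:I} B_i)$. In that context, apply the dependent-coproduct eliminator with the constant motive $C'$; for its $i$-th branch, which lives in the context $\Gamma, i : I \mid \Delta, x : B_i$, supply $f\,\iota_i : C'$. This produces $\Gamma \mid \Delta, z : \coprod_{i:I} B_i \vdash \coprod\text{-}\fs{elim}(z.C',\, i\,x.\,f\,\iota_i) : C'$, and $[f]_{i:I}$ is the corresponding element of $\Hom(\Sigma(\Delta,\coprod_{i:I} B_i),C')$ obtained by re-abstracting over the argument.

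The two homotopies then follow the same pattern, using that identity types are extensional. Recall that $\fs{in}_i'$ is the map on total spaces induced by $\fs{in}_i$, so on a canonical tuple it sends $(x_1, \ldots, x_k, x)$ with $x : B_i$ to $(x_1, \ldots, x_k, \fs{in}_i(x))$. To compare $[f]_{i:I} \circ \fs{in}_i'$ with $f$, eliminate the argument with the $\Sigma$-eliminator against the motive $\Id_{C'}\!\bigl([f]_{i:I}(\fs{in}_i'\,w),\, f\,w\bigr)$; on a canonical tuple the $\beta$-rule for $\Sigma$ and the computation rule $\coprod\text{-}\fs{elim}(z.D,\, i\,x.\,d)[\fs{in}_j(x)[b/x]/z] = d[j/i,b/x]$ (instantiated with $j := i$, $b := x$) collapse the composite to $f$, so the branch is discharged by $\refl$; then $\fs{Idext}$ assembles the resulting pointwise paths into an inhabitant of $\Id([f]_{i:I} \circ \fs{in}_i', f)$. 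For the second homotopy, given $g$, split the argument as $(x_1,\ldots,x_k,z)$ and also eliminate $z$ with $\coprod\text{-}\fs{elim}$, now with the motive $\Id_{C'}\!\bigl([g \circ \fs{in}']_{i:I}\,w,\, g\,w\bigr)$; the same computation rule together with the description of $\fs{in}_i'$ makes every branch $\refl$, and $\fs{Idext}$ produces the inhabitant of $\Id([g \circ \fs{in}']_{i:I}, g)$. (When the $\Sigma$-$\eta$ rule and the $\coprod$-computation rule are judgmental, and the dependent coproducts are strict, the first homotopy is in fact an equality.)

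The main obstacle is bookkeeping rather than mathematical depth: one must track the base context $\Gamma, i : I$ and the bound index variable of $\coprod\text{-}\fs{elim}$, match the substitutions $z := \fs{in}_i(x)$ that arise to the slightly unusual form of its computation rule, and appeal to $\Sigma$-$\eta$ (valid up to homotopy in the presence of the $\Sigma$-eliminator and extensional identity types) when passing between a map out of $\Sigma(\Delta,-)$ and its values on canonical tuples. A more structural alternative is also available: iterating \rprop{coprod-sigma} to peel the variables of $\Delta$ off one at a time yields an equivalence $\Sigma(\Delta,\coprod_{i:I} B_i) \simeq \coprod_{i:I}\Sigma(\Delta,B_i)$ compatible with the inclusions, where the right-hand side is the dependent coproduct of the \emph{closed} family $\Sigma(\Delta,B_i)$; since a dependent coproduct of a closed family is automatically a coproduct in the sense of Section~\ref{sec:products} — essentially the $\Delta = \cdot$ case of the construction above — and coproducts are unique up to equivalence, this would also conclude the argument. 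I would present the direct construction while noting this shortcut.
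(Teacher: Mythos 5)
Your proof is correct and follows essentially the same route as the paper: define $[f]_{i:I}$ by splitting the $\Sigma$-argument and applying $\coprod\text{-}\fs{elim}$ with constant motive $C'$, verify $[f]_{i:I}\circ\fs{in}_i'\sim f$ via the computation rule together with $\Sigma$-$\eta$ up to homotopy, and verify $[g\circ\fs{in}_i']_{i:I}\sim g$ by eliminating $z$ against the pointwise identity-type motive and assembling with $\fs{Idext}$. The structural shortcut via \rprop{coprod-sigma} that you mention is a valid alternative but is not needed.
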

\begin{proof}
Let $\Delta = x_1 : A_1, \ldots x_n : A_n$.
Then $\fs{in}_i'$ is defined as follows:
\[ \fs{in}_i' = \lambda p.(\pi_1(p), \ldots \pi_n(p), \fs{in}_i(x)[\pi_{n+1}(p)/x]). \]
Let $f$ be a map of the following form:
\[ \Gamma, i : I \vdash f : \Hom(\Sigma(\Delta,B_i),C). \]
Then we define $\Gamma \vdash [f]_{i : I} : \Hom(\Sigma(\Delta, \coprod_{i : I} B_i), C)$ as follows:
\[ [f]_{i : I} = \lambda p.\,\coprod\text{-}\fs{elim}(z.C, i x.\,f\,(\pi_1(p), \ldots \pi_n(p), x))[\pi_{n+1}(p)/z]. \]
Let us prove that $[f]_{i : I} \circ \fs{in}_i' \sim f$:
\begin{align*}
[f]_{i : I} \circ \fs{in}_i' & = \\
\lambda p.\,\coprod\text{-}\fs{elim}(z.C, i x.\,f\,(\pi_1(p), \ldots \pi_n(p), x))[\fs{in}_i(x)[\pi_{n+1}(p)/x]/z] & \sim \\
\lambda p.\,f\,(\pi_1(p), \ldots \pi_{n+1}(p)) & \sim \\
\lambda p.\,f\,p & \sim \\
f & .
\end{align*}
Finally, for every $g : \Hom(\coprod_{i : I} B_i, C)$, we need to prove that $[g \circ \fs{in}_i']_{i : I} \sim g$.
It is enough to prove that, for all $x_1 : A_1, \ldots x_n : A_n, z : \coprod_{i : I} B_i$, there is a homotopy between $[g \circ \fs{in}_i']_{i : I}\,(x_1, \ldots x_n, z)$ and $g\,(x_1, \ldots x_n, z)$.
To do this, we can apply $\coprod\text{-}\fs{elim}$ to $z$.
Then we just need to construct a homotopy between $[g \circ \fs{in}_i']_{i : I}\,(x_1, \ldots x_n, \fs{in}_i(x))$ and $g\,(x_1, \ldots x_n, \fs{in}_i(x))$:
\begin{align*}
[g \circ \fs{in}_i']_{i : I}\,(x_1, \ldots x_n, \fs{in}_i(x)) & = \\
\lambda p.\,\coprod\text{-}\fs{elim}(z.C, i x.\,g\,(\fs{in}_i\,(x_1, \ldots x_n, x)))[\fs{in}_i(x)/z] & \sim \\
g\,(x_1, \ldots x_n, \fs{in}_i(x)) & .
\end{align*}
\end{proof}

This lemma implies that $\Hom_\Delta(\coprod_{i : I} B_i, C)$ is a weak $\Pi$-type $\Pi_{i : I} \Hom_\Delta(B_i,C)$.
We will say that dependent coproducts are \emph{extensional} if this $\Pi$-type satisfies functional extensionality.

\begin{lem}[coprod-dep-coprod]
Suppose that an indexed dependent type theory has $\Sigma$-types and identity types.
If $\coprod$ and $\fs{in}$ are unstable constructions defined above and the map $\Gamma, i : I \vdash \fs{in}_i' : \Hom(\Sigma(\Delta,B_i), \Sigma(\Delta, \coprod_{i : I} B_i))$ induced by $\fs{in}_i$ makes $\Sigma(\Delta, \coprod_{i : I} B_i)$ into an extensional coproduct of the family $\Gamma, i : I \mid \cdot \vdash \Sigma(\Delta,B_i)$,
then the eliminator is definable and this dependent coproduct is extensional.
\end{lem}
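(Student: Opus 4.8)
The plan is to move the whole problem along the equivalence, set up in the discussion of types over contexts in Section~\ref{sec:lccc}, between dependent types over $\Delta, z : \coprod_{i : I} B_i$ and types over the single type $W := \Sigma(\Delta, \coprod_{i : I} B_i)$, and then to feed the resulting data into the hypothesis that $W$ is the \emph{extensional} unary coproduct of the closed types $W_i := \Sigma(\Delta, B_i)$ via the maps $\fs{in}_i' : \Hom(W_i, W)$ of \rlem{dep-coprod-coprod}. All of this uses only $\Sigma$-types and identity types of the indexed theory, so it goes through for unstable $\coprod$ and $\fs{in}$.

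To define the eliminator, suppose given $\Gamma \mid \Delta, z : \coprod_{i : I} B_i \vdash D \ob$ and $\Gamma, i : I \mid \Delta, x : B_i \vdash d : D[\fs{in}_i(x)/z]$. The equivalence of Section~\ref{sec:lccc} presents $D$ as the evident projection $p_D : \Hom(E_D, W)$ with $E_D = \Sigma(\Delta, z : \coprod_{i : I} B_i, D)$, and presents $d$, for each $i$, as the map $s_i : \Hom(W_i, E_D)$ obtained by pairing the variables of $\Delta$, then $\fs{in}_i(x)$, then $d$; one checks at once that $p_D \circ s_i = \fs{in}_i'$. Using the coproduct structure I would form $[s_i]_{i : I} : \Hom(W, E_D)$, so that $[s_i]_{i : I} \circ \fs{in}_i' \sim s_i$. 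Then $p_D \circ [s_i]_{i : I}$ and $\id_W$ become homotopic after precomposition with each $\fs{in}_i'$, and since the coproduct is extensional, the $\fs{funext}$ operation for $\Hom(W, W)$ viewed as the weak $\Pi$-type $\Pi_{i : I} \Hom(W_i, W)$ upgrades these to a homotopy $p_D \circ [s_i]_{i : I} \sim \id_W$. Thus $[s_i]_{i : I}$ together with this homotopy is a section of $p_D$, and transporting back along the equivalence yields the term $\coprod\text{-}\fs{elim}(z.D, i x.d) : D$. The propositional computation rule then follows by restricting this section along $\fs{in}_j'$ and along the point of $W_j$ determined by $b$: the (weak) $\beta$-rule of the coproduct gives $[s_i]_{i : I} \circ \fs{in}_j' \sim s_j$, and reading off the last component of $s_j$ produces the homotopy $\coprod\text{-}\fs{elim}(z.D, i x.d)[\fs{in}_j(x)[b/x]/z] \sim d[j/i, b/x]$.

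For extensionality of the dependent coproduct: once the eliminator is in place, the discussion following \rlem{dep-coprod-coprod} shows that $\Hom_\Delta(\coprod_{i : I} B_i, C)$ is a weak $\Pi$-type $\Pi_{i : I} \Hom_\Delta(B_i, C)$, with application given by precomposition with $\fs{in}_i$. I would then note that, under the equivalence of Section~\ref{sec:lccc}, this weak $\Pi$-type is the one induced by the unary coproduct structure on the total spaces $W = \coprod_{i : I} W_i$: an element of $\Hom_\Delta(\coprod_{i : I} B_i, C)$ is a section over $W$ of the projection from the total space of $C$, evaluation at $i$ is restriction along $\fs{in}_i'$, and abstraction is $[-]_{i : I}$. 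So a proof of $\Id(f \cdot i, g \cdot i)$ in context $\Gamma, i : I$ unfolds, by $\Sigma$-extensionality, into a family of homotopies between the underlying maps together with a family between the section witnesses; feeding the first into the $\fs{funext}$ operation of the extensional unary coproduct $\Hom(W, -)$, and the second into the one for $\Hom(W, W)$, produces the required term of $\Id(f, g)$, its coherences being inherited from those of the unary $\fs{funext}$.

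The main obstacle is precisely this last identification together with the attendant coherence-chasing: one must verify that the equivalence of Section~\ref{sec:lccc} is compatible with the weak $\Pi$-type structure extracted from \rlem{dep-coprod-coprod}, with the inclusions $\fs{in}_i'$, and with the $\Sigma$-type projections, and that the homotopy component of a section is carried along faithfully — both when reading off the computation rule and when assembling $\fs{funext}$ for the dependent coproduct out of the unary one. Everything else is routine manipulation of $\Sigma$- and $\Id$-types, in the style of the proof of \rlem{dep-coprod-coprod}.
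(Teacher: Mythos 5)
Your proposal is correct and takes essentially the same route as the paper: your $[s_i]_{i:I} : \Hom(W, E_D)$ is the paper's map $t$, the homotopy $p_D \circ [s_i]_{i:I} \sim \id_W$ produced by extensionality of the unary coproduct (with its computation rule over each $\fs{in}_i'$) is the paper's $h$, and the eliminator is likewise obtained by transporting the last component of $t$ along $h$, with the propositional $\beta$-rule and the extensionality of the dependent coproduct handled exactly as in the paper.
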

\begin{proof}
Let $\Delta = x_1 : A_1, \ldots x_n : A_n$ and let $\Gamma, i : I \mid \Delta, x : B_i \vdash d : D[\fs{in}_i(x)/z]$ be a term.
Since $\Sigma(\Delta, \coprod_{i : I} B_i)$ is a coproduct of $\Sigma(\Delta, B_i)$, we have the following map:
\begin{align*}
t & : \Hom(\Sigma(\Delta, \coprod_{i : I} B_i), \Sigma(\Delta, z : \coprod_{i : I} B_i, D)) \\
t & = [\lambda p.(x_1, \ldots x_n, \fs{in}_i(x), d)[\pi_1(p)/x_1, \ldots \pi_n(p)/x_n, \pi_{n+1}(p)/x]]_{i : I}
\end{align*}
Let $\pi_0 : \Hom(\Sigma(\Delta, z : \coprod_{i : I} B_i, D), \Sigma(\Delta, \coprod_{i : I} B_i))$ be the obvious projection map.
Let $\eta(q)$ be the following homotopy:
\[ \eta(q) : \Id(t\,q, (x_1, \ldots x_n, \fs{in}_i(x), d)[\pi_1(q)/x_1, \ldots \pi_n(q)/x_n, \pi_{n+1}(q)/x]). \]
Then $\eta(\fs{in}_i'\,p) * \pi_0$ is a homotopy between $\pi_0\,(t\,(\fs{in}_i'\,p))$ and $\fs{in}_i'\,p$.
By the universal property of coproducts, there exists (a unique) homotopy $h(q)$ between $\pi_0\,(t\,q)$ and $q$ such that $h(\fs{in}_i'\,p)$ is homotopic to $\eta(\fs{in}_i'\,p) * \pi_0$.
Now, we can define the eliminator $\coprod\text{-}\fs{elim}(z.D, i x.d)$ as $h(x_1, \ldots x_n, z)_*(\pi_{n+2}(t\,(x_1, \ldots x_n, z)))$.

We need to construct the following homotopy:
\[ h(x_1, \ldots x_n, \fs{in}_j(x)[b/x])_*(\pi_{n+2}(t\,(x_1, \ldots x_n, \fs{in}_j(x)[b/x]))) \sim d[j/i,b/x]. \]
Since $\eta(q)$ is a homotopy between $\Sigma$-types, the standard argument about such homotopies shows that we have the following homotopy:
\[ (\eta(\fs{in}_j'(b)) * \pi_0)_*(\pi_{n+2}(t\,(x_1, \ldots x_n, \fs{in}_j(x)[b/x]))) \sim d[j/i,b/x]. \]
Thus, we just need to prove that $h(x_1, \ldots x_n, \fs{in}_j(x)[b/x])$ is homotopic to $\eta(\fs{in}_j'(b)) * \pi_0$, but this is true by the definition of $h$.

The extensionality of dependent coproducts is equivalent to the extensionality of the ordinary coproduct $\Sigma(\Delta, \coprod_{i : I} B_i)$.
\end{proof}

\begin{prop}
If an indexed dependent type theory has $\Sigma$-types and extensional identity types, then it has weak extensional dependent coproducts if and only if it has extensional coproducts.
\end{prop}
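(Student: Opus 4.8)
The plan is to deduce the two implications from \rlem{dep-coprod-coprod} and \rlem{coprod-dep-coprod} respectively, using the equivalence between dependent types over a context $\Delta$ and closed types over $\Sigma(\Delta)$, established in the subsection on types over contexts, as the bridge between the dependent and the non-dependent settings.

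For the forward direction, suppose the theory has weak extensional dependent coproducts and let $\Gamma, i : I \mid \cdot \vdash B \ob$ be a family of closed indexed types. Instantiating the dependent coproduct at the empty indexed context produces a closed type $\coprod_{i : I} B$ with inclusions $\fs{in}_i$. First I would observe that the proof of \rlem{dep-coprod-coprod} uses only the propositional computation rule of the eliminator, so it applies to weak dependent coproducts as well; together with the evident equivalence $\sum_{u : \top} X \simeq X$ it exhibits $\coprod_{i : I} B$, with the $\fs{in}_i$, as a coproduct of $B$ in the sense of subsection~\ref{sec:products}. Since with $\Delta$ empty the weak $\Pi$-type identified in the remark following \rlem{dep-coprod-coprod} is literally $\Hom(\coprod_{i : I} B, C)$, extensionality of the dependent coproduct is by definition the functional extensionality making $\coprod_{i : I} B$ an extensional coproduct.

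For the converse, suppose the theory has extensional coproducts and let $\Gamma, i : I \mid \Delta \vdash B_i \ob$ be an arbitrary family. I would pass to the closed family $\Gamma, i : I \mid \cdot \vdash \Sigma(\Delta, B_i) \ob$, let $\Gamma \mid \cdot \vdash E \ob$ be its extensional coproduct with inclusions $\Gamma, i : I \vdash \fs{in} : \Hom(\Sigma(\Delta, B_i), E)$, and use that the projections $\Sigma(\Delta, B_i) \to \Sigma(\Delta)$ are natural in $i$ to obtain, by the universal property of $E$, a map $p : \Hom(E, \Sigma(\Delta))$ with $p \circ \fs{in}_i$ homotopic to the projection. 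I would then let $\coprod_{i : I} B_i$ be the dependent type over $\Delta$ corresponding to $p$ under the types-over-contexts equivalence, so that $\Sigma(\Delta, \coprod_{i : I} B_i) \simeq E$ over $\Sigma(\Delta)$, transport each $\fs{in}$, being a map over $\Sigma(\Delta)$, to a term $\Gamma, i : I \mid \Delta, x : B_i \vdash \fs{in}_i(x) : \coprod_{i : I} B_i$, and check that the induced map $\fs{in}_i'$ on the associated $\Sigma$-types agrees, modulo the equivalence $\Sigma(\Delta, \coprod_{i : I} B_i) \simeq E$, with the original inclusion into $E$. This places us in the situation of \rlem{coprod-dep-coprod}, which then provides the eliminator, its propositional computation rule, and extensionality of the constructed dependent coproduct.

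The step I expect to be the main obstacle is the bookkeeping around the types-over-contexts equivalence: checking that the inclusions $\fs{in}$ really factor through $\coprod_{i : I} B_i$ over the correct point of $\Sigma(\Delta)$, and that $\fs{in}_i'$ matches $\fs{in}$ up to the equivalence $\Sigma(\Delta, \coprod_{i : I} B_i) \simeq E$, so that the hypothesis of \rlem{coprod-dep-coprod} is genuinely verified. I also note that assembling $\Sigma(\Delta)$ and reducing dependent types to context size $1$ uses the unit type in addition to the $\Sigma$-types and extensional identity types in the statement, and that the homotopies witnessing these identifications, while routine, take some care to write out.
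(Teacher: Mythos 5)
Your proposal is correct and matches the paper's own argument: the forward direction is \rlem{dep-coprod-coprod} specialized to the empty indexed context, and for the converse the paper likewise reduces to $\Delta = (x : A)$, forms the extensional coproduct $E = \coprod_{i:I}\Sigma(\Delta,B_i)$ with the induced map $[\pi_1]_{i:I} : \Hom(E,\Sigma(\Delta))$, takes the corresponding dependent type $\Sigma_{p : E}\,\Id_A([\pi_1]_{i:I}\,p,x)$, and verifies the hypothesis of \rlem{coprod-dep-coprod} via the contractibility of the singleton component, exactly the bookkeeping you anticipated. The only point you flag that the paper resolves differently is the apparent need for the unit type: it is avoided by treating the empty-$\Delta$ case separately and using $\Sigma$-types alone to collapse a nonempty context to a single type.
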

\begin{proof}
If the theory has weak dependent coproducts, then it has coproducts by \rlem{dep-coprod-coprod}.
If dependent coproducts are extensional, then coproducts are also extensional since the latter is the special case of the former.
Conversely, suppose that the theory has extensional coproducts.
Let $\Gamma, i : I \mid \Delta \vdash B_i \ob$ be a dependent type.
If $\Delta$ is empty, then an extensional dependent coproduct of $B_i$ exists by \rlem{coprod-dep-coprod}.
Thus, we may assume that $\Delta$ is not empty.
Since we have $\Sigma$-types, we may also assume that it consists of a single type, that is $\Delta = (x : A)$.

We define $\coprod'_{i : I} B_i$ as $\Gamma \mid \Delta \vdash \Sigma_{(p : \coprod_{i : I} \Sigma_{x : A} B_i)} \Id_A([\pi_1]_{i : I}\,p, x) \ob$.
If $\Gamma \vdash j : I$, then we define $\fs{in}_j$ as $\Gamma \mid \Delta, y : B_j \vdash (\fs{in}_j(x,y), \refl(x)) : \coprod'_{i : I} B_i$.
Let $\fs{in}_i'$ be the following map:
\[ \lambda p. (\pi_1(p), \fs{in}_i\,p, \refl(x)) : \Hom(\Sigma(\Delta, B_i), \Sigma(\Delta, p : \coprod_{i : I} \Sigma(\Delta, B_i), \Id_A([\pi_1]_{i : I}\,p,x))). \]
By \rlem{coprod-dep-coprod}, we just need to prove that $\fs{in}_i'$ makes its codomain into an extensional coproduct.
Since the type $\Sigma(\Delta, \Id_A([\pi_1]_{i : I}\,p,x))$ is contractible, the following map is an equivalence:
\[ \pi_2 : \Hom(\Sigma(\Delta, p : \coprod_{i : I} \Sigma(\Delta, B_i), \Id_A([\pi_1]_{i : I}\,p,x)), \coprod_{i : I} \Sigma(\Delta, B_i)). \]
Since $\pi_2 \circ \fs{in}_i'$ makes $\coprod_{i : I} \Sigma(\Delta, B_i)$ into an extensional coproduct of $\Sigma(\Delta, B_i)$ and $\pi_2$ is an equivalence, this is also true for $\fs{in}_i'$.
\end{proof}

If we assume the stability condition for $\fs{in}$, then we can replace it with the following stable rule:
\begin{center}
\AxiomC{$\Gamma \vdash j : I$}
\AxiomC{$\Gamma \mid \Delta \vdash b : B_j$}
\BinaryInfC{$\Gamma \mid \Delta \vdash (j,b) : \coprod_{i : I} B_i$}
\DisplayProof
\end{center}
\medskip
Indeed, $\fs{in}_j(x)$ can be defined as $(j,x)$.
Conversely, $(j,b)$ can be defined in terms of $\fs{in}$ as $\fs{in}_j(x)[b/x]$.
Since $(-,-)$ is stable, these constructions are mutually inverse.

We will say that dependent coproducts are \emph{stable} if $\coprod$ and $(-,-)$ are stable.
We will say that coproducts in an indexed unary type theory are \emph{stable under pullbacks} if, for all maps $p : \Hom(\coprod_{i : I} B_i, D)$ and $r : \Hom(E,D)$,
the canonical map from $\coprod_{i : I} r^*(B_i)$ to $r^*(\coprod_{i : I} B_i)$ is an equivalence, where $r^*(X)$ is the pullback of $X$ along $r$.

If dependent coproducts are stable, then we can define the local version of their eliminator:
\begin{center}
\AxiomC{$\Gamma \mid \Delta, z : \coprod_{i : I} B_i, E \vdash D \ob$}
\AxiomC{$\Gamma, i : I \mid \Delta, x : B_i, E[(i,x)/z] \vdash d : D[(i,x)/z]$}
\BinaryInfC{$\Gamma \mid \Delta, z : \coprod_{i : I} B_i, E \vdash \coprod\text{-}\fs{elim}(z E. D, i x E. d) : D$}
\DisplayProof
\end{center}
\medskip

\[ \coprod\text{-}\fs{elim}(z E. D, i x E. d)[(j,b)/z] = d[j/i,b/x] \]

This is a strict version of the local eliminator.
We can define its weak version as usual by replacing the judgmental equality with a propositional one.

\begin{prop}
Suppose that an indexed dependent type theory has $\Sigma$-types, identity types, and dependent coproducts.
If $\coprod$ and $(-,-)$ are stable, then the local weak unstable eliminator is definable and coproducts are stable under pullbacks.
\end{prop}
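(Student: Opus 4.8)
The plan is to reduce both assertions to a single ``extensivity'' lemma for stable coproducts. Using $\Sigma$-types we may assume the context $E$ after $z$ is a single indexed type $w : C$ with $\Gamma \mid \Delta, z : \coprod_{i : I} B_i \vdash C \ob$ (when $E$ is empty the local eliminator is just the non-local one). Write $C_i := C[(i,x)/z]$, a type in context $\Gamma, i : I \mid \Delta, x : B_i$ --- the ``fibre'' of $C$ over $\fs{in}_i x$. The key claim $(\star)$ is that the map
\[ \Gamma \mid \Delta \vdash \Phi : \coprod_{i : I} \Big( \sum_{x : B_i} C_i \Big) \to \sum_{z : \coprod_{i : I} B_i} C, \]
defined through the non-local $\coprod$-eliminator so that $\Phi$ carries $\fs{in}_i(x,c)$ to $((i,x),c)$, is an equivalence; equivalently, $C$ regarded as a type over $\coprod_{i : I} B_i$ is the coproduct of the fibres $C_i$. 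Stability of $(-,-)$ is what makes this definition of $\Phi$ type-check (it is why $\pi_2$ of an element of $\sum_{x : B_i} C_i$ has type $C[(i,x)/z]$), and stability of $\coprod$ is used to reindex $\coprod_{i : I} B_i$ along base substitutions.

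To prove $(\star)$ I would build an inverse to $\Phi$ using the non-local $\coprod$-eliminator on a motive over $z : \coprod_{i : I} B_i$ assembled from $C$ and a path-classifying (encode-decode) family for $\coprod_{i : I} B_i$ built from $\Sigma$- and identity types; the strict computation rule of the non-local eliminator then delivers the homotopies $\Phi \circ \Phi^{-1} \sim \id$ and $\Phi^{-1} \circ \Phi \sim \id$. Here \rprop{coprod-sigma} (that $\coprod$ commutes with $\Sigma$-types) is convenient for simplifying the fibres. This is where stability is genuinely used and where essentially all the work lies: $(\star)$ is the statement that stable dependent coproducts are extensive, and $\Phi$ need not be an equivalence for unstable coproducts. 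I expect this to be the main obstacle of the proof.

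Granting $(\star)$, the local weak eliminator is immediate. A motive $\Gamma \mid \Delta, z : \coprod_{i : I} B_i, w : C \vdash D \ob$ is the same as a type over the context $\Gamma \mid \Delta, p : \sum_{z : \coprod_{i : I} B_i} C$, which $(\star)$ transports to a type $\widetilde D$ over $\Gamma \mid \Delta, q : \coprod_{i : I} \big( \sum_{x : B_i} C_i \big)$; the methods $d$ transport likewise to methods for $\widetilde D$. Since the coproduct variable $q$ carries no indexed context after it, the ordinary non-local $\coprod$-eliminator produces a section of $\widetilde D$, and transporting it back along $(\star)$ gives the required $e$. The weak computation rule $e[(j,b)/z] \sim d[j/i,b/x]$ follows from the strict $\beta$-rule of the non-local eliminator together with the (weak) computation rules of the $(\star)$-transports, using that $\Phi$ carries $\fs{in}_j(b,c)$ to $((j,b),c)$.

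For stability under pullbacks, let $p : \Hom(\coprod_{i : I} B_i, D)$ and $r : \Hom(E,D)$, so that $r^*(\coprod_{i : I} B_i) = \sum_{z : \coprod_{i : I} B_i} \sum_{e : E} \Id_D(p\,z, r\,e)$ and $r^*(B_i) = \sum_{x : B_i} \sum_{e : E} \Id_D(p\,(\fs{in}_i x), r\,e)$. Applying $(\star)$ to $C := \sum_{e : E} \Id_D(p\,z, r\,e)$, a type over $z : \coprod_{i : I} B_i$ whose fibres are $C_i = \sum_{e : E} \Id_D(p\,(\fs{in}_i x), r\,e)$, yields an equivalence $\coprod_{i : I} r^*(B_i) \xrightarrow{\sim} r^*(\coprod_{i : I} B_i)$. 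Since both this map and the canonical comparison map are determined by their restrictions along the inclusions $\fs{in}_i$, they agree, which completes the proof.
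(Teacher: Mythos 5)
You have correctly isolated the crux: your claim $(\star)$, that $\coprod_{i : I} \sum_{x : B_i} C[(i,x)/z] \to \sum_{z : \coprod_{i : I} B_i} C$ is an equivalence, is essentially interchangeable with pullback-stability of coproducts, and both stated conclusions do follow from it along the lines you describe. The gap is in your proposed proof of $(\star)$ itself, which you acknowledge is where all the work lies but for which your sketch does not go through. Constructing $\Phi^{-1}$ requires producing a term in the context $\Delta, z : \coprod_{i : I} B_i, w : C$ by cases on $z$ --- but that is precisely the local eliminator you are trying to define, so the argument is circular unless you can make do with the non-local eliminator, whose motive must have $z$ as the \emph{last} indexed variable. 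Without function types in the indexed theory you cannot curry away the extra argument $w : C$, and the ``encode-decode family for $\coprod_{i : I} B_i$'' you invoke cannot be defined either: encode-decode requires a type-valued eliminator or a universe to eliminate into, and neither is among the hypotheses ($\Sigma$-types, identity types, dependent coproducts). So the main step is asserted rather than proved.

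The way to actually break the circularity --- and the route the paper takes --- is to reduce a general pullback to a \emph{substitution}. Given $p : \Hom(\coprod_{i : I} B_i, D)$ and $r : \Hom(E,D)$, replace each $B_i$ by the $\Sigma$-type of its fibres $B_i'$ over $d : D$; then \rprop{coprod-sigma} (which \emph{is} provable with the non-local eliminator, because there the $\Sigma$ is over a type not depending on the coproduct variable) gives $\coprod_{i : I} B_i \simeq \sum_{d : D} \coprod_{i : I} B_i'$, and the stability of $\coprod$ under substitution of indexed terms gives $\coprod_{i : I}\bigl(B_i'[r\,e/d]\bigr) = \bigl(\coprod_{i : I} B_i'\bigr)[r\,e/d]$, whose total space over $e : E$ is exactly $r^*(\coprod_{i : I} B_i)$ since substitution corresponds to pullback. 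This proves pullback-stability; your $(\star)$ is then the special case where $r$ is the projection $\Sigma(\Delta, z : \coprod_{i : I} B_i, E) \to \coprod_{i : I} B_i$ and $p = \id$, and the local eliminator follows via \rlem{coprod-dep-coprod}. Note also that this is where stability is genuinely consumed --- not merely, as you suggest, in making $\Phi$ type-check and in reindexing along base substitutions.
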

\begin{proof}
First, let us prove that coproducts are stable under pullbacks.
Let $\Gamma, i : I \mid \cdot \vdash B_i \ob$ be a dependent type and let $p : \Hom(\coprod_{i : I} B_i, D)$ and $r : \Hom(E,D)$ be maps.
We define $\Gamma, i : I \mid d : D \vdash B_i' \ob$ as the fiber of $B_i$ over $d$.
Then we have the following pullback squares:
\[ \xymatrix{ \coprod_{i : I} r^*(B_i) \ar[r] \ar[d]_\simeq \pb & \coprod_{i : I} B_i \ar[d]^\simeq \\
              \coprod_{i : I} \sum_{e : E} B_i'[r\,e/d] \ar[r] \ar[d]_\simeq \pb & \coprod_{i : I} \sum_{d : D} B_i' \ar[d]^\simeq \\
              \sum_{e : E} \coprod_{i : I} B_i'[r\,e/d] \ar[r] \ar[d] \pb & \sum_{d : D} \coprod_{i : I} B_i' \ar[d] \\
              E \ar[r] & D
            } \]
The bottom square is a pullback since substitutions correspond to pullbacks.
Vertical maps in the second row are equivalences by \rprop{coprod-sigma}.
Vertical maps in the first row are equivalences since $B_i \simeq \Sigma_{d : D} B_i'$ and $r^*(B_i) \simeq B_i'[r\,e/d]$.

Now, let us prove the existence of the local weak unstable eliminator.
By \rlem{coprod-dep-coprod}, it is enough to show that the following map makes its codomain into a coproduct:
\[ \fs{in}_i : \Hom(\Sigma(x : B_i, E[(i,x)/z]), \Sigma(z : \coprod_{i : I} B_i, E)). \]
This follows from the fact that coproducts are stable under pullbacks since the codomain of this map is a pullback of $\id_{\coprod_{i : I} B_i}$ along the obvious projection from $\Sigma(z : \coprod_{i : I} B_i, E)$ to $\coprod_{i : I} B_i$.
\end{proof}

\subsection{Pushouts}

Pushouts can be defined in a more type-theoretic way.
The theory of \emph{(unstable) weak dependent pushouts} consists of the following unstable rules:
\begin{center}
\AxiomC{$\Gamma \mid \Delta \vdash f : \Hom(A,B)$}
\AxiomC{$\Gamma \mid \Delta \vdash g : \Hom(A,C)$}
\BinaryInfC{$\Gamma \mid \Delta \vdash B \amalg_A C \ob$}
\DisplayProof
\end{center}
\medskip

\begin{center}
\AxiomC{}
\UnaryInfC{$\Gamma \mid \Delta, y : B \vdash \fs{inl}(y) : B \amalg_A C$}
\DisplayProof
\qquad
\AxiomC{}
\UnaryInfC{$\Gamma \mid \Delta, z : C \vdash \fs{inr}(z) : B \amalg_A C$}
\DisplayProof
\end{center}
\medskip

\begin{center}
\AxiomC{}
\UnaryInfC{$\Gamma \mid \Delta, x : A \vdash \fs{glue}(x) : \Id(\fs{inl}(y)[f\,x/y],\fs{inr}(z)[g\,x/z])$}
\DisplayProof
\end{center}
\medskip

To simplify the notation, we will write $\fs{inl}(b)$, $\fs{inr}(c)$, and $\fs{glue}(a)$ instead of $\fs{inl}(y)[b/y]$, $\fs{inr}(z)[c/z]$, and $\fs{glue}(x)[a/x]$, respectively.

\begin{center}
\def\extraVskip{1pt}
\Axiom$\fCenter \Gamma \mid \Delta, w : B \amalg_A C \vdash D \ob$
\noLine
\UnaryInf$\fCenter \Gamma \mid \Delta, y : B \vdash d_1 : D[\fs{inl}(y)/w]$
\noLine
\UnaryInf$\fCenter \Gamma \mid \Delta, z : C \vdash d_2 : D[\fs{inr}(z)/w]$
\noLine
\UnaryInf$\fCenter \Gamma \mid \Delta, x : A \vdash d_3 : \Id(\fs{glue}(x)_*(d_1[f\,x/y]), d_2[g\,x/z])$
\def\extraVskip{2pt}
\UnaryInf$\fCenter \Gamma \mid \Delta, w : B \amalg_A C \vdash \amalg\text{-}\fs{elim}(w.D, y.d_1, z.d_2, x.d_3) : D$
\DisplayProof
\end{center}
\medskip

\begin{align*}
h_1(b) & : \Id(\amalg\text{-}\fs{elim}(w.D,y.d_1,z.d_2,x.d_3)[\fs{inl}(b)/w], d_1[b/y]) \\
h_2(c) & : \Id(\amalg\text{-}\fs{elim}(w.D,y.d_1,z.d_2,x.d_3)[\fs{inr}(c)/w], d_2[c/z])
\end{align*}

\[ \xymatrix{ \fs{glue}(a)_*(\amalg\text{-}\fs{elim}(w.D,y.d_1,z.d_2,x.d_3)[\fs{inl}(f\,a)/w]) \ar@{=}[r] \ar@{=}[d] & \fs{glue}(a)_*(d_1[f\,a/y]) \ar@{=}[d]^{d_3[a/x]} \\
              \amalg\text{-}\fs{elim}(w.D,y.d_1,z.d_2,x.d_3)[\fs{inr}(g\,a)/w] \ar@{=}[r]_-{h_2(g\,a)} & d_2[g\,a/z]
            } \]
The last square must commute up to a homotopy $h_3(a)$.
The top arrow in this square is $\pmap(x.\,\fs{glue}(a)_*(x), h_1(f\,a))$ and the left arrow is defined by path induction on $\fs{glue}(a)$.

\begin{prop}[pushout-sigma]
Dependent pushouts commute with $\Sigma$-types.
More precisely, the following map is an equivalence for all maps $f : \Hom_\Delta(A,B)$ and $g : \Hom_\Delta(A,C)$:
\begin{align*}
& \Hom_\Delta((\sum_{d : D} B) \amalg_{(\sum_{d : D} A)} (\sum_{d : D} C), \sum_{d : D} B \amalg_A C) \\
& \lambda w. \amalg\text{-}\fs{elim}(w. \sum_{d : D} B \amalg_A C, (d,y).(d,\fs{inl}(y)), (d,z).(d,\fs{inr}(z)), (d,x).d_3),
\end{align*}
where $d_3 = (d,\pmap(w.(d,w),\fs{glue}(x)))$.
\end{prop}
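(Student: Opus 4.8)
The plan is to exhibit an explicit inverse and verify the two round‑trips using the eliminator for dependent pushouts, exactly paralleling the proof of \rprop{coprod-sigma}. After applying $\Sigma$-elimination, the inverse sends a pair $(d,w)$ with $w : B \amalg_A C$ to
\[ \amalg\text{-}\fs{elim}\bigl(w.\,{\textstyle(\sum_{d:D}B)\amalg_{(\sum_{d:D}A)}(\sum_{d:D}C)},\ y.\,\fs{inl}((d,y)),\ z.\,\fs{inr}((d,z)),\ x.\,\fs{glue}((d,x))\bigr). \]
This typechecks because the two legs of the pushout $(\sum_{d:D}B)\amalg_{(\sum_{d:D}A)}(\sum_{d:D}C)$ carry $(d,x)$ to $(d,f\,x)$ and $(d,g\,x)$, so $\fs{glue}((d,x))$ has precisely the type demanded of the glue datum for the two point clauses $y.\fs{inl}((d,y))$ and $z.\fs{inr}((d,z))$.

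First I would check that the composite of the displayed map with this inverse is homotopic to the identity on $\sum_{d:D} B \amalg_A C$, using $\Sigma$-elimination and then pushout induction on $w$. On the $\fs{inl}$ and $\fs{inr}$ branches both maps compute on point constructors up to the homotopies $h_1$ and $h_2$ of their respective eliminators, so each branch reduces to a concatenation of the appropriate $h_1$'s (resp.\ $h_2$'s) together with $\pmap$ applied to them. On the $\fs{glue}$ branch one must fill the square obtained by transporting these identifications along $\fs{glue}((d,x))$; here the $h_3$-coherences of both eliminators enter, together with functoriality of $\pmap$ and the standard description of transport of a pair along a path built by $\pmap$ in its second coordinate. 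The other composite, on $(\sum_{d:D}B)\amalg_{(\sum_{d:D}A)}(\sum_{d:D}C)$, is handled by the symmetric pushout induction: the point branches reduce to $h_1$/$h_2$ concatenations and the glue branch again reduces to a pasting of the two $h_3$'s with these $\pmap$-naturality facts.

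The main obstacle is exactly this coherence for the $\fs{glue}$ constructor. Because the theory provides only weak dependent pushouts, the point computation rules hold up to $h_1, h_2$ and the glue computation rule only up to the $2$-cell $h_3$, so one cannot simply compute but must assemble an explicit pasting: the $h_3$-square of the inner eliminator, the $h_3$-square of the outer one, the naturality square of $\pmap(w.(d,w),-)$ along $\fs{glue}(x)$, and the interaction of transport along $\fs{glue}((d,x))$ with the two point clauses. Each of these is a standard fact about $\pmap$, transport and $\Sigma$-types of the kind used freely elsewhere in the paper, so the argument is routine once organized; it is, however, the only laborious part. Alternatively one could try to deduce the statement from the common universal property of the two sides, but since dependent pushouts are specified here only by a weak eliminator, that reformulation would require essentially the same path‑chasing.
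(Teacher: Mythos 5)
Your proposal matches the paper's proof: the paper likewise defines the inverse by applying $\amalg\text{-}\fs{elim}$ to the second component of a pair $(d,w)$, sending the point constructors to $\fs{inl}((d,y))$ and $\fs{inr}((d,z))$ and the glue constructor to a path in the big pushout, and then asserts that the two round-trips follow ``using the eliminator for pushouts.'' Your write-up is in fact slightly more careful than the paper's (your glue clause $\fs{glue}((d,x))$ is the type-correct datum, and you identify exactly which $h_1,h_2,h_3$-coherences are needed in the verification), but the route is the same.
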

\begin{proof}
The inverse of this map is defined as follows:
\[ \lambda (d,w). \amalg\text{-}\fs{elim}(w. (\sum_{d : D} B) \amalg_{(\sum_{d : D} A)} (\sum_{d : D} C), y.(d,y), z.(d.z), x.\pmap(w.(d,w),\fs{glue}(x))). \]
It is easy to show that these maps are mutually inverse using the eliminator for pushouts.
\end{proof}

\begin{lem}[dep-pushouts]
Suppose that an indexed dependent type theory has $\Sigma$-types and extensional identity types.
Let $B \amalg_A C$, $\fs{inl}$, $\fs{inr}$, and $\fs{glue}$ be unstable constructions as defined above.
Then the eliminator is definable if and only if the following square is a pushout:
\[ \xymatrix{ \Sigma(\Delta,A) \ar[rr]^-{T(f)} \ar[d]_{T(g)}        & & \Sigma(\Delta,B) \ar[d]^{T(\lambda y.\fs{inl}(y))} \\
              \Sigma(\Delta,C) \ar[rr]_-{T(\lambda z.\fs{inr}(z))}  & & \Sigma(\Delta, B \amalg_A C)
            } \]
where $T(h) = \lambda x_1 \ldots x_n x.\,(x_1, \ldots x_n, h\,x)$.
\end{lem}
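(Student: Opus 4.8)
The plan is to prove the two implications separately, following the pattern of \rlem{dep-coprod-coprod} and \rlem{coprod-dep-coprod} but carrying along the extra coherence datum produced by $\fs{glue}$. Throughout, write $G : \Id(T(\fs{inl}) \circ T(f), T(\fs{inr}) \circ T(g))$ for the homotopy obtained by applying $\fs{Idext}$ to the family $a.\,\fs{glue}(a)$, under the identification of maps out of $\Sigma(\Delta,-)$ with terms in indexed context $\Delta$; thus the square in the statement together with $G$ is the candidate pushout cocone. Write $e$ for the universal-property map $\lambda \phi.\,(\phi \circ T(\fs{inl}), \phi \circ T(\fs{inr}), G * \phi)$ associated to a given indexed type $P$, with codomain the pullback $\Hom(\Sigma(\Delta,B),P) \times_{\Hom(\Sigma(\Delta,A),P)} \Hom(\Sigma(\Delta,C),P)$.

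For the direction \emph{eliminator $\Rightarrow$ pushout}, assume $\amalg\text{-}\fs{elim}$ together with its computation data $h_1, h_2, h_3$ is available. Fix $\Gamma \mid \cdot \vdash P \ob$; we must show $e$ is an equivalence. Given $(k, m, p)$ in the codomain, I would translate $k$ and $m$ into methods over the constant motive $D := P$ and synthesise the $\fs{glue}$-method from $p$ via $\fs{hap}$, after trivialising the transport along the constant family $D$; the recursor $\amalg\text{-}\fs{elim}(w.P, \ldots)$ is then the candidate inverse. The $\beta$ half of the universal property ($e$ applied to the recursor is the identity) follows from $h_1$, $h_2$, $h_3$ together with extensionality of identity types, applied iteratively to promote the pointwise homotopies to homotopies of maps and of homotopies. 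The $\eta$ half follows by running the dependent eliminator on the motive $w \mapsto \Id_P(\phi\,(\overline{x}, w), (\text{recursor of }e(\phi))\,(\overline{x}, w))$ with methods reassembled from $h_1$, $h_2$, $h_3$, and concluding by extensionality; these together exhibit the recursor as a two-sided inverse of $e$.

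For the direction \emph{pushout $\Rightarrow$ eliminator}, assume the $T$-square is a pushout and fix a motive $\Gamma \mid \Delta, w : B \amalg_A C \vdash D \ob$ with methods $d_1, d_2, d_3$. Form the total space $\Sigma(\Delta, w : B \amalg_A C, D)$ with its projection $\pi$ forgetting the $D$-component, back to $\Sigma(\Delta, B \amalg_A C)$. The methods assemble into a cocone on the $T$-diagram with apex $\Sigma(\Delta, w, D)$: the legs send $(\overline{x}, b) \mapsto (\overline{x}, \fs{inl}(b), d_1)$ and $(\overline{x}, c) \mapsto (\overline{x}, \fs{inr}(c), d_2)$, and the coherence homotopy over $\Sigma(\Delta, A)$ is $\fs{Idext}$ of the $\Sigma$-paths whose components are $\refl$, $\fs{glue}(a)$, and $d_3$. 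By the universal property this cocone is classified by a map $t : \Hom(\Sigma(\Delta, B \amalg_A C), \Sigma(\Delta, w, D))$. Since $\pi$ composed with each leg is $T(\fs{inl})$, resp.\ $T(\fs{inr})$, on the nose and $\pi * (\text{coherence}) = G$, the endomap $\pi \circ t$ of the pushout is homotopic to $\id$; because $e$ is an equivalence (so the relevant fibre is contractible), I may choose the homotopy $h : \Id(\pi \circ t, \id)$ whose restrictions along $T(\fs{inl})$, $T(\fs{inr})$ and $G$ are precisely those witnessing the previous sentence. I then define $\amalg\text{-}\fs{elim}(w.D, b.d_1, c.d_2, a.d_3) := h(\overline{x}, w)_*(\pi_{n+2}(t\,(\overline{x}, w)))$, where $n = \lvert\Delta\rvert$. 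The rules $h_1$, $h_2$ come out by the standard description of paths in $\Sigma$-types applied to $t\,(\overline{x}, \fs{inl}(b))$ and $t\,(\overline{x}, \fs{inr}(c))$, and $h_3$ comes out by the same analysis one dimension up, using that the restriction of $h$ along $\fs{glue}(a)$ is the $\Sigma$-path built from $d_3$, together with the usual lemmas on $\pmap$ of a projection and transport along a concatenation. Since the statement does not assert extensionality of dependent pushouts, nothing further is required.

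I expect the coherence bookkeeping around $\fs{glue}$ to be the main obstacle in both directions: in the forward direction, matching the $\fs{glue}$-method synthesised from $p$ against $h_3$ once the constant-family transport has been collapsed; and in the reverse direction, arranging that $h : \Id(\pi \circ t, \id)$ is coherent with $G$ so that $h_3$ is forced, which is exactly where we must use that $e$ is an equivalence rather than merely a retraction. This is the one-dimension-up analogue of the step in the proof of \rlem{coprod-dep-coprod} where $h(\fs{in}_i'\,p)$ had to be pinned down to agree with $\eta(\fs{in}_i'\,p) * \pi_0$. All remaining steps are routine path algebra in $\Sigma$-types together with applications of extensionality of identity types.
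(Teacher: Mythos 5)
Your proposal is correct and follows essentially the same route as the paper: in the forward direction you construct the explicit inverse of the cocone map via the recursor with constant motive (trivialising the transport) and verify the round trips using $h_1,h_2,h_3$, dependent elimination into identity types, and $\fs{Idext}$; in the backward direction you classify the cocone into the total space $\Sigma(\Delta, w : B \amalg_A C, D)$, use the uniqueness clause of the universal property to obtain a homotopy $\pi \circ t \sim \id$ coherent with the $\fs{glue}$ data, and define the eliminator by transporting the last component along it. You also correctly single out the one genuinely delicate point --- pinning down that homotopy against $G$ so that $h_3$ is forced --- which is exactly where the paper's proof invokes the uniqueness terms $q, q_1, q_2, q_3$.
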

\begin{proof}
First, suppose that the eliminator is definable.
Then we need to show that the following canonical map is an equivalence:
\[ \Hom(\Sigma(\Delta, B \amalg_A C), D) \to \Hom(\Sigma(\Delta,B),D) \times_{\Hom(\Sigma(\Delta,A),D)} \Hom(\Sigma(\Delta,C),D). \]
The domain of this map is equivalent to $\Hom_\Delta(B \amalg_A C, D)$ and the codomain is equivalent to $\Sigma_{b : \Hom_\Delta(B,D)} \Sigma_{c : \Hom_\Delta(C,D)} \Id_{\Hom_\Delta(A,D)}(\lambda \overline{x} x.\,b\,\overline{x}\,(f\,x), \lambda \overline{x} x.\,c\,\overline{x}\,(g\,x))$.
It follows that this map is an equivalence if and only if the following one is:
\begin{align*}
F & : \Hom_\Delta(B \amalg_A C, D) \to \sum_{b : \Hom_\Delta(B,D)} \sum_{c : \Hom_\Delta(C,D)} \Id(\lambda \overline{x} x.\,b\,\overline{x}\,(f\,x), \lambda \overline{x} x.\,c\,\overline{x}\,(g\,x)) \\
F & = \lambda h.\,(\lambda \overline{x} y.\,h\,\overline{x}\,(\fs{inl}(y)), \lambda \overline{x} z.\,h\,\overline{x}\,(\fs{inr}(z)), \fs{Idext}(\overline{x} x.\,\pmap(h\,\overline{x},\fs{glue}(x))))
\end{align*}
The inverse of this function is defined as follows:
\begin{align*}
G & : \sum_{b : \Hom_\Delta(B,D)} \sum_{c : \Hom_\Delta(C,D)} \Id(\lambda \overline{x} x.\,b\,\overline{x}\,(f\,x), \lambda \overline{x} x.\,c\,\overline{x}\,(g\,x)) \to \Hom_\Delta(B \amalg_A C, D) \\
G & = \lambda (b,c,p).\,\lambda \overline{x} w.\,\amalg\text{-}\fs{elim}(w.D, y.\,b\,\overline{x}\,y, z.\,c\,\overline{x}\,z, x.\,\fs{hap}(p,\overline{x},x))
\end{align*}
For every triple $(b,c,p)$, we need to show that $F\,(G\,(b,c,p)) \sim (b,c,p)$.
It is easy to construct this homotopy using $h_1$, $h_2$, and $h_3$.
For every map $h : \Hom_\Delta(B \amalg_A C, D)$, we need to show that $G\,(F\,h) \sim h$.
We have the following homotopy:
\[ G\,(F\,h) \sim \lambda \overline{x} w.\,\amalg\text{-}\fs{elim}(w.D, y.\,h\,\overline{x}\,(\fs{inl}(y)), z.\,h\,\overline{x}\,(\fs{inr}(z)), x.\,\pmap(h\,\overline{x}, \fs{glue}(x))). \]
Let us denote the latter function by $h'$.
We can define a homotopy $h'$ and $h$ as follows:
\[ \fs{Idext}(\overline{x} w.\,\amalg\text{-}\fs{elim}(w.\,\Id(h'\,\overline{x}\,w,h\,\overline{x}\,w), y.\,h_1(y), z.\,h_2(z), x.\,h_3'(x))), \]
where $h_3'(x)$ is a homotopy between $\fs{glue}(x)_*(h_1(f\,x))$ and $h_2(g\,x)$.
Since the former term is homotopic to $\sym{\pmap(h'\,\overline{x},\fs{glue}(x))} \ct h_1(f\,x) \ct \pmap(h\,\overline{x},\fs{glue}(x))$, we can construct $h_3'(x)$ using $h_3(x)$.

Now, let us prove the converse.
Let $D$, $d_1$, $d_2$, and $d_3$ be arguments of $\amalg\text{-}\fs{elim}$.
Then we have the following commutative square:
\[ \xymatrix{ \Sigma(\Delta,A) \ar[rrr]^-{T(f)} \ar[d]_{T(g)}               & & & \Sigma(\Delta,B) \ar[d]^{T(\lambda y.(\fs{inl}(y),d_1))} \\
              \Sigma(\Delta,C) \ar[rrr]_-{T(\lambda z.(\fs{inr}(z),d_2))}   & & & \Sigma(\Delta, w : B \amalg_A C, D)
            } \]
The commutativity of the square is witnessed by the following term:
\[ \fs{Idext}((x_1, \ldots x_n, x).\,\pmap(p.(x_1, \ldots x_n, p), \Sigma\fs{ext}(\fs{glue}(x),d_3))). \]
By the universal property of pushouts, we have the following terms:
\begin{align*}
\Delta, w : B \amalg_A C & \vdash s(w) : \Sigma(\Delta, w : B \amalg_A C, D) \\
\Delta, y : B & \vdash h_1'(y) : \Id(s(\fs{inl}(y)),(\overline{x},\fs{inl}(y),d_1)) \\
\Delta, z : C & \vdash h_2'(z) : \Id(s(\fs{inr}(z)),(\overline{x},\fs{inr}(z),d_2))
\end{align*}
and term $h_3'(x)$ which witnesses the commutativity of the following square:
\[ \xymatrix{ s(\fs{inl}(f\,x)) \ar@{=}[r]^-{h_1'(f\,x)} \ar@{=}[d]_{\pmap(w.s(w),\fs{glue}(x))} & (\overline{x},\fs{inl}(f\,x),d_1[f\,x/y]) \ar@{=}[d]^{\pmap(p.(\overline{x},p),\Sigma\fs{ext}(\fs{glue}(x),d_3))} \\
              s(\fs{inr}(g\,x)) \ar@{=}[r]_-{h_2'(g\,x)} & (\overline{x},\fs{inr}(g\,x),d_2[g\,x/z])
            } \]
Let $\pi_0$ be the obvious projection $\Hom(\Sigma(\Delta, w : B \amalg_A C, D), \Sigma(\Delta, B \amalg_A C))$.
By the uniqueness, we have the following terms:
\begin{align*}
\Delta, w : B \amalg_A C & \vdash q(w) : \Id(\pi_0(s(w)),(\overline{x},w)) \\
\Delta, y : B & \vdash q_1(y) : \Id(h(\fs{inl}(y)),\pmap(\pi_0,h_1'(y))) \\
\Delta, z : C & \vdash q_2(z) : \Id(h(\fs{inr}(z)),\pmap(\pi_0,h_2'(z)))
\end{align*}
and $q_3(x)$ which is a homotopy between two terms witnessing the commutativity of the following square:
\[ \xymatrix{ \pi_0(s(\fs{inl}(f\,x))) \ar@{=}[rr]^-{q(\fs{inl}(f\,x))} \ar@{=}[d]_{\pmap(w.\pi_0(s(w)),\fs{glue}(x))} & & (\overline{x},\fs{inl}(f\,x)) \ar@{=}[d]^{\pmap(p.(\overline{x},p),\fs{glue}(x))} \\
              \pi_0(s(\fs{inr}(g\,x))) \ar@{=}[rr]_-{q(\fs{inr}(g\,x))} & & (\overline{x},\fs{inr}(g\,x))
            } \]
One of this term is define by path induction on $\fs{glue}(x)$ and the other one is obtained from $h_3'(x)$, $q_1(f\,x)$, and $q_2(g\,x)$.
We define $\amalg\text{-}\fs{elim}(w.D, y.d_1, z.d_2, x.d_3)$ as $q_*(\pi_{n+2}(s))$.
Maps $h_1$, $h_2$, and $h_3$ can be defined using $h_1'$ and $q_1$, $h_2'$ and $q_2$, and $h_3'$ and $q_3$, respectively.
\end{proof}

\begin{prop}
If an indexed dependent type theory has $\Sigma$-types and extensional identity types, then it has weak dependent pushouts if and only if it has pushouts.
\end{prop}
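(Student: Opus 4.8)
The plan is to imitate the treatment of dependent coproducts above (Lemmas \nlem{dep-coprod-coprod} and \nlem{coprod-dep-coprod}, and the proposition comparing dependent and ordinary coproducts), with \rlem{dep-pushouts} doing the real work and the rest being an application of the correspondence between dependent types over $\Delta$ and types equipped with a map to $\Sigma(\Delta)$.

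Suppose first that the theory has weak dependent pushouts. Given closed indexed types $A$, $B$, $C$ and maps $f : \Hom(A,B)$ and $g : \Hom(A,C)$, apply the formation rules for weak dependent pushouts with the empty indexed context; this yields an indexed type $B \amalg_A C$ together with $\fs{inl}$, $\fs{inr}$, $\fs{glue}$, and a weak eliminator. These are exactly the unstable constructions considered in \rlem{dep-pushouts}, and the eliminator is definable by assumption, so that lemma (taken with $\Delta = \cdot$) tells us the square with vertices $\Sigma(\cdot, A)$, $\Sigma(\cdot, B)$, $\Sigma(\cdot, C)$, $\Sigma(\cdot, B \amalg_A C)$ is a pushout. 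Since $\Sigma(\cdot, X)$ is canonically equivalent to $X$ and this equivalence is natural in the maps involved, that square is equivalent to the one with vertices $A$, $B$, $C$, $B \amalg_A C$ and edges $f$, $g$, $\fs{inl}$, $\fs{inr}$; hence the latter is a pushout and the theory has pushouts.

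Conversely, suppose the theory has pushouts, and let $A$, $B$, $C$ be dependent types over an indexed context $\Delta$ with $f : \Hom_\Delta(A,B)$ and $g : \Hom_\Delta(A,C)$. Using $\Sigma$-types and the equivalence between dependent types over $\Delta$ and types over $\Sigma(\Delta)$ from the subsection on types over contexts, we may assume $\Delta$ consists of a single type $d : D$. Passing to the closed types $\widehat{A} = \Sigma_{d : D} A$, $\widehat{B} = \Sigma_{d : D} B$, $\widehat{C} = \Sigma_{d : D} C$ with their first projections $p_{\widehat{A}}, p_{\widehat{B}}, p_{\widehat{C}}$ to $D$, the maps $f$ and $g$ induce morphisms $\widehat{f} : \Hom(\widehat{A}, \widehat{B})$ and $\widehat{g} : \Hom(\widehat{A}, \widehat{C})$ over $D$. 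Form the ordinary pushout $P = \widehat{B} \amalg_{\widehat{A}} \widehat{C}$, which exists by hypothesis; from the homotopies $p_{\widehat{B}} \circ \widehat{f} \sim p_{\widehat{A}} \sim p_{\widehat{C}} \circ \widehat{g}$ and the universal property of the pushout we obtain a map $p_P : \Hom(P, D)$ compatible with the cocone. We then define $B \amalg_A C$ to be the dependent type $d : D \vdash \sum_{w : P} \Id_D(p_P\,w, d)$, reading off $\fs{inl}$, $\fs{inr}$, and $\fs{glue}$ from the corresponding data of $P$ together with the recorded compatibility homotopies.

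It remains to produce the eliminator, which by \rlem{dep-pushouts} amounts to checking that the square with vertices $\Sigma(\Delta, A)$, $\Sigma(\Delta, B)$, $\Sigma(\Delta, C)$, $\Sigma(\Delta, B \amalg_A C)$ is a pushout. By the roundtrip equivalence of the subsection on types over contexts, $\Sigma(\Delta, B \amalg_A C)$ is equivalent to $P$ over $D$, and under this equivalence the square becomes the defining pushout square of $P$; hence it is a pushout and the theory has weak dependent pushouts. The step requiring the most care is this last identification — transporting $\fs{glue}$ and the propositional computation homotopies $h_1$, $h_2$, $h_3$ of \rlem{dep-pushouts} across the $\Sigma$-contraction equivalences — but this is routine once \rlem{dep-pushouts} and the types-over-contexts machinery are available; \rprop{pushout-sigma} can be invoked to streamline the comparison between the dependent pushout built over $D$ and the one required over $\Delta$.
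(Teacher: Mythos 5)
Your proof is correct and follows essentially the same route as the paper: the forward direction is \rlem{dep-pushouts} applied with empty $\Delta$, and the converse reduces $\Delta$ to a single type, forms the ordinary pushout $P$ of the total spaces over $\Sigma(\Delta)$, re-fibers it as $d : D \vdash \Sigma_{w : P}\,\Id_D(p_P\,w,d)$, and invokes \rlem{dep-pushouts} again via the equivalence $\Sigma(\Delta, B \amalg_A C) \simeq P$. One small caveat: the proposition does not assume unit types, so your blanket reduction of $\Delta$ to a single type is not available when $\Delta$ is empty; that case should be dispatched separately (it follows directly from \rlem{dep-pushouts}), as the paper does.
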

\begin{proof}
\Rlem{dep-pushouts} implies that the theory with weak dependent pushouts has pushouts.
Conversely, suppose that the theory has pushouts.
Let $\Gamma \mid \Delta \vdash f : \Hom(A,B)$ and $\Gamma \mid \Delta \vdash g : \Hom(A,C)$ be a pair of maps.
If $\Delta$ is empty, then an extensional dependent pushout of $f$ and $g$ exists by \rlem{dep-pushouts}.
Thus, we may assume that $\Delta$ is not empty.
Since we have $\Sigma$-types, we may also assume that it consists of a single type, that is $\Delta = (u : U)$.

By the universal property of pushouts, there exists a map $p : \Hom(B \amalg_A C, U)$ such that $p \circ \fs{inl} \sim \pi_1$ and $p \circ \fs{inr} \sim \pi_1$.
We define $B \amalg_A' C$ as $\Gamma \mid \Delta \vdash \Sigma_{w : B \amalg_A C} \Id(p\,w,u)$.
By \rlem{dep-pushouts}, to prove that $B \amalg_A' C$ is a pushouts of $f$ and $g$, it is enough to show that $\Sigma(\Delta, B \amalg_A' C)$ is a pushout of $T(f)$ and $T(g)$.
This follows from the fact that $\Sigma(\Delta, B \amalg_A' C)$ is equivalent to $B \amalg_A C$.
\end{proof}

We will say that pushouts in an indexed unary type theory are \emph{stable under pullbacks} if, for all maps $f : \Hom(A,B)$, $g : \Hom(A,C)$, $p : \Hom(B \amalg_A C, D)$, and $r : \Hom(E,D)$,
the canonical map from $r^*(B) \amalg_{r^*(A)} r^*(C)$ to $r^*(B \amalg_A C)$ is an equivalence, where $r^*(X)$ is the pullback of $X$ along $r$.
If dependent pushouts are stable, then we can define the local version of their eliminator:
\begin{center}
\def\extraVskip{1pt}
\Axiom$\fCenter \Gamma \mid \Delta, w : B \amalg_A C, E \vdash D \ob$
\noLine
\UnaryInf$\fCenter \Gamma \mid \Delta, y : B, E[\fs{inl}(y)/w] \vdash d_1 : D[\fs{inl}(y)/w]$
\noLine
\UnaryInf$\fCenter \Gamma \mid \Delta, z : C, E[\fs{inr}(z)/w] \vdash d_2 : D[\fs{inr}(z)/w]$
\noLine
\UnaryInf$\fCenter \Gamma \mid \Delta, x : A, E[\fs{inr}(g\,x)/w] \vdash d_3 : \Id(d_1', d_2[g\,x/z])$
\def\extraVskip{2pt}
\UnaryInf$\fCenter \Gamma \mid \Delta, w : B \amalg_A C, E \vdash \amalg\text{-}\fs{elim}(z E.D, x E.d_1, y E.d_2, w E.d_3) : D$
\DisplayProof
\end{center}
where $E = z_1 : E_1, \ldots z_n : E_n$, $d_1' = \fs{glue}(x)_*(d_1[\rho(\fs{glue}(x)), f\,x/w])$, and $\rho(p) = \overline{\sym{p}_*(z_i)/z_i}$.
The definitions of $h_1$, $h_2$, and $h_3$ are modified appropriately.

\begin{prop}
Suppose that an indexed dependent type theory has $\Sigma$-types, identity types, and dependent pushouts.
If $B \amalg_A C$, $\fs{inl}$, $\fs{inr}$, and $\fs{glue}$ are stable, then the local weak unstable eliminator is definable and pushouts are stable under pullbacks.
\end{prop}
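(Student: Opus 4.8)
The plan is to mirror the proof of the corresponding statement for dependent coproducts, replacing \rprop{coprod-sigma} by \rprop{pushout-sigma} and \rlem{coprod-dep-coprod} by \rlem{dep-pushouts}. There are two things to show — that pushouts are stable under pullbacks, and that the local weak unstable eliminator is definable — and the first will feed into the second.

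First I would establish stability under pullbacks. Fix maps $f : \Hom(A,B)$, $g : \Hom(A,C)$, $p : \Hom(B \amalg_A C, D)$, and $r : \Hom(E,D)$. The pushout injections give a map $A \to B \amalg_A C$, so $A$, $B$, $C$ all acquire maps to $D$ by composing with $p$; let $\Gamma \mid d : D \vdash A', B', C' \ob$ be the corresponding fibres, so that $A \simeq \Sigma_{d : D} A'$, $B \simeq \Sigma_{d : D} B'$, $C \simeq \Sigma_{d : D} C'$ over $D$ and $r^*(A) \simeq \Sigma_{e : E} A'[r\,e/d]$, $r^*(B) \simeq \Sigma_{e : E} B'[r\,e/d]$, $r^*(C) \simeq \Sigma_{e : E} C'[r\,e/d]$. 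I would then stack three pullback squares. The bottom one sits over $r : E \to D$ with apexes $\Sigma_{e : E}(B'[r\,e/d] \amalg_{A'[r\,e/d]} C'[r\,e/d])$ and $\Sigma_{d : D}(B' \amalg_{A'} C')$; it is a pullback because substituting $d := r\,e$ corresponds to pullback along $r$, and here the hypothesis that $B \amalg_A C$, $\fs{inl}$, $\fs{inr}$, $\fs{glue}$ are stable is exactly what lets the substitution pass through the pushout. The middle square has as vertical legs the equivalence $\Sigma_{d : D}(B' \amalg_{A'} C') \simeq (\Sigma_{d : D} B') \amalg_{(\Sigma_{d : D} A')} (\Sigma_{d : D} C')$ and its $E$-analogue, both instances of \rprop{pushout-sigma}. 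The top square has as vertical legs the equivalence $(\Sigma_{d : D} B') \amalg_{(\Sigma_{d : D} A')} (\Sigma_{d : D} C') \simeq B \amalg_A C$ and its $E$-analogue, obtained from the fibre reconstructions together with the invariance of pushouts under equivalences of spans. Pasting, the whole rectangle is a pullback, so $r^*(B) \amalg_{r^*(A)} r^*(C)$ is a pullback of $B \amalg_A C$ along $r$; unwinding the construction shows that the induced comparison map is the canonical one, which is hence an equivalence.

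Second I would prove that the local weak unstable eliminator is definable. Using $\Sigma$-types I may assume that the extra context $E$ is a single type over $\Delta, w : B \amalg_A C$, and I consider the dependent type $\Sigma_{w : B \amalg_A C} E$ over $B \amalg_A C$. Its pullbacks along $\fs{inl}$, along $\fs{inr}$, and along the canonical map $A \to B \amalg_A C$ are, up to contracting a singleton and up to the transport along $\fs{glue}$ already present in the rule, $\Sigma_{y : B} E[\fs{inl}(y)/w]$, $\Sigma_{z : C} E[\fs{inr}(z)/w]$, and $\Sigma_{x : A} E[\fs{inl}(f\,x)/w]$. By the stability under pullbacks just proved, $\Sigma_{w : B \amalg_A C} E$ together with these three legs and the homotopy induced by $\fs{glue}$ is a pushout of the resulting span. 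Applying $\Sigma(\Delta,-)$ and invoking \rprop{pushout-sigma} once more, the square of the $\Sigma(\Delta,-)$'s is again a pushout, so \rlem{dep-pushouts} produces an eliminator out of $\Sigma(\Delta, w : B \amalg_A C, E)$ into an arbitrary dependent type, with point cases over $\Sigma(\Delta, y : B, E[\fs{inl}(y)/w])$ and $\Sigma(\Delta, z : C, E[\fs{inr}(z)/w])$, a glue case over $\Sigma(\Delta, x : A, E[\fs{inl}(f\,x)/w])$, and the computation homotopies $h_1$, $h_2$, $h_3$. Under the correspondence between dependent types and types over a $\Sigma$-type this is precisely the local weak unstable eliminator $\amalg\text{-}\fs{elim}(z E.D, x E.d_1, y E.d_2, w E.d_3)$.

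I expect the main obstacle to be the two-dimensional bookkeeping. Unlike the coproduct case, a pushout carries the path constructor $\fs{glue}$, so one must check that each square above commutes up to the intended homotopy and, in the second part, that the transports along $\fs{glue}$ built into the local eliminator rule agree with the comparison homotopies that \rlem{dep-pushouts} returns. Most of this coherence is already absorbed into \rprop{pushout-sigma} and \rlem{dep-pushouts}, so the remaining work is to thread their identifications through carefully rather than to prove anything genuinely new.
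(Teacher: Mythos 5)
Your proposal is correct and follows essentially the same route as the paper: the same three-tier stack of pullback squares (substitution-as-pullback at the bottom, \rprop{pushout-sigma} in the middle, fibre reconstruction at the top) for stability under pullbacks, and the same reduction of the local eliminator to \rlem{dep-pushouts} by observing that $\Sigma(w : B \amalg_A C, E)$ with its three legs is the pullback of the original pushout span along the projection to $B \amalg_A C$. The coherence issues you flag around $\fs{glue}$ and the transport in the second leg of the pulled-back span are exactly the points the paper also leaves to routine bookkeeping.
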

\begin{proof}
First, let us prove that pushouts are stable under pullbacks.
Suppose that we have the following maps: $f : \Hom(A,B)$, $g : \Hom(A,C)$, $p : \Hom(B \amalg_A C, D)$, and $r : \Hom(E,D)$.
We define $d : D \vdash A' \ob$ as the fiber of $A$ over $d$.
Types $B'$ and $C'$ and maps $f' : \Hom_{d : D}(A',B')$ and $g' : \Hom_{d : D}(A',C')$ are defined similarly.
Then we have the following pullback squares:
\[ \xymatrix{ r^*(B) \amalg_{r^*(A)} r^*(C) \ar[r] \ar[d]_\simeq \pb & B \amalg_A C \ar[d]^\simeq \\
              (\Sigma_{e : E} B'[r\,e/d]) \amalg_{(\Sigma_{e : E} A'[r\,e/d])} (\Sigma_{e : E} C'[r\,e/d]) \ar[r] \ar[d]_\simeq \pb & (\Sigma_{d : D} B') \amalg_{(\Sigma_{d : D} A')} (\Sigma_{d : D} C') \ar[d]^\simeq \\
              \Sigma_{e : E} (B' \amalg_{A'} C')[r\,e/d] \ar[r] \ar[d] \pb & \Sigma_{d : D} B' \amalg_{A'} C' \ar[d] \\
              E \ar[r] & D
            } \]
The bottom square is a pullback since substitutions correspond to pullbacks.
Vertical maps in the second row are equivalences by \rprop{pushout-sigma}.

Now, let us prove the existence of the local weak unstable eliminator.
By \rlem{dep-pushouts}, it is enough to prove that $\Sigma(w : B \amalg_A C, E)$ is a pushout of the following maps:
\begin{align*}
\lambda (x, \overline{z}).\,(f\,x, \overline{z}) & : \Hom_\Delta(\Sigma(x : A, E[\fs{inl}(f\,x)/w]), \Sigma(y : B, E[\fs{inl}(y)])) \\
\lambda (x, \overline{z}).\,(g\,x, \overline{\fs{glue}(x)_*(z)}) & : \Hom_\Delta(\Sigma(x : A, E[\fs{inl}(f\,x)/w]), \Sigma(z : C, E[\fs{inr}(z)]))
\end{align*}
This follows from the fact that pushouts are stable under pullbacks since $\Sigma(w : B \amalg_A C, E)$ is a pullback of $\id_{B \amalg_A C}$ along the obvious projection from $\Sigma(w : B \amalg_A C, E)$ to $B \amalg_A C$.
\end{proof}

\section{The initial type theorem}
\label{sec:initial}

The general adjoint functor theorem holds in the context of indexed categories \cite[IV.1]{indexed-cats} and in the context of $\infty$-categories \cite{infty-gaft}.
Thus, it is natural to assume that it also should hold in the context of indexed type theories.
To properly state this theorem, we need to define the notion of adjoint functors between models of such theories.
This paper focuses on internal properties of a single model of an indexed type theory.
So, we only consider the first step in the proof of the adjoint functor theorems, which is known as \emph{the initial object theorem} or \emph{the initial type theorem} in our case.
This theorem is proved in \cite[IV.1.1]{indexed-cats} for indexed categories and in \cite[Proposition~2.3.2]{infty-gaft} for $\infty$-categories.

\subsection{$h$-initial types}

In this subsection, we will prove an analogue of \cite[Proposition~2.2.2]{infty-gaft}.
This proposition states that $h$-initial objects are initial in finitely complete $\infty$-categories.
An object $Z$ is $h$-initial if the space $\Hom(Z,X)$ is connected (and inhabited) for all $X$.
This proposition has two problems in the context of indexed type theories.
The first one is that it seems that it is not enough to assume the existence of finite limits since this only implies that homotopy groups of $\Hom(Z,X)$ vanish which might be not enough to conclude that this type is contractible.
For this reason, we replace this condition with the condition that all powers exist.
The second problem is that the definition of $h$-initial objects involves the propositional truncation, but it might not exist in general.
We solve this problem by replacing the condition of connectedness by a weaker condition which can be formulated without the propositional truncation.

First, for every type $X$, we define a weakening of the condition that $X$ is inhabited.
Similar condition was defined in \cite[Definition~5]{gen-hedberg}: a type $X$ is \emph{populated}, written $\mathrm{isPop}(X)$, if every constant endofunction on $X$ has a fixed point.
We will say that $X$ is \emph{weakly populated}, written $\mathrm{isWPop}(X)$, if $\mathrm{isProp}(X) \to X$, that is if $X$ is inhabited whenever it is a proposition.
Clearly, $\mathrm{isWPop}(X)$ is a proposition.

\begin{prop}
We have the following sequence of implications:
\[ X \to \| X \| \to \mathrm{isPop}(X) \to \mathrm{isWPop}(X) \to \neg \neg X. \]
\end{prop}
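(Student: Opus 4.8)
The plan is to prove each of the four arrows separately; since they compose, it suffices to treat them one at a time. The arrow $X \to \| X \|$ is the constructor of the propositional truncation, and I assume here that the base theory has propositional truncation (otherwise the statement is to be read as the chain $X \to \mathrm{isPop}(X) \to \mathrm{isWPop}(X) \to \neg\neg X$). For $\| X \| \to \mathrm{isPop}(X)$, recall that a constant endofunction is a pair $(f,c)$ with $f : X \to X$ and $c : \prod_{x, y : X} \Id(f\,x, f\,y)$, and that the type of its fixed points $\fs{fix}(f) = \sum_{x : X} \Id(f\,x, x)$ is a proposition; this is the lemma on weakly constant maps from \cite{gen-hedberg}. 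The function $\lambda x.\,(f\,x, c\,(f\,x)\,x)$ maps $X$ into this proposition, hence factors through $\| X \|$, so a term of $\| X \|$ produces a fixed point of $f$, and as $f$ was arbitrary this gives $\mathrm{isPop}(X)$.

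For $\mathrm{isPop}(X) \to \mathrm{isWPop}(X)$, assume $\mathrm{isProp}(X)$ with witness $h : \prod_{x, y : X} \Id(x, y)$. Then $(\id_X, h)$ is a constant endofunction (the constancy datum for $\id_X$ is literally $h$), so $\mathrm{isPop}(X)$ yields a fixed point of $\id_X$, whose first component is an element of $X$; this is exactly $\mathrm{isProp}(X) \to X$. For $\mathrm{isWPop}(X) \to \neg\neg X$, assume $\neg X$; then any two elements of $X$ are equal by $\bot$-elimination, so $\mathrm{isProp}(X)$ holds, whence $\mathrm{isWPop}(X)$ produces an element of $X$, contradicting $\neg X$. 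Therefore $\neg\neg X$.

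The only step with genuine content is that the fixed-point type of a weakly constant map is a proposition, which I import from \cite{gen-hedberg} rather than reprove (its proof involves a real coherence argument, unlike the purely formal manipulations in the other three steps). The one place deserving a little attention is the third implication, where one must observe that the hypothesis of $\mathrm{isWPop}$, namely $\mathrm{isProp}(X)$, is verbatim the constancy witness needed to invoke $\mathrm{isPop}$ on the identity map, so nothing beyond unfolding the definitions is required.
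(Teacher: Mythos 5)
Your proof is correct and follows essentially the same route as the paper: the first, third, and fourth implications are handled identically, and for the second implication both arguments ultimately rest on the same key fact from \cite{gen-hedberg} that the fixed-point type of a weakly constant map is a proposition (the paper phrases this as ``$\mathrm{isPop}(X)$ is a proposition'' and eliminates the truncation into $\mathrm{isPop}(X)$, whereas you eliminate into $\fs{fix}(f)$ for each fixed $f$; the two are interchangeable). Your version has the small virtue of making that borrowed lemma explicit rather than leaving it implicit in the claim that $\mathrm{isPop}(X)$ is propositional.
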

\begin{proof}
The first implication is obvious and the second follows from the fact that $\mathrm{isPop}(X)$ is a proposition.
Suppose that $X$ is populated.
To prove that it is weakly populated, we may assume that it is a proposition.
Then the identity endofunction on $X$ is constant.
Thus, there exists a point in $X$ (namely, the fixed point of $\id_X$).
Finally, suppose that $X$ is weakly populated and let us prove that $\neg \neg X$.
Assume that $\neg X$.
This implies that $X$ is a proposition.
Since $X$ is weakly populated, it is inhabited, which is a contradiction.
\end{proof}

We will say that a type $X$ is \emph{weakly connected} if, for all $x, x' : X$, the type $\Id(x,x')$ is weakly populated.
We will say that an indexed type $Z$ is \emph{$h$-initial} if, for every indexed type $X$, the type $\Hom(Z,X)$ is weakly connected and inhabited.

\begin{prop}
Let $0$ be an indexed type in a locally small indexed unary type theory such that, for every indexed type $X$, the type $\Hom(0,X)$ is weakly connected.
If the theory has extensional powers, then $\Hom(0,X)$ is a proposition for every $X$.
\end{prop}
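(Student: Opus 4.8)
The plan is to fix an indexed type $X$ and, writing $Y := \Hom(0,X)$, to produce for arbitrary $x, x' : Y$ a term of $\Id_Y(x,x')$; this is exactly what it means for $\Hom(0,X)$ to be a proposition. The main device is the extensional power $X^Y = \prod_{y : Y} X$, which exists and is extensional since the theory has extensional powers. By extensionality, $\Hom(0, X^Y)$ is a weak $\Pi$-type $\prod_{y : Y}\Hom(0,X)$ satisfying functional extensionality; in particular the ``evaluation'' maps $\Id_{\Hom(0,X^Y)}(f,g) \to \prod_{y : Y}\Id_{\Hom(0,X)}(f\,y, g\,y)$ are equivalences. Inside $\Hom(0, X^Y)$ I would single out the two terms $\lambda y.\,x$ (a constant family) and $\lambda y.\,y$ (the ``universal'' family), and set $P := \Id_{\Hom(0,X^Y)}(\lambda y.\,x,\ \lambda y.\,y)$.

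First I would show that the base type $P$ is a proposition, using the standard fact that a type admitting a map to $\mathrm{isContr}$ of itself is a proposition. So assume $p : P$. Passing $p$ through the evaluation equivalence above and composing with the weak $\beta$-paths $\beta(y.x, y) : \Id((\lambda y.x)\,y, x)$ and $\beta(y.y, y) : \Id((\lambda y.y)\,y, y)$ gives, for every $y : Y$, a term of $\Id_Y(x, y)$; that is, $Y$ is contractible with center $x$. The proposition on contractible index types then yields $X^Y = \prod_{y : Y} X \simeq X$, hence $\Hom(0, X^Y) \simeq \Hom(0, X) = Y$, which is contractible, so its identity type $P$ is contractible. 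Thus $P \to \mathrm{isContr}(P)$, and $P$ is a proposition.

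Finally I would apply the hypothesis to the indexed type $X^Y$: the type $\Hom(0, X^Y)$ is weakly connected, so $P$ is weakly populated, and since $P$ is a proposition it is inhabited. Picking $p : P$ and running it through the evaluation equivalence and the weak $\beta$-paths once more, this time at $y := x'$, produces the desired term of $\Id_Y(x, x')$; as $x, x'$ were arbitrary, $\Hom(0,X)$ is a proposition. The step that needs care is the contractibility argument of the second paragraph: one must arrange the passage from $p$ to a proof that $Y$ is contractible so that it uses only identity types, $\Sigma$-types, and the extensionality data of the power (which is all that is available), rather than genuine base-level $\Pi$-types; the first and third paragraphs are then routine.
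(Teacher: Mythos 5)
Your proposal is correct and follows the same overall strategy as the paper: both form the extensional power $X^{\Hom(0,X)}$, identify $\Hom(0,X^{\Hom(0,X)})$ with the weak function type $\Hom(0,X) \to \Hom(0,X)$, single out the identity map and a constant map, prove that the identity type between them is a proposition, invoke weak connectedness to inhabit that identity type, and evaluate to extract the desired path. The one real divergence is the middle step. The paper proves that $\Id(\id, \lambda x.\,g)$ is a proposition directly: by functional extensionality this reduces to showing that any two pointwise families $p, q : \Id(x,g)$ agree, which follows by path induction (the by-hand version of ``the type of contractions is a proposition''). You instead invoke the lemma that $P \to \mathrm{isContr}(P)$ implies $P$ is a proposition, and establish its hypothesis by a detour: an inhabitant of $P$ makes $Y = \Hom(0,X)$ contractible, hence $X^Y \simeq X$ by the proposition on products over contractible index types, hence $\Hom(0,X^Y) \simeq Y$ is contractible and so is its identity type $P$. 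Both routes work; yours leans on two extra results (the $\mathrm{isContr}$ lemma and the contractible-index proposition, both available in the paper), whereas the paper's is a self-contained short path induction. Your closing caveat is apposite but unproblematic: the $\mathrm{isContr}$ lemma can be phrased entirely in context (given $p, q : P$, use $p$ to produce a center and a contraction, then concatenate) without ever forming $\mathrm{isContr}(P)$ as a type, so no base-level $\Pi$-types are required.
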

\begin{proof}
Since the theory has powers, there is a type $X^{\Hom(0,X)}$ such that the type $\Hom(0,X^{\Hom(0,X)})$ is equivalent to $\Hom(0,X) \to \Hom(0,X)$.
This implies that the type $\Hom(0,X) \to \Hom(0,X)$ is weakly populated.
Let $f,g : \Hom(0,X)$ be a pair of maps.
We need to construct a homotopy between them.

First, let us prove the type $\Id_{\Hom(0,X) \to \Hom(0,X)}(\id, \lambda x.g)$ is a proposition.
Since powers are extensional, the functional extensionality holds for the type $\Hom(0,X) \to \Hom(0,X)$.
Thus, we just need to prove that, for all $x : \Hom(0,X) \vdash p : \Id(x,g)$ and $x : \Hom(0,X) \vdash q : \Id(x,g)$, there is a homotopy $x : X \vdash h : \Id(p,q)$.
It is enough to prove that, for all $x$ and $q$, there is a homotopy between $q$ and $p \ct \sym{p[g/x]}$, which follows by path induction.

Now, since $\Hom(0,X) \to \Hom(0,X)$ is weakly populated and $\Id(\id_{\Hom(0,X)}, \lambda x.g)$ is a proposition, the latter type is inhabited.
If $h$ is a homotopy between $\id$ and $\lambda x.g$, then $\fs{hap}(h,f)$ is a homotopy between $f$ and $g$.
\end{proof}

\begin{cor}
Any $h$-initial type in a locally small indexed unary type theory with extensional powers is initial.
\end{cor}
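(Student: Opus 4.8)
The plan is to combine the proposition immediately preceding this corollary with the elementary fact that an inhabited proposition is contractible. Let $Z$ be an $h$-initial type in a locally small indexed unary type theory with extensional powers. By the definition of $h$-initiality, for every indexed type $X$ the type $\Hom(Z,X)$ is weakly connected and inhabited. First I would invoke the preceding proposition with $0 := Z$: since the theory has extensional powers and $\Hom(Z,X)$ is weakly connected for all $X$, that proposition tells us that $\Hom(Z,X)$ is a proposition for every $X$.

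It then remains only to note that a proposition equipped with a point is contractible, so the inhabitedness part of $h$-initiality upgrades "$\Hom(Z,X)$ is a proposition" to "$\Hom(Z,X)$ is contractible." Since $X$ is arbitrary, this is exactly the definition of $Z$ being initial. I do not expect any genuine obstacle here: all of the substantive work — extracting contractibility of $\Hom$-types from a mere weak-connectedness hypothesis using extensional powers — has already been carried out in the preceding proposition, and what is left is the standard observation $\mathrm{isProp}(X) \times X \to \mathrm{isContr}(X)$, which holds by path induction and requires no further assumptions on the theory.
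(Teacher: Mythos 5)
Your proof is correct and matches the paper's intended argument exactly: the corollary follows from the preceding proposition (applied with $0 := Z$) together with the standard fact that an inhabited proposition is contractible, which is precisely why the paper states it as a corollary without further proof.
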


\subsection{Split idempotents}

An idempotent in a 1-category is a map $h : B \to B$ such that $h \circ h = h$.
In the setting of $\infty$-categories an idempotent consists of a map and an infinite amount of coherence data.
Lurie proved in \cite[Lemma~7.3.5.14]{lurie-algebra} that if $h$ is a map such that there exists a homotopy between $h \circ h$ and $h$ satisfying one additional coherence condition, then $h$ can be extended to an idempotent.

An idempotent $h$ in a 1-category is split if there are maps $f : A \to B$ and $g : B \to A$ such that $g \circ f = \id_A$ and $f \circ g = h$.
If the category is finitely complete, then every idempotent is split since $f$ can be defined as the equalizer of $h$ and $\id_B$ and $g$ exists by the universal property.
It is no longer true that the splitting of an idempotent in an $\infty$-category can be constructed as a limit of a finite diagram, but it is a limit of a countable diagram.
This question in the context of ordinary homotopy type theory was discussed by Shulman in \cite{split-idemp}.
We can repeat this argument in the setting of indexed type theories.

\begin{defn}
A map $h : \Hom(B,B)$ in an indexed unary type theory consists is \emph{idempotent} if there are the following terms:
\begin{align*}
I & : \Id_{\Hom(B,B)}(h, h \circ h) \\
J & : \Id_{\Id(h \circ h, h \circ h \circ h)}(\pmap(- \circ h, I), \pmap(h \circ -, I))
\end{align*}
\end{defn}

\begin{defn}
An idempotent map $h : \Hom(B,B)$ is \emph{split} if there exist maps $f : \Hom(A,B)$ and $g : \Hom(B,A)$ such that $g \circ f \sim \id_A$ and $f \circ g \sim h$.
\end{defn}

\begin{prop}
If the base theory of a locally small indexed unary type theory has natural numbers $\mathbb{N}$ and the indexed theory has equalizers and extensional powers $B^\mathbb{N}$ for some type $B$, then every idempotent on $B$ is split.
\end{prop}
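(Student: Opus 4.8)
The plan is to realise the splitting of $h$ as the sequential limit of the tower $\cdots \to B \to B$ in which every map is $h$, and to build this limit out of the extensional power $B^{\mathbb N}$ together with one equalizer. This is the type-theoretic incarnation of Shulman's argument in \cite{split-idemp}; the naive equalizer of $h$ and $\id_B$ does not split $h$, which is why the power is needed, and the coherence $J$ from the definition of an idempotent will be used in exactly the same place as there.

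Concretely, I would first use the universal property of the power $B^{\mathbb N}$ to define a map $v : \Hom(B^{\mathbb N}, B^{\mathbb N})$ satisfying $\pi_n \circ v \sim h \circ \pi_{n+1}$ for all $n : \mathbb N$, where $\pi_n : \Hom(B^{\mathbb N}, B)$ is the $n$-th projection; intuitively $v$ shifts a sequence and then applies $h$. Let $e : \Hom(A, B^{\mathbb N})$, together with a homotopy $\varepsilon : \Id(e, v \circ e)$, be the equalizer of $\id_{B^{\mathbb N}}$ and $v$; then $A$ is the sequential limit of the tower. Set $f := \pi_0 \circ e : \Hom(A,B)$. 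For $g : \Hom(B,A)$ I would observe that the constant sequence $(h)_{n : \mathbb N}$ together with the homotopies $I : \Id(h, h \circ h)$ constitutes a cone $B \to B^{\mathbb N}$ equalizing $\id_{B^{\mathbb N}}$ and $v$ — here extensionality of the power turns this pointwise data into an honest homotopy $e' \sim v \circ e'$ — so that $g$ is obtained from the universal property of the equalizer, with $\pi_n \circ e \circ g \sim h$ for every $n$. The homotopy $f \circ g \sim h$ then follows immediately, since $f \circ g = \pi_0 \circ e \circ g \sim h$.

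The substantial part of the argument is the homotopy $g \circ f \sim \id_A$, and this is where I expect the difficulty to lie. Because $A$ is only an equalizer, the induced map $\Hom(A,A) \to \Hom(A, B^{\mathbb N})$ is not an embedding, so it does not suffice to produce a homotopy $e \circ g \circ f \sim e$: one must also identify the accompanying equalizing data, and it is precisely this two-dimensional comparison that fails for a bare pre-idempotent and is rescued by $J$. Unwinding both sides, $\pi_n \circ e \circ g \circ f \sim h \circ \pi_0 \circ e$, while iterating $\varepsilon$ gives $\pi_0 \circ e \sim h \circ \pi_1 \circ e \sim \cdots \sim \pi_n \circ e$ up to coherence, so the desired homotopy amounts to a coherent identification of two telescopes built from copies of $h$ and of $I$, and $J$ supplies exactly the coherence making the two ways of sliding these homotopies past one another agree; the remaining induction on $\mathbb N$ is routine, following the computation of \cite{split-idemp}. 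The main obstacle is therefore this coherence bookkeeping in the last step; everything else is a mechanical combination of the universal properties of the equalizer and of the extensional power with recursion on $\mathbb N$.
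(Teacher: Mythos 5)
Your proposal follows essentially the same route as the paper: the same equalizer of $\id$ and the shift-then-$h$ endomap of $B^{\mathbb N}$, the same $f = \pi_0 \circ e$ and constant-sequence $g$, and the same correct identification that the only delicate point is comparing the \emph{pairs} (map plus equalizing homotopy) rather than just the maps, which is exactly where the coherence datum $J$ enters. The closing induction over $\mathbb N$ that you defer as routine is precisely the telescope computation the paper carries out explicitly (its homotopies $s(n)$ and $T(n)$), so the sketch is sound.
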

\begin{proof}
We define the splitting of an idempotent map $h : \Hom(B,B)$ as the limit of the following sequence:
\[ \ldots \xrightarrow{h} B \xrightarrow{h} B \xrightarrow{h} B. \]
More precisely, let $e : \Hom(A,B^\mathbb{N})$ be the equalizer of $\id, h \circ - \circ \mathrm{suc} : \Hom(B^\mathbb{N},B^\mathbb{N})$.
Let $d : \Id(e, h \circ e(-) \circ \mathrm{suc})$ be the witness of the fact that $e$ equalizes these maps.
Let $f : \Hom(A,B)$ be the following composite:
\[ A \xrightarrow{e} B^\mathbb{N} \xrightarrow{\lambda f.\,f\,0} B. \]
By the universal property of equalizers, to define a map $g : \Hom(B,A)$, it is enough to define a map $g' : \Hom(B,B^\mathbb{N})$ and a homotopy $p : \Id(g', h \circ g'(-) \circ \mathrm{suc})$.
Let $g' = \lambda b. \lambda n.\,h\,b$.
Since powers are extensional, to define $p$, it is enough to define a homotopy between $h$ and $h \circ h$, which we define as $I$.
It is easy to see that $f \circ g \sim h$.
Thus, we just need to prove that $g \circ f \sim \id_A$.

By the universal properties of equalizers, to construct this homotopy, it is enough to prove that for every $n : \mathbb{N}$, terms $(e,d)$ and $(e',d')$ of type $\Sigma_{r : \Hom(A,B^\mathbb{N})} \Id(r, h \circ r(-) \circ \mathrm{suc})$ are homotopic,
where $e' = \lambda a. \lambda n.\,h\,(e\,a\,0)$ and $d'$ is the homotopy between $e'$ and $\lambda a. \lambda n.\,h\,(h\,(e\,a\,0)))$ obtained from $I$.
First, we will construct a homotopy between $e$ and $e'$.
By the universal property of extensional powers, it is enough to prove that for every $n : \mathbb{N}$, maps $\lambda a.\,e\,a\,n$ and $\lambda a.\,h\,(e\,a\,0)$ are homotopic.
Let $s(n)$ be the homotopy between $\lambda a.e\,a\,n$ and $\lambda a.\,h\,(e\,a\,(n+1))$ obtained from $d$.
Then we define a homotopy between $\lambda a.\,e\,a\,n$ and $\lambda a.\,h\,(e\,a\,0)$ as $s(n) \ct T(n)$, where $T(n) : \Id(\lambda a.\,h\,(e\,a\,(n+1)), \lambda a.\,h\,(e\,a\,0))$ is defined by induction.
Let us write $I(a.t) : \Id(h \circ \lambda a.t, h \circ h \circ \lambda a.t)$ for $\pmap(\lambda f.\,\lambda a.\,f \circ t, I)$.
Then we define $T$ as follows:
\begin{align*}
T(0) & = I(a.\,e\,a\,1) \ct \pmap(h \circ -, \sym{s(0)}) \\
T(n+1) & = I(a.\,e\,a\,(n+2)) \ct \pmap(h \circ -, \sym{s(n+1)}) \ct T(n)
\end{align*}

Now, we need to construct a homotopy between the second components of pairs $(e,d)$ and $(e',d')$.
By the universal property of extensional powers, it is enough to prove that for every $n : \mathbb{N}$, the following square commutes:
\[ \xymatrix{ \lambda a.\,h\,(e\,a\,(n+1)) \ar@{=}[rr]^-{\pmap(h \circ -, s(n+1))} \ar@{=}[d]_{T(n)}    & & \lambda a.\,h\,(h\,(e\,a\,(n+2))) \ar@{=}[d]^{\pmap(h \circ -, T(n+1))} \\
              \lambda a.\,h\,(e\,a\,0) \ar@{=}[rr]_-{I(a.\,e\,a\,0)}                                    & & \lambda a.\,h\,(h\,(e\,a\,0))
            } \]
By the definition of $T(n+1)$, the top path is homotopic to $\pmap(h \circ -, s(n+1) \ct I(a.\,e\,a\,(n+2)) \ct \pmap(h \circ -, \sym{s(n+1)}) \ct T(n))$.
By $J$, we have a homotopy $\pmap(h \circ -, I(a.\,e\,a\,(n+2))) \sim I(a.\,h\,(e\,a\,(n+2)))$.
By path induction, we have a homotopy $\pmap(h \circ -, s(n+1)) \ct I(a.\,h\,(e\,a\,(n+2))) \ct \pmap(h \circ h \circ -, \sym{s(n+1)}) \sim I(a.\,e\,a\,(n+1))$.
Thus, we just need to prove that the following square is commutative:
\[ \xymatrix{ \lambda a.\,h\,(e\,a\,(n+1)) \ar@{=}[rr]^-{I(a.\,e\,a\,(n+1))} \ar@{=}[d]_{T(n)}  & & \lambda a.\,h\,(h\,(e\,a\,(n+1))) \ar@{=}[d]^{\pmap(h \circ -, T(n))} \\
              \lambda a.\,h\,(e\,a\,0) \ar@{=}[rr]_-{I(a.\,e\,a\,0)}                            & & \lambda a.\,h\,(h\,(e\,a\,0))
            } \]
We do this by induction on $n$.
If $n = 0$, then $I(a.\,e\,a\,1) \ct \pmap(h \circ -, T(0)) \sim I(a.\,e\,a\,1) \ct I(a.\,h\,(e\,a\,1)) \ct \pmap(h \circ -, \sym{s(0)})$ and $T(0) \ct I(a.\,e\,a\,0) = I(a.\,e\,a\,1) \ct \pmap(h \circ -, \sym{s(0)}) \ct I(a.\,e\,a\,0)$.
By path induction on $s(0)$, these terms are homotopic.
If $n = n' + 1$, then the top and the bottom paths in the diagram are homotopic to the following terms:
\begin{align*}
& I(a.\,e\,a\,(n+1)) \ct I(a.\,h\,(e\,a\,(n+1))) \ct \pmap(h \circ h \circ -, \sym{s(n)}) \ct \pmap(h \circ -, T(n')) \\
& I(a.\,e\,a\,(n+1)) \ct \pmap(h \circ -, \sym{s(n)}) \ct T(n') \ct I(a.\,e\,a\,0)
\end{align*}
By the induction hypothesis, we have a homotopy $T(n') \ct I(a.\,e\,a\,0) \sim I(a.\,e\,a\,n) \ct \pmap(h \circ -, T(n'))$.
Thus, we just need to prove that terms $I(a.\,h\,(e\,a\,(n+1))) \ct \pmap(h \circ h \circ -, \sym{s(n)})$ and $\pmap(h \circ -, \sym{s(n)}) \ct I(a.\,e\,a\,n)$ are homotopic, which follows by path induction on $s(n)$.
\end{proof}

\subsection{The initial type theorem}

A \emph{weakly initial family of indexed types} is a family $\Gamma, i : I \mid \cdot \vdash W_i \ob$ such that, for every indexed type $\Gamma \mid \cdot \vdash B \ob$, there exists an index $\Gamma \vdash i : I$ and a map $\Gamma \vdash b_i : \Hom(W_i,B)$.
We will prove the initial type theorem for indexed dependent type theories.
This is merely a technical convenience; the theorem should also be true in unary theories.

\begin{thm}
Suppose that a locally small indexed dependent type theory has $\Sigma$-types, extensional identity types, extensional dependent products, and split idempotents.
If it has a weakly initial family of indexed types, then it also has the initial type.
\end{thm}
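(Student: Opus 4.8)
The plan is to reduce everything to the Corollary that an $h$-initial type in a locally small indexed unary type theory with extensional powers is initial. Extensional dependent products give extensional powers (a power over a base type is the special case of a product), so it suffices to produce an $h$-initial type, i.e.\ an indexed type $Z$ with $\Hom(Z,X)$ inhabited and weakly connected for every $X$.

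First I would replace the weakly initial family by a single weakly initial type: since the theory has extensional dependent products, the product $W := \prod_{i : I} W_i$ exists, and composing the projection $\pi$ with the maps $b_i : \Hom(W_i, B)$ supplied by weak initiality shows that $\Hom(W, B)$ is inhabited for every indexed type $B$, so $W$ is weakly initial, and hence so is any type admitting a map to $W$. Next I would form the ``wide equalizer of all endomorphisms of $W$''. Since the theory is locally small, $\Hom(W,W)$ is a base type and the power $W^{\Hom(W,W)}$ exists; let $Z$ be the equalizer (which exists from $\Sigma$-types and extensional identity types) of the diagonal $\Delta = \langle \id_W \rangle$ and the tautological map $\langle (g : \Hom(W,W)) \mapsto g \rangle$, with inclusion $e : \Hom(Z, W)$. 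Unwinding the universal properties of the power and the equalizer, the equalizing datum is exactly a homotopy $e \sim g \circ e$ coherently in $g : \Hom(W,W)$; in particular $e$ is fixed by every endomorphism of $W$, and $Z$ is again weakly initial. Choosing $s : \Hom(W,Z)$ by weak initiality of $W$, the composite $h := e \circ s$ is an endomorphism of $W$, so it fixes $e$, i.e.\ $e \circ s \circ e \sim e$, whence $h \circ h = e \circ (s \circ e) \circ s \sim e \circ s = h$. Thus $h$ is idempotent up to homotopy, and I would upgrade this to the full idempotent structure of the relevant definition (the higher coherence being supplied by the criterion of \cite{lurie-algebra} used earlier). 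By hypothesis $h$ splits: there are $f : \Hom(A, W)$ and $g : \Hom(W,A)$ with $g \circ f \sim \id_A$ and $f \circ g \sim h$.

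Finally I would show that $A$ is $h$-initial. It is weakly initial because $f$ composes onto every indexed type, which gives inhabitedness of each $\Hom(A,X)$. For weak connectedness, given $f', g' : \Hom(A,X)$ one forms their equalizer $E \to A$; weak initiality of $E$ produces a map $A \to E$, hence an endomorphism $k : \Hom(A,A)$ through which $f'$ and $g'$ become equal, and transporting the ``fixed by every endomorphism'' property of $e$ through the retraction $(f,g)$ forces $k$, after the cancellations made legitimate by the splitting, to be homotopic to $\id_A$; weak populatedness is invoked precisely where one must pass from ``$\Id(f',g')$ is a proposition'' to its inhabitedness. Then $f' \sim g'$, so $\Hom(A,X)$ is weakly connected, and the Corollary upgrades $A$ to an initial type.

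The main obstacle is exactly the passage from weak initiality to $h$-initiality: unlike in a $1$-category, the inclusion of a wide equalizer is not a monomorphism, so one cannot simply cancel $e$ and conclude that $Z$ already has a contractible space of endomorphisms. Controlling all the higher homotopy of the mapping spaces is what forces the use of the split-idempotents hypothesis, and — as in \cite{infty-gaft} — it may be necessary to iterate the wide-equalizer construction and package the resulting tower of split monomorphisms into a single idempotent before splitting it; assembling this coherence data, and verifying that the splitting genuinely has trivial self-maps at every level, is the delicate part of the argument.
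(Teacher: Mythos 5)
Your overall strategy matches the paper's: take the product $W$ of the weakly initial family, carve out of $W$ a type of points fixed by all endomorphisms, observe that the resulting retract-style composite is idempotent, split it, and show the splitting is initial. But there is a genuine gap at the step you yourself flag as ``the delicate part'': producing the coherence datum $J$ of an idempotent. The splitting hypothesis of the theorem is about idempotents in the sense of the paper's definition, i.e.\ a pair $(I,J)$ with $J : \Id(\pmap(-\circ h,I),\pmap(h\circ -,I))$; Lurie's criterion is cited only as external motivation and is not available as an internal device for ``upgrading'' a bare homotopy $I : \Id(h,h\circ h)$. With your $Z$ --- the wide equalizer, i.e.\ essentially $\sum_{x:W}\prod_{f:\Hom(W,W)}\Id(x,f\,x)$ --- you can construct $I$ but there is no evident way to construct $J$, and iterating the wide-equalizer construction is not carried out and is not what the paper does. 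The paper's key move, which your proposal is missing, is to build the needed coherence into $Z$ itself by adding a third component
\[ \prod_{f : \Hom(W,W)}\ \prod_{p : \Id(f,\,f\circ f)} \Id(\fs{hap}(p,x),\,\pmap(f,h\,f)), \]
which says that the chosen fixed-point homotopies are compatible with any idempotence witness of $f$; applying this to $f = e\circ r$ and the evident $p$ is exactly what yields $J$. Without some such device the proof does not go through.

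Two secondary remarks. First, your endgame routes through $h$-initiality and the extensional-powers corollary, whereas the paper proves initiality of the splitting directly: given $f,g:\Hom(0,B)$ it maps $W$ into the equalizer of $f\circ p$ and $g\circ p$ and cancels using the splitting equations, producing an actual homotopy $f\sim g$ rather than mere weak connectedness; your appeal to weak populatedness is then both unsubstantiated (you never show $\Id(f',g')$ is weakly populated) and unnecessary if the cancellation argument works. Second, your idempotent lives on $W$ ($e\circ s$) while the paper's lives on $Z$ ($r\circ e$); this is harmless, but the coherence construction above is naturally phrased for the endomorphism of $Z$.
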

\begin{proof}
Let $W$ be the product of a weakly initial family of indexed types.
Then $W$ is weakly initial in the sense that, for every indexed type $B$, there exists a map from $W$ to $B$.
Let $Z$ be the following type:
\[ \sum_{(x : W)} \sum_{(h : \prod_{f : \Hom(W,W)} \Id(x,f\,x))} \prod_{(f : \Hom(W,W))} \prod_{(p : \Id(f, f \circ f))} \Id(\fs{hap}(p,x),\pmap(f,h\,f)). \]
Let $e : \Hom(Z,W)$ be the first projection and let $r : \Hom(W,Z)$ be any map.
We will prove that $h = r \circ e$ is idempotent.
We define $I : \Id(h, h \circ h)$ as follows:
\[ I = \fs{Idext}(z.\,\pmap(r, \pi_2\,z\,(e \circ r))). \]
To construct a homotopy $J$, it is enough to define the following homotopy:
\[ z : Z \vdash J' : \Id(\pmap(r, \pi_2\,(h\,z)\,(e \circ r), \pmap(h \circ r, \pi_2\,z\,(e \circ r))). \]
We define $J'$ as $\pmap(r,-)$ applied to the following sequence:
\[ \pi_2\,(h\,z)\,(e \circ r) \sim \fs{hap}(\fs{Idext}(w.\,\pi_2\,(r\,w)\,(e \circ r)),e\,z) \sim \pmap(e \circ r, \pi_2\,z\,(e \circ r)). \]
The second homotopy here is $\pi_3\,z\,(e \circ r)\,(\fs{Idext}(w.\,\pi_2\,(r\,w)\,(e \circ r)),e\,z)$.

Since idempotents are split, there exist a type $0$ and maps $q : \Hom(0,Z)$ and $p : \Hom(Z,0)$ such that $p \circ q \sim \id_0$ and $q \circ p \sim r \circ e$.
Let us prove that $0$ is initial.
For every type $B$, there exists a map $0 \xrightarrow{q} Z \xrightarrow{e} W \to B$.
Thus, it is enough to construct a homotopy between any two maps $f,g : \Hom(0,B)$.
Since there exists a map from $W$ to the equalizer of $f \circ p$ and $g \circ p$, there is a map $r' : \Hom(W,Z)$ such that $f \circ p \circ r' \sim g \circ p \circ r'$.
Then $\fs{Idext}(z.\,\pi_2\,z\,(e \circ r'))$ is a homotopy between $e$ and $e \circ r' \circ e$.
Then we have the following sequence of homotopies:
\[ p \circ r' \circ e \sim p \circ q \circ p \circ r' \circ e \sim p \circ r \circ e \circ r' \circ e \sim p \circ r \circ e \sim p \circ q \circ p \sim p. \]
It follows that $f$ and $g$ are homotopic:
\[ f \sim f \circ p \circ q \sim f \circ p \circ r' \circ e \circ q \sim g \circ p \circ r' \circ e \circ q \sim g \circ p \circ q \sim g. \]
\end{proof}

\section{Classifying morphisms}
\label{sec:class}

In this section, we will define the notion of classifying morphisms, discuss its relationship to the notions of universes and factorization systems.

\subsection{Truncated maps}

Let $f : \Hom(A,B)$ be a map in an indexed unary type theory and let $n$ be an integer $\geq -2$.
We will say that $f$ is \emph{$n$-truncated} if the map $f \circ - : \Hom(X,A) \to \Hom(X,B)$ is $n$-truncated for all indexed types $X$.
This definition makes sense in models of indexed unary type theories, but the problem is that it is not algebraic since it quantifies over indexed types.
That is, we cannot define a predicate on $\Hom(A,B)$ which corresponds to the notion of $n$-truncated maps in models.
But we can fix this problem if the indexed theory has pullbacks.

First, let us prove a few technical lemmas:

\begin{lem}[trunc-pb]
In an indexed unary type theory, $n$-truncated maps are closed under pullbacks.
\end{lem}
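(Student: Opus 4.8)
The plan is to reduce the statement to the standard fact of ordinary type theory that a base map sitting in a pullback square over an $n$-truncated base map is again $n$-truncated, applied pointwise after hitting the square with $\Hom(X,-)$.

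Concretely, suppose $f : \Hom(A,B)$ is $n$-truncated and that $f'$ is obtained as a pullback of $f$ along some $g : \Hom(B',B)$, so that $A' = B' \times_B A$ and $f' : \Hom(A',B')$ is the corresponding projection (the one parallel to $f$). By definition I must show that $f' \circ - : \Hom(X,A') \to \Hom(X,B')$ is $n$-truncated for every indexed type $X$. Since $\Hom(X,-)$ preserves pullbacks (as noted in the remark above), there is an equivalence $\Hom(X,A') \simeq \Hom(X,B') \times_{\Hom(X,B)} \Hom(X,A)$ carrying $f' \circ -$ to the projection of this pullback of base types onto $\Hom(X,B')$. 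Writing $u = f \circ - : \Hom(X,A) \to \Hom(X,B)$, which is $n$-truncated by hypothesis, and $w = g \circ - : \Hom(X,B') \to \Hom(X,B)$, this pullback of base types is $\sum_{b' : \Hom(X,B')} \fs{fib}_u(w\,b')$, and the projection to $\Hom(X,B')$ has fibre $\fs{fib}_u(w\,b')$ over each $b'$. This fibre is an $n$-type because $u$ is $n$-truncated, so the projection is $n$-truncated, and hence so is $f' \circ -$. Only $\Sigma$-types and identity types of the base theory enter this computation, so it is available in the setting at hand.

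The only step needing any care — and the main, rather mild, obstacle — is the identification of $f' \circ -$ with the projection of the pullback of base types under the universal-property equivalence; this is immediate once one unwinds the statement that $\Hom(X,-)$ preserves the pullback, since that equivalence sends a map $h$ to a tuple one of whose components is $f' \circ h$. Everything else is the usual fibrewise argument for closure of $n$-truncated maps under pullback, carried out in the base theory.
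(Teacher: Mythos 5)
Your proof is correct and follows essentially the same route as the paper's: apply $\Hom(X,-)$ to the pullback square, use that it preserves pullbacks to reduce to closure of $n$-truncated maps under pullback in the base theory, and then conclude by the standard fibrewise computation (the paper writes the base pullback as $\sum_{b : B} P(f(b)) \to B$ with $P(d)$ the $n$-truncated fibres, which is the same observation as your identification of the fibre of the projection with $\fs{fib}_u(w\,b')$).
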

\begin{proof}
Suppose that we have a pullback square in which the right arrow is $n$-truncated:
\[ \xymatrix{ A \ar[r] \ar[d] \pb   & C \ar[d] \\
              B \ar[r]              & D.
            } \]
Since $\Hom(X,-)$ preserves pullbacks, the following square is also pullback and the right arrow is $n$-truncated by the definition of $n$-truncated maps in indexed type theories:
\[ \xymatrix{ \Hom(X,A) \ar[r] \ar[d] \pb   & \Hom(X,C) \ar[d] \\
              \Hom(X,B) \ar[r]              & \Hom(X,D).
            } \]

Thus, we just need to prove that $n$-truncated maps are closed under pullbacks in the base theory.
Suppose that we have the following pullback square:
\[ \xymatrix{ \sum_{b : B} P(f(b)) \ar[r] \ar[d] \pb    & \sum_{d : D} P(d) \ar[d] \\
              B \ar[r]_f                                & D.
            } \]
If the right arrow is $n$-truncated, then its fibers $P(d)$ are $n$-types for all $d : D$, but this implies that fibers of the left arrow are also $n$-types; hence, it is also $n$-truncated.
\end{proof}

\begin{lem}[trunc-total]
Let $B$ and $C$ be base types over $x : A$.
Then a function $f : B \to C$ is $n$-truncated if and only if the induced function $f' : \Sigma_{x : A} B \to \Sigma_{x : A} C$ is $n$-truncated.
\end{lem}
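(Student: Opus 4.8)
The plan is to reduce the statement to the standard computation of the fibres of a total map. Recall that, in the base theory, a map is $n$-truncated precisely when all of its fibres are $n$-types; so it suffices to exhibit, for each $x_0 : A$ and each $c_0 : C[x_0/x]$, an equivalence between $\mathrm{fib}_{f'}((x_0,c_0))$ and the fibre of $f : B \to C$ over $c_0$ computed at $x := x_0$. Since $n$-truncatedness of a map between base types is, as used already in the proof of \rlem{trunc-pb}, the statement that each of its fibres is an $n$-type, the lemma will then be immediate.

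First I would unfold $\mathrm{fib}_{f'}((x_0,c_0)) = \sum_{(x,b) : \Sigma_{x:A} B} \Id_{\Sigma_{x:A} C}\big((x,f\,b),(x_0,c_0)\big)$ and apply the usual characterisation of paths in a $\Sigma$-type to rewrite the inner identity type as $\sum_{p : \Id_A(x,x_0)} \Id_{C}\big(p_*(f\,b),c_0\big)$. After reassociating and commuting the $\Sigma$'s, the whole fibre becomes $\sum_{x:A} \sum_{p : \Id_A(x,x_0)} \sum_{b : B} \Id_{C}\big(p_*(f\,b),c_0\big)$. Now I would contract the based path space $\sum_{x:A} \Id_A(x,x_0)$ to its centre $(x_0,\refl(x_0))$, which collapses the expression to $\sum_{b : B[x_0/x]} \Id_{C[x_0/x]}(f\,b,c_0)$, that is, to $\mathrm{fib}_f(c_0)$ at $x := x_0$. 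Every step of this chain uses only $\Sigma$-types and identity types, so it is available in the base theory even though $\Pi$-types are not assumed; the underlying computation is the one behind \cite[Theorem~4.7.6]{hottbook}.

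With the equivalence $\mathrm{fib}_{f'}((x_0,c_0)) \simeq \mathrm{fib}_f(c_0)$ in hand, both directions follow at once, since being an $n$-type is invariant under equivalence. If $f$ is $n$-truncated, then every $\mathrm{fib}_f(c_0)$ is an $n$-type, hence so is every fibre of $f'$, so $f'$ is $n$-truncated. Conversely, if $f'$ is $n$-truncated, then for every $x_0$ and $c_0$ the type $\mathrm{fib}_f(c_0)$ is equivalent to the $n$-type $\mathrm{fib}_{f'}((x_0,c_0))$, hence itself an $n$-type, so $f$ is $n$-truncated.

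I do not expect any genuine obstacle; the argument is routine. The only points to keep track of are that the fibre computation never secretly invokes function extensionality or $\Pi$-types (it does not, being a rearrangement and contraction of $\Sigma$-types), and that the phrase ``$f$ is $n$-truncated'' is read fibrewise over $x : A$, in agreement with asserting the predicate $\mathrm{isTrunc}_n$ of $f$ in the context $x : A$.
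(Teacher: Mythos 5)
Your proof is correct and follows the same route as the paper: both reduce the claim to the equivalence $\mathrm{fib}_{f'}((x_0,c_0)) \simeq \mathrm{fib}_f(c_0)$ of \cite[Theorem~4.7.6]{hottbook} and then invoke the fibrewise characterisation of $n$-truncatedness. You merely unfold the $\Sigma$-type computation that the paper cites as a black box.
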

\begin{proof}
By \cite[Theorem~4.7.6]{hottbook}, the fiber of $f'$ over a point $(x,c)$ is equivalent to the fiber of $f$ over $c$.
Thus, fibers of $f'$ are $n$-truncated if and only if fibers of $f$ are $n$-truncated.
\end{proof}

The following lemma is similar to \cite[5.5.6.15]{lurie-topos}, which is proved in the context of $\infty$-categories.

\begin{lem}[trunc-id]
A map $f : \Hom(A,B)$ is $n$-truncated if and only if the map $\langle \id_A, \id_A \rangle : \Hom(A, A \times_B A)$ is $(n-1)$-truncated.
\end{lem}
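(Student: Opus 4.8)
The plan is to eliminate the quantification over indexed types $X$ that is built into the phrase ``$n$-truncated map'', reducing the claim to a statement in the base theory, and then to prove that statement with \rlem{trunc-total}. By definition $f$ is $n$-truncated iff $f \circ - : \Hom(X,A) \to \Hom(X,B)$ is $n$-truncated for every indexed type $X$, and $\langle \id_A, \id_A \rangle$ is $(n-1)$-truncated iff $\langle \id_A, \id_A \rangle \circ - : \Hom(X,A) \to \Hom(X, A \times_B A)$ is $(n-1)$-truncated for every $X$. Because $\Hom(X,-)$ preserves pullbacks, the canonical map $\Hom(X, A \times_B A) \to \Hom(X,A) \times_{\Hom(X,B)} \Hom(X,A)$ is an equivalence, and composing $\langle \id_A, \id_A \rangle \circ k$ with it yields a triple whose first two components are, by the defining homotopies $\pi_1 \circ \langle \id_A, \id_A \rangle \sim \id_A$ and $\pi_2 \circ \langle \id_A, \id_A \rangle \sim \id_A$, both homotopic to $k$; so under this equivalence $\langle \id_A, \id_A \rangle \circ -$ is the diagonal of the base-theory function $g = f \circ -$. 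Since truncatedness is invariant under composition with an equivalence, it therefore suffices to prove the following purely type-theoretic fact and apply it to $g = f \circ -$ for each $X$: a function $g : S \to T$ is $n$-truncated iff its diagonal $\delta : S \to S \times_T S$, $\delta\,s = (s, s, \refl(g\,s))$, is $(n-1)$-truncated.

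To prove this, I would view $\delta$ as a morphism over $S$ along the identity $S \to S$ on the source and the first projection $\pi_1 : S \times_T S \to S$ on the target, noting $\pi_1 \circ \delta \sim \id_S$. The fiber of $\pi_1$ over $s : S$ is the fiber $\fs{fib}_g(g\,s)$ of $g$ over $g\,s$, the fiber of $\id_S$ over $s$ is contractible, and the fiberwise component of $\delta$ at $s$ is the map $c_s$ from that contractible fiber to $\fs{fib}_g(g\,s)$ that selects $(s, \refl(g\,s))$. Applying \rlem{trunc-total} to the family $s \mapsto c_s$, whose total map is $\delta$ up to the equivalence $S \simeq \sum_{s : S}\fs{fib}_{\id_S}(s)$, we get that $\delta$ is $(n-1)$-truncated iff $c_s$ is $(n-1)$-truncated for all $s$, i.e. iff $\Id_{\fs{fib}_g(g\,s)}((s, \refl(g\,s)), w)$ is an $(n-1)$-type for all $s : S$ and all $w : \fs{fib}_g(g\,s)$. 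A based-path-induction argument, contracting $\sum_{t : T}\sum_{p : \Id(g\,s, t)}(-)$, converts this family of conditions into the condition that $\Id_{\fs{fib}_g(t)}(w, w')$ is an $(n-1)$-type for all $t : T$ and all $w, w' : \fs{fib}_g(t)$ --- equivalently, that every fiber of $g$ is an $n$-type, which is exactly what it means for $g$ to be $n$-truncated. I will carry this out for $n \geq -1$; the case $n = -2$ is immediate, reading ``$(-3)$-truncated'' as ``equivalence'', since $A \times_B A \simeq A$ when $f$ is an equivalence.

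I expect the main obstacle to be bookkeeping rather than anything conceptual. The first fiddly point is verifying that $\langle \id_A, \id_A \rangle \circ -$ really becomes the diagonal of $f \circ -$ under pullback preservation: this uses the universal property of the pullback together with its defining homotopies, not merely an equivalence of objects. The second is arranging the ``contract the based path space'' step so that the fiberwise $(n-1)$-truncatedness conditions coming out of \rlem{trunc-total} match on the nose the assertion that all identity types of all fibers of $g$ are $(n-1)$-types. Everything else is routine manipulation of $\Sigma$-types and path induction in the base theory, of the kind in \cite[Chapter~7]{hottbook}, and the whole argument is a syntactic rendering of the $\infty$-categorical proof of \cite[5.5.6.15]{lurie-topos}.
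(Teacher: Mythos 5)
Your argument for $n \geq -1$ is correct and shares the paper's overall architecture: both reduce the claim to a statement about base-theory functions using the fact that $\Hom(X,-)$ preserves (and reflects) pullbacks and $n$-truncatedness, and both then lean on \rlem{trunc-total}. Where you diverge is in the base-theory stage. The paper quotes \cite[Lemma~7.6.2]{hottbook} ($g$ is $n$-truncated iff each $\pmap(g,-) : \Id(a,a') \to \Id(g\,a,g\,a')$ is $(n-1)$-truncated) and then applies \rlem{trunc-total} to totalize that family over $a,a'$, identifying the total map with the diagonal via $\Sigma_{a}\Sigma_{a'}\Id(a,a') \simeq S$. You instead fiber the diagonal over $S$ via $\pi_1$, observe that the fiberwise components are point-inclusions into the fibers of $g$, and recover the truncation of identity types of fibers from the fibers of those inclusions, finishing with based path induction. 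This is essentially an inlined proof of \cite[Lemma~7.6.2]{hottbook}; it is slightly longer but more self-contained, and your explicit verification that $\langle\id_A,\id_A\rangle \circ -$ really is the diagonal of $f \circ -$ under the pullback-preservation equivalence is a point the paper glosses over.

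One genuine error, though confined to a case outside the lemma's effective range: your parenthetical treatment of $n = -2$ is wrong as a biconditional. If $\langle\id_A,\id_A\rangle$ is an equivalence, this only forces $f$ to be a monomorphism (i.e.\ $(-1)$-truncated), not an equivalence; the implication you cite ($A \times_B A \simeq A$ when $f$ is an equivalence) is only one direction. Since $(n-1)$-truncatedness is undefined for $n = -2$ under the paper's conventions, the lemma should simply be read for $n \geq -1$, which is exactly the range your main argument covers; but the $n=-2$ claim as you state it should be deleted rather than ``read off''.
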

\begin{proof}
Since $\Hom(X,-)$ preserves and reflects $n$-truncatedness and pullbacks, it is enough to prove this fact for base types.
By \cite[Lemma~7.6.2]{hottbook}, a function $f : A \to B$ is $n$-truncated if and only if, for all $a,a' : A$, the function $\pmap(f,-) : \Id(a,a') \to \Id(f\,a,f\,a')$ is $(n-1)$-truncated.
By \rlem{trunc-total}, the latter function is $n$-truncated if and only if the induced function $\Sigma_{a : A} \Sigma_{a' : A} \Id(a,a') \to \Sigma_{a : A} \Sigma_{a' : A} \Id(f\,a,f\,a')$ is $(n-1)$-truncated.
The latter function is equivalent to $\lambda a.\,(a,a,\refl) : A \to \Sigma_{a : A} \Sigma_{a' : A} \Id(f\,a,f\,a')$, which is equivalent to $\langle \id_A, \id_A \rangle : \Hom(A, A \times_B A)$.
\end{proof}

The last lemma implies that if the theory has pullbacks, then we can define a predicate on maps that corresponds to the notion of $n$-truncated maps by induction on $n$.

\subsection{Fibrations}

Suppose that we have a class of families of propositions over all indexed morphisms:
\begin{center}
\AxiomC{$\Gamma \mid \cdot \vdash A \ob$}
\AxiomC{$\Gamma \mid \cdot \vdash B \ob$}
\AxiomC{$\Gamma \vdash f : \Hom(A,B)$}
\TrinaryInfC{$\Gamma \vdash \Fib(f) \type$}
\DisplayProof
\end{center}
We will call maps $f$ together with an element of $\Fib(f)$ \emph{fibrations} and denote them by $\twoheadrightarrow$.
We will assume that $\Fib$ is closed under equivalences, that is if $f : \Hom(A,B)$ is a fibration and $e_1 : \Hom(A',A)$ and $e_2 : \Hom(B,B')$ are equivalences, then $e_2 \circ f \circ e_1$ is a fibration.

Sometimes $\Fib(f)$ is not a type, but a finite number of judgments of the form $\Gamma, \Delta_i \vdash A_i \type$.
For example, we might want to define $\Fib(f)$ as $\fs{isEquiv}(C(f))$ for some morphism $C(f)$.
In general, this is not a type, but a collection of four judgments.
If the base theory has $\Pi$-types, then we can always replace such a collection of judgments with a single type.
Even if the base theory does not have $\Pi$-types, we still can work with such definitions of $\Fib(f)$;
we just need to replace judgments of the form $\Gamma \vdash b : \Fib(f)$ with a finite collection of judgments of the form $\Gamma, \Delta_i \vdash a_i : A_i$.
For notational convenience, we will always assume that $\Fib(f)$ is a single type.

We can also define the dependent version of classes of fibrations:
\begin{center}
\AxiomC{$\Gamma \mid \Delta \vdash B \ob$}
\UnaryInfC{$\Gamma \vdash \Fib(\Delta.B) \type$}
\DisplayProof
\end{center}
We will also call dependent types $B$ together with an element of $\Fib(\Delta.B)$ fibrations.
It is often more convenient to work with the dependent version of this definition.
If the indexed theory has $\Sigma$-types and unit types, then, for every class of fibrations $\Fib$, we can define its dependent version as follows:
\[ \Fib(\Delta.B) = \Fib(\pi_1 : \Hom(\sum_{p : \Sigma(\Delta)} B[\pi_1(p)/x_1, \ldots \pi_n(p)/x_n],\Sigma(\Delta))). \]
Conversely, if the indexed theory also has identity types, then, for every dependent class of fibrations $\Fib$, we can define its non-dependent version:
\[ \Fib(f : \Hom(A,B)) = \Fib((y : B).\,\sum_{x : A} \Id(f\,x,y)). \]

\begin{example}
If the indexed theory has finite limits, then we can define a class of fibrations consisting of $n$-truncated maps (or $n$-truncated indexed types) as was explained in the previous subsection.
\end{example}

For every dependent class of fibrations $\Fib$, we can add a new sort of dependent types $\Gamma \mid \Delta \vdash A \fib$ consisting of types satisfying the predicate $\Fib$:
\begin{center}
\AxiomC{$\Gamma \mid \Delta \vdash A \fib$}
\UnaryInfC{$\Gamma \mid \Delta \vdash \El(A) \ob$}
\DisplayProof
\qquad
\AxiomC{$\Gamma \mid \Delta \vdash A \fib$}
\UnaryInfC{$\Gamma \vdash \fs{fp}(\Delta.A) : \Fib(\Delta.\,\El(A))$}
\DisplayProof
\end{center}
\medskip

\begin{center}
\AxiomC{$\Gamma \mid \Delta \vdash A \ob$}
\AxiomC{$\Gamma \vdash p : \Fib(\Delta.A)$}
\AxiomC{$\Gamma \mid E \vdash b_i : B_i[b_1/x_1, \ldots b_{i-1}/x_{i-1}]$}
\TrinaryInfC{$\Gamma \mid E \vdash \fs{rf}(\Delta.A, p, b_1, \ldots b_k) \fib$}
\DisplayProof
\end{center}
where $\Delta = x_1 : B_1, \ldots x_k : B_k$.
\[ \El(\fs{rf}(\Delta.A, p, b_1, \ldots b_k)) = A[b_1/x_1, \ldots b_k/x_k]. \]
We define equivalences between fibrations $\Gamma \mid \Delta \vdash A \fib$ and $\Gamma \mid \Delta \vdash B \fib$ as equivalences between underlying types $\El(A)$ and $\El(B)$.
We will often omit the function symbol $\El$.

We can assume various closure conditions on the class of fibrations in the usual way.
For example, we can assume that $\Fib$ is closed under contractible types.
This is true if and only if it contains all identity morphisms.
Similarly, $\Fib$ is closed under $n$-types (as a dependent class) if and only if it contains all $n$-truncated maps (as a non-dependent class).

\begin{prop}[fib-sigma]
A class of fibrations is closed under $\Sigma$-types if and only if the corresponding non-dependent class is closed under compositions.
\end{prop}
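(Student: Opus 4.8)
The plan is to reduce both implications to a single structural fact about $\Sigma$-types, using the dictionary between the dependent and non-dependent presentations of $\Fib$ (available since the indexed theory has $\Sigma$-types, unit types, and identity types) and the assumption that $\Fib$ is closed under equivalences. The structural fact is: for $\Gamma \mid \Delta \vdash A \ob$ and $\Gamma \mid \Delta, x : A \vdash B \ob$, the projection $\pi_1 : \Hom(\Sigma_{p : \Sigma(\Delta)}(\Sigma_{x : A} B)[\pi_1(p)/x_1, \ldots], \Sigma(\Delta))$ attached to the dependent type $\Sigma_{x : A} B$ is equivalent, over $\Sigma(\Delta)$, to the composite
\[ \Sigma(\Delta, x : A, y : B) \longrightarrow \Sigma(\Delta, x : A) \longrightarrow \Sigma(\Delta) \]
of the projection attached to $B$ over $(\Delta, x : A)$ with the projection attached to $A$ over $\Delta$. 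This is just associativity of $\Sigma$, and through the dictionary it identifies the two closure conditions.

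For the direction from closure under $\Sigma$-types to closure under compositions, I would take fibrations $f : \Hom(A,B)$ and $g : \Hom(B,C)$ and pass to the dependent class, so that $\Gamma \mid z : C \vdash \Sigma_{y : B}\Id(g\,y,z) \fib$ and $\Gamma \mid y : B \vdash \Sigma_{x : A}\Id(f\,x,y) \fib$. Reindexing the second fibration along the indexed term $z : C, w : \Sigma_{y : B}\Id(g\,y,z) \vdash \pi_1(w) : B$ via the $\fs{rf}$ rule (legitimate since $\Fib$ is stable under reindexing) and applying closure under $\Sigma$-types gives $\Gamma \mid z : C \vdash \Sigma_{w : \Sigma_{y : B}\Id(g\,y,z)} \Sigma_{x : A}\Id(f\,x,\pi_1(w)) \fib$. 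Contracting the based path space $\Sigma_{y : B}\Id(f\,x,y)$ identifies this type, compatibly with the projections to $C$, with $\Sigma_{x : A}\Id(g\,(f\,x),z)$, so closure under equivalences says precisely that $g \circ f$ is a fibration.

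For the converse, assume the non-dependent class is closed under composition and let $\Gamma \mid \Delta \vdash A \fib$ and $\Gamma \mid \Delta, x : A \vdash B \fib$. By definition the two projections appearing in the composite above are fibrations, hence so is the composite $\Sigma(\Delta, x : A, y : B) \to \Sigma(\Delta)$; by the structural fact this composite is equivalent over $\Sigma(\Delta)$ to the projection attached to $\Sigma_{x : A} B$, and closure under equivalences gives $\Gamma \mid \Delta \vdash \Sigma_{x : A} B \fib$.

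I expect the main obstacle to be purely bookkeeping: one has to spell out the iterated substitution $[\pi_1(p)/x_1, \ldots, \pi_n(p)/x_n]$ defining the dependent-to-projection translation, check that the equivalence $\Sigma(\Delta, x : A, y : B) \simeq \Sigma_{p : \Sigma(\Delta)}(\Sigma_{x : A} B)[\ldots]$ commutes with both projections to $\Sigma(\Delta)$, and likewise verify that the path-space contraction in the first half respects the maps down to $C$. These are elementary manipulations of $\Sigma$- and identity types, but they must be carried out with care, since it is exactly the interaction of the dependent/non-dependent dictionary for $\Fib$ with its reindexing rule that makes the two closure conditions coincide.
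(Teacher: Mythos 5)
Your proposal is correct and follows essentially the same route as the paper: the converse direction is the paper's argument that the projection for $\Sigma_{x:A}B$ factors, up to equivalence over $\Sigma(\Delta)$, as the composite of the two projections, and the forward direction is the paper's reindexing of $\Sigma_{x:A}\Id(f\,x,y)$ along $\pi_1$ followed by the based-path-space contraction identifying the resulting $\Sigma$-type with $\Sigma_{x:A}\Id(g\,(f\,x),z)$. The bookkeeping you flag (the dependent/non-dependent dictionary and compatibility of the equivalences with the projections) is exactly what the paper also leaves implicit.
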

\begin{proof}
First, suppose that $\Fib$ is closed under compositions.
If we have fibrations $\Gamma \mid \Delta \vdash A$ and $\Gamma \mid \Delta, x : A \vdash B$, then the type $\Sigma_{x : A} B$ corresponds to the following map:
\[ \Sigma(\Delta, A, B) \xrightarrow{\simeq} \sum_{p : \Sigma(\Delta, A)} B[\overline{\pi_i(p)/x_i}] \xrightarrow{\pi_1} \Sigma(\Delta, A) \xrightarrow{\simeq} \sum_{p : \Sigma(\Delta)} A[\overline{\pi_i(p)/x_i}] \xrightarrow{\pi_1} \Sigma(\Delta). \]
The first and the third maps are equivalences and the second and the fourth maps are fibrations by assumption.
Thus, the type $\Sigma_{x : A} B$ is also a fibration.

Now, suppose that $\Fib$ is closed under $\Sigma$-types.
Let $f : \Hom(A,B)$ and $g : \Hom(B,C)$ be fibrations.
These maps correspond to the types $y : B \vdash \Sigma_{x : A} \Id(f\,x,y)$ and $z : C \vdash \Sigma_{y : B} \Id(g\,y,z)$.
The first type is equivalent to the type $z : C, p : \Sigma_{y : B} \Id(g\,y,z) \vdash \Sigma_{x : A} \Id(f\,x,\pi_1(p))$.
Since fibrations are closed under $\Sigma$-types, the type $z : C \vdash \Sigma_{(p : \Sigma_{y : B} \Id(g y, z))} \sum_{x : A} \Id(f\,x,\pi_1(p))$ is a fibration.
This type corresponds to the following map:
\[ \pi_1 : \Hom(\sum_{z : C} \sum_{(p : \sum_{y : B} \Id(g y, z))} \sum_{x : A} \Id(f\,x,\pi_1(p)), C). \]
Since this map is equivalent to $g \circ f$, the composite is a fibration.
\end{proof}

\begin{prop}[fib-id]
Let $\Fib$ be a class of fibrations closed under pullbacks.
Then it is closed under identity types if and only if, for every fibration $p : \Hom(A,B)$, the map $\langle \id_A, \id_A \rangle : \Hom(A, A \times_B A)$ is also a fibration.
\end{prop}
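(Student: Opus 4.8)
The plan is to use, in both directions, the elementary identification — the same one that underlies the proof of \rlem{trunc-id} — between the diagonal $\langle \id_A, \id_A \rangle : \Hom(A, A \times_B A)$ of a map $p : \Hom(A,B)$ and the identity type: the dependent type over $A \times_B A$ that classifies this diagonal has fibre over a point $(y,u,v)$, thinking of $A$ as fibred over $B$ so that $u,v$ lie in the fibre $P$ of $p$ over $y$, equivalent to $\Id_P(u,v)$. Since $\Hom(X,-)$ preserves and reflects pullbacks and membership in $\Fib$, and the dependent and non-dependent classes of fibrations determine each other, I will phrase the argument in the dependent language, presenting a fibration $p : \Hom(A,B)$ as a dependent type $\Gamma \mid y : B \vdash P \fib$ whose elements over $y$ form the fibre of $p$.

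For the forward implication, suppose $\Fib$ is closed under identity types and let $p : \Hom(A,B)$ be a fibration, presented as $\Gamma \mid y : B \vdash P \fib$. Closure under identity types gives that $\Gamma \mid y : B, u : P, u' : P \vdash \Id_P(u,u') \fib$ is a fibration. Now $A \times_B A$ corresponds to the dependent type $y : B \vdash P \times P$, so a point of $A \times_B A$ is a triple $(y,u,u')$, and the type over $A \times_B A$ classifying $\langle \id_A, \id_A \rangle$ has fibre over $(y,u,u')$ the type $\sum_{w : P} \Id(w,u) \times \Id(w,u')$, which is equivalent to $\Id_P(u,u')$. Since $\Fib$ is closed under equivalences, the diagonal $\langle \id_A, \id_A \rangle$ is a fibration.

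For the converse, suppose the diagonal of every fibration is a fibration and let $\Gamma \mid \Delta \vdash A \fib$ be a fibration. Then the projection $p : \Hom(\Sigma(\Delta, A), \Sigma(\Delta))$ is a fibration, hence so is its diagonal $\delta : \Hom(\Sigma(\Delta, A), \Sigma(\Delta, A) \times_{\Sigma(\Delta)} \Sigma(\Delta, A))$. The codomain is canonically equivalent to $\Sigma(\Delta, x : A, x' : A)$, and under this equivalence the type over the codomain classifying $\delta$ has fibre over $(\overline{x}, a, a')$ equivalent to $\Id_A(a,a')$; thus, using closure under equivalences and under pullbacks to transport along the structure maps, $\Gamma \mid \Delta, x : A, x' : A \vdash \Id_A(x,x') \fib$ is a fibration, which is precisely closure under identity types.

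The only substantial content is the two fibre computations above, identifying the fibre of a diagonal with the corresponding identity type; but these are exactly the computations carried out in the proof of \rlem{trunc-id} (without the truncation counting), so they go through verbatim. The point that needs the most care is the dictionary between the dependent and non-dependent classes of fibrations together with the attendant reindexings along the structure maps of the pullbacks $A \times_B A$ and $\Sigma(\Delta, A) \times_{\Sigma(\Delta)} \Sigma(\Delta, A)$, where the hypothesis that $\Fib$ is closed under pullbacks is used.
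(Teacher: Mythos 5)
Your proposal is correct and follows essentially the same route as the paper: both directions come down to identifying the fibre of the diagonal $\langle\id_A,\id_A\rangle$ with the identity type of the fibre of $p$ (the computation underlying \rlem{trunc-id}), using closure under equivalences and under pullbacks to pass between the dependent and non-dependent presentations. The only cosmetic difference is that in the converse you establish the generic instance $\Gamma \mid \Delta, x : A, x' : A \vdash \Id_A(x,x') \fib$ and leave the specialization to arbitrary terms $c_1,c_2$ implicit, whereas the paper obtains it directly by pulling the diagonal back along $\langle c_1', c_2'\rangle$; this is the same step, since that substitution is exactly a pullback, which your hypotheses cover.
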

\begin{proof}
First, suppose that $\Fib$ is closed under identity types.
Let $a_1 : A, a_2 : A, h : \Id(p\,a_1,p\,a_2)$ be an element of $A \times_B A$.
The fiber over this element is the following type:
\begin{align*}
\sum_{a : A} \sum_{h_1 : \Id(a_1,a)} \sum_{h_2 : \Id(a,a_2)} \Id(\pmap(p, h_1 \ct h_2), h) & \simeq \\
\sum_{h' : \Id(a_1,a_2)} \Id(\pmap(p,h'),h) & \simeq \\
\Id_{\sum_{a : A} \Id(p a_1, p a)}((a_1,\refl), (a_2,h)) & .
\end{align*}
Since $\Fib$ is closed under identity types, it is enough to show that $\sum_{a : A} \Id(p\,a_1,p\,a)$ is a fibration over $a_1,a_2,h$.
This follows from the fact that this type is a pullback of the type $\sum_{a : A} \Id(b,p\,a)$ over $b : B$ which is a fibration since it is the fiber of $p$ over $b$.

Now, let us prove the converse.
Let $\Gamma \mid \Delta \vdash c_1 : C$ and $\Gamma \mid \Delta \vdash c_2 : C$ be a pair of terms.
Then the following square is a pullback:
\[ \xymatrix{ \Sigma_{p : \Sigma(\Delta)} \Id(c_1',c_2') \ar[r]^-{c_1' \circ \pi_1} \ar@{->>}[d]_{\pi_1} \pb    & C' \ar@{->>}[d]^{\langle \id_{C'}, \id_{C'} \rangle} \\
              \Sigma(\Delta) \ar[r]_-{\langle c_1', c_2' \rangle}                                               & C' \times_{\Sigma(\Delta)} C'
            } \]
where $C' = \Sigma_{p : \Sigma(\Delta)} C[\overline{\pi_i(p)/x_i}]$ and $c_i' = c_i[\overline{\pi_i(p)/x_i}]$.
Since $\Gamma \mid \Delta \vdash \Id(c_1,c_2)$ is equivalent to $\Gamma \mid p : \Sigma(\Delta) \vdash \Id(c_1',c_2')$, it follows that $\Id(c_1,c_2)$ is a fibration.
\end{proof}

The following proposition shows that if the class of fibrations is closed under pullbacks and compositions, then there is another characterization of the condition that it is closed under identity types.

\begin{prop}[fib-id-comp]
Let $\Fib$ be a class of fibrations in an indexed unary type theory.
If $\Fib$ is closed under pullbacks and compositions, then the following conditions are equivalent:
\begin{enumerate}
\item \label{it:fib-pb} For every fibration $p : \Hom(A,B)$, the map $\langle \id_A, \id_A \rangle : \Hom(A, A \times_B A)$ is also a fibration.
\item \label{it:fib-over} For every commutative diagram as below in which $p$ and $q$ are fibrations, $f$ is also a fibration.
\[ \xymatrix{ A \ar[rr]^f \ar@{->>}[dr]_p &   & C \ar@{->>}[dl]^q \\
                                          & B &
            }\]
\end{enumerate}
\end{prop}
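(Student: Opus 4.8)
The plan is to prove \eqref{it:fib-over} $\Rightarrow$ \eqref{it:fib-pb} directly and to obtain \eqref{it:fib-pb} $\Rightarrow$ \eqref{it:fib-over} by factoring a map over $B$ through a pullback. For the first implication, given a fibration $p : A \twoheadrightarrow B$, I would apply \eqref{it:fib-over} to the commutative triangle whose diagonal is $\langle \id_A, \id_A \rangle : \Hom(A, A \times_B A)$, whose left leg is $p$ itself, and whose right leg is $p \circ \pi_1 : \Hom(A \times_B A, B)$. This right leg is a fibration: the projection $\pi_1 : \Hom(A \times_B A, A)$ is a pullback of $p$ along $p$, hence a fibration since $\Fib$ is closed under pullbacks, and then $p \circ \pi_1$ is a fibration since $\Fib$ is closed under compositions. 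The triangle commutes because $\pi_1 \circ \langle \id_A, \id_A \rangle \sim \id_A$, so $p \circ \pi_1 \circ \langle \id_A, \id_A \rangle \sim p$. Condition \eqref{it:fib-over} then gives that $\langle \id_A, \id_A \rangle$ is a fibration.

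For \eqref{it:fib-pb} $\Rightarrow$ \eqref{it:fib-over}, suppose we are given fibrations $p : A \twoheadrightarrow B$ and $q : C \twoheadrightarrow B$, a map $f : \Hom(A,C)$, and a homotopy $q \circ f \sim p$. I would form the pullback $A \times_B C$ of $p$ along $q$ and factor $f$ as
\[ A \xrightarrow{\langle \id_A, f \rangle} A \times_B C \xrightarrow{\pi_2} C, \]
where $\langle \id_A, f \rangle$ is built using the given homotopy $q \circ f \sim p$; this is indeed a factorization of $f$ since $\pi_2 \circ \langle \id_A, f \rangle \sim f$. The map $\pi_2$ is a pullback of $p$ along $q$, hence a fibration, so by closure under compositions it suffices to show that the graph $\langle \id_A, f \rangle$ is a fibration.

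The key point I would establish is that $\langle \id_A, f \rangle$ is a pullback of the diagonal $\langle \id_C, \id_C \rangle : \Hom(C, C \times_B C)$. Concretely, I would consider the square with top map $f : \Hom(A,C)$, left map $\langle \id_A, f \rangle$, right map $\langle \id_C, \id_C \rangle$, and bottom map $\langle f \circ \pi_1, \pi_2 \rangle : \Hom(A \times_B C, C \times_B C)$ (the witnessing homotopy for this last map being assembled from the homotopy defining $A \times_B C$ whiskered appropriately and the triangle homotopy $q \circ f \sim p$), check that it commutes, and check that it is a pullback. Since $q$ is a fibration, condition \eqref{it:fib-pb} makes $\langle \id_C, \id_C \rangle$ a fibration, so its pullback $\langle \id_A, f \rangle$ is a fibration by closure under pullbacks. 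Therefore $f = \pi_2 \circ \langle \id_A, f \rangle$ is a composite of two fibrations and hence a fibration.

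The main obstacle will be the verification in the last paragraph that the square exhibiting $\langle \id_A, f \rangle$ as a pullback of $\langle \id_C, \id_C \rangle$ really is a pullback — that is, correctly assembling the witnessing homotopies so that the universal property can be checked against an arbitrary test type $X$. In a $1$-category this step is trivial, but here it is the same kind of coherence bookkeeping (graph of a map versus diagonal of the codomain) as in the proof of \rlem{trunc-id}, and one should either spell it out or reduce it to that computation. Everything else is a routine application of closure under pullbacks and compositions.
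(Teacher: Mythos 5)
Your proposal is correct and follows essentially the same route as the paper: for \eqref{it:fib-pb} $\Rightarrow$ \eqref{it:fib-over} the paper likewise factors $f$ as the graph $\langle \id_A, f\rangle$ (a pullback of the diagonal $\langle \id_C,\id_C\rangle$) followed by the projection $A \times_B C \to C$ (a pullback of $p$ along $q$), and for the converse it applies \eqref{it:fib-over} to the same triangle with legs $p$ and $f \circ \pi_1$. The coherence bookkeeping you flag at the end is also left implicit in the paper's proof, so nothing further is required.
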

\begin{proof}
First, suppose that \eqref{it:fib-pb} holds.
Consider the following diagram:
\[ \xymatrix{ A \ar[r]^f \ar@{->>}[d]_r \pb             & C \ar@{->>}[d]^{\langle \id_C, \id_C \rangle} & \\
              A \times_B C \ar[r]^{f'} \ar@{->>}[d] \pb & C \times_B C \ar[r]^-{q'} \ar@{->>}[d] \pb    & C \ar@{->>}[d]^q \\
              A \ar[r]_f                                & C \ar[r]_q                                    & B
            } \]
The map $q' \circ f'$ is a pullback of $q \circ f = p$, so it is a fibration.
The map $r$ is a fibration since it is a pullback of $\langle \id_C, \id_C \rangle$, which is a fibration by \eqref{it:fib-pb}.
Since fibrations are closed under compositions, $q' \circ f' \circ r$ is also a fibration.
Finally, $f$ is a fibration since $f \sim q' \circ \langle \id_C, \id_C \rangle \circ f \sim q' \circ f' \circ r$.

Now, suppose that \eqref{it:fib-over} holds.
Let $f : \Hom(A,B)$ be a fibration.
Let $q$ be the composite $A \times_B A \xrightarrow{\pi_1} A \xrightarrow{f} B$.
Since fibrations are closed under pullbacks, the first map is a fibration.
Since they are closed under compositions, $q$ is also a fibration.
Since $q \circ \langle \id_A, \id_A \rangle = f$ is a fibration, $\langle \id_A, \id_A \rangle$ is also a fibration by \eqref{it:fib-over}.
\end{proof}

\begin{lem}[fib-id-idm]
Let $\Fib$ be a class of fibrations in an indexed unary type theory.
If $\Fib$ is closed under pullbacks and contains all identity morphisms, then \eqref{it:fib-over} implies \eqref{it:fib-pb}.
\end{lem}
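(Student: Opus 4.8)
The plan is to apply condition \eqref{it:fib-over} to a single well-chosen commutative triangle. Fix a fibration $p : \Hom(A,B)$; we must exhibit an element of $\Fib$ witnessing that the diagonal $\langle \id_A, \id_A \rangle : \Hom(A, A \times_B A)$ is a fibration. I would take the triangle over the base $A$ whose left leg is $\id_A : \Hom(A,A)$, whose right leg is the first projection $\pi_1 : \Hom(A \times_B A, A)$, and whose hypotenuse is $\langle \id_A, \id_A \rangle$:
\[ \xymatrix{ A \ar[rr]^{\langle \id_A, \id_A \rangle} \ar[dr]_{\id_A} & & A \times_B A \ar[dl]^{\pi_1} \\ & A & } \]
This triangle commutes because $\pi_1 \circ \langle \id_A, \id_A \rangle \sim \id_A$ by the defining homotopy of the pullback pairing.

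To invoke \eqref{it:fib-over} I need both legs to be fibrations. The leg $\id_A$ is a fibration by the hypothesis that $\Fib$ contains all identity morphisms. For $\pi_1$, I would observe that in the pullback square defining $A \times_B A$ (the pullback of $p$ and $p$) the projection $\pi_1$ is a pullback of $p$ along $p$; since $p$ is a fibration and $\Fib$ is closed under pullbacks, $\pi_1$ is a fibration. Condition \eqref{it:fib-over} then yields directly that $\langle \id_A, \id_A \rangle$ is a fibration, which is exactly \eqref{it:fib-pb}.

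I do not expect any real obstacle: the argument is a one-step diagram chase, and the only points to verify are that the chosen triangle genuinely commutes up to homotopy (immediate from the universal property of the pullback) and that the two closure hypotheses in the statement — closure under pullbacks, and containing all identity morphisms — are exactly what is needed for the two legs. In particular, no closure under composition and no closure under identity types is used, in contrast with \rprop{fib-id-comp}.
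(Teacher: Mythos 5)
Your proof is correct and is essentially the same as the paper's: both apply condition \eqref{it:fib-over} to the triangle with legs $\id_A$ (a fibration by hypothesis) and $\pi_1 : \Hom(A \times_B A, A)$ (a fibration as a pullback of $p$), using $\pi_1 \circ \langle \id_A, \id_A \rangle \sim \id_A$.
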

\begin{proof}
Let $f : \Hom(A,B)$ be a fibration.
Since $\pi_1 : \Hom(A \times_B A, A)$ is a pullback of $f$, it is also a fibration.
Moreover, $\id_A$ is a fibration by assumption.
Now, the claim follows from the fact that $\pi_1 \circ \langle \id_A, \id_A \rangle \sim \id_A$.
\end{proof}

\begin{example}
The class of $n$-truncated maps is closed under compositions, pullbacks, $m$-types for $m \leq n$, $\Sigma$-types, and identity types.
This follows from \rlem{trunc-pb}, \rprop{fib-sigma}, \rprop{fib-id}, and \rlem{trunc-id}.
\end{example}

An \emph{external universe} is a base type $\mathcal{U}(\Delta)$ defined for all indexed context $\Delta$ together with an indexed type $\El(c)$ over $\Delta$ for all $c : \mathcal{U}(\Delta)$:
\begin{center}
\AxiomC{$\Gamma \vdash c : \mathcal{U}(\Delta)$}
\UnaryInfC{$\Gamma \mid \Delta \vdash \El(c) \ob$}
\DisplayProof
\end{center}
We have a function from $\Id_{\mathcal{U}(\Delta)}(c,c')$ to the type of equivalences between $\El(c)$ and $\El(c')$ over $\Delta$ defined as the transport of the identity morphism along the homotopy $\Id_{\mathcal{U}(\Delta)}(c,c')$.
We will say that the universe $\mathcal{U}(\Delta)$ is \emph{univalent} if this function is an equivalence.
We will say that a universe \emph{classifies} fibrations $\Fib$ if it is univalent, the family $\El(c)$ satisfies $\Fib$ for all $c$, and every fibration is equivalent to a fibration of the form $\El(c)$ for some $c$.

If a universe classifying $\Fib$ exists, then it is unique up to a canonical equivalence.
Indeed, let $\mathcal{U}$ and $\mathcal{U}'$ be two universes classifying $\Fib$.
Then, for every indexed context $\Delta$ over a base context $\Gamma$, we have a fibration $\El(x)$ over $\Delta$ in the context $\Gamma, x : \mathcal{U}$.
This fibration is equivalent to a fibration of the form $\El'(c')$ for some $\Gamma, x : \mathcal{U} \vdash c' : \mathcal{U}'$.
Thus, we have a map $\lambda x.\,c' : \mathcal{U} \to \mathcal{U}'$.
Similarly, we have a map $\lambda y.\,c : \mathcal{U}' \to \mathcal{U}$.
We need to construct a homotopy $\Gamma, x : \mathcal{U} \vdash h : \Id(c[c'/y],x)$.
By univalence, it is enough to prove that $\El(c[c'/y])$ is equivalent to $\El(x)$ over $\Delta$.
By definition of $c$, we have an equivalence $\Gamma, y : \mathcal{U}' \mid \Delta \vdash e : \El(c) \simeq \El'(y)$.
It follows that we have the following equivalence: $\Gamma, x : \mathcal{U} \mid \Delta \vdash e[c'/y] : \El(c[c'/y]) \simeq \El'(c')$.
By definition of $c'$, $\El'(c')$ is equivalent to $\El(x)$ over $\Delta$.
The composition of these two equivalences gives us the required equivalence between $\El(c[c'/y])$ and $\El(x)$ over $\Delta$.
A homotopy between $c'[c/x]$ and $y$ is constructed similarly.

\begin{defn}
We will say that a class $\Fib$ is \emph{locally small} if it is classified by an external universe.
\end{defn}

\begin{example}
An indexed type theory is called \emph{well-powered} if the class of monomorphisms (that is, $(-1)$-truncated maps) is locally small.
This definition is analogous to the definition of well-powered indexed categories (see \cite[Example~B1.3.14]{elephant}).
\end{example}

\subsection{Object classifiers}

Let $p : \Hom(\widehat{\mathcal{U}},\mathcal{U})$ be a map in an indexed unary type theory such that its pullbacks along all maps exist.
Then we can define a map from $\Id_{\Hom(\Delta,\mathcal{U})}(f,g)$ to the type of equivalences over $\Delta$ between pullbacks of $p$ along $f$ and $g$ as the transport of the identity map along the homotopy between $f$ and $g$.
An \emph{object classifier} is a map $p : \Hom(\widehat{\mathcal{U}},\mathcal{U})$ such that its pullbacks exist and the map defined above is an equivalence.
We will say that an object classifier \emph{classifies} a class of fibrations if this class is closed under pullbacks, $p$ is a fibration, and every fibration is a pullback of $p$.

\begin{example}
A subobject classifier is an object classifier for monomorphisms.
\end{example}

Let $p : \Hom(\widehat{\mathcal{U}},\mathcal{U})$ be any map.
We can think of such a map as a (non-univalent) universe.
We will say that the universe $\mathcal{U}$ \emph{contains a type $A$} if there is a map $a : \Hom(1,\mathcal{U})$ and an equivalence between $A$ and the pullback of $p$ along $a$.
We will say that $\mathcal{U}$ is \emph{closed under coproducts} if, for all maps $a,b : \Hom(\Delta,\mathcal{U})$, there is a map $a + b : \Hom(\Delta,\mathcal{U})$
and an equivalence over $\Delta$ between the pullback of $p$ along $a + b$ and the sum of pullbacks of $p$ along $a$ and $b$.
Similarly, we will say that $\mathcal{U}$ is \emph{closed under $\Sigma$-types} if, for every map $a : \Hom(\Delta,\mathcal{U})$
and every map $b : \Hom(\Delta \times_\mathcal{U} \widehat{\mathcal{U}}, \mathcal{U})$, there is a map $\Sigma(a,b) : \Hom(\Delta,\mathcal{U})$
together with an equivalence over $\Delta$ between the pullback of $p$ along $\Sigma(a,b)$ and the composition of pullbacks of $p$ along $b$ and $a$.
The closure under other constructions is defined similarly.

In general, being closed under different construction is not a property of a map but additional data on it.
The following proposition shows that it is a property if the map $p$ is an object classifier:

\begin{prop}
Let $p : \Hom(\widehat{\mathcal{U}},\mathcal{U})$ be an object classifier.
Then the types corresponding to the closure conditions listed above are propositions.
\end{prop}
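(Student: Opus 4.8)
The plan is to reduce every one of the listed closure conditions to a single statement and then prove that. Write $P_c$ for the pullback of $p$ along a map $c : \Hom(\Delta,\mathcal{U})$ (a type over $\Delta$) and write $\fs{Equiv}_\Delta(-,-)$ for the type of equivalences over $\Delta$. Each of the conditions — being closed under coproducts, under $\Sigma$-types, containing a given type $A$, and the others — asserts that for every indexed context $\Delta$ and every choice of the parameters occurring in it there is a map $c : \Hom(\Delta,\mathcal{U})$ together with an element of $\fs{Equiv}_\Delta(P_c, Q)$, where $Q$ is the type over $\Delta$ produced by the construction in question from the pullbacks of $p$ along the parameters (for ``contains $A$'' one simply has $\Delta = 1$ and $Q = A$). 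Thus the data asserted by such a condition is a schematic product, over $\Delta$ and over the parameters, of types of the shape $\sum_{c : \Hom(\Delta,\mathcal{U})} \fs{Equiv}_\Delta(P_c, Q)$. Since a product of propositions is a proposition, I would first reduce to showing that each type $\sum_{c : \Hom(\Delta,\mathcal{U})} \fs{Equiv}_\Delta(P_c, Q)$ is a proposition, for arbitrary $\Delta$ and $Q$.

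To prove this I would use the standard fact that a type is a proposition exactly when it becomes contractible as soon as it is inhabited, so I may assume an element $(c_0, e_0)$ with $e_0 : P_{c_0} \simeq Q$ over $\Delta$ and produce a center of contraction. First I would rewrite $Q$ as $P_{c_0}$: post-composition with $e_0$, and with an inverse of $e_0$, give for each $c$ maps $\fs{Equiv}_\Delta(P_c, P_{c_0}) \to \fs{Equiv}_\Delta(P_c, Q)$ and back which are inverse to one another (the predicate of being an equivalence can be taken to be propositional, e.g. bi-invertibility, and equivalences over $\Delta$ are closed under composition, so these maps are well defined and mutually inverse). Passing to total spaces gives an equivalence $\sum_{c} \fs{Equiv}_\Delta(P_c, Q) \simeq \sum_{c} \fs{Equiv}_\Delta(P_c, P_{c_0})$. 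Then I would apply the object classifier hypothesis: for every $c$ the canonical map $\Id_{\Hom(\Delta,\mathcal{U})}(c, c_0) \to \fs{Equiv}_\Delta(P_c, P_{c_0})$, defined as transport of the identity equivalence along a path, is an equivalence; as this is precisely the comparison map furnished uniformly in $c$ by the hypothesis, it is a family of equivalences over $c$, and one more passage to total spaces gives $\sum_{c} \fs{Equiv}_\Delta(P_c, P_{c_0}) \simeq \sum_{c} \Id_{\Hom(\Delta,\mathcal{U})}(c, c_0)$. The right-hand side is a based path space, hence contractible, so the type we started from is contractible and therefore a proposition, as required.

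I expect the only delicate point to be bookkeeping rather than a genuine obstruction: one must check that the two ``passage to total spaces'' steps really apply, that is, that post-composition with $e_0$ is a fibrewise equivalence (immediate, since it is an honest function and being an equivalence is a property) and that the object-classifier comparison map is a family of equivalences indexed by $c$ (which is essentially built into the phrasing of the object classifier condition, the map being defined uniformly as transport of the identity). Alternatively, the same ingredients give a direct argument that any two elements $(c,e)$ and $(c',e')$ of $\sum_{c} \fs{Equiv}_\Delta(P_c, Q)$ are equal — take the path between $c$ and $c'$ corresponding under the object classifier to the composite of $e$ with an inverse of $e'$, and check that it transports $e$ to $e'$ — but that route forces one to compute exactly which transport the object-classifier map realizes, so I would prefer the contractibility argument above.
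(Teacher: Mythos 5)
Your proof is correct, and it rests on exactly the same key fact as the paper's: the object classifier hypothesis turns the type of equivalences over $\Delta$ between pullbacks of $p$ into the identity type of $\Hom(\Delta,\mathcal{U})$. The difference is only in how propositionality is then extracted. The paper argues directly that any two inhabitants $(c_1,e_1)$ and $(c_2,e_2)$ are identified: by the classifier property an identification of the pairs amounts to an equivalence $e$ between the two pullbacks together with a homotopy between $e \circ e_1$ and $e_2$, and $e = e_2 \circ \sym{e_1}$ does the job. You instead assume an inhabitant $(c_0,e_0)$ and contract the whole $\Sigma$-type onto it through two total-space equivalences, landing on the based path space $\sum_{c} \Id(c,c_0)$. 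Your route is slightly longer but cleaner on the point you yourself flag as delicate: it never needs the characterization of the identity type of the $\Sigma$-type, nor the computation that transporting $e_1$ along the path corresponding to $e_2 \circ \sym{e_1}$ yields $e_2$ --- a step the paper's two-line argument implicitly absorbs into the phrase ``it is enough to define \dots''. Conversely, the paper's version is more economical and makes the witness of the identification completely explicit. Both are sound; the only housekeeping your version adds is the standard fibrewise-to-total-space equivalence lemma, which is available from the $\Sigma$-types and identity types already assumed in the base theory.
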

\begin{proof}
Such types consist of a map $c : \Hom(\Delta,\mathcal{U})$ for some fixed type $\Delta$ together with an equivalence between the fiber of $p$ over $c$ and some fixed type over $\Delta$.
Let $(c_1,e_1)$ and $(c_2,e_2)$ be two such pairs.
Since $p$ is an object classifier, to define a homotopy between these pairs, it is enough to define a homotopy $e$ between fibers of $p$ over $c_1$ and $c_2$ together with a homotopy between $e \circ e_1$ and $e_2$.
We can define $e$ as $e_2 \circ e_1^{-1}$.
\end{proof}

An \emph{(internal) universe} in an indexed dependent type theory is an indexed type $\mathcal{U}$ together with an indexed type $\El(c)$ for all $c : \mathcal{U}$:
\begin{center}
\AxiomC{}
\UnaryInfC{$\Gamma \mid \Delta \vdash \mathcal{U} \ob$}
\DisplayProof
\qquad
\AxiomC{$\Gamma \mid \Delta \vdash c : \mathcal{U}$}
\UnaryInfC{$\Gamma \mid \Delta \vdash \El(c) \ob$}
\DisplayProof
\end{center}
An internal universe is \emph{univalent} if the obvious map from $\Id_{\mathcal{U}}(c,c')$ to the type of equivalences between $\El(c)$ and $\El(c')$ is an equivalence.
Internal universes in a theory with $\Sigma$-types and identity types correspond to maps $\Hom(\widehat{\mathcal{U}},\mathcal{U})$ via the construction $\pi_1 : \Hom(\Sigma_{x : \mathcal{U}} \El(x), \mathcal{U})$.
Such a map is an object classifier if and only if the corresponding universe is univalent.

Every internal universe $\mathcal{U}$ gives rise to an external one, namely $\Hom(\Delta.\mathcal{U})$.
An internal universe is univalent if and only if the corresponding external one is.
Moreover, a class of fibration is classified by an internal universe if and only if it is classified by the corresponding external one.
An internal universe classifying a given class of fibrations is also unique up to a canonical equivalence.
The proof is the same as for external universes.

A universe $\mathcal{U}$ is \emph{weakly (resp., strictly) contains a type $A$} if there is an element $a : \mathcal{U}$ such that $\El(a)$ is propositionally (resp., judgmentally) equivalent to $A$. 
A universe $\mathcal{U}$ is \emph{weakly (resp., strictly) closed under coproducts} if, for all elements $a,b : \mathcal{U}$, there exists an element $a + b : \mathcal{U}$ such that $\El(a + b)$ is propositionally (resp., judgmentally) equivalent to the coproduct of $\El(a)$ and $\El(b)$.
A universe $\mathcal{U}$ is \emph{weakly (resp., strictly) closed under $\Sigma$-types} if, for every element $a : \mathcal{U}$ and every function $b : \El(a) \to \mathcal{U}$, there exists an element $\Sigma(a,x.b(x)) : \mathcal{U}$ such that $\El(\Sigma(a,x.b(x)))$ is propositionally (resp., judgmentally) equivalent to $\Sigma_{x : \El(a)} \El(b(x))$.

\begin{prop}
A universe $\mathcal{U}$ is weakly closed under one of the constructions listed above if and only if the map $\pi_1 : \Hom(\Sigma_{A : \mathcal{U}} \El(A), \mathcal{U})$ is closed under this construction.
\end{prop}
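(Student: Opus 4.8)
The plan is to show that, under the dictionary relating families $\El(-)$ with pullbacks of the map $p : \Hom(\Sigma_{A : \mathcal{U}} \El(A), \mathcal{U})$ given by $\pi_1$, the weak closure condition on $\mathcal{U}$ and the closure condition on $p$ are word-for-word translations of one another. First I would set up this dictionary precisely. Since the ambient theory has $\Sigma$-types and identity types, a term $\Gamma \mid \Delta \vdash c : \mathcal{U}$ in an indexed context $\Delta$ corresponds to a map $c : \Hom(\Sigma(\Delta), \mathcal{U})$, and — by the computation of pullbacks of first projections $\pi_1 : \Hom(\Sigma_{x : A} B, A)$ used already to relate internal universes with object classifiers above — the pullback of $p$ along this map, regarded as an indexed type over $\Sigma(\Delta)$, is canonically equivalent to $\El(c)$ (substituting $c$ into the defining family $c : \mathcal{U} \vdash \El(c)$ of $p$ yields exactly $\El(c)$). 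In particular $\Delta \times_{\mathcal{U}} \widehat{\mathcal{U}}$ is $\Sigma(\Delta, \El(a))$ for $a : \Hom(\Sigma(\Delta),\mathcal{U})$, so a map $b : \Hom(\Delta \times_{\mathcal{U}} \widehat{\mathcal{U}}, \mathcal{U})$ is the same datum as a family $\Gamma \mid \Delta \vdash b : \El(a) \to \mathcal{U}$; moreover ``the composition of the pullbacks of $p$ along $b$ and along $a$'' is $\Sigma_{x : \El(a)} \El(b\,x)$ over $\Sigma(\Delta)$, and ``the sum of the pullbacks of $p$ along $a$ and $b$'' is $\El(a) \amalg \El(b)$ (the fiberwise coproduct, equivalently the coproduct in the slice over $\Sigma(\Delta)$).

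With the dictionary in hand I would run through the listed constructions; the remaining ones are handled identically. For containment: an element $a : \mathcal{U}$ with $\El(a)$ propositionally equivalent to $A$ is, after translation, a map $a : \Hom(1,\mathcal{U})$ together with an equivalence between $A$ and the pullback of $p$ along $a$ — which is precisely the definition of $p$ containing $A$. For coproducts: given $a, b : \Hom(\Delta.\mathcal{U})$, an element $a + b : \mathcal{U}$ with $\El(a+b)$ equivalent to $\El(a) \amalg \El(b)$ translates to a map $a+b : \Hom(\Delta,\mathcal{U})$ with an equivalence over $\Delta$ between the pullback of $p$ along $a+b$ and the sum of the pullbacks of $p$ along $a$ and $b$, i.e. the definition of $p$ being closed under coproducts. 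For $\Sigma$-types: given $a : \mathcal{U}$ and $b : \El(a) \to \mathcal{U}$, which the first paragraph identifies with $a : \Hom(\Delta,\mathcal{U})$ and $b : \Hom(\Delta \times_{\mathcal{U}} \widehat{\mathcal{U}}, \mathcal{U})$, an element $\Sigma(a,x.b(x)) : \mathcal{U}$ with $\El(\Sigma(a,x.b(x)))$ equivalent to $\Sigma_{x : \El(a)} \El(b\,x)$ translates to a map $\Sigma(a,b) : \Hom(\Delta,\mathcal{U})$ equipped with the equivalence over $\Delta$ demanded by the definition of $p$ being closed under $\Sigma$-types. In each case the two conditions quantify over the same data and assert the existence of the same data, so they are logically equivalent; here ``propositionally equivalent'' on the universe side and ``an equivalence (over $\Delta$)'' on the classifier side denote the same notion, so the weak closure conditions correspond exactly to the classifier-level conditions rather than to some stricter variant.

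The only genuine work lies in the first paragraph: checking that the equivalence between the pullback of $p$ along a map and the corresponding $\El$ is natural enough to carry the coproduct (resp. the total space of a $\Sigma$) of the $\El$'s to the sum of the pullbacks (resp. the composite of the pullbacks), and to match the ``over $\Delta$'' refinements on the two sides. This is a routine but slightly tedious unwinding of the definitions of $\Sigma$-types, identity types, and pullbacks in the indexed dependent theory; once it is in place, the remainder of the argument is purely formal.
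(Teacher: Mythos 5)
Your proposal is correct and follows exactly the same route as the paper, which proves this proposition in one sentence by invoking the same dictionary: elements of $\mathcal{U}$ in a context $\Delta$ correspond to maps $\Hom(\Delta,\mathcal{U})$, and types $\El(c)$ over $\Delta$ correspond to pullbacks of $\pi_1$ along $c$. You simply spell out the translation construction by construction, which the paper leaves implicit.
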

\begin{proof}
This follows from the fact that elements of $\mathcal{U}$ in a context $\Delta$ correspond to maps $\Hom(\Delta, \mathcal{U})$ and
types of the form $\El(c)$ over $\Delta$ correspond to pullbacks of $\pi_1 : \Hom(\Sigma_{A : \mathcal{U}} \El(A), \mathcal{U})$ along $c$.
\end{proof}

The following propositions discuss $n$-truncated object classifiers.

\begin{prop}
If $p : \Hom(\widehat{\mathcal{U}},\mathcal{U})$ is an $n$-truncated map which is also an object classifier, then $\mathcal{U}$ and $\widehat{\mathcal{U}}$ are $(n+1)$-truncated.
\end{prop}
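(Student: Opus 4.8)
The plan is to show that $\Hom(X,\mathcal{U})$ is an $(n+1)$-type for every indexed type $X$, which is exactly what it means for $\mathcal{U}$ to be $(n+1)$-truncated, and then to deduce the statement for $\widehat{\mathcal{U}}$. By the standard characterization of truncation levels (\cite[Theorem~7.1.7]{hottbook}), it suffices to prove that $\Id_{\Hom(X,\mathcal{U})}(f,g)$ is $n$-truncated for all $f,g : \Hom(X,\mathcal{U})$. Since $p$ is an object classifier, the pullbacks $P_f$ and $P_g$ of $p$ along $f$ and $g$ exist, and $\Id_{\Hom(X,\mathcal{U})}(f,g)$ is equivalent to the type of equivalences over $X$ between $P_f$ and $P_g$. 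Writing the structure maps as $q_f : \Hom(P_f,X)$ and $q_g : \Hom(P_g,X)$, \rlem{trunc-pb} shows that $q_f$ and $q_g$ are $n$-truncated. So everything reduces to the following lemma: the type of equivalences over $X$ between two objects over $X$ whose structure maps are $n$-truncated is itself $n$-truncated.

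To prove this lemma I would first observe that $\Hom_X(P_f,P_g)$ is $n$-truncated. Indeed, applying the definition of an $n$-truncated map to $q_g$ with test type $P_f$, the function $q_g \circ - : \Hom(P_f,P_g) \to \Hom(P_f,X)$ is $n$-truncated; its fiber over $q_f$ is, by definition, $\Hom_X(P_f,P_g) = \sum_{h : \Hom(P_f,P_g)} \Id(q_g \circ h, q_f)$, and fibers of $n$-truncated maps are $n$-truncated. The type of equivalences over $X$ is then the subtype of $\Hom_X(P_f,P_g)$ consisting of the morphisms whose underlying morphism is an equivalence (the inverse of such a morphism is automatically a morphism over $X$), and being an equivalence is a proposition by \rprop{biinv-equiv} together with \rlem{lrinv-contr}; hence for $n \geq -1$ this subtype is $n$-truncated. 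When $n = -2$ the maps $q_f$ and $q_g$ are pullbacks of the equivalence $p$, hence equivalences, so the type of equivalences over $X$ is not merely a subtype of the contractible type $\Hom_X(P_f,P_g)$ but is also inhabited, hence contractible. (One can avoid this case split by presenting the type of equivalences over $X$ via bi-invertible maps as a $\Sigma$-type assembled from $\Hom_X(P_f,P_g)$, $\Hom_X(P_g,P_f)$, and two identity types of such $\Hom_X$-types, each of which has just been shown to be $n$-truncated.)

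For $\widehat{\mathcal{U}}$, for every indexed type $X$ the function $p \circ - : \Hom(X,\widehat{\mathcal{U}}) \to \Hom(X,\mathcal{U})$ is $n$-truncated by hypothesis, hence $(n+1)$-truncated, and its codomain is $(n+1)$-truncated by the first part; since the domain of an $(n+1)$-truncated map into an $(n+1)$-type is an $(n+1)$-type, $\Hom(X,\widehat{\mathcal{U}})$ is $(n+1)$-truncated, i.e.\ $\widehat{\mathcal{U}}$ is $(n+1)$-truncated. The \emph{main obstacle} is the lemma on equivalences over $X$: the crucial point is recognizing $\Hom_X(P_f,P_g)$ as a fiber of the $n$-truncated map $q_g \circ -$, which lets the argument go through without assuming $\Pi$-types in the base theory, while the level $n = -2$ still has to be handled with a little extra care.
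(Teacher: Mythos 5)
Your proof is correct and follows essentially the same route as the paper's: reduce to showing $\Id(f,g)$ is $n$-truncated, identify it via the classifier with the type of equivalences over $X$, recognize $\Hom_X(P_f,P_g)$ as a fiber of the $n$-truncated map $q_g \circ -$, and then pass from maps over $X$ to equivalences over $X$. The only difference is that you explicitly handle the $n=-2$ case of that last step (where ``subtype of a contractible type'' is not enough), a point the paper's appeal to the embedding of equivalences into maps over $X$ quietly glosses over.
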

\begin{proof}
First, let us prove that $\mathcal{U}$ is $(n+1)$-truncated.
This is true if and only if $\Hom(B,\mathcal{U})$ is $(n+1)$-truncated for all $B$, which is true if and only if the type $\Id(f,f')$ is $n$-truncated for all $f,f' : \Hom(B,\mathcal{U})$.
Since $p$ is an object classifier, the type $\Id(f,f')$ is equivalent to the type of equivalences over $B$ between pullbacks of $p$ along $f$ and $f'$.
By \rlem{trunc-pb}, pullbacks of $p$ are $n$-truncated.
Since the type of equivalences over $B$ is embedded into the type of maps over $B$, we just need to prove that the type of such maps between $n$-truncated maps is $n$-truncated.

Let $s : \Hom(E,B)$ and $s' : \Hom(E',B)$ be $n$-truncated maps.
The type of maps over $B$ is defined as $\Sigma_{f : \Hom(E,E')} \Id(s' \circ f, s)$.
This type is the fiber of $s' \circ -$ over $s$, which is $n$-truncated since $s' \circ -$ is $n$-truncated.

Finally, since both $p$ and $\mathcal{U}$ are $(n+1)$-truncated, $\widehat{\mathcal{U}}$ is also $(n+1)$-truncated.
\end{proof}

\begin{prop}
If $p : \Hom(\widehat{\mathcal{U}},\mathcal{U})$ is an object classifier which is also a monomorphism, then $\widehat{\mathcal{U}}$ is subterminal.
It is terminal if and only if identity morphisms are classified by $p$.
\end{prop}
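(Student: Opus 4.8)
The plan is to read ``$\widehat{\mathcal{U}}$ is subterminal'' as the statement that $\Hom(X,\widehat{\mathcal{U}})$ is a proposition for every indexed type $X$, and ``$\widehat{\mathcal{U}}$ is terminal'' as the statement that this type is moreover contractible. Two easy facts will be used throughout. Since $p$ is a monomorphism, i.e.\ a $(-1)$-truncated map, every pullback of $p$ is again a monomorphism by \rlem{trunc-pb}. And a monomorphism $m : \Hom(P,X)$ admitting a section $s$ (so $m \circ s \sim \id_X$) is an equivalence: $m \circ (s \circ m) = (m \circ s) \circ m \sim m = m \circ \id_P$, and since $m$ is a monomorphism the map $m \circ -$ is an embedding, so $s \circ m \sim \id_P$.

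The main tool is the following correspondence. For $c : \Hom(X,\mathcal{U})$ let $p^{*}c : \Hom(P_c,X)$ denote the pullback of $p$ along $c$, with second projection $q_c : \Hom(P_c,\widehat{\mathcal{U}})$; this exists because $p$ is an object classifier. By the universal property of the pullback, maps $g : \Hom(X,\widehat{\mathcal{U}})$ with $p \circ g \sim c$ correspond to sections of $p^{*}c$: in one direction feed $\id_X$, $g$ and the homotopy $c \sim p \circ g$ into the universal property; in the other compose a section $X \to P_c$ with $q_c$. Consequently, if a lift $g$ of $c := p \circ g$ exists at all, then $p^{*}c$ is a monomorphism carrying a section, hence an equivalence over $X$, so $p^{*}c \simeq \id_X$ over $X$.

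For subterminality, let $g, g' : \Hom(X,\widehat{\mathcal{U}})$ and set $c = p \circ g$, $c' = p \circ g'$. By the previous paragraph $p^{*}c$ and $p^{*}c'$ are each equivalent over $X$ to $\id_X$, hence to each other; since $p$ is an object classifier, $\Id_{\Hom(X,\mathcal{U})}(c,c')$ is equivalent to the type of equivalences over $X$ between $p^{*}c$ and $p^{*}c'$, which is now inhabited, so $c \sim c'$. Because $p$ is a monomorphism, $p \circ - : \Hom(X,\widehat{\mathcal{U}}) \to \Hom(X,\mathcal{U})$ is an embedding, and $c \sim c'$ therefore yields $g \sim g'$. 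Hence $\Hom(X,\widehat{\mathcal{U}})$ is a proposition and $\widehat{\mathcal{U}}$ is subterminal.

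For the last claim: if identity morphisms are classified by $p$, then for every $X$ there is some $c : \Hom(X,\mathcal{U})$ with $p^{*}c \simeq \id_X$ over $X$; since $\id_X$ has a section, so does $p^{*}c$, and the correspondence above converts this into an element of $\Hom(X,\widehat{\mathcal{U}})$, so this type is an inhabited proposition, i.e.\ contractible, and $\widehat{\mathcal{U}}$ is terminal. Conversely, if $\widehat{\mathcal{U}}$ is terminal, let $g : \Hom(X,\widehat{\mathcal{U}})$ be the unique element; then $p^{*}(p \circ g) \simeq \id_X$ over $X$ by the correspondence, exhibiting $\id_X$, up to equivalence, as a pullback of $p$. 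I do not expect a genuine obstacle; the one point deserving care is stating the pullback correspondence between lifts of $c$ along $p$ and sections of $p^{*}c$ cleanly and then invoking it in both directions, so I would isolate it explicitly before splitting into cases.
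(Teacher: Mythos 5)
Your proof is correct and follows essentially the same route as the paper: both arguments hinge on the fact that the pullback of $p$ along $p \circ g$ is equivalent over the base to the identity, then use the object classifier property to identify $p \circ f_1$ with $p \circ f_2$ and the monomorphism property of $p$ to conclude subterminality, with the terminal/classifies-identities equivalence falling out of the same pullback fact. The only cosmetic difference is that the paper verifies directly that the square with $\id_B$ on the left is a pullback, whereas you derive the same fact from the lift--section correspondence together with the observation that a monomorphism admitting a section is an equivalence.
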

\begin{proof}
Any commutative square of the form
\[ \xymatrix{ B \ar[r]^s \ar[d]_{\id_B} & \widehat{\mathcal{U}} \ar[d]^p \\
              B \ar[r]_t                & \mathcal{U}
            } \]
is a pullback.
To prove this, we need to show that the canonical map
\[ r : \Hom(X,B) \to \sum_{f : \Hom(X,B)} \sum_{g : \Hom(X,\widehat{\mathcal{U}})} \Id(t \circ f, p \circ g). \]
is an equivalence for all $X$.
Since $t$ is homotopic to $p \circ s$, the type $\Id(t \circ f, p \circ g)$ is equivalent to $\Id(p \circ s \circ f, p \circ g)$.
Since $p$ is a monomorphism, it is equivalent to $\Id(s \circ f, g)$.
Since the type $\Sigma_{g : \Hom(X,\widehat{\mathcal{U}})} \Id(s \circ f, g)$ is contractible, $r$ is indeed an equivalence.

To prove that $\widehat{\mathcal{U}}$ is subterminal, we need to show that any two maps $f_1,f_2 : \Hom(B,\widehat{\mathcal{U}})$ are homotopic.
Since $p$ is a monomorphism, it is enough to construct a homotopy between $p \circ f_1$ and $p \circ f_2$.
We have two pullback squares as above with $s = f_i$ and $t = p \circ f_i$.
Since $p$ is an object classifier, we have an equivalence between $\Id(p \circ f_1, p \circ f_2)$ and the type of equivalences between pullbacks of $p$ along $p \circ f_1$ and $p \circ f_2$.
Since both pullbacks are just $\id_B$, they are equivalent; so, we have a homotopy between $p \circ f_1$ and $p \circ f_2$.

If $p$ classifies identity morphisms, then, for every $B$, the map $\id_B$ is a pullback of $p$.
In particular, there exists a map from $B$ to $\widehat{\mathcal{U}}$.
Thus, $\widehat{\mathcal{U}}$ is terminal.
Conversely, if $\widehat{\mathcal{U}}$ is terminal, then, for every type $B$, we have a commutative square as depicted at the beginning of the proof.
Since this square is a pullback, $p$ classifies $\id_B$.
\end{proof}

Finally, let us prove another simple but useful result.
Analogous result in the context of higher categories was proved in \cite[Theorem~3.28]{rasekh-eht}.

\begin{prop}
Let $p : \Hom(\widehat{\mathcal{U}},\mathcal{U})$ be an object classifier and let $f : \Hom(\mathcal{U}',\mathcal{U})$ be any map.
Then the pullback of $p$ along $f$ is an object classifier if and only if $f$ is a monomorphism.
\end{prop}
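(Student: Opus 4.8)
The plan is to compare the classifying map of $p$ with the classifying map of its pullback along $f$, and then to recognize the resulting condition as the characterization of monomorphisms by their action on paths. Write $p' : \Hom(\widehat{\mathcal{U}}', \mathcal{U}')$ for the pullback of $p$ along $f$, with $\widehat{\mathcal{U}}' = \mathcal{U}' \times_{\mathcal{U}} \widehat{\mathcal{U}}$. First I would check that $p'$ admits pullbacks along all maps: given $g : \Hom(\Delta, \mathcal{U}')$, the pasting lemma for pullbacks---available here because $\Hom(P,-)$ preserves pullbacks and the pasting lemma holds in ordinary type theory---shows that the pullback of $p$ along $f \circ g$ can be taken as the pullback of $p'$ along $g$; writing $g^{*}p'$ and $(f \circ g)^{*}p$ for these, this gives a canonical equivalence over $\Delta$ between $g^{*}p'$ and $(f \circ g)^{*}p$.

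Now fix an indexed type $\Delta$ and maps $g, g' : \Hom(\Delta, \mathcal{U}')$. Let $c_{p'}$ be the classifying map of $p'$, from $\Id_{\Hom(\Delta, \mathcal{U}')}(g, g')$ to the type of equivalences over $\Delta$ between $g^{*}p'$ and $g'^{*}p'$; let $c_p$ be the classifying map of $p$, from $\Id_{\Hom(\Delta, \mathcal{U})}(f \circ g, f \circ g')$ to the type of equivalences over $\Delta$ between $(f \circ g)^{*}p$ and $(f \circ g')^{*}p$; and let $\psi$ be the equivalence between these two types of equivalences obtained by conjugating with the two canonical equivalences from the first paragraph. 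The key step is to prove that $\psi \circ c_{p'}$ is homotopic to $c_p \circ \pmap(h.\,f \circ h, -)$. This follows by path induction on the path in $\Id_{\Hom(\Delta, \mathcal{U}')}(g, g')$: on $\refl(g)$ the maps $c_{p'}$ and $c_p$ return identity equivalences, $\pmap(h.\,f \circ h, \refl(g)) = \refl(f \circ g)$, and $\psi$ carries the identity equivalence on $g^{*}p'$ to the identity equivalence on $(f \circ g)^{*}p$ (the two canonical equivalences being conjugated coincide when $g = g'$). Since $p$ is an object classifier, $c_p$ is an equivalence, and $\psi$ is an equivalence, so $c_{p'}$ is an equivalence if and only if $\pmap(h.\,f \circ h, -) : \Id_{\Hom(\Delta, \mathcal{U}')}(g, g') \to \Id_{\Hom(\Delta, \mathcal{U})}(f \circ g, f \circ g')$ is an equivalence, for this choice of $\Delta, g, g'$.

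To conclude, $p'$ is an object classifier precisely when $c_{p'}$ is an equivalence for all $\Delta, g, g'$, hence, by the previous paragraph, precisely when $\pmap(h.\,f \circ h, -)$ is an equivalence for all $\Delta, g, g'$. But $\pmap(h.\,f \circ h, -)$ is the action on paths of the function $f \circ - : \Hom(\Delta, \mathcal{U}') \to \Hom(\Delta, \mathcal{U})$, so by the characterization of $(-1)$-truncated maps via their action on paths (\cite[Lemma~7.6.2]{hottbook}, as used in the proof of \rlem{trunc-id}) this holds for all $g, g'$ if and only if $f \circ -$ is $(-1)$-truncated; and, by the definition of $n$-truncated maps in an indexed unary type theory, $f \circ -$ is $(-1)$-truncated for every $\Delta$ exactly when $f$ is a monomorphism.

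I expect the main obstacle to be the homotopy $\psi \circ c_{p'} \sim c_p \circ \pmap(h.\,f \circ h, -)$: one has to set up the canonical equivalences $g^{*}p' \simeq (f \circ g)^{*}p$ coming from the pasting lemma with enough care that the square commutes after a single path induction, and one must also make sure the pasting lemma for pullbacks is genuinely available in the unary setting, which is where the remark that $\Hom(P,-)$ preserves pullbacks is used.
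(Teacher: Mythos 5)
Your proposal is correct and follows essentially the same route as the paper's proof: identify $g^{*}p'$ with $(f\circ g)^{*}p$ via pasting of pullbacks, use the object-classifier property of $p$ to reduce the classifier condition for $p'$ to the map $\Id(g,g')\to\Id(f\circ g, f\circ g')$ being an equivalence, identify that map with $\pmap(f\circ -,-)$ by path induction, and conclude via the characterization of monomorphisms by their action on paths. The only difference is that you spell out the commuting square $\psi\circ c_{p'}\sim c_p\circ\pmap(f\circ -,-)$ explicitly, which the paper leaves implicit.
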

\begin{proof}
Let us denote the pullback of $p$ along $f$ by $p' : \Hom(\widehat{\mathcal{U}'},\mathcal{U}')$.
Then the pullback of $p'$ along a map $g : \Hom(\Delta,\mathcal{U}')$ is equivalent to the pullback of $p$ along $f \circ g$.
Thus, we have an equivalence between the type of equivalences between $g_1^*(p')$ and $g_2^*(p')$ and the type of equivalences between $(f \circ g_1)^*(p)$ and $(f \circ g_2)^*(p)$.
Since $p$ is an object classifier, the latter type is equivalent to $\Id(f \circ g_1, f \circ g_2)$.
Thus, $p'$ is an object classifier if and only if the canonical function $\Id(g_1,g_2) \to \Id(f \circ g_1, f \circ g_2)$ is an equivalence.
This function maps $\refl(g)$ to $\refl(f \circ g)$.
This implies that it is homotopic to $\lambda h.\,\pmap(f \circ -, h)$, but this map is an equivalence if and only if $f$ is a monomorphism.
\end{proof}

\section{Locally reflective classes of fibrations}
\label{sec:refl-fib}

In this section, we discuss the notion of modalities in indexed type theories.
Several equivalent definitions of modalities were defined in \cite{modality-hott} in the context of ordinary homotopy type theory.
We define the notion of locally reflective classes of fibrations which is similar to the notion of a reflective subuniverse.
This definition makes sense in an indexed unary type theory.
We also define a dependent version of this notion which is similar to the notion of a higher modality.

\subsection{Locally reflective classes in unary theories}

Let $\Fib$ be a class of fibrations in an indexed unary type theory as defined in the previous section.
We will say that it is \emph{locally reflective} if every map $f : \Hom(A,B)$ factors through a fibration $p : \Hom(C,B)$
such that, for every factorization of $f$ through any fibration $p' : \Hom(C',B)$, the type of lifts in the following square is contractible:
\[ \xymatrix{ A \ar[r] \ar[d]                   & C' \ar@{->>}[d]^{p'} \\
              C \ar@{->>}[r]_p \ar@{-->}[ur]    & B
            } \]
The factorization $A \to C \twoheadrightarrow B$ will be called \emph{the universal factorization} of $f$.
We will say that $\Fib$ is \emph{stably} locally reflective if the universal factorization of any map is stable under pullbacks.

\begin{lem}[fib-refl]
If $A \xrightarrow{i} C \overset{p}\twoheadrightarrow B$ is the universal factorization of $f$, then the following conditions are equivalent:
\begin{enumerate}
\item $i$ is an equivalence.
\item $f$ is a fibration.
\item $i$ has a retraction over $B$.
\end{enumerate}
\end{lem}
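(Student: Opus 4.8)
The plan is to prove the cycle of implications $(1) \Rightarrow (3) \Rightarrow (2) \Rightarrow (1)$, exploiting the universal property of the factorization at each stage and the fact that $\Fib$ is closed under equivalences.

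For $(1) \Rightarrow (3)$: if $i : \Hom(A,C)$ is an equivalence, then its inverse $i^{-1} : \Hom(C,A)$ is a retraction, and it lives over $B$ in the required sense because $p \circ i \sim f$ gives $f \circ i^{-1} \sim p \circ i \circ i^{-1} \sim p$. So $i^{-1}$ is a lift (with the relevant homotopies) in the square with $p' = p$, hence a retraction over $B$.

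For $(3) \Rightarrow (2)$: suppose $r : \Hom(C,A)$ satisfies $r \circ i \sim \id_A$ and $f \circ r \sim p$ (so that $r$ is a map over $B$). Then the square exhibiting $i$ composed with $r$ shows that $i$ and $r$ make $A$ into a quasi-retract of $C$ over $B$; in fact I want to conclude that $i$ is an equivalence, which will then give $f \sim p \circ i$ a fibration since $\Fib$ is closed under equivalences. To get that $i$ is an equivalence, I will use the contractibility half of the universal property: both $i \circ r : \Hom(C,C)$ and $\id_C$ are lifts in the square
\[ \xymatrix{ A \ar[r]^i \ar[d]_i & C \ar@{->>}[d]^p \\
              C \ar@{->>}[r]_p \ar@{-->}[ur] & B
            } \]
(the first because $(i \circ r) \circ i \sim i$ and $p \circ i \circ r \sim f \circ r \sim p$; the second trivially). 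Since the type of such lifts is contractible by the universal property of the factorization, $i \circ r \sim \id_C$. Together with $r \circ i \sim \id_A$ this shows $i$ is an equivalence.

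For $(2) \Rightarrow (1)$: if $f$ itself is a fibration, then $A \xrightarrow{\id_A} A \xrightarrow{f} B$ is another factorization of $f$ through a fibration, so the universal property of $i$ gives a lift $j : \Hom(C,A)$ with $j \circ i \sim \id_A$ and $f \circ j \sim p$. Now I apply the same contractibility argument as above — $i \circ j$ and $\id_C$ are both lifts in the square with $p' = p$ — to conclude $i \circ j \sim \id_C$, so $i$ is an equivalence.

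The main obstacle is bookkeeping rather than mathematics: in an indexed unary type theory without $\Pi$-types one must be careful that "the type of lifts in the square is contractible" is the right statement and that "retraction over $B$" is interpreted as an element of $\Hom_B(C,A)$ in the appropriate sense — i.e. a map together with the homotopy $f \circ r \sim p$, mirroring the data in the lifting square. Once the contractibility of lift-spaces is invoked correctly, each implication is a two-line manipulation of homotopies, so the argument $(3) \Rightarrow (2) \Rightarrow (1)$ essentially reduces to the single observation that any lift $\ell$ in the square with $p' = p$ satisfies $i \circ \ell \sim \id_C$, which I would isolate as the crux at the start.
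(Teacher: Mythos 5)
Your proof is correct and uses the same three atomic arguments as the paper's --- closure of $\Fib$ under equivalences for $(1)\Rightarrow(2)$, the lift against the factorization $A \xrightarrow{\id_A} A \xrightarrow{f} B$ to produce the retraction, and contractibility of the type of lifts in the square with $p'=p$ to conclude $i \circ r \sim \id_C$ --- merely arranged in a different cyclic order, so the last argument gets invoked twice. The only blemish is the final sentence, where ``$i \circ \ell \sim \id_C$'' should read ``$\ell \sim \id_C$''.
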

\begin{proof}
If $i$ is an equivalence, then $f$ is a fibration since fibrations are closed under equivalences and $p$ is a fibration.
If $f$ is a fibration, then the lift in the following square is a retraction of $i$ over $B$.
\[ \xymatrix{ A \ar@{=}[r] \ar[d]_i             & A \ar@{->>}[d]^{f} \\
              C \ar@{->>}[r]_p \ar@{-->}[ur]^r  & B
            } \]
Let $r$ be a retraction of $i$ over $B$.
Consider the following commutative square:
\[ \xymatrix{ A \ar[r]^i \ar[d]_i                                                           & C \ar@{->>}[d]^{p} \\
              C \ar@{->>}[r]_p \ar@{-->}@<-0.5ex>[ur]_{i \circ r} \ar@{-->}@<0.5ex>[ur]^\id & B
            } \]
It is easy to see that $\id_C$ and $i \circ r$ are lifts in this square.
Since lifts are unique up to a homotopy, these maps are homotopic.
Thus, $i$ is an equivalence.
\end{proof}

\begin{lem}[fib-idm]
Any locally reflective class of fibrations contains all identity morphisms
\end{lem}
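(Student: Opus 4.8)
The goal is to show that for any locally reflective class of fibrations $\Fib$, every identity morphism $\id_B : \Hom(B,B)$ is a fibration. The natural strategy is to apply \rlem{fib-refl} to a cleverly chosen map $f$ whose universal factorization starts with $\id_B$, so that condition (1) of that lemma is automatically satisfied, forcing condition (2)—namely that $f$ itself is a fibration—and then to observe that $f$ can be taken to be $\id_B$.

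The plan is as follows. First I would take $f = \id_B : \Hom(B,B)$ and let $B \xrightarrow{i} C \overset{p}\twoheadrightarrow B$ be its universal factorization, so $p \circ i \sim \id_B$. I then want to show that $i$ is an equivalence; by \rlem{fib-refl} this will imply that $\id_B$ is a fibration. To see that $i$ is an equivalence, I would use the universal property directly: the square with left edge $i$, top edge $\id_B$, bottom edge $p$, and right edge $p$ (which is a fibration) has a unique-up-to-homotopy lift $C \to B$; but both $i \circ (\text{something})$ and candidate maps can be compared here. More cleanly, I would apply the universal property to the trivial factorization of $f = \id_B$ through the fibration $\id_B : \Hom(B,B)$ itself—that is, $B \xrightarrow{\id_B} B \overset{\id_B}\twoheadrightarrow B$ is a factorization through a fibration, so there is a contractible type of lifts in the square with left edge $i$, top edge $\id_B$, right edge $\id_B$, and bottom edge $p$. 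A lift here is a map $r : \Hom(C,B)$ with $r \circ i \sim \id_B$ and $\id_B \circ r \sim p$, i.e. $r \sim p$; in particular $p$ itself is (homotopic to) such a lift, giving $p \circ i \sim \id_B$ as already known and identifying $r$ with $p$. The key point extracted is that $r = p$ is a retraction of $i$ over $B$: indeed $p \circ i \sim \id_B$ and $\id_B \circ p = p$, so condition (3) of \rlem{fib-refl} holds.

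Thus I would simply invoke \rlem{fib-refl}: since $i$ has a retraction over $B$ (namely $p$, using that $p \circ i \sim \id_B$ witnesses both that $p$ is a retraction and that $p = \id_B \circ p$ sits correctly over $B$), condition (3) holds, hence $f = \id_B$ is a fibration by the equivalence of (2) and (3). This is exactly the claim.

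The main obstacle—and it is a mild one—is checking that the retraction condition "over $B$" is genuinely satisfied, i.e. that the witnessing homotopy for $p \circ i \sim \id_B$ is compatible with the triangle data so that $p$ counts as a morphism $C \to B$ over $B$ in the sense of \rlem{fib-refl}. Since in the factorization $B \xrightarrow{i} C \overset{p}\twoheadrightarrow B$ of $\id_B$ we have $p \circ i \sim \id_B$ by definition, and we must view $p : \Hom(C,B)$ as lying over $B$ via $p$ itself (the fibration structure map is $p$, and $p \circ \id_C = p$), the retraction $p$ of $i$ satisfies $p \circ i \sim \id_B = p \circ i$ trivially over $B$. So the verification is routine, and the proof is short: it is essentially one application of \rlem{fib-refl} to $f = \id_B$.
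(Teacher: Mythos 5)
Your proof is correct and is essentially the paper's: take the universal factorization $B \xrightarrow{i} C \overset{p}\twoheadrightarrow B$ of $\id_B$, observe that $p$ itself is a retraction of $i$ over $B$ (since $p \circ i \sim \id_B$ holds by definition of the factorization and $\id_B \circ p = p$), and apply the implication (3) $\Rightarrow$ (2) of \rlem{fib-refl}. The intermediate digression about lifting against the ``fibration'' $\id_B$ is circular (that $\id_B$ is a fibration is exactly what is being proved), but your final argument does not rely on it, so the proof stands.
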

\begin{proof}
Let $A \xrightarrow{i} B \overset{p}\twoheadrightarrow A$ be the universal factorization of $\id_A$.
Then $p$ is a retraction of $i$ over $A$.
By \rlem{fib-refl}, $\id_A$ is a fibration.
\end{proof}

\begin{lem}[fib-pullback]
Any stably locally reflective class of fibrations is closed under pullbacks.
\end{lem}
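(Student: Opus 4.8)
The plan is to deduce \rlem{fib-pullback} from the interplay between \rlem{fib-refl} and the stability hypothesis: the universal factorization of a fibration is ``trivial'' in the sense that its first map is an equivalence, and stability propagates this along pullbacks. So suppose we are given a pullback square with $p : \Hom(C,B)$ a fibration on the right and $q : \Hom(C',B')$ on the left, sitting over a map $u : \Hom(B',B)$; we must show that $q$ is a fibration.

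First I would take the universal factorization $C \xrightarrow{i} D \overset{\bar p}\twoheadrightarrow B$ of $p$. Since $p$ is a fibration, \rlem{fib-refl} (the implication from ``$f$ is a fibration'' to ``$i$ is an equivalence'') tells us that $i$ is an equivalence. Next I would apply the stability hypothesis: because $\Fib$ is stably locally reflective, the pullback of this universal factorization along $u$ is the universal factorization of the pullback of $p$, i.e. of $q$. The domain of the universal factorization of $p$ is $C$, whose pullback along $u$ is exactly $C'$, so this pulled-back factorization has the form $C' \xrightarrow{i'} D' \overset{\bar p'}\twoheadrightarrow B'$, where $D' = D \times_B B'$ and $i'$, $\bar p'$ are the pullbacks of $i$, $\bar p$ along $u$. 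In particular $i'$ is a pullback of the equivalence $i$, hence $i'$ is an equivalence. Now \rlem{fib-refl}, applied this time to the universal factorization $C' \xrightarrow{i'} D' \overset{\bar p'}\twoheadrightarrow B'$ of $q$ (the implication from ``$i'$ is an equivalence'' to ``$q$ is a fibration''), yields that $q$ is a fibration, which is what we wanted.

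The only genuinely non-formal step, and the one I expect to need a word of justification, is the claim that a pullback of an equivalence is again an equivalence; one clean way to see this is that $\Hom(X,-)$ preserves pullbacks and carries equivalences to equivalences of base types (as used in \rlem{lrinv-contr}), that pullbacks of equivalences of base types are equivalences in the base theory, and that a map inducing an equivalence on $\Hom(X,-)$ for every $X$ is itself an equivalence. The other point to state carefully is that stability identifies the domain of the pulled-back factorization with $C'$, which is immediate since the domain of the universal factorization of $p$ is $C$ and pullback is computed componentwise; everything else is formal manipulation with the definitions of universal factorization and of fibration.
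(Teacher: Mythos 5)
Your proof is correct and follows essentially the same route as the paper: pull back the universal factorization (using stability), note the first map is an equivalence by \rlem{fib-refl} and that pullbacks preserve equivalences, and conclude via the closure of fibrations under equivalences (which is exactly what the implication (1)$\Rightarrow$(2) of \rlem{fib-refl} encapsulates). The only difference is cosmetic: you also supply the justification, via $\Hom(X,-)$, for why a pullback of an equivalence is an equivalence, which the paper simply asserts.
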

\begin{proof}
Let $f$ be a fibration and let $f = p \circ i$ be its universal factorization.
Since the class of fibrations is stably locally reflective, the pullbacks of $p$ and $i$ constitute the universal factorization of a pullback of $f$.
By \rlem{fib-refl}, $i$ is an equivalence.
Hence, its pullback is also an equivalence.
Since the pullback of $p$ is a fibration and fibrations are closed under equivalences, the pullback of $f$ is also a fibration.
\end{proof}

\begin{lem}[pullback-lift]
Suppose that we have the following diagram, where the right square is a pullback.
\[ \xymatrix{ A \ar[r]^{c} \ar[d]_i & C \ar[r]^e \ar[d]_p \pb   & E \ar[d]^q \\
              B \ar[r]_d            & D \ar[r]_f                & F
            } \]
Then the type of lifts in the left square is equivalent to the type of lifts in the outer rectangle.
\end{lem}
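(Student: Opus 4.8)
The plan is to unfold the notion of a lift as an iterated $\Sigma$-type and then transport it across the universal property of the pullback in the right square. Recall that a lift in the left square amounts to a map $\ell : \Hom(B,C)$ together with homotopies $\ell \circ i \sim c$ and $p \circ \ell \sim d$ and a coherence $2$-cell relating the whiskerings of these homotopies to the commutativity homotopy of that square; a lift in the outer rectangle has the same shape with $C$, $p$, $d$, $c$ replaced by $E$, $q$, $f \circ d$, $e \circ c$ and with the pasted commutativity homotopy of the rectangle.

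First I would invoke the Remark that $\Hom(X,-)$ preserves pullbacks, so that for every $X$ the type $\Hom(X,C)$ is equivalent to $\Hom(X,D) \times_{\Hom(X,F)} \Hom(X,E)$, and this equivalence is natural in $X$. Concretely, a map $\ell : \Hom(B,C)$ corresponds to a triple $(\ell_D, \ell_E, \kappa)$ with $\ell_D \sim p \circ \ell$, $\ell_E \sim e \circ \ell$, and $\kappa : \Id(f \circ \ell_D, q \circ \ell_E)$; precomposition with $i$ corresponds to whiskering each component with $i$, and $e \circ -$ and $p \circ -$ correspond to the two projections.

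Then I would substitute this description into the lift type of the left square. The factor $\Id(p \circ \ell, d)$ becomes $\Id(\ell_D, d)$, so the pair consisting of $\ell_D$ and that homotopy ranges over a based path space and may be contracted; this eliminates $\ell_D$ and pins $\kappa$ down to type $\Id(f \circ d, q \circ \ell_E)$. The factor $\Id(\ell \circ i, c)$ becomes a homotopy in the pullback-of-$\Hom$ type, which splits into $\Id(\ell_E \circ i, e \circ c)$ together with $D$-component data that is already determined once $\ell_D$ has been set to $d$. After reshuffling the resulting $\Sigma$-type, what remains is exactly a map $\ell_E : \Hom(B,E)$, homotopies $\ell_E \circ i \sim e \circ c$ and $q \circ \ell_E \sim f \circ d$, and a coherence cell, i.e.\ the type of lifts in the outer rectangle. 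The desired equivalence is the composite of the pullback equivalence for $\Hom(B,-)$, these rearrangements of $\Sigma$-types, and the contraction of the singleton $\sum_{\ell_D}\Id(\ell_D, d)$.

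The main obstacle is bookkeeping the coherence $2$-cells. One must verify that the coherence datum of a lift in the left square is carried, under the pullback equivalence for $\Hom(B,-)$ and the contraction of the $\Hom(B,D)$-component, precisely onto the coherence datum for the outer rectangle, and that the commutativity homotopy of the outer rectangle obtained by pasting agrees with the one induced from the pullback square. This is routine path algebra — repeated use of naturality of whiskering and of $\pmap$ applied to projections out of $\Sigma$-types — but it is where all the care is needed; everything else is a direct application of the universal property of the pullback together with contractibility of singletons.
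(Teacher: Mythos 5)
Your proposal is correct and follows essentially the same route as the paper: both unfold the lift type, apply the universal property of the pullback to $\Hom(B,-)$ so that the candidate lift decomposes into its $D$- and $E$-components with a coherence cell, and then contract the based path space $\sum_{\ell_D}\Id(\ell_D,d)$ to leave exactly the lift data for the outer rectangle. The paper writes out the iterated $\Sigma$-type with the coherence $2$-cells explicitly and reduces the same components you identify, so the bookkeeping you flag is handled just as you describe.
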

\begin{proof}
Let $H_1$ and $H_2$ be the homotopies witnessing the commutativity of the left and right square, respectively.
By the universal property of pullbacks, the type of lifts in the left square is equivalent to the following type:
\begin{align*}
& \sum_{r_1 : \Hom(B,D)} \sum_{r_2 : \Hom(B,E)} \sum_{r_3 : \Id(f \circ r_1, q \circ r_2)} \sum_{h : \Id(d,r_1)} \\
& \sum_{h_1 : \Id(r_1 \circ i, p \circ c)} \sum_{h_2 : \Id(e \circ c, r_2 \circ i)} \sum_{h_3 : \Id((h_1 * f) \ct (c * H_2) \ct (h_2 * q), i * r_3)} \Id(h_1,h_*(H_1)).
\end{align*}
After reducing $r_1$, $h$, $h_1$, and the last homotopy we get the following equivalent type:
\[ \sum_{r_2 : \Hom(B,E)} \sum_{r_3 : \Id(f \circ d, q \circ r_2)} \sum_{h_2 : \Id(e \circ c, r_2 \circ i)} \Id((H_1 * f) \ct (c * H_2) \ct (h_2 * q), i * r_3). \]
This type is equivalent to the type of lifts in the outer rectangle.
\end{proof}

\begin{lem}[fib-refl-lift]
Let $\Fib$ be a locally reflective class of fibrations closed under pullbacks.
Let $A \xrightarrow{i} C \overset{p}\twoheadrightarrow B$ be the universal factorization of a map $f : \Hom(A,B)$.
Then the type of lifts in every commutative square as below is contractible if $v$ factors through $p$.
\[ \xymatrix{ A \ar[r] \ar[d]_i & D \ar@{->>}[d] \\
              C \ar[r]_v        & E
            } \]
\end{lem}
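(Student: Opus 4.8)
The plan is to reduce the assertion to the defining property of the universal factorization by transporting the lifting problem along a pullback, using \rlem{pullback-lift}. Write $q : \Hom(D,E)$ for the fibration on the right edge of the given square, $a : \Hom(A,D)$ for its top edge, and --- using the hypothesis that $v$ factors through $p$ --- fix a map $w : \Hom(B,E)$ together with a homotopy $v \sim w \circ p$.

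First I would form the pullback $q' : \Hom(D',B)$ of $q$ along $w$, with projection $\pi_D : \Hom(D',D)$; since $q$ is a fibration and $\Fib$ is closed under pullbacks, $q'$ is again a fibration. The commutativity of the given square gives $q \circ a \sim v \circ i \sim w \circ (p \circ i)$, so the universal property of the pullback produces a map $a' : \Hom(A,D')$ with $\pi_D \circ a' \sim a$ and $q' \circ a' \sim p \circ i \sim f$. Thus $a'$ exhibits $f$ as a factorization through the fibration $q'$ over $B$. Since $A \xrightarrow{i} C \overset{p}{\twoheadrightarrow} B$ is the universal factorization of $f$, the definition of a locally reflective class tells us that the type of lifts in the square with top edge $a'$, left edge $i$, right edge $q'$, and bottom edge $p$ is contractible.

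Next I would place the defining pullback square of $q'$ (edges $\pi_D$, $q'$, $q$, $w$) to the right of the square just considered and apply \rlem{pullback-lift}. The outer rectangle of the resulting diagram has top edge $\pi_D \circ a' \sim a$, left edge $i$, right edge $q$, and bottom edge $w \circ p \sim v$; that is, it is precisely the given square. Hence \rlem{pullback-lift} provides an equivalence between the type of lifts in the given square and the type of lifts in the $q'$-square, and the latter is contractible, so the former is as well.

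The bulk of the actual work lies in the homotopy bookkeeping: one must check that the commutativity data of the given square matches the hypotheses of \rlem{pullback-lift}, and that the outer rectangle produced there genuinely coincides with the given square (including its commutativity homotopy), not just up to an unspecified identification. There is also the minor point of chasing the homotopy $q' \circ a' \sim f$ through the fixed factorization $f \sim p \circ i$ so that the universal factorization's lifting property applies directly; this is routine path algebra.
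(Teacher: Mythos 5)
Your proof is correct and takes essentially the same route as the paper's: factor $v$ as $w \circ p$, pull the fibration $q$ back along $w$ to obtain a fibration over $B$ through which $f$ factors, invoke the defining contractibility of lifts for the universal factorization against that fibration, and transfer the result to the original square via \rlem{pullback-lift}. The only differences are notational.
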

\begin{proof}
By assumption, $v$ equals to $C \overset{p}\twoheadrightarrow B \xrightarrow{u} E$ for some map $u$.
Consider the following diagram:
\[ \xymatrix{ A \ar[r] \ar[d]_i & C' \ar[r] \ar@{->>}[d] \pb    & D \ar@{->>}[d] \\
              C \ar@{->>}[r]_p  & B \ar[r]_u                    & E
            } \]
The type of lift in the left square is contractible and \rlem{pullback-lift} implies that this type is equivalent to the type of lifts in the original square.
\end{proof}

\begin{prop}[fib-refl-id]
Any locally reflective class of fibrations closed under pullbacks satisfies the equivalent conditions of \rprop{fib-id}.
\end{prop}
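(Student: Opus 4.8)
The plan is to verify condition \eqref{it:fib-over} of \rprop{fib-id-comp}; together with \rlem{fib-idm} (a locally reflective class contains all identity morphisms) and \rlem{fib-id-idm}, this yields the diagonal condition \eqref{it:fib-pb}, and hence, by \rprop{fib-id}, also closure under identity types, i.e. both equivalent conditions of \rprop{fib-id}. So let $p : \Hom(A,B)$ and $q : \Hom(C,B)$ be fibrations and $f : \Hom(A,C)$ a map with $q \circ f \sim p$; we must show $f$ is a fibration. Let $A \xrightarrow{i} D \overset{r}\twoheadrightarrow C$ be the universal factorization of $f$. By \rlem{fib-refl} it is enough to exhibit a retraction of $i$ over $C$, that is a map $\rho : \Hom(D,A)$ with $\rho \circ i \sim \id_A$ and $f \circ \rho \sim r$.

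First I would produce a map $\rho$ with $\rho \circ i \sim \id_A$. The square with left leg $i$, right leg $p$, top $\id_A$, and bottom $q \circ r : \Hom(D,B)$ commutes, since $(q \circ r) \circ i \sim q \circ f \sim p$, and its bottom map factors through $r$; so \rlem{fib-refl-lift} supplies a lift $\rho : \Hom(D,A)$ equipped with homotopies $\rho \circ i \sim \id_A$ and $p \circ \rho \sim q \circ r$. At this point we only know $q \circ (f \circ \rho) \sim q \circ r$, not the required $f \circ \rho \sim r$.

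To bridge this I would run a second lifting argument, this time against $q$: the square with left leg $i$, right leg $q$, top $f$, and bottom $q \circ r$ also commutes and has bottom map factoring through $r$, so by \rlem{fib-refl-lift} its type of lifts is contractible. Now both $r$ and $f \circ \rho$ are lifts in this square — for $r$ one uses $r \circ i \sim f$, and for $f \circ \rho$ one uses $f \circ \rho \circ i \sim f$ together with $q \circ f \circ \rho \sim p \circ \rho \sim q \circ r$ — so contractibility forces $f \circ \rho \sim r$. Hence $\rho$ is a retraction of $i$ over $C$, so $i$ is an equivalence by \rlem{fib-refl}, and $f \sim r \circ i$ is a fibration since $r$ is one and $\Fib$ is closed under equivalences. (Alternatively one can prove \eqref{it:fib-pb} directly by the same two lifts, replacing $f$ and $\rho$ by the projections $\pi_1 \circ r, \pi_2 \circ r : \Hom(D,A)$ of the universal factorization of $\langle \id_A, \id_A \rangle$ and using the pullback homotopy $p \circ \pi_1 \sim p \circ \pi_2$.)

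The main obstacle is the coherence in the last step: one must check that $f \circ \rho$ genuinely lies in the relevant type of lifts — i.e. assemble its two triangle-commutativity witnesses, built by whiskering from the data already at hand, so that together with the square's commutativity they satisfy the lift coherence — and then that the homotopy $f \circ \rho \sim r$ extracted from contractibility is compatible with $\rho \circ i \sim \id_A$ in the form \rlem{fib-refl} requires. The rest is routine path-algebra.
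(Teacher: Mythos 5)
Your proposal is correct and follows essentially the same route as the paper's proof: reduce to condition \eqref{it:fib-over} of \rprop{fib-id-comp} via \rlem{fib-id-idm} and \rlem{fib-idm}, take the universal factorization of $f$, use \rlem{fib-refl-lift} once to build the candidate retraction and a second time (via uniqueness of lifts against $q$) to show it is a retraction over the base, then conclude with \rlem{fib-refl}. The coherence points you flag at the end are exactly the ones the paper passes over with "this implies that $r$ is a retraction of $i$ over $D$," so nothing is missing.
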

\begin{proof}
By \rlem{fib-id-idm} and \rlem{fib-idm}, it is enough to prove condition~\eqref{it:fib-over} of \rprop{fib-id-comp}.
Let $f : \Hom(A,D)$ be a map and let $q : \Hom(D,B)$ be a fibration such that $q \circ f$ is also a fibration.
We need to prove that $f$ is a fibration.
Let $A \xrightarrow{i} C \overset{p}\twoheadrightarrow D$ be the universal factorization of $f$.
By \rlem{fib-refl}, it is enough to show that $i$ has a retraction over $D$.
By \rlem{fib-refl-lift}, we have a lift in the following square:
\[ \xymatrix{ A \ar@{=}[r] \ar[d]_i                 & A \ar@{->>}[d]^{q \circ f} \\
              C \ar[r]_{q \circ p} \ar@{-->}[ur]^r  & B
            } \]
Both maps $p$ and $f \circ r$ are lifts in the following square:
\[ \xymatrix{ A \ar[r]^f \ar[d]_i   & D \ar@{->>}[d]^q \\
              C \ar[r]_{q \circ p}  & B
            } \]
By \rlem{fib-refl-lift}, we have a homotopy $h$ between $p$ and $f \circ r$ such that $i * h$ is homotopic to the canonical homotopy between $p \circ i$ and $f \circ r \circ i$.
This implies that $r$ is a retraction of $i$ over $D$.
\end{proof}

\subsection{Orthogonal factorization systems}

In this subsection, we defined connected maps and orthogonal factorization systems and prove that they are equivalent to locally reflective classes of fibrations.
Similar equivalence was proved in \cite{modality-hott}, but our proof is more general since it applies to any (indexed) unary theory.
Moreover, the proof in \cite{modality-hott} applies only to stable orthogonal factorization systems and stably locally reflective classes of fibrations.
The reason is that it is done in the internal language of the theory and everything must be stable in this language.
We prove a more general equivalence between (non-stable) orthogonal factorization systems and (non-stable) locally reflective classes of fibrations.

\begin{defn}
Let $\Fib$ be a locally reflective class of fibrations in an indexed unary type theory.
A map $f$ is \emph{connected} if the fibration in the universal factorization of $f$ is an equivalence.
\end{defn}

\begin{lem}[uni-conn]
Let $\Fib$ be a locally reflective class of fibrations closed under compositions.
If $A \xrightarrow{i} B \overset{p}\twoheadrightarrow C$ is the universal factorization of some map, then $i$ is connected.
\end{lem}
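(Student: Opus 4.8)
The plan is to exploit the interaction between the universal property of the factorization of $f$ and the factorization of $i$ itself. Let $f$ be the map whose universal factorization is $A \xrightarrow{i} B \overset{p}\twoheadrightarrow C$, and let $A \xrightarrow{j} D \overset{q}\twoheadrightarrow B$ be the universal factorization of $i$; by definition $i$ is connected exactly when $q$ is an equivalence, so this is the goal. First I would note that, since $\Fib$ is closed under compositions, the composite $p \circ q : \Hom(D,C)$ is again a fibration, and since $f \sim p \circ i \sim p \circ q \circ j$, the pair consisting of $j$ and $p \circ q$ is a factorization of $f$ through a fibration.

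Applying the universal property of $A \xrightarrow{i} B \overset{p}\twoheadrightarrow C$ to this factorization produces a lift $\ell : \Hom(B,D)$ with $\ell \circ i \sim j$ and $(p \circ q) \circ \ell \sim p$, and the type of such lifts is contractible. I claim that $\ell$ is a two-sided inverse of $q$. For $q \circ \ell \sim \id_B$: both $q \circ \ell$ and $\id_B$ are lifts in the square with left leg $i$, top leg $i$, right leg $p$, and bottom leg $p$, since $(q \circ \ell) \circ i \sim q \circ j \sim i$ and $p \circ (q \circ \ell) = (p \circ q) \circ \ell \sim p$; because $B \xrightarrow{p} C$ is a factorization of $f$ through the fibration $p$, the universal property of the factorization of $f$ makes this type of lifts contractible, so the two lifts are homotopic. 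For $\ell \circ q \sim \id_D$: both $\ell \circ q$ and $\id_D$ are lifts in the square with left leg $j$, top leg $j$, right leg $q$, and bottom leg $q$, since $(\ell \circ q) \circ j \sim \ell \circ i \sim j$ and $q \circ (\ell \circ q) = (q \circ \ell) \circ q \sim q$; because $D \xrightarrow{q} B$ is a factorization of $i$ through the fibration $q$, the universal property of the factorization of $i$ makes this type of lifts contractible, so again the two lifts are homotopic. Hence $q$ is an equivalence, and $i$ is connected.

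I do not expect a genuine obstacle here: the argument is a diagram chase whose only substantive ingredient is uniqueness of lifts, which is used three times (once to produce $\ell$, twice to identify it as an inverse of $q$). The point to check carefully is that each invocation is legitimate — that $p \circ q$ is a fibration (the sole use of closure under compositions), that the triangles involving $\ell$ commute up to the specified homotopies, and that the degenerate factorizations $B \xrightarrow{p} C$ of $f$ and $D \xrightarrow{q} B$ of $i$ really are factorizations through fibrations, so that the relevant lift types are indeed contractible. Everything else reduces to routine compositions of homotopies between morphisms, using the strict associativity and unitality of composition.
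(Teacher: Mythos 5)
Your argument is correct and is essentially the paper's own proof: factor $i$ as $A \xrightarrow{j} D \overset{q}{\twoheadrightarrow} B$, use closure under composition to see $p \circ q$ is a fibration, obtain the lift $\ell$ from the universal property of the factorization of $f$, and then identify $q \circ \ell$ with $\id_B$ and $\ell \circ q$ with $\id_D$ by exhibiting each pair as lifts in a square whose lift type is contractible. The only detail you defer that the paper spells out is that a ``lift'' here is a map together with two triangle homotopies \emph{and} a coherence identifying their composite with the homotopy witnessing the square, so to invoke contractibility you must supply that coherence for $q \circ \ell$ and $\ell \circ q$ (the paper does this by checking the combination of the triangle homotopies against $\sym{h_0} * p$ and against the previously constructed $i * h_3$); this is routine but should not be omitted.
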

\begin{proof}
Let $A \xrightarrow{j} B' \overset{q}\twoheadrightarrow B$ be the universal factorization of $i$.
We need to prove that $q$ is an equivalence.
Since fibrations are closed under compositions, $p \circ q$ is a fibration.
It follows that we have a lift in the following square:
\[ \xymatrix{ A \ar[r]^j \ar[d]_i               & B' \ar@{->>}[d]^{p \circ q} \\
              B \ar@{->>}[r]_p \ar@{-->}[ur]^k  & C
            } \]
Let $h_1 : \Id(j, k \circ i)$ and $h_2 : \Id(p \circ q \circ k, p)$ be the homotopies witnessing the commutativity of triangles in the diagram above.

Let us show that $q \circ k$ is a lift in the following square:
\[ \xymatrix{ A \ar[r]^i \ar[d]_i                   & B \ar@{->>}[d]^p \\
              B \ar@{->>}[r]_p \ar[ur]^{q \circ k}  & C
            } \]
Let $h_0$ be the homotopy between $i$ and $q \circ j$.
The homotopy between $i$ and $q \circ k \circ i$ is defined as $h_0 \ct (h_1 * q)$.
The homotopy between $p \circ q \circ k$ and $p$ is simply $h_2$.
The fact that the combination of these homotopies is homotopic to the trivial homotopy on $p \circ i$ follows from the fact that the combination of $h_1$ and $h_2$ is homotopic to $\sym{h_0} * p$.
Since both $\id_B$ and $q \circ k$ are lifts in the square above, there is a homotopy $h_3$ between them such that $i * h_3$ is homotopic to $h_0 \ct (h_1 * q)$.

Let us show that $k \circ q$ is a lift in the following square:
\[ \xymatrix{ A \ar[r]^j \ar[d]_j                   & B' \ar@{->>}[d]^q \\
              B' \ar@{->>}[r]_q \ar[ur]^{k \circ q} & B
            } \]
The homotopy between $j$ and $k \circ q \circ j$ is defined as $h_1 \ct (h_0 * k)$.
The homotopy between $q \circ k \circ q$ and $q$ is defined as $q * \sym{h_3}$.
The fact that the combination of these homotopies is homotopic to the trivial homotopy on $q \circ j$ follows from the fact that $i * h_3$ is homotopic to $h_0 \ct (h_1 * q)$.
Since both $\id_{B'}$ and $k \circ q$ are lifts in the square above, these maps are homotopic.
It follows that $q$ is an equivalence.
Hence, $i$ is connected.
\end{proof}

\begin{lem}[conn-pullback]
Let $\Fib$ be a stably locally reflective class of fibrations.
Then connected maps are closed under pullbacks.
\end{lem}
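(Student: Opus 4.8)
The plan is to deduce this directly from the stability of the universal factorization, which reduces the claim to the fact that equivalences are closed under pullbacks.

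First I would fix a connected map $f : \Hom(A,B)$ and write its universal factorization as $A \xrightarrow{i} C \overset{p}\twoheadrightarrow B$; by the definition of a connected map, the fibration $p$ is an equivalence. Next I would take an arbitrary pullback $f' : \Hom(A',B')$ of $f$, say along a map $g : \Hom(B',B)$. Since $f \sim p \circ i$, pulling back along $g$ produces maps $i' : \Hom(A',C')$ and $p' : \Hom(C',B')$ with $f' \sim p' \circ i'$, where $p'$ is a pullback of $p$ and $i'$ is a pullback of $i$:
\[ \xymatrix{ A' \ar[r]^{i'} \ar[dr]_{f'} & C' \ar[d]^{p'} \\ & B' } \]
Because $\Fib$ is \emph{stably} locally reflective, the universal factorization of $f$ is stable under pullbacks, so $A' \xrightarrow{i'} C' \overset{p'}\twoheadrightarrow B'$ is precisely the universal factorization of $f'$ — this is the same observation used in the proof of \rlem{fib-pullback}.

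It now suffices to show that the fibration $p'$ occurring in the universal factorization of $f'$ is an equivalence. But $p$ is an equivalence and equivalences are closed under pullbacks, so $p'$ is an equivalence; hence $f'$ is connected by definition, which is what we wanted.

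I do not anticipate a genuine obstacle here: once stability of the universal factorization is invoked, the argument is formal. The only point requiring a little care is the identification, up to equivalence, of the pullback of the universal factorization of $f$ with the universal factorization of the chosen pullback $f'$ — but this is exactly the content of being stably locally reflective, together with the essential uniqueness of universal factorizations.
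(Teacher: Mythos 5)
Your proof is correct and follows essentially the same route as the paper: pull back the universal factorization, use stability to identify the result as the universal factorization of the pulled-back map, and conclude because the pullback of the equivalence $p$ is again an equivalence. No gaps.
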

\begin{proof}
Let $f : \Hom(A,C)$ be a connected map and let $g : \Hom(D,C)$ be an arbitrary map.
We need to prove that the pullback of $f$ along $g$ is connected.
Let $A \xrightarrow{i} B \overset{p}\twoheadrightarrow C$ be the universal factorization of $f$.
Then we have the following diagram:
\[ \xymatrix{ A' \ar[r] \ar[d] \pb  & A \ar[d]^i \\
              B' \ar[r] \ar[d] \pb  & B \ar@{->>}[d]^p \\
              D  \ar[r]_g           & C
            } \]
Since $\Fib$ is stably locally reflective, $A' \to B' \to D$ is the universal factorization of $A' \to D$.
Since $f$ is connected, $p$ is an equivalence.
Hence, $B' \to D$ is also an equivalence.
Thus, $A' \to D$ is connected.
\end{proof}

\begin{defn}
Let $f : \Hom(A,B)$ and $g : \Hom(C,D)$ be maps in an indexed unary type theory.
We will say that $f$ is \emph{left orthogonal} to $g$ and $g$ is \emph{right orthogonal} to $f$ if the type of lifts in squares of the form depicted below is contractible.
\[ \xymatrix{ A \ar[r] \ar[d]_f         & C \ar[d]^g \\
              B \ar[r] \ar@{-->}[ur]    & D
            } \]
\end{defn}

\begin{lem}[conn-orth]
Let $\Fib$ be a locally reflective class of fibrations closed under pullbacks.
Then connected maps are left orthogonal to fibrations.
\end{lem}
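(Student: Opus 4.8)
The plan is to deduce this from \rlem{fib-refl-lift}. Let $g : \Hom(C,D)$ be a fibration and let $f : \Hom(A,B)$ be a connected map, with universal factorization $A \xrightarrow{i} E \overset{p}\twoheadrightarrow B$; by the definition of a connected map, $p$ is an equivalence. Given an arbitrary commutative square
\[ \xymatrix{ A \ar[r]^a \ar[d]_f & C \ar[d]^g \\ B \ar[r]_b & D, } \]
the first thing I would do is replace it by the square
\[ \xymatrix{ A \ar[r]^a \ar[d]_i & C \ar@{->>}[d]^g \\ E \ar[r]_-{b \circ p} & D, } \]
obtained using $f = p \circ i$. Since $p$ is an equivalence, precomposition with $p$ is an equivalence $\Hom(B,C) \to \Hom(E,C)$, and I would check that it carries lifts of the first square to lifts of the second and back; because $f = p \circ i$, all the homotopies witnessing commutativity together with the higher coherence in the definition of a lift match up, so the two types of lifts are equivalent.

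Next, the bottom map $b \circ p$ of the second square tautologically factors through $p$, and $g$ is a fibration, so \rlem{fib-refl-lift} (which applies since $\Fib$ is locally reflective and closed under pullbacks) shows that the type of lifts in the second square is contractible. Therefore the type of lifts in the original square is contractible, which is precisely the assertion that $f$ is left orthogonal to $g$.

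The main obstacle is the first step: carefully transporting the lifting problem along the equivalence $p$, keeping track of the homotopy witnessing commutativity of the square and of the coherence datum in the notion of a lift (in the same spirit as the bookkeeping in \rlem{pullback-lift}). Everything else is a direct invocation of \rlem{fib-refl-lift} and the definition of a connected map.
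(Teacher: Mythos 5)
Your proposal is correct and follows essentially the same route as the paper: factor the connected map through its universal factorization, invoke \rlem{fib-refl-lift} for the square whose bottom edge factors through the fibration part, and transport the lifting problem along the equivalence given by that fibration part. The ``bookkeeping'' you flag is exactly what the paper carries out by exhibiting an explicit map between the two types of lifts and checking it is an equivalence componentwise.
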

\begin{proof}
Let $i : \Hom(A,B)$ be a connected map and let $p : \Hom(C,D)$ be a fibration.
Consider a commutative square of the following form:
\[ \xymatrix{ A \ar[d]_i \ar[r]^f   & C \ar[d]^p \\
              B \ar[r]_g            & D
            } \]
The type of lifts in this square is
\[ \sum_{r : \Hom(B,C)} \sum_{h_1 : \Id(f, r \circ i)} \sum_{h_2 : \Id(p \circ r, g)} \Id((h_1 * p) \ct (i * h_2), H), \]
where $H$ is the homotopy witnessing the commutativity of the square.
Let us denote this type by $L$.
We need to prove that $L$ is contractible.

Let $A \xrightarrow{i'} B' \overset{q}\twoheadrightarrow B$ be the universal factorization of $i$.
By \rlem{fib-refl-lift}, the type of lifts in the following square is contractible:
\[ \xymatrix{ A \ar[d]_{i'} \ar[rr]^f       &               & C \ar[d]^p \\
              B' \ar[r]_q \ar@{-->}[urr]    & B \ar[r]_g    & D
            } \]
The type of lifts in this square is defined as follows:
\[ \sum_{r' : \Hom(B',C)} \sum_{h_1' : \Id(f, r' \circ i')} \sum_{h_2 : \Id(p \circ r', g \circ q)} \Id((h_1' * p) \ct (i' * h_2'), H \ct (H' * g)), \]
where $H'$ is the homotopy between $i$ and $q \circ i'$.
Let us denote this type by $L'$.
It is enough to prove that $L$ and $L'$ are equivalent.

We have an obvious map $s : L \to L'$ which maps $(r,h_1,h_2,h_3)$ to $(r \circ q, h_1 \ct (H' * r), q * h_2, h_3')$, where $h_3'$ is the following homotopy:
\begin{align*}
((h_1 \ct (H' * r)) * p) \ct (i' * q * h_2) & \sim \\
((h_1 * p) \ct (H' * r * p)) \ct (i' * q * h_2) & \sim \\
(h_1 * p) \ct (H' * h_2) & \sim \\
((h_1 * p) \ct (i * h_2)) \ct (H' * g) & \sim \\
H \ct (H' * g) & ,
\end{align*}
where we use $h_3$ at the last step and other steps are usual interchange laws.

To prove that this map is an equivalence, it is enough to show that it is an equivalence on each component.
Since $i$ is connected, $q$ is an equivalence.
This implies that functions $- \circ q$ and $q * -$ are equivalences and these functions are the first and the third component of $s$, respectively.
The second component of $s$ is $- \ct (H' * r)$, which is also an equivalence.
Finally, the third component of $s$ is an equivalence since it is a function that concatenates its argument with fixed homotopies.
\end{proof}

Let $\mathcal{L}$ and $\mathcal{R}$ be a pair of classes of maps closed under equivalences such that maps in $\mathcal{L}$ are left orthogonal to maps in $\mathcal{R}$.
Then a factorization of a map into a map in $\mathcal{L}$ followed by a map in $\mathcal{R}$ is essentially unique.
The pair $(\mathcal{L},\mathcal{R})$ is called an \emph{orthogonal factorization system} if such a factorization exists for every map.

\begin{lem}[orth-refl]
If $(\mathcal{L},\mathcal{R})$ is an orthogonal factorization system, then $\mathcal{R}$ is a locally reflective class of maps.
Moreover, if $A \xrightarrow{i} B \twoheadrightarrow C$ is the universal factorization of some map, then $i$ belongs to $\mathcal{L}$.
\end{lem}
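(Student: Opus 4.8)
The plan is to take, for a map $f : \Hom(A,B)$, its $(\mathcal{L},\mathcal{R})$-factorization as the candidate universal factorization, and then read off both assertions from orthogonality together with the essential uniqueness of such factorizations.

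First I would check that $\mathcal{R}$, viewed as a class of fibrations, is locally reflective. Given $f : \Hom(A,B)$, choose a factorization $A \xrightarrow{i} C \overset{p}\twoheadrightarrow B$ with $i \in \mathcal{L}$ and $p \in \mathcal{R}$; this exists because $(\mathcal{L},\mathcal{R})$ is an orthogonal factorization system, and $p$ is a fibration by definition of $\mathcal{R}$. Now suppose $f$ also factors as $p' \circ j$ through some fibration $p' : \Hom(C',B)$, i.e.\ there are $j : \Hom(A,C')$ and a homotopy $p' \circ j \sim f$. Combining this with the homotopy $f \sim p \circ i$ gives a commutative square with top $j$, left $i$, bottom $p$, right $p'$. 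Since $i \in \mathcal{L}$ and $p' \in \mathcal{R}$, the map $i$ is left orthogonal to $p'$, so the type of lifts in this square is contractible. This is exactly the condition in the definition of a locally reflective class (the required closure of $\mathcal{R}$ under equivalences being one of the standing hypotheses on $(\mathcal{L},\mathcal{R})$, and membership in $\mathcal{R}$ being a proposition since it asserts contractibility of a family of lift types), so $\mathcal{R}$ is locally reflective, and the chosen $A \xrightarrow{i} C \overset{p}\twoheadrightarrow B$ serves as a universal factorization of $f$.

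For the remaining claim I would first record that universal factorizations of a fixed map are unique up to equivalence over $B$: if $A \xrightarrow{i} C \overset{p}\twoheadrightarrow B$ and $A \xrightarrow{\ell} E \overset{r}\twoheadrightarrow B$ are two of them, then applying the universal property of the first to the factorization $r \circ \ell$ of $f$, and that of the second to the factorization $p \circ i$, yields maps $a : \Hom(C,E)$ and $b : \Hom(E,C)$ compatible with $i,\ell$ and with $p,r$; a further application of the universal property, in which both $\id$ and the relevant composite are lifts, shows that $a$ and $b$ are mutually inverse equivalences over $B$. Applying this with $A \xrightarrow{\ell} E \overset{r}\twoheadrightarrow B$ the $(\mathcal{L},\mathcal{R})$-factorization of $f$ and $A \xrightarrow{i} C \overset{p}\twoheadrightarrow B$ an arbitrary universal factorization produces an equivalence $b : \Hom(E,C)$ with $b \circ \ell \sim i$; since $\ell \in \mathcal{L}$ and $\mathcal{L}$ is closed under post-composition with equivalences, it follows that $i \in \mathcal{L}$.

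The only delicate point is bookkeeping rather than ideas: as in the proofs of \rlem{pullback-lift} and \rlem{conn-orth}, the ``type of lifts in a square'' includes the homotopy filling the square and the coherence between the two composites around it, so I must feed orthogonality a specific filling homotopy each time and, in the uniqueness argument, keep track of these fillings in order to recognise $a \circ b$ and $b \circ a$ as lifts in the appropriate squares with the trivial filling. This is routine but has to be carried out honestly; everything else is immediate from the definitions.
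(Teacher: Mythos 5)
Your proof is correct and follows essentially the same route as the paper: the $(\mathcal{L},\mathcal{R})$-factorization is a universal factorization because orthogonality of $\mathcal{L}$ against $\mathcal{R}$ gives exactly the contractibility of the lift types required by local reflectivity, and the second assertion follows from essential uniqueness of universal factorizations together with closure of $\mathcal{L}$ under equivalences. The paper states both steps in two sentences; your write-up just spells out the uniqueness argument and the bookkeeping of filling homotopies that the paper leaves implicit.
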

\begin{proof}
Obviously, any factorization of a map into a map in $\mathcal{L}$ followed by a map in $\mathcal{R}$ is a universal factorization.
The second assertion follows from the facts that the universal factorization is essentially unique and $\mathcal{L}$ is closed under equivalences.
\end{proof}

\begin{prop}
If $(\mathcal{L},\mathcal{R})$ is an orthogonal factorization system, then $\mathcal{L}$ and $\mathcal{R}$ contains all identity morphisms.
\end{prop}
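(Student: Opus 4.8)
The plan is to obtain the statement directly from the factorization machinery already developed, by factoring $\id_A$. Fix an indexed type $A$ and let $A \xrightarrow{i} M \overset{p}{\twoheadrightarrow} A$ be the $(\mathcal{L},\mathcal{R})$-factorization of $\id_A$, so that $i \in \mathcal{L}$, $p \in \mathcal{R}$, and $p \circ i \sim \id_A$. By \rlem{orth-refl}, $\mathcal{R}$ is a locally reflective class and this is precisely the universal factorization of $\id_A$; in particular \rlem{fib-refl} applies to it.

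For the $\mathcal{R}$ half I would note that $p$ is itself a retraction of $i$ over $A$: indeed $p \circ i \sim \id_A$, and $\id_A \circ p = p$ by strict unitality of composition, so condition~(3) of \rlem{fib-refl} holds. Hence condition~(2) holds as well, i.e.\ $\id_A$ is a fibration, so $\id_A \in \mathcal{R}$. (Alternatively one can simply invoke \rlem{fib-idm} for the locally reflective class $\mathcal{R}$, whose proof is exactly this argument.)

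For the $\mathcal{L}$ half, \rlem{fib-refl} also yields condition~(1): $i$ is an equivalence. Since $i$ belongs to $\mathcal{L}$ and $\mathcal{L}$ is closed under equivalences, composing $i$ with a chosen inverse exhibits $\id_A$ — which is homotopic to $i^{-1} \circ i$ — as a member of $\mathcal{L}$. This finishes the proof.

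The argument is short and the only point that needs care is the $\mathcal{L}$ case, where one must use \emph{closed under equivalences} in the form that a map differing from a map in $\mathcal{L}$ by equivalences on source and target (in particular, a homotopic map with the same endpoints) is again in $\mathcal{L}$. If one prefers to avoid even that much and work only with strict composition by equivalences, the clean alternative is to set up the lifting problem for $i \perp p$ with top arrow $i$ and bottom arrow $p$ (it commutes since $p \circ i = p \circ i$), observe that both $\id_M$ and $i \circ p$ are lifts, conclude $i \circ p \sim \id_M$ from contractibility of lifts, and hence that $i$ and $p$ are mutually inverse equivalences; then $\id_A \sim p \circ i$ with $i \in \mathcal{L}$ and $p \in \mathcal{R}$, and closure under equivalences settles both memberships at once.
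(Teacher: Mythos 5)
Your proposal is correct and follows essentially the same route as the paper: the $\mathcal{R}$ half is exactly \rlem{fib-idm} (the retraction $p$ of $i$ over $A$ plus \rlem{fib-refl}), and the $\mathcal{L}$ half uses the same ingredients as the paper's appeal to \rlem{orth-refl} --- that the first leg $i$ of the universal factorization of $\id_A$ is an equivalence lying in $\mathcal{L}$, combined with closure of $\mathcal{L}$ under equivalences and transport of membership along the homotopy $i^{-1} \circ i \sim \id_A$. The paper merely packages the second half as ``the trivial factorization $\id_A = \id_A \circ \id_A$ is itself universal, hence its first leg belongs to $\mathcal{L}$,'' which unfolds to exactly your argument.
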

\begin{proof}
By \rlem{orth-refl} and \rlem{fib-idm}, $\mathcal{R}$ contains all identity morphisms.
Since $A \xrightarrow{\id_A} A \xrightarrow{\id_A} A$ is the universal factorization of $\id_A$, \rlem{orth-refl} implies that $\id_A$ belongs to $\mathcal{L}$.
\end{proof}

\begin{prop}[orth-char]
If $(\mathcal{L},\mathcal{R})$ is an orthogonal factorization system, then
$\mathcal{R}$ is precisely the class of maps which are right orthogonal to $\mathcal{L}$ and
$\mathcal{L}$ is precisely the class of maps which are left orthogonal to $\mathcal{R}$.
\end{prop}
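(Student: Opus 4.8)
The plan is to establish each of the two equalities of classes by proving two inclusions, one of which is immediate. By definition of an orthogonal factorization system every map in $\mathcal{R}$ is right orthogonal to every map in $\mathcal{L}$, and every map in $\mathcal{L}$ is left orthogonal to every map in $\mathcal{R}$, so only the reverse inclusions require work. For those I will use the same device in both cases: take the given map, factor it as a map in $\mathcal{L}$ followed by a map in $\mathcal{R}$, and show that one of the two factors is an equivalence; closure of $\mathcal{L}$ and $\mathcal{R}$ under equivalences then places the original map in the expected class.

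Concretely, suppose $g : \Hom(C,D)$ is right orthogonal to every map in $\mathcal{L}$, and factor it as $C \xrightarrow{l} E \xrightarrow{r} D$ with $l \in \mathcal{L}$ and $r \in \mathcal{R}$. Applying right orthogonality of $g$ to $l$ in the square with top edge $\id_C$, left edge $l$, right edge $g$, and bottom edge $r$ produces a lift $s : \Hom(E,C)$ with $s \circ l \sim \id_C$ and $g \circ s \sim r$; since $g \sim r \circ l$, the latter gives $r \circ l \circ s \sim r$. I then consider the square whose left and top edges are both $l$ and whose right and bottom edges are both $r$: as $l \in \mathcal{L}$ and $r \in \mathcal{R}$, its type of lifts is contractible, and both $\id_E$ and $l \circ s$ are lifts (the witnessing homotopies for $l \circ s$ being supplied by $s \circ l \sim \id_C$ and $r \circ l \circ s \sim r$), so $\id_E \sim l \circ s$. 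Together with $s \circ l \sim \id_C$ this exhibits $s$ as a homotopy inverse of $l$, hence $l$ is an equivalence and $g \sim r \circ l$ lies in $\mathcal{R}$.

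The statement for $\mathcal{L}$ is dual. If $f : \Hom(A,B)$ is left orthogonal to every map in $\mathcal{R}$ and is factored as $A \xrightarrow{l} E \xrightarrow{r} B$ with $l \in \mathcal{L}$ and $r \in \mathcal{R}$, then left orthogonality of $f$ against $r$ produces $t : \Hom(B,E)$ with $t \circ f \sim l$ and $r \circ t \sim \id_B$, whence $t \circ r \circ l \sim l$. Exactly as before, $\id_E$ and $t \circ r$ are both lifts in the square with left and top edges $l$ and right and bottom edges $r$, whose lift type is contractible, so $\id_E \sim t \circ r$; combined with $r \circ t \sim \id_B$ this makes $r$ an equivalence, and so $f \sim r \circ l$ belongs to $\mathcal{L}$.

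The one nontrivial point, and the step I expect to require the most care, is the passage from ``$l$ has a retraction'', respectively ``$r$ has a section'', to ``$l$ (respectively $r$) is an equivalence'': this is precisely the orthogonality argument already used in \rlem{fib-refl}, and in the homotopy setting it does not follow formally but genuinely uses contractibility of the relevant lift type. Assembling the homotopies and the coherence cell needed to recognise $l \circ s$ and $t \circ r$ as genuine elements of those lift types is then routine, since only contractibility is invoked and any element of the type suffices.
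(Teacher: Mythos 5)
Your proposal is correct and follows essentially the same route as the paper: factor the given map via the $(\mathcal{L},\mathcal{R})$ factorization, use the assumed orthogonality against the left (resp.\ right) factor to produce a retraction (resp.\ section), and then upgrade that to an equivalence by observing that both the identity and the composite are lifts in a square whose lift type is contractible. The only cosmetic difference is that the paper delegates this last step to \rlem{fib-refl} (via \rlem{orth-refl}) and treats the $\mathcal{L}$ case by duality, whereas you inline the argument and write out both cases.
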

\begin{proof}
We prove the first assertion; the other one follows by a dual argument.
Maps in $\mathcal{R}$ are right orthogonal to $\mathcal{L}$ by definition.
Let $f : \Hom(A,C)$ be a map which is right orthogonal to $\mathcal{L}$.
We need to prove that $f$ belongs to $\mathcal{R}$.
By \rlem{orth-refl}, $\mathcal{R}$ is a locally reflective class of maps.
Let $A \xrightarrow{i} B \xrightarrow{p} C$ be the universal factorization of $f$.
By \rlem{orth-refl}, $i$ belongs to $\mathcal{L}$.
Hence, we have a lift in the following square:
\[ \xymatrix{ A \ar@{=}[r] \ar[d]_i     & A \ar[d]^f \\
              B \ar[r]_p \ar@{-->}[ur]  & C
            } \]
By \rlem{fib-refl}, $f$ belongs to $\mathcal{R}$.
\end{proof}

\begin{cor}[orth-unique]
Let $\mathcal{R}$ be a class of maps.
Then a class of maps $\mathcal{L}$ such that $(\mathcal{L},\mathcal{R})$ is an orthogonal factorization system is essentially unique.
That is, if $\mathcal{L}_1$ and $\mathcal{L}_2$ is two such classes, then a map belongs to one of them if and only if it belongs to the other.
Dually, a class of maps $\mathcal{R}$ such that $(\mathcal{L},\mathcal{R})$ is an orthogonal factorization system for a fixed $\mathcal{L}$ is essentially unique.
\end{cor}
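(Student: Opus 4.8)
The plan is to read this off directly from \rprop{orth-char}, which already does all the work. Suppose $\mathcal{L}_1$ and $\mathcal{L}_2$ are two classes of maps such that both $(\mathcal{L}_1,\mathcal{R})$ and $(\mathcal{L}_2,\mathcal{R})$ are orthogonal factorization systems. Applying the second assertion of \rprop{orth-char} to each of these systems shows that $\mathcal{L}_1$ and $\mathcal{L}_2$ both coincide with the class of maps left orthogonal to $\mathcal{R}$. Hence a map lies in $\mathcal{L}_1$ precisely when it is left orthogonal to every member of $\mathcal{R}$, which happens precisely when it lies in $\mathcal{L}_2$; this is the required equivalence. For the dual statement I would argue symmetrically: if $(\mathcal{L},\mathcal{R}_1)$ and $(\mathcal{L},\mathcal{R}_2)$ are both orthogonal factorization systems, then the first assertion of \rprop{orth-char} identifies both $\mathcal{R}_1$ and $\mathcal{R}_2$ with the class of maps right orthogonal to $\mathcal{L}$, so the two classes have the same members.

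I do not expect any genuine obstacle, since the corollary is just a repackaging of \rprop{orth-char}. The one point worth a sentence of care is the interpretation of ``essentially unique'': the statement as stated concerns membership only, so there is nothing to check beyond the two orthogonality characterizations and no coherence data to be compared. If one preferred a more structural phrasing — that a predicate cutting out $\mathcal{L}$ is determined by $\mathcal{R}$ up to logical equivalence — this would also be immediate, because being left orthogonal to a fixed map is a proposition (the type of lifts is required to be contractible by \rlem{conn-orth} and the surrounding definitions), so the characterizing condition supplied by \rprop{orth-char} is itself propositional and hence pins the class down uniquely.
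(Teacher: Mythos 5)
Your proof is correct and is exactly the intended derivation: the paper states this as an immediate corollary of \rprop{orth-char}, with both $\mathcal{L}_1$ and $\mathcal{L}_2$ identified with the class of maps left orthogonal to $\mathcal{R}$ (and dually for $\mathcal{R}$).
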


\begin{prop}[orth-comp]
If $(\mathcal{L},\mathcal{R})$ is an orthogonal factorization system, then $\mathcal{R}$ and $\mathcal{L}$ are closed under compositions.
\end{prop}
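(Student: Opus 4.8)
The plan is to reduce both assertions to a statement about orthogonality and then to prove that statement by decomposing lifting problems. By \rprop{orth-char}, $\mathcal{R}$ is precisely the class of maps right orthogonal to every map in $\mathcal{L}$, and $\mathcal{L}$ is precisely the class of maps left orthogonal to every map in $\mathcal{R}$. So it is enough to prove the following purely formal fact: for a fixed map $f$, the class of maps right orthogonal to $f$ is closed under composition; the assertion for $\mathcal{L}$ then follows by the dual argument, in which the composite appears on the left of the lifting square rather than on the right.

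So let $f : \Hom(X,Y)$ be any map, let $g_1 : \Hom(A,B)$ and $g_2 : \Hom(B,C)$ both be right orthogonal to $f$, and consider a commutative square with $f$ on the left, $g_2 \circ g_1$ on the right, $u : \Hom(X,A)$ on top and $v : \Hom(Y,C)$ on the bottom; let $L$ be its type of lifts. First I would form the lifting square with $f$ on the left, $g_2$ on the right, $g_1 \circ u$ on top and $v$ on the bottom, whose commutativity datum is the given one (using that composition is strictly associative). Since $g_2$ is right orthogonal to $f$, the type of lifts of this square is contractible; write $w : \Hom(Y,B)$ for the underlying map of such a lift. Next, for each such lift (in particular for each $w$ together with its coherence data), form the lifting square with $f$ on the left, $g_1$ on the right, $u$ on top and $w$ on the bottom; since $g_1$ is right orthogonal to $f$, its type of lifts is contractible as well. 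The heart of the argument is to exhibit an equivalence between $L$ and the total type $\sum_{\ell} L_{g_1}(\ell)$, where $\ell$ ranges over lifts of the first square and $L_{g_1}(\ell)$ is the type of lifts of the second square determined by $\ell$: from a lift $r$ of the second square one builds a lift of the original square with underlying map $r$, with homotopy $r \circ f \sim u$ taken from $L_{g_1}(\ell)$ and with homotopy $g_2 \circ g_1 \circ r \sim g_2 \circ w \sim v$ assembled from $L_{g_1}(\ell)$ and $\ell$, and conversely. Granting this equivalence, $L$ is the total type of a family of contractible types over a contractible base, hence contractible, so $g_2 \circ g_1$ is right orthogonal to $f$.

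The routine parts are the appeal to \rprop{orth-char}, the two applications of orthogonality of $g_1$ and $g_2$, and the standard fact that a total type of contractible types over a contractible base is contractible. The part that requires care is the equivalence $L \simeq \sum_{\ell} L_{g_1}(\ell)$: matching the coherence $2$-cell of a lift of the outer square with the coherence $2$-cells of the two inner lifts is exactly the kind of interchange-law bookkeeping already carried out in \rlem{pullback-lift} and \rlem{conn-orth}, and this is where the actual work lies. An equivalent and arguably cleaner way to organize the same computation is to observe that a map $g$ with codomain $D$ is right orthogonal to $f$ if and only if the induced map $\Hom(Y,-) \to \Hom(X,-) \times_{\Hom(X,D)} \Hom(Y,D)$, whose fibers over squares are exactly the types of lifts, is an equivalence; the composite case then follows from the pasting law for pullbacks of base types and the fact that pullbacks preserve equivalences, trading the $2$-cell bookkeeping for standard facts about the base theory.
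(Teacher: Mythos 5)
Your proof is correct, but it takes a genuinely different route from the paper's. You reduce everything to the purely formal statement that the right orthogonal complement of any class of maps is closed under composition, and prove that by splitting a lifting problem against $g_2 \circ g_1$ into a lifting problem against $g_2$ followed by one against $g_1$, exhibiting the type of lifts of the outer square as the total type of a family of contractible types over a contractible base; your closing reformulation via the equivalence $\Hom(Y,A) \to \Hom(X,A) \times_{\Hom(X,C)} \Hom(Y,C)$ and the pasting law for pullbacks of base types is indeed the cleanest way to discharge the $2$-cell bookkeeping you correctly identify as the real work. The paper instead avoids any contractibility computation: it takes the universal factorization $A \xrightarrow{i} D \xrightarrow{p} C$ of $g \circ f$ with $i \in \mathcal{L}$ (\rlem{orth-refl}), builds a retraction of $i$ over $C$ by two successive \emph{mere existence} of lifts (first against $g$, then against $f$), and concludes by the retract criterion of \rlem{fib-refl}. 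The paper's argument is shorter given the lemmas already in place and never needs the uniqueness part of orthogonality; yours proves the stronger and independently useful fact that right (resp.\ left) orthogonal complements of arbitrary classes are closed under composition, at the price of the coherence computation, which you sketch rather than carry out but which is of the same routine kind as in \rlem{pullback-lift} and \rlem{conn-orth}. Both arguments handle the $\mathcal{L}$ case by duality in the same way.
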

\begin{proof}
We prove this for $\mathcal{R}$; the assertion about $\mathcal{L}$ follows by a dual argument.
Let $f : \Hom(A,B)$ and $g : \Hom(B,C)$ be maps in $\mathcal{R}$.
By \rlem{orth-refl}, there exists a universal factorization $A \xrightarrow{i} D \xrightarrow{p} C$ of $g \circ f$ such that $i \in \mathcal{L}$.
By \rlem{fib-refl}, to prove that $g \circ f$ belongs to $\mathcal{R}$, it is enough to show that $i$ has a retraction over $C$.
Since $i \in \mathcal{L}$, we have two lifts in the following diagram:
\[ \xymatrix{ A \ar@{=}[r] \ar[dd]_i                    & A \ar@{->>}[d]^f \\
                                                        & B \ar@{->>}[d]^g \\
              D \ar[r]_p \ar@{-->}[ur] \ar@{-->}[uur]^r & C
            } \]
Then $r$ is a retraction of $i$ over $C$.
\end{proof}

\begin{prop}[orth-pullback]
If $(\mathcal{L},\mathcal{R})$ is an orthogonal factorization system, then $\mathcal{R}$ is closed under pullbacks and $\mathcal{L}$ is closed under pushouts.
\end{prop}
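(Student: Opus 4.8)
The plan is to deduce both statements from \rprop{orth-char}, which says that $\mathcal{R}$ is exactly the class of maps right orthogonal to every map in $\mathcal{L}$ and $\mathcal{L}$ is exactly the class of maps left orthogonal to every map in $\mathcal{R}$, together with \rlem{pullback-lift} and its dual.

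First I would treat $\mathcal{R}$. Suppose $g : \Hom(C,D)$ lies in $\mathcal{R}$ and $g' : \Hom(C',D')$ is a pullback of $g$ along some map $D' \to D$. To show $g' \in \mathcal{R}$, by \rprop{orth-char} it suffices to show that $g'$ is right orthogonal to an arbitrary map $f : \Hom(A,B)$ in $\mathcal{L}$. Given a commutative square from $f$ to $g'$, paste it onto the pullback square, producing a commutative rectangle from $f$ to $g$. Since the right-hand square is a pullback, \rlem{pullback-lift} identifies the type of lifts in the original square with the type of lifts in the pasted rectangle, and the latter is contractible because $f$ is left orthogonal to $g$. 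Hence the type of lifts in the original square is contractible, so $g'$ is right orthogonal to $f$; as $f$ was an arbitrary member of $\mathcal{L}$, \rprop{orth-char} gives $g' \in \mathcal{R}$.

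The argument for $\mathcal{L}$ is dual. If $f : \Hom(A,B)$ lies in $\mathcal{L}$ and $f' : \Hom(A',B')$ is a pushout of $f$ along some map $A \to A'$, then to prove $f' \in \mathcal{L}$ it suffices, by \rprop{orth-char}, to show that $f'$ is left orthogonal to an arbitrary $g \in \mathcal{R}$. Given a commutative square from $f'$ to $g$, paste the pushout square onto it on the left; by the dual of \rlem{pullback-lift}, in which the left-hand square is a pushout, the type of lifts in the original square is equivalent to the type of lifts in the pasted rectangle, which is contractible since $f$ is left orthogonal to $g$. Therefore $f'$ is left orthogonal to $g$, and $f' \in \mathcal{L}$.

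No step here is deep. The only points requiring care are the orientation of the pasting --- which of the two squares of \rlem{pullback-lift} plays the role of the lifting problem and which the role of the (co)limit square --- and the fact that ``closed under pullbacks/pushouts'' is only asserted when the relevant (co)limit exists, so no extra existence hypotheses are needed. The one mild chore is to state and verify the dual of \rlem{pullback-lift}: reversing every arrow turns the hypothesis ``the right square is a pullback'' into ``the left square is a pushout'' and exchanges the roles of the two squares, and its proof is the formal dual of the one already given.
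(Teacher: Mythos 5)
Your proposal is correct and follows the paper's own argument: the paper likewise reduces the claim to \rprop{orth-char} plus \rlem{pullback-lift} for $\mathcal{R}$, and handles $\mathcal{L}$ by duality. The only difference is that you spell out the pasting and the dual of \rlem{pullback-lift} explicitly, which the paper leaves implicit.
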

\begin{proof}
We prove this for $\mathcal{R}$; the assertion about $\mathcal{L}$ follows by a dual argument.
Let $f$ be a pullback of a map $g \in \mathcal{R}$.
Since $g$ is right orthogonal to $\mathcal{L}$, \rlem{pullback-lift} implies that $f$ is also right orthogonal to $\mathcal{L}$.
\rprop{orth-char} implies that $f \in \mathcal{R}$.
\end{proof}

Now, we are ready to prove that orthogonal factorization systems are equivalent to locally reflective classes of maps which are closed under compositions and pullbacks:

\begin{thm}[refl-orth]
Let $\Fib$ be a class of fibrations.
If $\Fib$ is locally reflective and closed under compositions and pullbacks, then $(\mathcal{C},\Fib)$ is an orthogonal factorization system, where $\mathcal{C}$ is the class of connected maps.
The converse is also true in the sense that if $(\mathcal{L},\Fib)$ is an orthogonal factorization system for some class of maps $\mathcal{L}$, then $\Fib$ is locally reflective and closed under compositions and pullbacks.
\end{thm}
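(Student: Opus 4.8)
The plan is to prove both implications by assembling the lemmas of this section, with essentially no new computation. I begin by recording the harmless identification of the class of fibrations $\Fib$ with the class of maps satisfying the predicate $\Fib$, so that ``locally reflective class of fibrations'' becomes the special case of ``locally reflective class of maps'' appearing in \rlem{orth-refl}, \rprop{orth-comp}, and \rprop{orth-pullback}.

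For the forward implication, assume $\Fib$ is locally reflective and closed under compositions and pullbacks, and let $\mathcal{C}$ denote the class of connected maps. I verify the three defining properties of an orthogonal factorization system for the pair $(\mathcal{C},\Fib)$. \emph{Factorization}: given a map $f$, take its universal factorization $A \xrightarrow{i} C \overset{p}\twoheadrightarrow B$; then $p \in \Fib$ by definition and $i \in \mathcal{C}$ by \rlem{uni-conn}, which applies since $\Fib$ is closed under compositions. \emph{Orthogonality}: maps in $\mathcal{C}$ are left orthogonal to maps in $\Fib$ by \rlem{conn-orth}, which applies since $\Fib$ is closed under pullbacks. \emph{Closure under equivalences}: $\Fib$ is closed under equivalences by hypothesis; for $\mathcal{C}$ one uses essential uniqueness of the universal factorization — if $f = p \circ i$ is the universal factorization of $f$ and $e_1, e_2$ are equivalences, then $e_2 \circ p$ is again a fibration and $(i \circ e_1, e_2 \circ p)$ is the universal factorization of $e_2 \circ f \circ e_1$ (lifting problems against fibrations transport along $e_1$ and $e_2$ back to lifting problems for $(i,p)$), whose fibration part $e_2 \circ p$ is an equivalence as soon as $p$ is; hence $e_2 \circ f \circ e_1$ is connected whenever $f$ is. Therefore $(\mathcal{C},\Fib)$ is an orthogonal factorization system.

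For the converse, assume $(\mathcal{L},\Fib)$ is an orthogonal factorization system for some class $\mathcal{L}$. Then $\Fib$ is closed under compositions by \rprop{orth-comp}, closed under pullbacks by \rprop{orth-pullback}, and locally reflective by \rlem{orth-refl}. The main obstacle, such as it is, is pure bookkeeping: checking that the hypotheses of each cited lemma are satisfied, and — in the forward direction — confirming that $\mathcal{C}$ is closed under equivalences, which is not exactly the content of any earlier lemma but follows routinely from the essential uniqueness of universal factorizations noted after the definition of locally reflective classes. Once these verifications are in place the theorem follows at once.
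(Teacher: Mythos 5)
Your proof is correct and follows essentially the same route as the paper: \rlem{uni-conn} and \rlem{conn-orth} for the forward direction, and \rlem{orth-refl}, \rprop{orth-comp}, \rprop{orth-pullback} for the converse. The only addition is your explicit check that $\mathcal{C}$ is closed under equivalences, which the paper leaves implicit; your argument for it (transporting the universal factorization along $e_1$ and $e_2$) is sound.
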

\begin{proof}
If $\Fib$ is locally reflective and closed under compositions, then connected maps are left orthogonal to fibrations by \rlem{conn-orth} and the factorization exists by \rlem{uni-conn}.
Thus, $(\mathcal{C},\Fib)$ is an orthogonal factorization system.
Conversely, if we have an orthogonal factorization system $(\mathcal{L},\Fib)$, then $\Fib$ is locally reflective by \rlem{orth-refl} and it is closed under compositions and pullbacks by \rprop{orth-comp} and \rprop{orth-pullback}, respectively.
\end{proof}

\subsection{Locally reflective classes in dependent theories}

We can reformulate the definition of locally reflective class of fibrations in dependent theories with identity types, $\Sigma$-types, and unit types.
Let $\Fib$ be a dependent class of fibrations.
A factorization of a map in a unary theory can be turned into a factorization of a dependent type $\Delta \vdash A \ob$, which consists of a dependent type $\Delta \vdash \| A \| \ob$ and a map $\eta_A : \Hom_\Delta(A, \| A \|)$.
Then $\Fib$ is locally reflective if and only if, for every dependent type $A$, there exists its factorization such that the following function is an equivalence for every fibrant type $\Delta \vdash B \ob$:
\[ \lambda f.\, f \circ \eta_A : \Hom_\Delta(\| A \|, B) \to \Hom_\Delta(A, B). \]
This condition holds if and only if the type of lifts in the following diagram is contractible for every fibrant type $\Delta \vdash B \ob$ and every map $\Hom_\Delta(A,B)$:
\[ \xymatrix{ A \ar[r] \ar[d]_{\eta_A} & B \\
              \| A \| \ar@{-->}[ur]
            } \]

The constructions $\| - \|$ and $\eta$ are unstable, that is they are not stable under substitutions.
An \emph{(unstable) higher modality} consists of a class of fibrations $\Fib$ closed under identity types together with the following unstable rules:
\begin{center}
\AxiomC{$\Gamma \mid \Delta \vdash A \ob$}
\UnaryInfC{$\Gamma \mid \Delta \vdash \| A \| \fib$}
\DisplayProof
\qquad
\AxiomC{$\Gamma \mid \Delta \vdash A \ob$}
\UnaryInfC{$\Gamma \mid \Delta, x : A \vdash \eta_A(x) : \|A\|$}
\DisplayProof
\end{center}
\medskip

\begin{center}
\AxiomC{$\Gamma \mid \Delta, z : \| A \| \vdash D \fib$}
\AxiomC{$\Gamma \mid \Delta, x : A \vdash d : D[\eta_A(x)/z]$}
\BinaryInfC{$\Gamma \mid \Delta, z : \| A \| \vdash \| A \|\text{-}\fs{elim}(z.D, x.d) : D$}
\DisplayProof
\end{center}
\medskip

\begin{center}
\AxiomC{$\Gamma \mid \Delta, z : \| A \| \vdash D \fib$}
\AxiomC{$\Gamma \mid \Delta, x : A \vdash d : D[\eta_A(x)/z]$}
\BinaryInfC{$\Gamma \mid \Delta, x : A \vdash \| A \|\text{-}\fs{elim_h}(z.D, x.d) : \Id(\| A \|\text{-}\fs{elim}(z.D, x.d)[\eta_A(x)/z], d)$}
\DisplayProof
\end{center}

The following proposition shows that unstable higher modalities are the same as locally reflective classes of fibrations:

\begin{prop}[fib-dep-unst]
Let $\Fib$ be a class of fibrations closed under substitutions and $\Sigma$-types and let $\| - \|$, $\eta$ be a pair of unstable constructions as described above.
Then this pair extends to a higher modality if and only if it makes $\Fib$ into a locally reflective class.
\end{prop}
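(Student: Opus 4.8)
The plan is to recognize the two sides of the claimed equivalence as the two halves of the standard correspondence between an induction principle and ``non-dependent recursion together with uniqueness''. Concretely, ``$(\|-\|,\eta)$ extends to a higher modality'' asks for the eliminators $\|A\|\text{-}\fs{elim}$ and $\|A\|\text{-}\fs{elim}_h$ (together with $\Fib$ being closed under identity types), while ``$(\|-\|,\eta)$ makes $\Fib$ locally reflective'' asks, equivalently, that $\lambda f.\,f\circ\eta_A:\Hom_\Delta(\|A\|,B)\to\Hom_\Delta(A,B)$ be an equivalence for every fibrant $\Gamma\mid\Delta\vdash B\ob$, its fibre over a map $A\to B$ being exactly the type of lifts of that map through $\eta_A$. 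Throughout, the indexed theory has extensional identity types, $\Sigma$-types, and unit types — so $\fs{Idext}$, the dependent $\fs{hap}$, transport, and weak dependent $\Hom$-types are available — and $\Fib$ is closed under substitutions and, by hypothesis, under $\Sigma$-types, hence by \rprop{fib-sigma} under compositions.

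For the direction from higher modalities to locally reflective classes, fix a fibrant $B$ over $\Delta$ and define $\Psi:\Hom_\Delta(A,B)\to\Hom_\Delta(\|A\|,B)$ by $\Psi(f)=\lambda z.\,\|A\|\text{-}\fs{elim}(z.B,x.f\,x)$, where $B$ is weakened to $\Delta,z:\|A\|$ (still fibrant, by closure under substitutions). I would check that $\Psi$ is a two-sided inverse of $\lambda f.\,f\circ\eta_A$: the homotopy $\Psi(f)\circ\eta_A\sim f$ comes straight from $\|A\|\text{-}\fs{elim}_h$ followed by $\fs{Idext}$, while for $\Psi(g\circ\eta_A)\sim g$ I would run $\|A\|\text{-}\fs{elim}$ on the family $\Id_B(\Psi(g\circ\eta_A)\,z,g\,z)$ over $\Delta,z:\|A\|$ — fibrant because $\Fib$ is closed under identity types — fed by the path that $\|A\|\text{-}\fs{elim}_h$ supplies at $z=\eta_A(x)$, then apply $\fs{Idext}$ again. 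Hence $\lambda f.\,f\circ\eta_A$ is an equivalence.

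For the converse, assume $\lambda f.\,f\circ\eta_A$ is an equivalence for every fibrant $B$. The closure of $\Fib$ under identity types, demanded by the definition of a higher modality, I would obtain from \rprop{fib-refl-id} (with \rlem{fib-idm} supplying identity morphisms and \rprop{fib-sigma} the compositions). To construct the eliminator, given $\Gamma\mid\Delta,z:\|A\|\vdash D\fib$ and $\Gamma\mid\Delta,x:A\vdash d:D[\eta_A(x)/z]$, I would form $T=\sum_{z:\|A\|}\El(D)$, fibrant over $\Delta$ by \rprop{fib-sigma}, with first projection $\pi:\Hom_\Delta(T,\|A\|)$ and the section $e:\Hom_\Delta(A,T)$, $e\,x=(\eta_A(x),d)$, for which $\pi\circ e=\eta_A$. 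Applying the hypothesis to $T$ gives $\tilde e:\Hom_\Delta(\|A\|,T)$ with a homotopy $\alpha:\tilde e\circ\eta_A\sim e$; since $\pi\circ\tilde e$ and $\id_{\|A\|}$ then agree with $\eta_A$ after precomposition with $\eta_A$, applying the hypothesis to the fibrant type $\|A\|$ and using contractibility of the fibre of $\lambda g.\,g\circ\eta_A$ over $\eta_A$ yields a homotopy $h:\Id(\pi\circ\tilde e,\id_{\|A\|})$ whose restriction $\fs{hap}(h,\eta_A(x))$ is identified with the first component of $\fs{hap}(\alpha,x)$. I would then set $\|A\|\text{-}\fs{elim}(z.D,x.d)=\fs{hap}(h,z)_*(\pi_2(\tilde e\,z))$ and read off $\|A\|\text{-}\fs{elim}_h$ from the usual computation with paths in $\Sigma$-types applied to $\fs{hap}(\alpha,x)$ — just as in the construction of $0\text{-}\fs{elim}$ from an initial type and in the proof of \rprop{lccc} — using $e\,x=(\eta_A(x),d)$ together with the compatibility of $h$ with $\alpha$.

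I expect the genuine obstacle to be the last step of the converse: extracting from contractibility of the fibre of $\lambda g.\,g\circ\eta_A$ over $\eta_A$ not merely the $1$-cell $h:\pi\circ\tilde e\sim\id_{\|A\|}$ but the $2$-cell identifying $h$ whiskered by $\eta_A$ with $\pi*\alpha$, since this $2$-cell is precisely what forces the computation rule $\|A\|\text{-}\fs{elim}_h$ to hold; everything else (the $\beta\eta$-style checks that $\Psi$ inverts precomposition, and the $\Sigma$-path bookkeeping for $\|A\|\text{-}\fs{elim}_h$) is routine. A secondary point to be careful about is that the identity-type families appearing as targets of the eliminators must be fibrant, which is exactly where closure of $\Fib$ under identity types — a hypothesis in one direction and, via \rprop{fib-refl-id}, a consequence in the other — is used.
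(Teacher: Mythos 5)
Your proposal is correct, and the two directions split as follows. For ``higher modality $\Rightarrow$ locally reflective'' you do essentially what the paper does: existence of the lift comes from $\| A \|\text{-}\fs{elim}$, and uniqueness from running the eliminator on the fibrant identity type $\Id_B(f_1\,z,f_2\,z)$ together with $\| A \|\text{-}\fs{elim_h}$ for the coherence; packaging this as a two-sided inverse $\Psi$ of $-\circ\eta_A$ rather than as contractibility of each fibre is an immaterial repackaging. For ``locally reflective $\Rightarrow$ higher modality'' you take a genuinely different route. The paper first deduces closure under compositions from \rprop{fib-sigma}, invokes \rlem{uni-conn} to see that the map $j : \Hom(\Sigma(\Delta,x:A),\Sigma(\Delta,z:\| A \|))$ is connected, and then obtains the eliminator as the unique lift of $j$ against the projection fibration $\Sigma(\Delta,z:\| A \|,D)\twoheadrightarrow\Sigma(\Delta,z:\| A \|)$, i.e.\ via the orthogonality machinery of \rlem{conn-orth}. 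You instead apply the universal property $-\circ\eta_A$ directly to the total fibrant type $T=\Sigma_{z:\| A \|}\El(D)$, obtain $\tilde e$ with $\alpha:\tilde e\circ\eta_A\sim e$, correct it into a section by contracting the fibre of $-\circ\eta_A:\Hom_\Delta(\| A \|,\| A \|)\to\Hom_\Delta(A,\| A \|)$ over $\eta_A$, and transport --- the same total-space trick the paper uses for $0\text{-}\fs{elim}$ and in \rlem{coprod-dep-coprod}. The point you flag as delicate is exactly right and is where the two proofs differ in bookkeeping: you must extract from that contractible fibre not just $h:\Id(\pi\circ\tilde e,\id_{\| A \|})$ but the $2$-cell identifying $\fs{hap}(h,\eta_A(x))$ with the first component of $\fs{hap}(\alpha,x)$, which is what makes $\| A \|\text{-}\fs{elim_h}$ come out; the paper's orthogonality formulation has this coherence built into its definition of the type of lifts, so it never has to be extracted by hand. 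Both arguments use the same hypotheses (closure under substitutions for \rprop{fib-refl-id} and closure under $\Sigma$-types for \rprop{fib-sigma}), and your version is somewhat more self-contained, at the cost of this explicit $\Sigma$-path manipulation.
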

\begin{proof}
First, suppose that $\Fib$ is locally reflective class.
By \rprop{fib-refl-id}, $\Fib$ is closed under identity types.
Let $j : \Hom(\Sigma(\Delta, x : A), \Sigma(\Delta, z : \| A \|))$ be the obvious map defined in terms of $\eta$.
Since $\Fib$ is closed under $\Sigma$-types, \rprop{fib-sigma} implies that it is also closed under compositions.
Then \rlem{uni-conn} implies that $j$ is connected.
A term $\Delta, x : A \vdash d : D[\eta_A(x)/z]$ determines the top map in the following diagram:
\[ \xymatrix{ \Sigma(\Delta, x : A) \ar[r] \ar[d]_j                  & \Sigma(\Delta, z : \| A \|, D) \ar@{->>}[d] \\
              \Sigma(\Delta, z : \| A \|) \ar@{-->}[ur] \ar@{=}[r]  & \Sigma(\Delta, z : \| A \|)
            } \]
The right map is the obvious projection, which is a fibration since $D$ is fibrant over $\Delta, z : \| A \|$.
Since $j$ is connected, we have a lift in this square.
This lift determines a term $\Delta, z : \| A \| \vdash \| A \|\text{-}\fs{elim}(z.D, x.d) : D$.
The commutativity of the upper triangle implies the existence of a term $\Delta, x : A \vdash \| A \|\text{-}\fs{elim_h}(z.D, x.d) : \Id(\| A \|\text{-}\fs{elim}(z.D, x.d)[\eta_A(x)/z],d)$.

Now, suppose that $\Fib$ extends to a higher modality.
Let $B$ be a fibrant type over $\Delta$.
We need to prove that the type of lifts in the following diagram is contractible:
\[ \xymatrix{ A \ar[r]^b \ar[d]_{\eta_A} & B \\
              \| A \| \ar@{-->}[ur]
            } \]
We can define a lift in this diagram as $\lambda z.\,\| A \|\text{-}\fs{elim}(z.B, x.\,b\,x)$.
The commutativity of the triangle is witnessed by the term $\lambda x.\,\| A \|\text{-}\fs{elim_h}(z.B, x.\,b\,x)$.

Let $\Delta, z : \| A \| \vdash f_i : B$, $\Delta, x : A \vdash h_i : \Id(f_i[|x|/z],\,b\,x)$ be lifts in this diagram for $i \in \{1,2\}$.
We need to construct a homotopy between them.
Since $\Fib$ is closed under identity types, the type $\Id(f_1,f_2)$ is fibrant over $\Delta, z : \| A \|$.
Thus, we have a homotopy between $f_1$ and $f_2$:
\[ \Delta, z : \| A \| \vdash \| A \|\text{-}\fs{elim}(z.\,\Id(f_1,f_2), x.\,h_1 \ct \sym{h_2}) : \Id(f_1, f_2). \]
Let us denote this homotopy by $H$.
We need to prove that $\sym{H[|x|/z]} \ct h_1$ is homotopic to $h_2$.
This homotopy can be constructed from the following one:
\[ \Delta, x : A \vdash \| A \|\text{-}\fs{elim_h}(z.\,\Id(f_1,f_2), x.\,h_1 \ct \sym{h_2}) : \Id(H[|x|/z], h_1 \ct \sym{h_2}). \]
\end{proof}

\begin{cor}
A class of fibrations closed under substitutions and $\Sigma$-types is locally reflective if and only if it extends to a higher modality.
\end{cor}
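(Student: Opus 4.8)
The plan is to deduce this immediately from \rprop{fib-dep-unst}; the only difference between the two statements is that the corollary quantifies the pair $\| - \|$, $\eta$ existentially whereas \rprop{fib-dep-unst} keeps it fixed.

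For the ``if'' direction, suppose $\Fib$ extends to a higher modality. Then, by definition, $\Fib$ is closed under identity types and there is a pair $\| - \|$, $\eta$ of unstable constructions of the shape described before \rprop{fib-dep-unst} which, together with $\Fib$ and suitable eliminators, satisfies the rules of a higher modality. Since this pair extends to a higher modality, \rprop{fib-dep-unst} tells us that it makes $\Fib$ into a locally reflective class; hence $\Fib$ is locally reflective.

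For the ``only if'' direction, suppose $\Fib$ is locally reflective. I would first produce the pair $\| - \|$, $\eta$ from the universal factorizations. For every base context $\Gamma$, indexed context $\Delta$, and dependent type $\Gamma \mid \Delta \vdash A \ob$, the type $A$ corresponds to the map $\pi_1 : \Hom(\Sigma_{p : \Sigma(\Delta)} A[\overline{\pi_i(p)/x_i}], \Sigma(\Delta))$ in the unary fragment; taking its universal factorization and using the translation recalled at the start of this subsection between factorizations of maps and factorizations of dependent types, we obtain a fibrant type $\Gamma \mid \Delta \vdash \| A \| \fib$ (corresponding to the fibration of the factorization) together with a morphism $\eta_A : \Hom_\Delta(A, \| A \|)$, and hence a term $\Gamma \mid \Delta, x : A \vdash \eta_A(x) : \| A \|$. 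This is a pair of unstable constructions of exactly the required shape. By construction, local reflectivity of $\Fib$ asserts precisely that for each $A$ this factorization has the universal property that $\lambda f.\,f \circ \eta_A : \Hom_\Delta(\| A \|, B) \to \Hom_\Delta(A, B)$ is an equivalence for every fibrant $B$ --- equivalently, that the associated type of lifts is contractible --- which is exactly the hypothesis that this pair makes $\Fib$ into a locally reflective class in the sense of \rprop{fib-dep-unst}. That proposition then produces eliminators turning $(\Fib, \| - \|, \eta)$ into a higher modality, so $\Fib$ extends to a higher modality.

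There is no genuine obstacle here: the entire mathematical content is contained in \rprop{fib-dep-unst}. The one thing to be careful about is the bookkeeping identification of the two formulations of local reflectivity --- the original one, stated for maps in the unary fragment, and the dependent-type version used in \rprop{fib-dep-unst} --- but this is just the correspondence between maps into $\Sigma(\Delta)$ and dependent types over $\Delta$, together with the observation recorded before \rprop{fib-dep-unst} that a factorization of a map turns into a factorization of the corresponding dependent type.
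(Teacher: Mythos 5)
Your proposal is correct and matches the paper's intent: the corollary is stated without proof precisely because it is the existential closure of \rprop{fib-dep-unst}, with the only work being the translation (already recorded at the start of the subsection) between universal factorizations of maps over $\Sigma(\Delta)$ and the pair $\| - \|$, $\eta$ on dependent types. Your handling of that bookkeeping, and your reliance on the proposition to supply closure under identity types and the eliminators, is exactly what the paper implicitly does.
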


If we assume the stability condition for $\eta$, then we can replace it with the following stable rule:
\begin{center}
\AxiomC{$\Gamma \mid \Delta \vdash a : A$}
\UnaryInfC{$\Gamma \mid \Delta \vdash | a | : \| A \|$}
\DisplayProof
\end{center}
\medskip
Indeed, $\eta_A(x)$ can be defined as $| x |$.
Conversely, $| a |$ can be defined in terms of $\eta_A$ as $\eta_A(x)[a/x]$.
Since $|-|$ is stable, these constructions are mutually inverse.

We will say that a higher modality is \emph{stable} if $\| - \|$ and $| - |$ are stable.
The underlying class of fibrations of a stable higher modality is stably locally reflective.
The converse does not hold since the stability under substitutions is a stricter condition, then the stability under pullbacks.

If a higher modality is stable, then we can define the local version of its eliminator:
\begin{center}
\AxiomC{$\Gamma \mid \Delta, z : \| A \|, E \vdash D \fib$}
\AxiomC{$\Gamma \mid \Delta, x : A, E[|x|/z] \vdash d : D[|x|/z]$}
\BinaryInfC{$\Gamma \mid \Delta, z : \| A \|, E \vdash \| A \|\text{-}\fs{elim}(z E .D, x E.d) : D$}
\DisplayProof
\end{center}
\medskip

\begin{center}
\AxiomC{$\Gamma \mid \Delta, z : \| A \|, E \vdash D \fib$}
\AxiomC{$\Gamma \mid \Delta, x : A, E[|x|/z] \vdash d : D[| x | / z]$}
\BinaryInfC{$\Gamma \mid \Delta, x : A, E[|x|/z] \vdash \| A \|\text{-}\fs{elim_h}(z E.D, x E.d) : \Id(d'[|x|/z], d)$}
\DisplayProof
\end{center}
where $d' = \| A \|\text{-}\fs{elim}(z E.D, x E.d)$.

\begin{prop}[fib-dep-st]
Let $\Fib$ be a class of fibrations closed under $\Sigma$-types and let $\| - \|$, $| - |$ be a pair of stable constructions as described above.
Then the following conditions are equivalent:
\begin{enumerate}
\item \label{it:fib-non-dep} The pair $\| - \|$, $| - |$ makes $\Fib$ into a (stably) locally reflective class.
\item \label{it:fib-dep} The pair $\| - \|$, $| - |$ makes $\Fib$ into a (stable) higher modality.
\item \label{it:fib-dep-local} The class $\Fib$ is closed under identity types and the unstable local eliminator is definable.
\end{enumerate}
\end{prop}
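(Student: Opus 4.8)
The plan is to establish the cycle \eqref{it:fib-non-dep}~$\Rightarrow$~\eqref{it:fib-dep}~$\Rightarrow$~\eqref{it:fib-dep-local}~$\Rightarrow$~\eqref{it:fib-non-dep}, reducing almost everything to \rprop{fib-dep-unst} together with one bookkeeping observation about how stability of the constructions interacts with pullbacks. Throughout I use that $\Fib$ is closed under $\Sigma$-types, hence the associated non-dependent class is closed under compositions by \rprop{fib-sigma}; that $\Fib$ is closed under substitutions (automatically) as well as under $\Sigma$-types (by hypothesis), so the hypotheses of \rprop{fib-dep-unst} are met; and that $|-|$ and $\eta$ translate into each other as in the discussion preceding the proposition, which uses that $|-|$ is stable. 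Closure under identity types lines up on all three sides: it is forced for \eqref{it:fib-non-dep} by \rlem{fib-pullback} and \rprop{fib-refl-id}, it is part of the definition of a higher modality for \eqref{it:fib-dep}, and it is assumed in \eqref{it:fib-dep-local}.

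The key observation is that a pair of \emph{stable} constructions $\|-\|$, $|-|$ which makes $\Fib$ into a locally reflective class makes it \emph{stably} locally reflective. Given $f : \Hom(A,C)$, rewrite $A$ as a dependent type over its codomain, $C \vdash \tilde{A}(y) = \sum_{x : A} \Id(f\,x,y)$, using $\Sigma$- and identity types; then the universal factorization of $f$ is $A \to \sum_{y : C} \| \tilde{A}(y) \| \twoheadrightarrow C$, the second map being a fibration since $\| \tilde{A} \|$ is fibrant over $y : C$. Pulling this factorization back along an arbitrary map amounts, under the dictionary ``substitutions and reindexing correspond to pullbacks'' that was already used for dependent coproducts and pushouts, to reindexing the dependent types $\tilde{A}$ and $\| \tilde{A} \|$; since $\|-\|$ and $|-|$ are stable this reindexing commutes with $\|-\|$ and with $\eta$, so the pullback of the factorization of $f$ is literally the universal factorization of the pullback of $f$, and hence the factorization is stable under pullbacks.

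With this in hand the three implications are short. For \eqref{it:fib-non-dep}~$\Rightarrow$~\eqref{it:fib-dep}, a stably locally reflective class is locally reflective, so by \rprop{fib-dep-unst} the pair $\|-\|$, $\eta$ extends to a higher modality, and it is stable because $\|-\|$ and $|-|$ are. For \eqref{it:fib-dep-local}~$\Rightarrow$~\eqref{it:fib-non-dep}, taking the indexed context $E$ empty in the unstable local eliminator yields exactly the $\| A \|\text{-}\fs{elim}$ and $\| A \|\text{-}\fs{elim_h}$ of a higher modality, so \rprop{fib-dep-unst} makes $\Fib$ locally reflective, and the key observation upgrades this to stably locally reflective. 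For \eqref{it:fib-dep}~$\Rightarrow$~\eqref{it:fib-dep-local}, closure under identity types is part of the definition, and to build the local eliminator I would follow the proof of \rprop{fib-dep-unst} while carrying the extra context $E$: the map $j : \Hom(\Sigma(\Delta, x : A), \Sigma(\Delta, z : \| A \|))$ induced by $|-|$ is connected by \rlem{uni-conn}, the map $j_E : \Hom(\Sigma(\Delta, x : A, E[|x|/z]), \Sigma(\Delta, z : \| A \|, E))$ induced by $|-|$ is the pullback of $j$ along the projection forgetting $E$, hence connected by \rlem{conn-pullback} (using that \eqref{it:fib-dep} is stably locally reflective by the key observation), and a term $\Gamma \mid \Delta, x : A, E[|x|/z] \vdash d : D[|x|/z]$ with $D$ fibrant over $\Delta, z : \| A \|, E$ determines a square whose top leg lands in $\Sigma(\Delta, z : \| A \|, E, D)$ and whose right leg is the fibrant projection to $\Sigma(\Delta, z : \| A \|, E)$; a lift of this square along $j_E$ defines $\| A \|\text{-}\fs{elim}(z E.D, x E.d)$ and commutativity of the upper triangle defines $\| A \|\text{-}\fs{elim_h}$.

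The main obstacle I expect is the bookkeeping in the key observation and in the analogous identification of $j_E$ as a pullback: one must verify that pulling the factorization, and the map $j$, back along an arbitrary indexed morphism is computed by precisely the substitutions that the stability equations for $\|-\|$ and $|-|$ govern, carefully separating base reindexing from composition with indexed morphisms, and that the resulting $\| A \|\text{-}\fs{elim}$ and $\| A \|\text{-}\fs{elim_h}$ carry the types and $\beta$-homotopies displayed in the local eliminator rules. These verifications are routine but delicate, of exactly the flavour of the corresponding arguments for dependent coproducts and dependent pushouts, and I do not expect any essentially new difficulty.
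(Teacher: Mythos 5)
Your proposal is correct and follows essentially the same route as the paper: the equivalence of \eqref{it:fib-non-dep} and \eqref{it:fib-dep} via \rprop{fib-dep-unst}, the trivial specialization of the local eliminator to the global one, and the construction of the local eliminator by exhibiting the map $\Sigma(\Delta, x : A, E[|x|/z]) \to \Sigma(\Delta, z : \| A \|, E)$ as a pullback of the connected map $j$ and applying \rlem{conn-pullback}. The only difference is cosmetic: you spell out as a ``key observation'' the fact that stability of $\|-\|$ and $|-|$ upgrades locally reflective to stably locally reflective, which the paper asserts without proof in the paragraph preceding the proposition and then uses silently in the same place you do.
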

\begin{proof}
The equivalence between \eqref{it:fib-non-dep} and \eqref{it:fib-dep} holds by \rprop{fib-dep-unst}.
Obviously, \eqref{it:fib-dep-local} implies \eqref{it:fib-dep} since the (global) eliminator is a special case of the local one.
Let us prove that \eqref{it:fib-non-dep} implies the existence of the unstable local eliminator.
Consider the following pullback square:
\[ \xymatrix{ \Sigma(\Delta, x : A, E[|x|/z]) \ar[r] \ar[d]_i \pb   & \Sigma(\Delta, x : A) \ar[d]^j \\
              \Sigma(\Delta, z : \| A \|, E) \ar[r]                 & \Sigma(\Delta, z : \| A \|),
            } \]
where the bottom and top maps are the obvious projection and the left and right maps are defined in terms of $| - |$.
Since $\Fib$ is closed under $\Sigma$-types, \rprop{fib-sigma} implies that it is also closed under compositions.
Then \rlem{uni-conn} implies that $j$ is connected.
Since $\Fib$ is stably locally reflective, \rlem{conn-pullback} implies that $i$ is also connected.

Any term $\Delta, x : A, E[|x|/z] \vdash d : D[|x|/z]$ determines the top map in the following diagram:
\[ \xymatrix{ \Sigma(\Delta, x : A, E[|x|/z]) \ar[r] \ar[d]_i           & \Sigma(\Delta, z : \| A \|, E, D) \ar@{->>}[d] \\
              \Sigma(\Delta, z : \| A \|, E) \ar@{-->}[ur] \ar@{=}[r]   & \Sigma(\Delta, z : \| A \|, E)
            } \]
The right map is the obvious projection, which is a fibration since $D$ is fibrant over $\Delta, z : \| A \|, E$.
Since $i$ is connected, we have a lift in this square.
This lift determines a term $\Delta, z : \| A \|, E \vdash \| A \|\text{-}\fs{elim}(z E. D, x E. d) : D$.
The commutativity of the upper triangle implies the existence of a term $\Delta, x : A, E[|x|/z] \vdash \| A \|\text{-}\fs{elim_h}(z E. D, x E .d) : \Id(\| A \|\text{-}\fs{elim}(z E. D, x E. d)[|x|/z],d)$.
\end{proof}

\begin{example}
Since the class of $n$-truncated maps is closed under identity types, it is locally reflective if and only if we have the weak version of the $n$-truncation operation.
\end{example}

\bibliographystyle{amsplain}
\bibliography{ref}

\providecommand{\bysame}{\leavevmode\hbox to3em{\hrulefill}\thinspace}
\providecommand{\MR}{\relax\ifhmode\unskip\space\fi MR }
\providecommand{\MRhref}[2]{%
  \href{http://www.ams.org/mathscinet-getitem?mr=#1}{#2}
}
\providecommand{\href}[2]{#2}
\begin{thebibliography}{10}

\bibitem{alg-tt}
V.~{Isaev}, \emph{{Algebraic Presentations of Dependent Type Theories}},
  (2016), \href {http://arxiv.org/abs/1602.08504} {\path{arXiv:1602.08504}}.

\bibitem{elephant}
Peter~T. Johnstone, \emph{Sketches of an elephant : a topos theory compendium},
  Oxford Logic Guides, Clarendon Press, Oxford, 2002, Autre tirage : 2008.

\bibitem{gen-hedberg}
Nicolai Kraus, Mart{\'i}n Escard{\'o}, Thierry Coquand, and Thorsten
  Altenkirch, \emph{Generalizations of hedberg's theorem}, Typed Lambda Calculi
  and Applications (Berlin, Heidelberg) (Masahito Hasegawa, ed.), Springer
  Berlin Heidelberg, 2013, pp.~173--188.

\bibitem{lurie-algebra}
Jacob Lurie, \emph{Higher algebra}, Unpublished.

\bibitem{lurie-topos}
\bysame, \emph{Higher topos theory}, Annals of mathematics studies, Princeton
  University Press, Princeton, N.J., Oxford, 2009.

\bibitem{infty-gaft}
H.~K. {Nguyen}, G.~{Raptis}, and C.~{Schrade}, \emph{{Adjoint functor theorems
  for $\infty$-categories}},  (2018), \href {http://arxiv.org/abs/1803.01664}
  {\path{arXiv:1803.01664}}.

\bibitem{indexed-cats}
Robert Par{\'e} and Dietmar Schumacher, \emph{Abstract families and the adjoint
  functor theorems}, Indexed Categories and Their Applications (Berlin,
  Heidelberg), Springer Berlin Heidelberg, 1978, pp.~1--125.

\bibitem{rasekh-eht}
N.~{Rasekh}, \emph{{A Theory of Elementary Higher Toposes}},  (2018), \href
  {http://arxiv.org/abs/1805.03805} {\path{arXiv:1805.03805}}.

\bibitem{riehl-dhott}
E.~{Riehl} and M.~{Shulman}, \emph{A type theory for synthetic
  $\infty$-categories}, Higher Structures \textbf{1} (2017), no.~1, 147--224.

\bibitem{modality-hott}
E.~{Rijke}, M.~{Shulman}, and B.~{Spitters}, \emph{{Modalities in homotopy type
  theory}},  (2017), \href {http://arxiv.org/abs/1706.07526}
  {\path{arXiv:1706.07526}}.

\bibitem{split-idemp}
M.~{Shulman}, \emph{Splitting idempotents}, December 2014,
  \url{https://homotopytypetheory.org/2014/12/08/splitting-idempotents/}.

\bibitem{hottbook}
The {Univalent Foundations Program}, \emph{Homotopy type theory: Univalent
  foundations of mathematics}, \url{https://homotopytypetheory.org/book},
  Institute for Advanced Study, 2013.

\end{thebibliography}

\end{document}